\newtheorem{theorem}{Theorem}[section]
\newtheorem{lemma}[theorem]{Lemma}
\newtheorem{proposition}[theorem]{Proposition}
\newtheorem{corollary}[theorem]{Corollary}
\newtheorem{definition}[theorem]{Definition}
\newtheorem{example}[theorem]{Example}
\newtheorem{Remark}[theorem]{Remark}
\newcommand{\C}{\mathcal{C}}
\newcommand{\Mod}{\mathrm{Mod}}
\newcommand{\Tr}{\mathrm{Tr}}
\begin{document}
\title{Homological theory of $K$-idempotent ideals in dualizing varieties}

\author[Rodriguez, Sandoval, Santiago]{Luis Gabriel Rodr\'iguez-Vald\'es$^{1}$, \\ Martha Lizbeth Shaid Sandoval-Miranda$^{2}$,  \\ Valente Santiago-Vargas$^{3}$}

\thanks{The authors are grateful to the project PAPIIT-Universidad Nacional Aut\'onoma de M\'exico IN100520}
\subjclass{2000]{Primary 18A25, 18E05; Secondary 16D90,16G10}}
\keywords{Dualizing varieties, functor categories, ideals,  recollement}
\dedicatory{}
\maketitle

\begin{abstract}

In this work , we develop the theory of $k$-idempotent ideals in the setting of dualizing varieties. Several results given previously in \cite{APG}  by M. Auslander, M. I. Platzeck, and G. Todorov are extended to this context. Given an ideal $\mathcal{I}$ (which is the trace of a projective module), we construct a canonical recollement which is the analog to a well-known recollement in categories of modules over artin algebras. Moreover, we study the homological properties of the categories involved in such a recollement. Consequently,  we find conditions on the ideal $\mathcal{I}$ to obtain quasi-hereditary algebras in such a recollement. Applications to bounded derived categories are also given.
\end{abstract}


\section{Introduction}\label{sec:1}
Dualizing $R$-varieties have appeared  in the theory of  locally bounded $k$-categories over a field $k$, as well as in the study of  categories of graded modules over artin algebras. What is more, they are  in connection with  covering theory. One of the advantages of the notion of dualizing $R$-variety defined in  \cite{AusVarietyI} is that it provides a common setting for the category $\mathrm{proj}(\Lambda)$ of finitely generated projective $\Lambda$-modules, $\mathrm{mod}(\Lambda)$ and $\mathrm{mod}(\mathrm{mod}(\Lambda))$, which  play an important role in the study of an artin algebra $\Lambda$.\\


Recently, in \cite{Parra}, C. Parra, M. Saorin, and S. Virili, have studied modules over a small preadditive category $\mathcal{C},$ and have shown various interesting results on torsion theories, idempotent ideals, as well as recollements, in that context. Particularly, they have generalized a result given by J. P. Jans, which states that there is a bijective correspondence between idempotent ideals in the preadditive category $\mathcal{C}$ and TTF's  in the category of modules $\mathrm{Mod}(\mathcal{C})$.\\

In \cite{APG}, M. Auslander, M. I. Platzeck and G. Todorov studied homological ideals in the case of the category $\mathrm{mod}(\Lambda)$, where $\Lambda$ is an artin algebra. They proved several fundamental results related to homological ideals, and they connected such a notion with the context of quasi-hereditary algebras.\\


Following all of these ideas, it seems natural to extend this study to the setting of rings with several objects (see  \cite{Mitchelring}). This extension is best expressed in the language of dualizing $R-$varieties. Our main porpouse in this paper is to show generalizations of results given in \cite{APG}, in the context of dualizing varieties.
 In the following, we describe our results in more detail.\\

After the introduction (section \ref{sec:1}), in section  \ref{sec:2} we consider a preadditive category $\mathcal{C}$, and we recall basic definitions and results about $\mathrm{Mod}(\mathcal{C})$ and dualizing varieties.\\

In section \ref{sec:3}, we  consider the notion of ideal in a preadditive category $\mathcal{C}$.
We begin our work by  generalizing the classical adjunction for artin algebras given by \[\mathrm{Hom}_{\Lambda/I}(Y,\mathrm{Tr}_{\Lambda/I}(X))\simeq \mathrm{Hom}_{\Lambda}(Y,X)\]  to the case of rings with several objects. We study certain derived functors, and with the help of these derived functors, we give homological characterizations of when the functor $\overline{\mathrm{Tr}}_{\frac{\mathcal{C}}{\mathcal{I}}}:\mathrm{Mod}(\mathcal{C})\longrightarrow \mathrm{Mod}(\mathcal{C}/\mathcal{I})$ preserves injective coresolutions of length $k$ (see  \ref{tresfuntores} and \ref{Extiniciores}).\\

In section \ref{sec:4}, we study conditions on the ideal $\mathcal{I}$ under which we can restrict our previous results to the subcategory $\mathrm{mod}(\mathcal{C})$ of  finitely presented $\mathcal{C}$-modules. We introduce {\it the property $(A)$} in the ideal $\mathcal{I}$ (see definition \ref{propertyA}), and we prove that if an ideal satisfies property $A$ then we can restrict our attention to the case of finitely presented modules (see \ref{restresfuntores}). In particular, we prove that if $\mathcal{C}$ is a dualizing variety and $\mathcal{I}$ is an ideal satisfying  property $A$, then $\mathcal{C}/\mathcal{I}$ is also  dualizing  (see \ref{cocienteduali}).\\

In section \ref{sec:5}, we introduce the notion of $k$-idempotent ideal in preadditive categories (see  \ref{kidemcat}). We describe the idempotent ideals in terms of the vanishing of certain derived functors (see  \ref{caractidem} and \ref{caractidemfin}). Moreover, if the ideal $\mathcal{I}$ satisfies property $A$ and $\mathcal{C}$ is a dualizing $R$-variety, by using Auslander-Reiten duality, we give characterizations of when $\mathcal{I}$ is $k$-idempotent in terms of the functors $\mathrm{Ext}_{\mathrm{mod}(\mathcal{C})}^{i}(-,-)$ and $\mathrm{Tor}_{i}^{\mathcal{C}}(-,-)$ (see \ref{caractidemfin2}).\\

In section \ref{sec:6}, we prove the dual basis lemma for the category $\mathrm{Mod}(\mathcal{C})$ (see \ref{dualbasislema}), and given a projective module $P$, we introduce the trace ideal $\mathcal{I}:=\mathrm{Tr}_{P}\mathcal{C}$ (see \ref{deftraceideal}). We also show that if 
$\mathcal{C}$ is a dualizing $R$-variety, then  $\mathrm{Tr}_{P}\mathcal{C}$ satisfies property $A$ (see \ref{trasatproA}). We study projective resolutions of $k$-idempotent ideals
and introduce specials subcategories,  $\mathbb{P}_{k}$ and $\mathbb{I}_{k}$ (see \ref{defiPk}). Here we prove that  $\mathcal{I}=\mathrm{Tr}_{P}\mathcal{C}$ is $k+1$-idempotent  if and only if $\mathcal{I}(C',-)\in \mathbb{P}_{k}$ for all $C'\in \mathcal{C}$ (see \ref{otracaidemo}), which is a generalization of the result \cite[Theorem 2.1]{APG}.\\

In section \ref{sec:7},  we consider $P=\mathrm{Hom}_{\mathcal{C}}(C,-)\in \mathrm{mod}(\mathcal{C})$ and $R_{P}:=\mathrm{End}_{\mathrm{mod}(\mathcal{C})}(P)^{op}$ and we study the functor $\mathrm{Hom}_{\mathrm{mod}(\mathcal{C})}(P,-)$. We obtain a generalization of a well known recollement (see for example \cite[example 3.4]{Psaro3}) to the setting of dualizing $R$-varieties (see \ref{recolleproyefin}). Finally in this section, we prove that $\mathrm{mod}(R_{P})$ is equivalent to certain subcategories of $\mathrm{mod}(\mathcal{C})$ (see \ref{lemainteres}).\\

In section \ref{sec:8},  we study some homological properties of the functor \[\mathrm{Hom}_{\mathrm{mod}(\mathcal{C})}(P,-):\mathrm{mod}(\mathcal{C})\rightarrow \mathrm{mod}(R_{P}),\]
and how the homological properties of $\mathrm{mod}(\mathcal{C})$ and $\mathrm{mod}(R_{P})$ are related. We explore the relationship between injective coresolutions in $\mathrm{mod}(\mathcal{C})$ and $\mathrm{mod}(R_{P})$. We give necessary and sufficient conditions for $\mathbb{I}_{1}$ to be equal to $\mathbb{I}_{\infty}$. Finally, we study the property of an ideal to be projective.  This notion is important because the condition that $\mathcal{I}(C',-)$ being projective is part of the definition of heredity ideal given in \cite{Martin1}. We show that under certain conditions we are able to produce quasi-hereditary algebras (see \ref{quasiher2}). In addition, we have an application to derived categories (see \ref{fullcatdev}), which is a generalization of a well-known result for the category $\mathrm{Mod}(R),$ where $R$ is an associative ring.\\

Finally, in section \ref{sec:9}, some examples of $k$-homological ideals are explored.

\section{Preliminaries}\label{sec:2}
In this section we recall the basic notions of rings with several objects and the notion of dualizing $R$-variety introduced by Auslander and Reiten in \cite{AusVarietyI}.

\subsection{Categorical Foundations}
Recall that a category $\C$ together with an abelian group structure in each of the sets of morphisms $\C(C_{1},C_{2})$ is said to be a  \textbf{preadditive category}  whenever all the composition mappings
$\C(C',C'')\times \C(C,C')\longrightarrow \C(C,C'')$
in $ \C $ are bilinear mappings of abelian groups. A covariant functor $ F:\C_{1}\longrightarrow \C_{2} $ between  preadditive categories $ \C_{1} $ and $ \C_{2} $ is said to be \textbf{additive} if for each pair of objects $ C $ and $ C' $ in $ \C_{1}$, the mapping $ F:\C_{1}(C,C')\longrightarrow \C_{2}(F(C),F(C')) $ is a morphism of abelian groups. If $ \C $ is a  preadditive category, we always consider its opposite  category $ \C^{op}$ as a preadditive category by letting $\C^{op} (C',C)= \C(C,C') $. We follow the usual convention of identifying each contravariant  functor $F$ from a category $ \C $ to $ \mathcal{D} $ with a covariant functor $F$ from  $ \C^{op} $ to $ \mathcal{D}$.\\

An arbitrary category $\mathcal{C}$ is $\textbf{small}$ if the class of objects of $\mathcal{C}$ is a set. An $\textbf{additive}$ $\textbf{category}$ is a preadditive category $\mathcal{C}$ such that every finite family of objects in $\mathcal{C}$ has a coproduct. Given a small preadditive category $\mathcal{C}$ and $\mathcal{D}$ an arbitrary preadditive category, we denote by $(\mathcal{C},\mathcal{D})$ the category of all the covariant additive functors.

\subsection{The category $\mathrm{Mod}(\mathcal{C})$}
Throughout this section $\mathcal{C}$ will denote an arbitrary small preadditive category, and $\mathrm{Mod}(\mathcal{C})$ will denote the \textit{category of  additive covariant functors} from $\mathcal{C}$ to  the category of abelian groups $ \mathbf{Ab}$. This is  also referred as the category of $\mathcal{C}$-modules.  As usual, $\mathrm{Mod}(\mathcal{C}^{op})$ will be identified with the category of additive contravariant functors from $\mathcal{C}$ to  $ \mathbf{Ab}$.\\

Now, we recall some properties of the category $ \Mod(\C) $, and for more details, the reader may refer to   \cite{AusM1}. \\

As it is well-know, the category $\Mod(\C) $ is an abelian category with enough injectives and projectives. For each $C$ in $\C $, the $\C$-module $(C,-)$ given by $(C,-)(X)=\C(C,X)$ for each $X$ in $\C$, has the property that for each $\C$-module $M$, the mapping $\left( (C,-),M\right)\longrightarrow M(C)$ given by $f\mapsto f_{C}(1_{C})$ for each $\C$-morphism $f:(C,-)\longrightarrow M$ is an isomorphism of abelian groups (Yoneda's Lemma). \\

\begin{enumerate}
\item The functor $\mathbb{Y}:\C\longrightarrow \Mod(\C) $ given by $\mathbb{Y}(C)=(C,-) $ is fully faithful.

\item  For each family $\lbrace  C_{i}\rbrace _{i\in I}$ of objects in $ \C $, the $ \C $-module $ \underset{i\in I}\amalg \mathbb{Y}(C_{i})$ is projective.

\item  For each $M\in \mathrm{Mod}(\mathcal{C})$, there exists an epimorphism $ \underset{i\in I}\amalg \mathbb{Y}(C_{i})\rightarrow M$ for some family $ \lbrace C_{i}\rbrace_{i\in I}$  in $ \C $.  We say that $M$ is $\textbf{finitely generated}$ if such family is finite.

\item  A $\textbf{finitely generated projective}$ $\mathcal{C}$-module is a direct summand of $ \underset{i\in I}\amalg \mathbb{Y}(C_{i})$ for some finite family of objects $ \lbrace C_{i}\rbrace_{i\in I} $ in $\mathcal{C}$.

\end{enumerate}

\subsection{Change of Categories}
\label{subsec:2}
The results in this subsection are taken directly from \cite{AusM1}. Let $ \mathcal{C} $ be a small predditive category. There is a unique (up to isomorphism)  functor $ \otimes_{\mathcal{C}}:\mathrm{Mod}(\mathcal{C}^{op})\times \mathrm{Mod}(\mathcal{C})\longrightarrow \mathbf{Ab} $ called the \textbf{tensor product}. The abelian group $\otimes_{\mathcal{C}}(A,B) $  is denoted by $A\otimes_{\mathcal{C}} B $ for all $\mathcal{C}^{op}$-modules $A$ and all $\mathcal{C}$-modules $B$.
\begin{proposition}\label{AProposition0} The tensor product has the following properties:
\begin{enumerate}
\item
\begin{enumerate}
\item For each $ \mathcal{C}$-module $B$, the functor $\otimes_{\mathcal{C}}B:\mathrm{Mod}(\mathcal{C}^{op})\longrightarrow \mathbf{Ab}$ given by $(\otimes_{\mathcal{C}}B)(A)=A\otimes_{\mathcal{C}}B$ for all $\mathcal{C}^{op}$-modules $A$ is right exact.
\item For each $ \mathcal{C}^{op}$-module $A$, the functor $ A\otimes_{\mathcal{C}}:\mathrm{Mod}(\mathcal{C})\longrightarrow \mathbf{Ab}$ given by $(A\otimes_{\mathcal{C}})(B)=A\otimes_{\mathcal{C}}B $ for all $ \mathcal{C}$-modules $B$ is right exact.
\end{enumerate}

\item For each $ \mathcal{C}^{op}$-module $A$ and each $ \mathcal{C}$-module $B$, the functors $A\otimes_{\mathcal{C}}$ and $\otimes_{\mathcal{C}}B $ preserve arbitrary coproducts.

\item For each object $C$ in $ \mathcal{C} $ we have $ A\otimes_{\mathcal{C}}(C,-) =A(C)$ and $(-,C)\otimes_{\mathcal{C}}B=B(C)$ for all $\mathcal{C}^{op}$-modules $A$ and all $\mathcal{C}$-modules $B$.
\end{enumerate}
\end{proposition}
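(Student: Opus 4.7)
The plan is to recall the coend presentation of the tensor product and then derive each of the three properties from it, using Yoneda-style evaluations for (3) and the cocontinuity of $\otimes_{\mathbb{Z}}$ for (1) and (2). Explicitly, for $A \in \Mod(\mathcal{C}^{op})$ and $B \in \Mod(\mathcal{C})$ I would take
\[
A \otimes_{\mathcal{C}} B \;=\; \Bigl( \bigoplus_{C \in \mathcal{C}} A(C) \otimes_{\mathbb{Z}} B(C) \Bigr) \Big/ N,
\]
where $N$ is the subgroup generated by the elements $A(f)(a') \otimes b - a' \otimes B(f)(b)$ for every $f \in \mathcal{C}(C, C')$, every $a' \in A(C')$ and every $b \in B(C)$. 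This is the coend $\int^{C} A(C) \otimes_{\mathbb{Z}} B(C)$ and, by a routine check, yields the adjunction
\[
\Hom_{\mathbf{Ab}}(A \otimes_{\mathcal{C}} B, G) \;\cong\; \Hom_{\Mod(\mathcal{C}^{op})}\bigl(A, \Hom_{\mathbf{Ab}}(B(-), G)\bigr),
\]
in which $\Hom_{\mathbf{Ab}}(B(-), G)$ denotes the $\mathcal{C}^{op}$-module $C \mapsto \Hom_{\mathbf{Ab}}(B(C), G)$, together with its symmetric counterpart in the other slot.

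For property (3) I would produce explicit mutual inverses. On $A \otimes_{\mathcal{C}} (C, -)$ define $\varphi(a' \otimes f) := A(f)(a')$ for $a' \in A(C')$ and $f \in \mathcal{C}(C, C')$; the defining relations in $N$ are respected by the functoriality of $A$. The inverse $\psi$ sends $a \in A(C)$ to the class of $a \otimes 1_C$; one then checks $\psi \circ \varphi = \mathrm{id}$ modulo $N$ by applying the generating relation to the pair $(f, 1_C)$, while $\varphi \circ \psi = \mathrm{id}$ is immediate from $A(1_C) = \mathrm{id}$. The isomorphism $(-, C) \otimes_{\mathcal{C}} B \cong B(C)$ is proved symmetrically, using the left $\mathcal{C}$-module structure of $B$ in place of that of $A$.

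Properties (1) and (2) then follow formally. Coproducts in both $\Mod(\mathcal{C}^{op})$ and $\Mod(\mathcal{C})$ are computed pointwise, and tensor products over $\mathbb{Z}$ distribute over arbitrary coproducts of abelian groups; inspection of the explicit construction therefore yields (2) at once in either variable. For (1), the adjunction above exhibits $- \otimes_{\mathcal{C}} B$ as a left adjoint and hence as cocontinuous, so it preserves cokernels and is in particular right exact; the argument for $A \otimes_{\mathcal{C}} -$ is entirely symmetric. I expect no genuine obstacle, since the proposition is a direct transcription of the classical tensor product arguments to rings with several objects, with representable functors playing the role of the free module of rank one; the only points requiring care are the well-definedness of $\varphi$ and the verification of the generating relations in (3), both of which reduce to a single use of functoriality.
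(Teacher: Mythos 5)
The paper does not actually supply a proof here: Proposition \ref{AProposition0} is stated as a recollection and the reader is referred to \cite{AusM1}, where Auslander constructs $\otimes_{\mathcal{C}}$ and establishes precisely these facts. Your argument is a correct and self-contained reconstruction along the same lines one finds in that source. The coend presentation $A\otimes_{\mathcal{C}}B = \bigl(\bigoplus_{C}A(C)\otimes_{\mathbb{Z}}B(C)\bigr)/N$ is the standard definition; the explicit mutual inverses you give for (3) are the categorical form of the Eilenberg--Watts/Yoneda computation (your verification of $\psi\circ\varphi=\mathrm{id}$ via the generating relation applied to $(f,1_{C})$ is exactly right, as is the well-definedness of $\varphi$ from contravariance of $A$); and deducing (1) and (2) from the tensor-hom adjunction is the cleanest route, since a left adjoint preserves all colimits, in particular cokernels and coproducts. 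One small remark: for (2) you appeal to the explicit construction, but since you already have the adjunction in hand, cocontinuity gives (2) for free in both variables, which would make the presentation slightly more uniform. No gaps.
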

Suppose now that $ \mathcal{C}' $ is a preadditive subcategory of the small category $ \mathcal{C} $.
 We use the tensor product of $ \mathcal{C}'$-modules to describe the left adjoint $ \mathcal{C}\otimes_{\mathcal{C}'} $ of the restriction functor $ \mathrm{res}_{\mathcal{C}'}:\mathrm{Mod}(\mathcal{C})\longrightarrow \mathrm{Mod}(\mathcal{C}')$ given by $M\mapsto M|_{\mathcal{C}'}$.\\
Define the functor $ \mathcal{C}\otimes_{\mathcal{C}'}:\mathrm{Mod}\left( \mathcal{C}'\right) \longrightarrow \mathrm{Mod}\left( \mathcal{C}\right)  $ by $ (\mathcal{C}\otimes_{\mathcal{C}'}M)\left( C\right) =\mathcal{C}(-,C) \mid_{\mathcal{C}'}\otimes_{\mathcal{C}'} M $ for all $ M\in \mathrm{Mod}\left(\mathcal{C}'\right)  $ and $ C\in\mathcal{C}.$ 

\begin{proposition}
$\textnormal{\cite[Proposition 3.1]{AusM1}}$ \label{Rproposition1}
Let $ \mathcal{C}' $ be a subcategory of the small category $\mathcal{C}$. Then the functor $ \mathcal{C}\otimes_{\mathcal{C}'}:\mathrm{Mod}\left( \mathcal{C}'\right) \longrightarrow \mathrm{Mod}\left( \mathcal{C}\right)  $ satisfies:

\begin{enumerate}
\item $\mathcal{C}\otimes_{\mathcal{C}'}$ is right exact and preserves  coproducts;

\item The composition $ \mathrm{Mod}\left( \mathcal{C}'\right) \overset{\mathcal{C}\otimes_{\mathcal{C}'}}\longrightarrow \mathrm{Mod}\left( \mathcal{C}\right)\overset{\mathrm{res}_{\mathcal{C}'}}\longrightarrow \mathrm{Mod}\left( \mathcal{C}'\right)    $ is the identity; 

\item For each object $ C'\in \mathcal{C}' $, we have $ \mathcal{C}\otimes_{\mathcal{C}'}\mathcal{C}'\left(C',-\right) =\mathcal{C}\left(C', -\right); $

\item The restriction mapping $\mathcal{C}\left( \mathcal{C}\otimes_{\mathcal{C}'}M,N\right)\longrightarrow \mathcal{C}'\left( M,N\mid_{\mathcal{C}'}\right)$ is an isomorphism for each $\mathcal{C'}$-module $M$ and each $ \mathcal{C} $-module $ N $;
\item $ \mathcal{C}\otimes_{\mathcal{C}'} $ is a fully faithful functor.
\end{enumerate}
\end{proposition}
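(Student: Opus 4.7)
The plan is to prove the five assertions in the order $(1)\!\to\!(3)\!\to\!(2)\!\to\!(4)\!\to\!(5)$: properties $(1)$--$(3)$ are direct consequences of Proposition \ref{AProposition0}, the adjunction $(4)$ is the core content of the statement, and $(5)$ is a formal corollary of $(4)$ together with $(2)$.

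For $(1)$, note that the value of $\mathcal{C}\otimes_{\mathcal{C}'}(-)$ at $C\in\mathcal{C}$ is obtained by tensoring on the left with the fixed $\mathcal{C}'^{op}$-module $\mathcal{C}(-,C)|_{\mathcal{C}'}$, so parts $(1)(b)$ and $(2)$ of Proposition \ref{AProposition0} give right exactness and preservation of coproducts pointwise; both are pointwise-checkable properties in $\mathrm{Mod}(\mathcal{C})$, so they transfer. For $(3)$, evaluating the definition at $C\in\mathcal{C}$ and applying Proposition \ref{AProposition0}$(3)$ inside $\mathrm{Mod}(\mathcal{C}')$ yields
\[
(\mathcal{C}\otimes_{\mathcal{C}'}\mathcal{C}'(C',-))(C)
=\mathcal{C}(-,C)|_{\mathcal{C}'}\otimes_{\mathcal{C}'}\mathcal{C}'(C',-)
=\mathcal{C}(-,C)|_{\mathcal{C}'}(C')=\mathcal{C}(C',C),
\]
and naturality in $C$ identifies the result with $\mathcal{C}(C',-)$. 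For $(2)$, when $C'\in\mathcal{C}'$ one has $\mathcal{C}(-,C')|_{\mathcal{C}'}=\mathcal{C}'(-,C')$, so by the same tensor-Yoneda identity $(\mathcal{C}\otimes_{\mathcal{C}'}M)(C')=\mathcal{C}'(-,C')\otimes_{\mathcal{C}'}M=M(C')$, naturally in $M$ and in $C'$.

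The substantive step is $(4)$, and my strategy is the standard reduction to representables. When $M=\mathcal{C}'(C',-)$, part $(3)$ identifies $\mathcal{C}\otimes_{\mathcal{C}'}M$ with $\mathcal{C}(C',-)$, and Yoneda's lemma applied on both sides produces mutually inverse natural bijections
\[
\mathrm{Hom}_{\mathcal{C}}(\mathcal{C}(C',-),N)\cong N(C')\cong \mathrm{Hom}_{\mathcal{C}'}(\mathcal{C}'(C',-),N|_{\mathcal{C}'}).
\]
For a general $\mathcal{C}'$-module $M$, I would choose a presentation $\coprod_{j}\mathcal{C}'(C'_{j},-)\to\coprod_{i}\mathcal{C}'(C'_{i},-)\to M\to 0$, use $(1)$ to keep the sequence exact after applying $\mathcal{C}\otimes_{\mathcal{C}'}(-)$, and then apply $\mathrm{Hom}_{\mathcal{C}}(-,N)$ to produce a left exact sequence. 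Performing the analogous construction on the right via the restriction $\mathrm{res}_{\mathcal{C}'}$ (which is exact) and $\mathrm{Hom}_{\mathcal{C}'}(-,N|_{\mathcal{C}'})$, the representable case together with a five-lemma style comparison yields the desired isomorphism, with naturality in $M$ and $N$ inherited from Yoneda. The main obstacle here is purely bookkeeping: verifying that the squares involving restriction and the coproduct decompositions commute, which reduces to the functoriality of restriction and the fact that $\mathcal{C}\otimes_{\mathcal{C}'}(-)$ sends representables to representables.

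For $(5)$, fully faithfulness follows formally. Combining $(4)$ and $(2)$ produces
\[
\mathrm{Hom}_{\mathcal{C}}(\mathcal{C}\otimes_{\mathcal{C}'}M,\mathcal{C}\otimes_{\mathcal{C}'}N)
\cong\mathrm{Hom}_{\mathcal{C}'}(M,(\mathcal{C}\otimes_{\mathcal{C}'}N)|_{\mathcal{C}'})
=\mathrm{Hom}_{\mathcal{C}'}(M,N),
\]
and tracing the adjunction unit back through the isomorphism of $(4)$ shows that this composite is precisely the map induced by the functor $\mathcal{C}\otimes_{\mathcal{C}'}(-)$ on morphisms, so this functor is fully faithful.
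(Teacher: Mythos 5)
The paper does not prove this proposition; it simply cites it as \cite[Proposition 3.1]{AusM1}, so there is no in-paper argument to compare against. Your reconstruction is the standard one and is correct: items (1)--(3) are read off pointwise from Proposition \ref{AProposition0} (with the usual tacit assumption that $\mathcal{C}'$ is a \emph{full} subcategory, which is what makes $\mathcal{C}(-,C')|_{\mathcal{C}'}=\mathcal{C}'(-,C')$ in your proof of (2) and is also needed for the statement of (2) and (4) to hold at all); item (4) is handled by the Yoneda/representable case together with a presentation of an arbitrary $M$ and a five-lemma comparison; and (5) is the formal consequence of the adjunction (4) having an invertible unit, which is exactly the content of (2). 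The one step you flag as ``bookkeeping'' in (4) is genuinely the only nontrivial point: because the proposition asserts that the \emph{given} restriction mapping is an isomorphism (not merely that some isomorphism exists), you do need to check that the restriction map commutes with the Yoneda identifications and with the comparison of the two left-exact sequences. That check is routine but should not be omitted entirely; for representables it reduces to the observation that the restriction of the Yoneda isomorphism $\mathcal{C}(\mathcal{C}(C',-),N)\cong N(C')$ coincides with the Yoneda isomorphism $\mathcal{C}'(\mathcal{C}'(C',-),N|_{\mathcal{C}'})\cong N(C')$, and naturality of the restriction mapping then propagates this along the presentation. With that filled in, the argument is complete.
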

Having described the left adjoint $ \mathcal{C}\otimes_{\mathcal{C}'} $ of the restriction functor $ \mathrm{res}_{\mathcal{C}'}:\mathrm{Mod}\left(\mathcal{C}\right) \longrightarrow \mathrm{Mod}\left( \mathcal{C}'\right),$ we now describe its right adjoint. Define the functor $ \mathcal{C}'\left( \mathcal{C},-\right):\mathrm{Mod}\left( \mathcal{C}'\right) \longrightarrow \mathrm{Mod}\left( \mathcal{C}\right)$ by $\mathcal{C}'\left( \mathcal{C},M\right)\left( X\right) =\mathcal{C}'\left( \mathcal{C}(X,-) \mid_{\mathcal{C}'},M\right)    $ for all $ \mathcal{C}' $-modules $ M $ and all objects $ X $ in $ \mathcal{C}.$ We have  the following proposition.

\begin{proposition} 
$\textnormal{\cite[Proposition 3.4]{AusM1}}$ 
\label{Rproposition2}
Let $ \mathcal{C}' $ be a subcategory of the small category $\mathcal{C}$. Then the functor $ \mathcal{C}'\left( \mathcal{C},-\right) :\mathrm{Mod}\left( \mathcal{C}'\right) \longrightarrow \mathrm{Mod}\left( \mathcal{C}\right)$ has the following properties:
\begin{enumerate}

\item $ \mathcal{C}'\left( \mathcal{C},-\right)  $ is left exact and preserves inverse limits;

\item The composition $\mathrm{Mod}\left( \mathcal{C}'\right) \overset{\mathcal{C}'\left( \mathcal{C},-\right) }\longrightarrow \mathrm{Mod}\left( \mathcal{C}\right)\overset{\mathrm{res}_{\mathcal{C}'}}\longrightarrow \mathrm{Mod}\left( \mathcal{C}'\right)    $ is the identity;

\item The restriction mapping $\mathcal{C}\left( N,\mathcal{C}'\left( \mathcal{C},M\right) \right) \longrightarrow \mathcal{C}'\left( N\mid_{\mathcal{C}'},M\right)$ is an isomorphism for each $\mathcal{C}' $-module $M $ and $\mathcal{C}$-module $N$;

\item $\mathcal{C}'\left( \mathcal{C},-\right)  $ is a fully faithful functor.
\end{enumerate}
\end{proposition}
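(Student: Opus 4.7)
The plan is to prove item (3) first, since it establishes the adjunction $\mathrm{res}_{\mathcal{C}'}\dashv \mathcal{C}'(\mathcal{C},-)$, and the remaining items will then follow either as formal consequences of adjointness or by short Yoneda-style computations. The strategy for (3) is the standard one: verify the isomorphism on representable $\mathcal{C}$-modules using Yoneda's lemma, then bootstrap to arbitrary $N$ via projective presentations by representables.

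First I would check (3) in the case $N=\mathcal{C}(X,-)$ for some $X\in\mathcal{C}$. Yoneda's lemma gives a natural isomorphism $\mathcal{C}(\mathcal{C}(X,-),\mathcal{C}'(\mathcal{C},M))\cong \mathcal{C}'(\mathcal{C},M)(X)$, and by the very definition of $\mathcal{C}'(\mathcal{C},-)$ the right-hand side equals $\mathcal{C}'(\mathcal{C}(X,-)|_{\mathcal{C}'},M)=\mathcal{C}'(N|_{\mathcal{C}'},M)$. To pass to arbitrary $N$, I would use that $\mathrm{Mod}(\mathcal{C})$ has enough projectives built from representables, so there is an exact sequence $\coprod_j \mathcal{C}(C_j,-)\to \coprod_i \mathcal{C}(C_i,-)\to N\to 0$. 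Both $\mathcal{C}(-,\mathcal{C}'(\mathcal{C},M))$ and $\mathcal{C}'((-)|_{\mathcal{C}'},M)$ are left exact contravariant functors of $N$ (the restriction functor is exact because exactness in $\mathrm{Mod}(\mathcal{C})$ is computed pointwise, and $\mathrm{Hom}$ is left exact contravariantly), and both convert coproducts of representables into products. Applying them to the presentation produces two left exact sequences whose last two terms are identified by the already established isomorphism for representables, and a routine five-lemma diagram chase yields the isomorphism for general $N$; naturality in $M$ and $N$ is inherited from naturality for representables.

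For (2), observe that for $C'\in\mathcal{C}'$ the restriction $\mathcal{C}(C',-)|_{\mathcal{C}'}$ is exactly $\mathcal{C}'(C',-)$, so by definition $\mathcal{C}'(\mathcal{C},M)(C')=\mathcal{C}'(\mathcal{C}'(C',-),M)\cong M(C')$ by Yoneda applied inside $\mathrm{Mod}(\mathcal{C}')$, and these isomorphisms are natural in $C'$. For (1), left exactness and preservation of inverse limits is automatic from (3), since any right adjoint preserves all limits that exist. Finally, (4) follows from the standard principle that a right adjoint is fully faithful if and only if its counit is an isomorphism: the counit of the adjunction $\mathrm{res}_{\mathcal{C}'}\dashv \mathcal{C}'(\mathcal{C},-)$ at $M$ is precisely the natural map $\mathcal{C}'(\mathcal{C},M)|_{\mathcal{C}'}\to M$, which is an isomorphism by (2).

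I expect the main obstacle to lie in the bootstrapping step of (3): one must check that the representable-case isomorphism is natural with respect to morphisms $\mathcal{C}(Y,-)\to\mathcal{C}(X,-)$ (which by Yoneda correspond to maps $X\to Y$ in $\mathcal{C}$) so that the five-lemma applies coherently to the chosen projective presentation, and one must ensure both functors genuinely send coproducts of representables to products. Once those bookkeeping issues are handled, everything else reduces to formal adjunction manipulations and a direct appeal to Yoneda.
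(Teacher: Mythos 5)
Your proof is correct. The paper does not actually prove this proposition: it is cited verbatim from \cite[Proposition 3.4]{AusM1}, and your argument is the standard one for that result---establish the adjunction isomorphism on representable $\mathcal{C}$-modules via Yoneda, extend to arbitrary $N$ through a projective presentation together with left exactness of both contravariant $\mathrm{Hom}$-functors, and then read off (1), (2) and (4) formally from adjointness (right adjoints preserve limits; the counit is an isomorphism). Two small bookkeeping points are worth making explicit in a careful write-up. First, the ``restriction mapping'' of item (3) is, as a map, ordinary restriction followed by the identification $\mathcal{C}'(\mathcal{C},M)|_{\mathcal{C}'}\cong M$ of item (2), so logically (2) should be in hand before (3); fortunately your proof of (2) uses only Yoneda and not (3), so this is purely a matter of presentation order, and one should also verify (by unwinding Yoneda at $1_{C'}$) that the isomorphism your bootstrap produces really is this restriction mapping rather than some other natural isomorphism. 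Second, the identification $\mathcal{C}(C',-)|_{\mathcal{C}'}=\mathcal{C}'(C',-)$ in your proof of (2) silently requires $\mathcal{C}'$ to be a \emph{full} subcategory of $\mathcal{C}$; this is a standing hypothesis in Auslander's setting that the statement as quoted here does not repeat, and without it item (2), and with it items (3) and (4), would fail.
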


\subsection{Dualizing varieties and Krull-Schmidt Categories}
\label{subsec:3}
Let $\mathcal{C}$ be an additive category.  It is said that $\mathcal{C}$ is a category in which $\textbf{idempotents split}$ if given $e:C\longrightarrow C$ an idempotent endomorphism of an object $C\in \mathcal{C}$, then $e$ has a kernel in $\mathcal{C}$. Let us denote by $\mathbf{proj}(\mathcal{C})$ the full subcategory of $\mathrm{Mod}(\mathcal{C})$ consisting of
all finitely generated projective $\mathcal{C}$-modules. It is well known that $\mathrm{proj}(\mathcal{C})$ is a small additive category in which idempotents split, the functor $\mathbb{Y}:\mathcal{C}\rightarrow \mathrm{proj}(\mathcal{C})$ given by $\mathbb{Y}(C)=\mathcal{C}(C,-)$, is fully faithful, and induces by restriction $\mathrm{res}:\mathrm{Mod}(\mathrm{proj}(\mathcal{C})^{op})\rightarrow \mathrm{Mod}(\mathcal{C})$ an equivalence of categories. 
Also, recall the following notion given by Auslander in \cite{AusM1},  a $\textbf{variety}$  is a small additive category in which idempotents split.\\
Given a ring $R$, we denote by $\mathrm{Mod}(R)$ the category of left $R$-modules and by $\mathrm{mod}(R)$ the full subcategory of $\mathrm{Mod}(R)$ consisting of the finitely generated left $R$-modules. Now, we recall some notions from \cite{AusVarietyI}.

\begin{definition}\label{Rvarietydef}
Let $R$ be a commutative artin ring. An $\mathbf{R}$-$\textbf{category}$ $\mathcal{C}$, is an additive category such that $\mathcal{C}(C_{1},C_{2})$ is an $R$-module, and the composition is $R$-bilinear.  An $\mathbf{R}$-$\textbf{variety}$ $\mathcal{C}$ is a variety which is an $R$-category. An $R$-variety $\mathcal{C}$ is $\mathbf{Hom}$-\textbf{finite}, if for each pair of objects $C_{1},C_{2}$ in $\mathcal{C},$ the $R$-module $\mathcal{C}(C_{1},C_{2})$ is finitely generated. We denote by $(\mathcal{C},\mathrm{mod}(R))$, the full subcategory of $(\mathcal{C},\mathrm{
\mathrm{Mod}}(R))$ consisting of the $\mathcal{C}$-modules such that for
every $C$ in $\mathcal{C}$ the $R$-module $M(C)$ is finitely generated. 
\end{definition}

Suppose $\mathcal{C}$ is a Hom-finite $R$-variety. If $M:\mathcal{C}\longrightarrow \mathbf{Ab}$ is a $\mathcal{C}$-module, for each $C\in \mathcal{C}$ the abelian group $M(C)$ has a structure of $\mathrm{End}_{\mathcal{C}}(C)$-module, and hence as an $R$-module since $\mathrm{End}_{\mathcal{C}}(C)$ is an $R$-algebra. Then, $\mathrm{\mathrm{Mod}}(\mathcal{C})$ is an $R$-variety, which we identify with the category of
covariant functors $(\mathcal{C},\mathrm{Mod}(R))$. Moreover, $(\mathcal{C},\mathrm{mod}(R))$ is abelian and the inclusion $(\mathcal{C},\mathrm{mod}(R))\rightarrow (\mathcal{C},\mathrm{\mathrm{Mod}}(R))$ is exact.

\begin{definition}\label{defifinipre}
Let $\mathcal{C}$ be a Hom-finite $R$-variety. We denote by $\mathrm{mod}(\mathcal{C})$ the full subcategory of $\mathrm{Mod}(\mathcal{C})$ whose objects are the  $\textbf{finitely presented functors}$.
That is, $M\in \mathrm{mod}(\mathcal{C})$ if and only if  there exists an exact sequence in $\mathrm{Mod}(\mathcal{C})$
$$\xymatrix{\mathrm{Hom}_{\mathcal{C}}(C_{0},-)\ar[r] & \mathrm{Hom}_{\mathcal{C}}(C_{1},-)\ar[r] & M\ar[r] & 0.}$$
\end{definition}
Consider the functor $\mathbb{D}_{\mathcal{C}}:(\mathcal{C},\mathrm{mod}(R))\rightarrow (\mathcal{C}^{op},\mathrm{mod}(R))$, which is defined as
follows: for any object $C$ in $\mathcal{C}$, $\mathbb{D}_{\mathcal{C}}(M)(C)=\mathrm{Hom}
_{R}(M(C),E) $ where $E$ is
the $\textbf{injective envelope}$ of $R/\mathrm{rad}(R)\in \mathrm{mod}(R)$. The functor $\mathbb{D}_{\mathcal{C}}$ defines a duality between $(
\mathcal{C},\mathrm{mod}(R))$ and $(\mathcal{C}^{op},\mathrm{mod}(R))$. We have the following definition due to Auslander and Reiten (see \cite{AusVarietyI}).

\begin{definition}\label{dualizinvar}
An $\mathrm{Hom}$-finite $R$-variety $\mathcal{C}$ is \textbf{dualizing}, if
the functor $\mathbb{D}_{\mathcal{C}}:(\mathcal{C},\mathrm{mod}(R))\rightarrow (\mathcal{C}^{op},\mathrm{mod}(R))
$ induces a duality between the categories $\mathrm{mod}(\mathcal{C})$ and $
\mathrm{mod}(\mathcal{C}^{op}).$
\end{definition}

It is clear from the definition that for dualizing varieties $\mathcal{C}$
the category $\mathrm{mod}(\mathcal{C})$ has enough injectives and projectives. To finish, we recall the following definition:

\begin{definition}
An additive category $\mathcal{C}$ is \textbf{Krull-Schmidt}, if every
object in $\mathcal{C}$ decomposes in a finite sum of objects whose
endomorphism ring are local.
\end{definition}

We recall the following result.

\begin{theorem}\label{moddualizin}
Let $\mathcal{C}$ a dualizing $R$-variety. Then $\mathrm{mod}(
\mathcal{C})$ is a dualizing $R$-variety.
\end{theorem}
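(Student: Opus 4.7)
The plan is to verify that $\mathrm{mod}(\mathcal{C})$ satisfies the three conditions defining a dualizing $R$-variety: that it is an $R$-variety, that it is Hom-finite over $R$, and that the canonical duality $\mathbb{D}_{\mathrm{mod}(\mathcal{C})}$ restricts to a duality between $\mathrm{mod}(\mathrm{mod}(\mathcal{C}))$ and $\mathrm{mod}(\mathrm{mod}(\mathcal{C})^{op})$.

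The preliminaries are routine. Since $\mathcal{C}$ is dualizing, $\mathrm{mod}(\mathcal{C})$ is an abelian subcategory of $\mathrm{Mod}(\mathcal{C})$, so it is additive, idempotents split, and it inherits its $R$-category structure from $\mathrm{Mod}(\mathcal{C})$. For Hom-finiteness, given $M,N\in\mathrm{mod}(\mathcal{C})$ I take a projective presentation $\mathcal{C}(C_0,-)\to\mathcal{C}(C_1,-)\to M\to 0$ and apply $\mathrm{Hom}_{\mathrm{mod}(\mathcal{C})}(-,N)$: by Yoneda, $\mathrm{Hom}(M,N)$ embeds into the finitely generated $R$-module $N(C_1)$, and is therefore finitely generated because $R$ is artin, hence Noetherian.

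The core is the duality. By the reflexivity $\mathbb{D}_{\mathrm{mod}(\mathcal{C})}^{2}\simeq\mathrm{id}$ (which follows from $E$ being an injective cogenerator of $\mathrm{mod}(R)$) and the symmetric role played by $\mathcal{C}$ and $\mathcal{C}^{op}$, it is enough to show that $\mathbb{D}_{\mathrm{mod}(\mathcal{C})}$ sends $\mathrm{mod}(\mathrm{mod}(\mathcal{C}))$ into $\mathrm{mod}(\mathrm{mod}(\mathcal{C})^{op})$. The key construction is $I_C:=\mathbb{D}_{\mathcal{C}^{op}}(\mathcal{C}(-,C))\in\mathrm{mod}(\mathcal{C})$, which lives in $\mathrm{mod}(\mathcal{C})$ precisely because $\mathcal{C}$ is dualizing. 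I would establish the natural isomorphism
$$\mathrm{Hom}_{\mathrm{mod}(\mathcal{C})}(N,I_C)\simeq \mathbb{D}(N(C))\qquad(N\in\mathrm{mod}(\mathcal{C}))$$
first on representables $N=\mathcal{C}(D,-)$ via Yoneda, and then for arbitrary $N$ by comparing a projective presentation of $N$ through both sides and invoking the exactness of $\mathrm{Hom}_R(-,E)$. Consequently $\mathbb{D}_{\mathrm{mod}(\mathcal{C})}(\mathrm{ev}_{C})$ is identified with the representable $\mathrm{mod}(\mathcal{C})^{op}$-module $\mathrm{Hom}_{\mathrm{mod}(\mathcal{C})}(-,I_C)$, which is in particular finitely presented.

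Granted this, for $M\in\mathrm{mod}(\mathcal{C})$ with presentation $\mathcal{C}(C'_0,-)\to\mathcal{C}(C'_1,-)\to M\to 0$, Yoneda yields an exact sequence $0\to\mathrm{Hom}_{\mathrm{mod}(\mathcal{C})}(M,-)\to\mathrm{ev}_{C'_1}\to\mathrm{ev}_{C'_0}$ of functors on $\mathrm{mod}(\mathcal{C})$, and applying $\mathbb{D}_{\mathrm{mod}(\mathcal{C})}$ presents $\mathbb{D}_{\mathrm{mod}(\mathcal{C})}\mathrm{Hom}_{\mathrm{mod}(\mathcal{C})}(M,-)$ as a cokernel of a map between the representables $\mathrm{Hom}_{\mathrm{mod}(\mathcal{C})}(-,I_{C'_i})$, hence as a finitely presented $\mathrm{mod}(\mathcal{C})^{op}$-module. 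For arbitrary $F\in\mathrm{mod}(\mathrm{mod}(\mathcal{C}))$ with projective presentation $\mathrm{Hom}(M_0,-)\to\mathrm{Hom}(M_1,-)\to F\to 0$, applying $\mathbb{D}_{\mathrm{mod}(\mathcal{C})}$ exhibits $\mathbb{D}_{\mathrm{mod}(\mathcal{C})}F$ as the kernel of a map between two finitely presented $\mathrm{mod}(\mathcal{C})^{op}$-modules; since $\mathrm{mod}(\mathcal{C})^{op}$ is abelian, $\mathrm{mod}(\mathrm{mod}(\mathcal{C})^{op})$ is closed under kernels inside its ambient module category, and so $\mathbb{D}_{\mathrm{mod}(\mathcal{C})}F$ is finitely presented. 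The main obstacle is the Hom-identity locating $I_C$; once that is in place, the rest is propagation through presentations together with standard abelian-category closure.
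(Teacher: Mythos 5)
The paper does not supply its own proof of this theorem; it is simply recalled from Auslander--Reiten's work on dualizing $R$-varieties (\cite{AusVarietyI}), so there is no in-paper argument for direct comparison. Judged on its own terms, your proposal is essentially correct and follows the standard Auslander--Reiten line: the central move is the adjunction-type identity $\mathrm{Hom}_{\mathrm{mod}(\mathcal{C})}(N,\mathbb{D}_{\mathcal{C}^{op}}(\mathcal{C}(-,C)))\simeq\mathbb{D}(N(C))$, which identifies the image of the evaluation $\mathrm{mod}(\mathcal{C})$-modules under $\mathbb{D}_{\mathrm{mod}(\mathcal{C})}$ with representable $\mathrm{mod}(\mathcal{C})^{op}$-modules, and the rest propagates through presentations. (A slightly slicker way to get the key identity is the one-line tensor--hom calculation $\mathrm{Hom}(N,\mathbb{D}(\mathcal{C}(-,C)))\simeq\mathbb{D}(\mathcal{C}(-,C)\otimes_{\mathcal{C}}N)=\mathbb{D}(N(C))$, avoiding the two-step verification on representables plus presentation comparison.) Two small points you should make explicit: the final closure-under-kernels step uses Auslander's theorem that $\mathrm{mod}(\mathcal{A})$ is abelian whenever $\mathcal{A}$ has pseudo-kernels, applied with $\mathcal{A}=\mathrm{mod}(\mathcal{C})^{op}$, which is abelian and so certainly has them; and the reduction ``by symmetry'' to showing only one direction of the restriction of $\mathbb{D}_{\mathrm{mod}(\mathcal{C})}$ needs the equivalence $\mathrm{mod}(\mathcal{C})^{op}\simeq\mathrm{mod}(\mathcal{C}^{op})$ (coming from $\mathcal{C}$ being dualizing) together with $\mathbb{D}^2\simeq\mathrm{id}$ to transport the dual statement. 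Neither is a real gap, but both are doing quiet work.
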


\section{An adjunction and some derived functors}\label{sec:3}

In the article \cite{APG}, Auslander-Platzeck-Todorov studied homological ideals in the case of $\mathrm{mod}(\Lambda)$ where $\Lambda$ is an artin algebra. Given a two sided ideal $I$ of $\Lambda$ they consider $\Lambda/I$ and they studied the trace  $\mathrm{Tr}_{\Lambda/I}(M)$ of a $\Lambda$-module $M$ defined as $\mathrm{Tr}_{\Lambda/I}(M)=\!\!\!\!\!\!
\!\!\!\!\!\!\displaystyle\sum _{f\in\mathrm{Hom}_{\Lambda}(\Lambda/I, M)}
\!\!\!\!\!\!\!\!\!\!\!\!\mathrm{Im}(f).$ In order to define the analogous of  $\mathrm{Tr}_{\Lambda/I}(M)$ in the category $\mathrm{Mod}(\mathcal{C})$ we introduced the following notions. \\
In this section $\mathcal{C}$ will be a small preadditive category. Let  $\mathcal{M}=\{M_{i}\}_{i\in I}$ be a family of $\mathcal{C}$-modules and set $M:=\bigoplus_{i\in I}M_{i}$. For $F\in \mathrm{Mod}(\mathcal{C})$ we  define $\Lambda_{F}:=\mathrm{Hom}_{\mathrm{Mod}(\mathcal{C})}(M,F),$ and for $\lambda\in \Lambda_{F}$ we set $u_{\lambda}:M\longrightarrow M^{(\Lambda_{F})}$ as the $\lambda$-th inclusion of $M$ into $M^{(\Lambda_{F})}:=\bigoplus_{\lambda\in \Lambda_{F}}M$. For $\lambda\in \Lambda_{F}$ we have the morphism $\lambda:M\longrightarrow F$, then by the universal property of the coproduct, there exists a unique morphism $\Theta_{F}:M^{(\Lambda_{F})}\longrightarrow F$ such that $\lambda=\Theta_{F}\circ u_{\lambda}$ 
for every $\lambda\in \Lambda_{F}$.

\begin{definition}\label{definitraza}	
The $\textbf{trace}$ of $F$ respect to the family $\mathcal{M}=\{M_{i}\}_{i\in I}$, denoted by $\mathrm{Tr}_{\mathcal{M}}(F)$, is the image of  $\Theta_{F}$. That is, we have the following factorization $\xymatrix{M^{(\Lambda_{F})}\ar[r]^{\Delta_{F}} &  \mathrm{Tr}_{\mathcal{M}}(F)\ar[r]^{\Psi_{F}} & F}$ where $\Theta_{F}=\Psi_{F}\Delta_{F}$ with $\Delta_{F}$ an epimorphism and $\Psi_{F}$ a monomorphism.
\end{definition}

\begin{proposition}\label{tracefuntor}
For each family $\mathcal{M}=\{M_{i}\}_{i\in I}$, we have a functor
$\mathrm{Tr}_{\mathcal{M}}:\mathrm{Mod}(\mathcal{C})\rightarrow \mathrm{Mod}(\mathcal{C})$.
\end{proposition}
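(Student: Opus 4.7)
The plan is to produce, for each morphism $\alpha\colon F\longrightarrow F'$ in $\mathrm{Mod}(\mathcal{C})$, a canonical morphism $\mathrm{Tr}_{\mathcal{M}}(\alpha)\colon \mathrm{Tr}_{\mathcal{M}}(F)\longrightarrow\mathrm{Tr}_{\mathcal{M}}(F')$, and then to check the two functoriality axioms. The construction is driven entirely by the universal property of the coproduct together with the factorization $\Theta_{F}=\Psi_{F}\Delta_{F}$ that appears in Definition \ref{definitraza}.

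First I would build a morphism $\widetilde{\alpha}\colon M^{(\Lambda_{F})}\longrightarrow M^{(\Lambda_{F'})}$ at the level of the big coproducts. Post-composition with $\alpha$ defines a map of abelian groups $\alpha_{*}\colon \Lambda_{F}\longrightarrow\Lambda_{F'}$, $\lambda\mapsto \alpha\circ\lambda$. For each $\lambda\in\Lambda_{F}$ I consider the inclusion $u_{\alpha_{*}(\lambda)}\colon M\longrightarrow M^{(\Lambda_{F'})}$; by the universal property of $M^{(\Lambda_{F})}=\bigoplus_{\lambda\in\Lambda_{F}}M$, the family $\{u_{\alpha_{*}(\lambda)}\}_{\lambda\in\Lambda_{F}}$ produces a unique morphism $\widetilde{\alpha}$ with $\widetilde{\alpha}\circ u_{\lambda}=u_{\alpha_{*}(\lambda)}$ for every $\lambda\in\Lambda_{F}$. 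The key compatibility to check is the commutativity of
$$
\xymatrix{
M^{(\Lambda_{F})}\ar[r]^{\widetilde{\alpha}}\ar[d]_{\Theta_{F}} & M^{(\Lambda_{F'})}\ar[d]^{\Theta_{F'}}\\
F\ar[r]_{\alpha} & F'.
}
$$
By the universal property it suffices to test both composites on each $u_{\lambda}$: on the one hand $\alpha\circ\Theta_{F}\circ u_{\lambda}=\alpha\circ\lambda$, and on the other $\Theta_{F'}\circ\widetilde{\alpha}\circ u_{\lambda}=\Theta_{F'}\circ u_{\alpha\circ\lambda}=\alpha\circ\lambda$, so the square commutes.

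Since $\alpha\circ\Theta_{F}=\Theta_{F'}\circ\widetilde{\alpha}$, the morphism $\alpha$ sends $\mathrm{Im}(\Theta_{F})=\mathrm{Tr}_{\mathcal{M}}(F)$ into $\mathrm{Im}(\Theta_{F'})=\mathrm{Tr}_{\mathcal{M}}(F')$, where here I am using that $\mathrm{Mod}(\mathcal{C})$ is an abelian category and images are computed pointwise. Using the monomorphism $\Psi_{F'}$ and the epimorphism $\Delta_{F}$ from the factorization in Definition \ref{definitraza}, there exists a unique morphism $\mathrm{Tr}_{\mathcal{M}}(\alpha)\colon\mathrm{Tr}_{\mathcal{M}}(F)\longrightarrow\mathrm{Tr}_{\mathcal{M}}(F')$ with $\Psi_{F'}\circ\mathrm{Tr}_{\mathcal{M}}(\alpha)=\alpha\circ\Psi_{F}$ and $\mathrm{Tr}_{\mathcal{M}}(\alpha)\circ\Delta_{F}=\Delta_{F'}\circ\widetilde{\alpha}$; this is the standard image-factorization argument.

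Finally I would verify the axioms. For $\alpha=\mathrm{id}_{F}$ we have $\alpha_{*}=\mathrm{id}_{\Lambda_{F}}$, hence $\widetilde{\mathrm{id}_{F}}=\mathrm{id}_{M^{(\Lambda_{F})}}$ by the uniqueness part of the universal property, and therefore $\mathrm{Tr}_{\mathcal{M}}(\mathrm{id}_{F})=\mathrm{id}_{\mathrm{Tr}_{\mathcal{M}}(F)}$ by the uniqueness of the induced morphism on images. For $F\xrightarrow{\alpha}F'\xrightarrow{\beta}F''$, on generators one checks $\widetilde{\beta\alpha}\circ u_{\lambda}=u_{\beta\alpha\lambda}=\widetilde{\beta}\circ u_{\alpha\lambda}=\widetilde{\beta}\circ\widetilde{\alpha}\circ u_{\lambda}$, so $\widetilde{\beta\alpha}=\widetilde{\beta}\circ\widetilde{\alpha}$ by uniqueness; restricting to images then gives $\mathrm{Tr}_{\mathcal{M}}(\beta\alpha)=\mathrm{Tr}_{\mathcal{M}}(\beta)\circ\mathrm{Tr}_{\mathcal{M}}(\alpha)$. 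The only mild obstacle is keeping the universal-property bookkeeping straight; once the commutative square is established, every other step is a formal consequence of the image factorization in an abelian category.
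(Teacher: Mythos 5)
Your proof is correct and is exactly the routine verification the paper has in mind: the paper's own proof is just ``Straightforward,'' and you have filled in the details in the natural way by building $\widetilde{\alpha}$ from the universal property of the coproduct, checking the commutative square, and then invoking the orthogonality of the epi--mono factorization (i.e.\ the unique diagonal fill-in) to get $\mathrm{Tr}_{\mathcal{M}}(\alpha)$, with identities and composition following from the uniqueness clauses. There is no substantive difference from what the authors intended; your remark that images are computed pointwise in $\mathrm{Mod}(\mathcal{C})$ is true but unnecessary, since the image-factorization argument works in any abelian category.
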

\begin{proof}
Straightforward.
\end{proof}

Now, we recall the following definitions that will be essential throughout this work.

\begin{definition}
Let $ \mathcal{C}$ be a preadditive category. An $\textbf{ideal}$ $ \mathcal{I} $  of  $ \mathcal{C} $ is an additive subfunctor of $\mathrm{Hom}_\mathcal{C}(-,-) $. That is, $ \mathcal{I} $ is a class of the morphisms in $\mathcal{C}$  such that:
\begin{enumerate}
\item [(a)]   $\mathcal{I}(A,B)=\mathrm{Hom}_{\mathcal{C}}(A,B)\cap\mathcal{I} $ is an abelian subgroup of $\mathrm{Hom}_\mathcal{C}(A,B) $ for each $ A,B\in\mathcal{C}$; 

\item [(b)] If $ f\in\mathcal{I}(A,B),\;g\in\mathrm{Hom}_\mathcal{C}(C,A) $ and $ h\in\mathrm{Hom}_\mathcal{C}(B,D) $,  then $ hfg\in\mathcal{I}(C,D)$.

\item [(c)] Let $ \mathcal{I} $ and $ \mathcal{J}$ be ideals in  $ \mathcal{C} $. The  $\textbf{product of ideals}$ $\mathcal{I}\mathcal{J}$ is defined as follows: for each $ A,B\in\mathcal{C}$ we set
$$\mathcal{IJ}(A,B):=\left \{\sum_{i=1}^{n}f_{i}g_{i} \mathrel{\bigg|} g_{i}\in \mathcal{C}(A,C_{i}), f_{i}\in \mathcal{C}(C_{i},B)\,\text{for some}\,\, \, C_{i}\in\mathcal{C}\right \} .$$
We say that an ideal $ \mathcal{I} $ of $ \mathcal{C} $ is $\textbf{idempotent}$ if $ \mathcal{I}^2=\mathcal{I}.$
\item [(d)]
Let  $\mathcal{I}$ be an ideal of $\mathcal{C}$, we set $\mathrm{Ann}(\mathcal{I}):=\{F\in \mathrm{Mod}(\mathcal{C})\mid F(f)=0\,\,\forall f\in \mathcal{I}(A,B)\,\,\forall A,B\in \mathcal{C}\}.$
\end{enumerate}
\end{definition}
Now, we recall the construction of the quotient category.
Let $\mathcal{I}$ be an ideal in a preadditive category $\mathcal{C}$. The $\textbf{quotient category}$ $\mathcal{C}/\mathcal{I}$ is defined as follows: it has the same objects as $\mathcal{C}$ and $\mathrm{Hom}_{\mathcal{C}/\mathcal{I}}(A,B):=\frac{\mathrm{Hom}_{\mathcal{C}}(A,B)}{\mathcal{I}(A,B)}$ for each $A,B\in \mathcal{C}/\mathcal{I}$.\\
For $\overline{f}=f+\mathcal{I}(A,B)\in \mathrm{Hom}_{\mathcal{C}/\mathcal{I}}(A,B)$ and $\overline{g}=g+\mathcal{I}(B,C)\in \mathrm{Hom}_{\mathcal{C}/\mathcal{I}}(B,C)$ we set
$$\overline{g}\circ \overline{f}:=gf+\mathcal{I}(A,C)\in \mathrm{Hom}_{\mathcal{C}/\mathcal{I}}(A,C).$$ 
Let  $\mathcal{I}$ be an ideal of $\mathcal{C}$, we have the canonical functor $\pi:\mathcal{C}\longrightarrow \mathcal{C}/\mathcal{I}$ defined as:  $\pi(A)=A$ $\forall A\in \mathcal{C}$ and $\pi(f):=\overline{f}=f+\mathcal{I}(A,B)\in \mathrm{Hom}_{\mathcal{C}/\mathcal{I}}(A,B)$  $\forall f\in \mathrm{Hom}_{\mathcal{C}}(A,B)$.

\begin{definition}
Let $\mathcal{I}$ be an ideal in a preadditive category $\mathcal{C}$ and consider the functor $\pi:\mathcal{C}\longrightarrow \mathcal{C}/\mathcal{I}$. We  have the functor $\pi_{\ast}:\mathrm{Mod}(\mathcal{C}/\mathcal{I})\longrightarrow \mathrm{Mod}(\mathcal{C})$  defined as follows: $\pi_{\ast}(F):=F\circ \pi$ for $F\in \mathrm{Mod}(\mathcal{C}/\mathcal{I})$ and $\pi_{\ast}(\eta)=\eta$ for $\eta:F\longrightarrow G$ in $\mathrm{Mod}(\mathcal{C}/\mathcal{I})$.
\end{definition}
Now, we construct the analogous of the functor $\mathrm{Tr}_{\Lambda/I}(-).$
\begin{definition}
Let $\mathcal{I}$ be an ideal in $\mathcal{C}$, for $C\in \mathcal{C}$ we set $M_{C}:=\frac{\mathrm{Hom}_{\mathcal{C}}(C,-)}{\mathcal{I}(C,-)}\in \mathrm{Mod}(\mathcal{C})$. We consider the familiy $\mathcal{M}=\{M_{C}\}_{C\in \mathcal{C}}$ and we define $\mathrm{Tr}_{\frac{\mathcal{C}}{\mathcal{I}}}:=\mathrm{Tr}_{\mathcal{M}}:\mathrm{Mod}(\mathcal{C})\longrightarrow \mathrm{Mod}(\mathcal{C}).$
\end{definition}

\begin{Remark}\label{isomoan} 
\begin{enumerate}
\item [(a)]
For every $F\in \mathrm{Mod}(\mathcal{C})$ we have that $\mathrm{Tr}_{\frac{\mathcal{C}}{\mathcal{I}}}(F)\in \mathrm{Ann}(\mathcal{I})$.

\item [(b)] It is well known that there exists a functor $\Omega:\mathrm{Ann}(\mathcal{I})\longrightarrow \mathrm{Mod}(\mathcal{C}/\mathcal{I})$ satisfying that $\pi_{\ast}\circ \Omega=1_{\mathrm{Ann}(\mathcal{I})}$ and  $\Omega\circ \pi_{\ast}=1_{\mathrm{Mod}(\mathcal{C}/\mathcal{I})}$ (see for example \cite[Lemma 2.1]{Parra}).
\end{enumerate}
\end{Remark}

We have the following properties and the proof is left to the reader.

\begin{lemma}\label{descripcionMC}
Let $\mathcal{I}$ be an ideal in $\mathcal{C}$.
\begin{enumerate}
\item [(a)]  Let $F\in \mathrm{Mod}(\mathcal{C}/\mathcal{I})$. Then $\mathrm{Tr}_{\frac{\mathcal{C}}{\mathcal{I}}}(F\circ \pi)=F\circ \pi$.

\item [(b)]  Let $F\in\mathrm{Mod}(\mathcal{C})$. Then $\mathrm{Tr}_{\frac{\mathcal{C}}{\mathcal{I}}}(F)=F$ if and only if $F\in \mathrm{Ann}(\mathcal{I})$.

\item [(c)]  Let $M_{C}:=\frac{\mathrm{Hom}_{\mathcal{C}}(C,-)}{\mathcal{I}(C,-)}\in \mathrm{Mod}(\mathcal{C})$, then 
$\Omega(M_{C})=\overline{M_{C}}=\mathrm{Hom}_{\mathcal{C}/\mathcal{I}}(C,-)$ and  $M_{C}=\mathrm{Hom}_{\mathcal{C}/\mathcal{I}}(C,-)\circ \pi=\pi_{\ast}(\mathrm{Hom}_{\mathcal{C}/\mathcal{I}}(C,-))$.
\end{enumerate}
\end{lemma}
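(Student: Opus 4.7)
The three parts are closely tied, so I plan to prove (b) first and then derive (a) and the $\Omega$-part of (c) as corollaries.

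For (b), the forward direction is immediate from Remark \ref{isomoan}(a), which already guarantees $\mathrm{Tr}_{\frac{\mathcal{C}}{\mathcal{I}}}(F)\in\mathrm{Ann}(\mathcal{I})$; thus the equality $F=\mathrm{Tr}_{\frac{\mathcal{C}}{\mathcal{I}}}(F)$ forces $F\in\mathrm{Ann}(\mathcal{I})$. The substantive direction is $(\Leftarrow)$: assuming $F\in\mathrm{Ann}(\mathcal{I})$, I would show that the canonical map $\Theta_{F}\colon M^{(\Lambda_{F})}\to F$ from Definition \ref{definitraza} is an epimorphism. Fixing $C\in\mathcal{C}$ and $x\in F(C)$, Yoneda's lemma yields a unique morphism $\eta_{C,x}\colon (C,-)\to F$ with $(\eta_{C,x})_{C}(1_{C})=x$. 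Since $F$ annihilates $\mathcal{I}$, for every $g\in\mathcal{I}(C,D)$ one has $(\eta_{C,x})_{D}(g)=F(g)(x)=0$, so $\eta_{C,x}$ kills the subfunctor $\mathcal{I}(C,-)$ and factors through a morphism $\widetilde{\eta}_{C,x}\colon M_{C}\to F$, which can be viewed as an element of $\Lambda_{F}=\mathrm{Hom}(\bigoplus_{C'}M_{C'},F)$ via the natural inclusion of the summand $M_{C}\subseteq M$. Then $x$ lies in the image of $\Theta_{F}\circ u_{\widetilde{\eta}_{C,x}}$, so every value of $F$ is attained in $\mathrm{Im}(\Theta_{F})$ and hence $\mathrm{Tr}_{\frac{\mathcal{C}}{\mathcal{I}}}(F)=F$.

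Part (a) will then follow from the observation that $\pi$ kills every morphism in $\mathcal{I}$: for $f\in\mathcal{I}(A,B)$ we have $(F\circ\pi)(f)=F(0)=0$, so $F\circ\pi\in\mathrm{Ann}(\mathcal{I})$ and part (b) applies, giving $\mathrm{Tr}_{\frac{\mathcal{C}}{\mathcal{I}}}(F\circ\pi)=F\circ\pi$.

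For (c), I would compute pointwise: for each $D\in\mathcal{C}$,
\[\pi_{\ast}\bigl(\mathrm{Hom}_{\mathcal{C}/\mathcal{I}}(C,-)\bigr)(D)=\mathrm{Hom}_{\mathcal{C}/\mathcal{I}}(C,D)=\frac{\mathrm{Hom}_{\mathcal{C}}(C,D)}{\mathcal{I}(C,D)}=M_{C}(D),\]
and the induced $\mathcal{C}$-action matches by the very definition of $\pi$. This gives $M_{C}=\pi_{\ast}(\mathrm{Hom}_{\mathcal{C}/\mathcal{I}}(C,-))$; applying $\Omega$ and invoking $\Omega\circ\pi_{\ast}=1_{\mathrm{Mod}(\mathcal{C}/\mathcal{I})}$ from Remark \ref{isomoan}(b) yields $\Omega(M_{C})=\mathrm{Hom}_{\mathcal{C}/\mathcal{I}}(C,-)$. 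There is no substantial obstacle here; the only point requiring a little care is the Yoneda factorization step in (b), where one must cleanly identify the resulting $\widetilde{\eta}_{C,x}\in\mathrm{Hom}(M_{C},F)$ with an element of $\Lambda_{F}$ in order to witness $x$ as lying in the image of $\Theta_{F}$.
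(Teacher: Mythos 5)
The paper offers no proof of this lemma -- it is marked ``Straightforward'' and left to the reader -- so the only thing to check is whether your argument is sound, and it is. Your structure is the natural one: reduce (a) to (b), and in (b) the only nontrivial direction is showing that when $F\in\mathrm{Ann}(\mathcal{I})$ every $x\in F(C)$ is hit by the trace. The Yoneda step is handled correctly: the morphism $\eta_{C,x}\colon (C,-)\to F$ corresponding to $x$ satisfies $(\eta_{C,x})_D(g)=F(g)(x)=0$ for $g\in\mathcal{I}(C,D)$, so it factors through $M_{C}=\frac{(C,-)}{\mathcal{I}(C,-)}$, and the resulting $\widetilde{\eta}_{C,x}\colon M_{C}\to F$ sends $\overline{1_{C}}$ to $x$, witnessing $x\in\mathrm{Tr}_{\frac{\mathcal{C}}{\mathcal{I}}}(F)(C)$. (One can phrase this directly via the pointwise description $\mathrm{Tr}_{\mathcal{F}}(F)(X)=\sum_{f\colon M_{C'}\to F}\mathrm{Im}(f_{X})$ from Remark \ref{descritraxza}, which avoids the slightly awkward identification of $\widetilde{\eta}_{C,x}$ with an element of $\Lambda_{F}$ by extending by zero on the other summands -- but your version is fine.) Part (c) is the definitional unwinding you describe. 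No gaps.
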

\begin{proof}
Straightforward.
\end{proof}

Now, let $\overline{\mathrm{Tr}}_{\frac{\mathcal{C}}{\mathcal{I}}}:=\Omega \circ \mathrm{Tr}_{\frac{\mathcal{C}}{\mathcal{I}}}:
\mathrm{Mod}(\mathcal{C})\longrightarrow \mathrm{Mod}(\mathcal{C}/\mathcal{I})$. Let us see that  $\pi_{\ast}$ is left adjoint to $\overline{\mathrm{Tr}}_{\frac{\mathcal{C}}{\mathcal{I}}}$. 

\begin{proposition}\label{firstadjoin}
The functor $\pi_{\ast}:\mathrm{Mod}(\mathcal{C}/\mathcal{I})\longrightarrow \mathrm{Mod}(\mathcal{C})$ is left adjoint to  $\overline{\mathrm{Tr}}_{\frac{\mathcal{C}}{\mathcal{I}}}:=\Omega \circ \mathrm{Tr}_{\frac{\mathcal{C}}{\mathcal{I}}}:
\mathrm{Mod}(\mathcal{C})\longrightarrow \mathrm{Mod}(\mathcal{C}/\mathcal{I})$. That is, there exists a natural isomorphism
$$\theta_{F,G}:\mathrm{Hom}_{\mathrm{Mod}(\mathcal{C})}(\pi_{\ast}(F),G)\longrightarrow \mathrm{Hom}_{\mathrm{Mod}(\mathcal{C}/\mathcal{I})}(F,\overline{\mathrm{Tr}}_{\frac{\mathcal{C}}{\mathcal{I}}}(G))$$
for $F\in \mathrm{Mod}(\mathcal{C}/\mathcal{I})$ and $G\in \mathrm{Mod}(\mathcal{C})$.
\end{proposition}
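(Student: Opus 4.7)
The plan is to exhibit an explicit bijection built out of two ingredients: the equivalence $\Omega:\mathrm{Ann}(\mathcal{I})\xrightarrow{\sim}\mathrm{Mod}(\mathcal{C}/\mathcal{I})$ recalled in Remark \ref{isomoan}(b), and the fact that every morphism from $\pi_{\ast}(F)$ into $G$ automatically lands in $\mathrm{Tr}_{\frac{\mathcal{C}}{\mathcal{I}}}(G)$. Concretely, I would first show that $\pi_{\ast}(F)$ is generated, in $\mathrm{Mod}(\mathcal{C})$, by the family $\mathcal{M}=\{M_{C}\}_{C\in\mathcal{C}}$. To do this, pick a projective presentation $\bigoplus_{j}\mathrm{Hom}_{\mathcal{C}/\mathcal{I}}(C_{j},-)\twoheadrightarrow F$ in $\mathrm{Mod}(\mathcal{C}/\mathcal{I})$, and apply $\pi_{\ast}$; since $\pi_{\ast}$ is computed objectwise it is exact and commutes with coproducts, so by Lemma \ref{descripcionMC}(c) we obtain an epimorphism $p:\bigoplus_{j}M_{C_{j}}\twoheadrightarrow\pi_{\ast}(F)$.

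Next, for any $\alpha:\pi_{\ast}(F)\to G$ in $\mathrm{Mod}(\mathcal{C})$, the composite $\alpha\circ p$ is a morphism from a coproduct of copies of elements of $\mathcal{M}$ into $G$. By the universal property defining $\Theta_{G}:M^{(\Lambda_{G})}\to G$ (Definition \ref{definitraza}), this composite factors through $M^{(\Lambda_{G})}\to \mathrm{Tr}_{\frac{\mathcal{C}}{\mathcal{I}}}(G)$, hence $\mathrm{Im}(\alpha\circ p)\subseteq \mathrm{Tr}_{\frac{\mathcal{C}}{\mathcal{I}}}(G)$; since $p$ is an epimorphism, so is $\mathrm{Im}(\alpha)$. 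Therefore $\alpha$ admits a unique factorisation $\alpha=\Psi_{G}\circ\widetilde{\alpha}$ with $\widetilde{\alpha}:\pi_{\ast}(F)\to\mathrm{Tr}_{\frac{\mathcal{C}}{\mathcal{I}}}(G)$. Both source and target of $\widetilde{\alpha}$ lie in $\mathrm{Ann}(\mathcal{I})$ (for the source, directly; for the target, by Remark \ref{isomoan}(a)), so I can apply $\Omega$ and set
\[
\theta_{F,G}(\alpha):=\Omega(\widetilde{\alpha}):F=\Omega(\pi_{\ast}(F))\longrightarrow \Omega(\mathrm{Tr}_{\frac{\mathcal{C}}{\mathcal{I}}}(G))=\overline{\mathrm{Tr}}_{\frac{\mathcal{C}}{\mathcal{I}}}(G),
\]
using $\Omega\pi_{\ast}=1_{\mathrm{Mod}(\mathcal{C}/\mathcal{I})}$ from Remark \ref{isomoan}(b).

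For the inverse, given $\gamma:F\to\overline{\mathrm{Tr}}_{\frac{\mathcal{C}}{\mathcal{I}}}(G)$, apply $\pi_{\ast}$ and use $\pi_{\ast}\Omega=1_{\mathrm{Ann}(\mathcal{I})}$ to obtain $\pi_{\ast}(\gamma):\pi_{\ast}(F)\to\mathrm{Tr}_{\frac{\mathcal{C}}{\mathcal{I}}}(G)$; then compose with the inclusion $\Psi_{G}:\mathrm{Tr}_{\frac{\mathcal{C}}{\mathcal{I}}}(G)\hookrightarrow G$ to land in $\mathrm{Hom}_{\mathrm{Mod}(\mathcal{C})}(\pi_{\ast}(F),G)$. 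That these two constructions are mutually inverse reduces, via uniqueness of the factorisation through the monomorphism $\Psi_{G}$ and the identities $\pi_{\ast}\Omega=1$, $\Omega\pi_{\ast}=1$, to straightforward chasing.

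Finally I would verify naturality: given $F'\to F$ in $\mathrm{Mod}(\mathcal{C}/\mathcal{I})$ and $G\to G'$ in $\mathrm{Mod}(\mathcal{C})$, both sides transform by pre/post-composition, and the uniqueness of the corestriction to $\mathrm{Tr}_{\frac{\mathcal{C}}{\mathcal{I}}}(-)$ (since $\Psi$ is mono) together with functoriality of $\mathrm{Tr}_{\frac{\mathcal{C}}{\mathcal{I}}}$ (Proposition \ref{tracefuntor}) and of $\Omega$ makes the required squares commute. The only step that is not purely formal is the \emph{factorisation through the trace}, i.e.\ the $\mathcal{M}$-generation of $\pi_{\ast}(F)$; everything else is bookkeeping with the equivalence $\Omega$. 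I expect this to be the main, though still mild, obstacle.
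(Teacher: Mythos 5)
Your proof is correct, and it reaches the same conclusion by a slightly different route: where the paper constructs the unit $\eta_F=1_F$ (using $\overline{\mathrm{Tr}}_{\frac{\mathcal{C}}{\mathcal{I}}}\circ\pi_{\ast}=1$) and the counit $\epsilon_G=\Psi_G$ (using $\pi_{\ast}\circ\overline{\mathrm{Tr}}_{\frac{\mathcal{C}}{\mathcal{I}}}=\mathrm{Tr}_{\frac{\mathcal{C}}{\mathcal{I}}}$) and then leaves the triangular identities to the reader, you build the hom-set bijection directly. That is a legitimate and essentially equivalent way to produce the adjunction, and your inverse map $\gamma\mapsto\Psi_G\circ\pi_{\ast}(\gamma)$ is exactly the one implicit in the paper's unit-counit formalism. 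One remark on economy: the step you single out as the ``main obstacle''---factorisation of $\alpha:\pi_{\ast}(F)\to G$ through $\mathrm{Tr}_{\frac{\mathcal{C}}{\mathcal{I}}}(G)$ via a projective presentation of $F$---can be obtained more directly. By Lemma \ref{descripcionMC}(a) one has $\mathrm{Tr}_{\frac{\mathcal{C}}{\mathcal{I}}}(\pi_{\ast}(F))=\pi_{\ast}(F)$, so $\Psi_{\pi_{\ast}(F)}=1_{\pi_{\ast}(F)}$; functoriality of $\mathrm{Tr}_{\frac{\mathcal{C}}{\mathcal{I}}}$ (Proposition \ref{tracefuntor}) then gives a map $\mathrm{Tr}_{\frac{\mathcal{C}}{\mathcal{I}}}(\alpha):\pi_{\ast}(F)\to\mathrm{Tr}_{\frac{\mathcal{C}}{\mathcal{I}}}(G)$ with $\Psi_G\circ\mathrm{Tr}_{\frac{\mathcal{C}}{\mathcal{I}}}(\alpha)=\alpha\circ\Psi_{\pi_{\ast}(F)}=\alpha$, which is the required corestriction $\widetilde\alpha$ with no recourse to $\mathcal{M}$-generation. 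In effect your generation argument re-derives part of Lemma \ref{descripcionMC}, which you are already citing; leaning on that lemma and on trace-functoriality would make the ``non-formal'' step disappear and bring your argument in line with the triangular-identities check the paper alludes to.
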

\begin{proof}
First, we construct the unit $\eta:1_{\mathrm{Mod}(\mathcal{C}/\mathcal{I})} \longrightarrow \overline{\mathrm{Tr}}_{\frac{\mathcal{C}}{\mathcal{I}}}\circ \pi_{\ast}$ of the adjunction.
Indeed, we have that $(\overline{\mathrm{Tr}}_{\frac{\mathcal{C}}{\mathcal{I}}}\circ \pi_{\ast})(F)=(\Omega\circ \mathrm{Tr}_{\frac{\mathcal{C}}{\mathcal{I}}})(F\circ \pi)=\Omega(F\circ \pi)=(\Omega\circ \pi_{\ast})(F)=F$ (see \ref{descripcionMC}(a) and \ref{isomoan}(b)). Then, for each $F\in \mathrm{Mod}(\mathcal{C}/\mathcal{I})$ we define
$\eta_{F}:=1_{F}:F \longrightarrow (\overline{\mathrm{Tr}}_{\frac{\mathcal{C}}{\mathcal{I}}}\circ \pi_{\ast})(F).$\\
Now, we define the counit $\epsilon:\pi_{\ast} \circ  \overline{\mathrm{Tr}}_{\frac{\mathcal{C}}{\mathcal{I}}}\longrightarrow 1_{\mathrm{Mod}(\mathcal{C})}$  of the adjunction. We note that for $G\in \mathrm{Mod}(\mathcal{C})$ we have $(\pi_{\ast} \circ  \overline{\mathrm{Tr}}_{\frac{\mathcal{C}}{\mathcal{I}}})(G)=(\pi_{\ast}\circ \Omega \circ \mathrm{Tr}_{\frac{\mathcal{C}}{\mathcal{I}}})(G)=
(\pi_{\ast}\circ \Omega)(\mathrm{Tr}_{\frac{\mathcal{C}}{\mathcal{I}}}(G))=\mathrm{Tr}_{\frac{\mathcal{C}}{\mathcal{I}}}(G)$ (see \ref{isomoan}(b)). Then, for $G\in\mathrm{Mod}(\mathcal{C})$ we define
$\epsilon_{G}:=\Psi_{G}$ where $\Psi_{G}:\mathrm{Tr}_{\frac{\mathcal{C}}{\mathcal{I}}}(G)\longrightarrow G$ is the canonical inclusion given in \ref{definitraza}. Now, it is straightforward to check the triangular identities.

\end{proof}

For the following results of this section we are going to use the functor $ \otimes_{\mathcal{C}}:\mathrm{Mod}(\mathcal{C}^{op})\times \mathrm{Mod}(\mathcal{C})\longrightarrow \mathbf{Ab}$ which we introduced in section \ref{subsec:2}. Let $\mathcal{I}$ be an ideal in a preadditive category $\mathcal{C}$. We recall the following functor (for more details see \cite{LeOS2}).

\begin{definition}\label{HomCI}
We define the functor $ \frac{\mathcal{C}}{\mathcal{I}}\otimes_{\mathcal{C}}:\mathrm{Mod}(\mathcal{C})\longrightarrow\mathrm{Mod}(\mathcal{C}/\mathcal{I}) $ as follows: for $M\in \mathrm{Mod}(\mathcal{C})$ we set 
$\left( \frac{\mathcal{C}}{\mathcal{I}}\otimes _{\mathcal{C}}M\right)(C):= \frac{\mathcal{C}(-,C)}{\mathcal{I}(-,C)}\otimes_{\mathcal{C}}M$  for all $C\in \mathcal{C}/\mathcal{I}$
and  $\left( \frac{\mathcal{C}}{\mathcal{I}}\otimes _{\mathcal{C}}M\right)(\overline{f})=\frac{\mathcal{C}}{\mathcal{I}}(-,f)\otimes_{\mathcal{C}}M$ for all $\overline{f}=f+\mathcal{I}(C,C')\in \mathrm{Hom}_{\mathcal{C}/\mathcal{I}}(C,C')$.
\end{definition}
We also recall the following functor which will be fundamental in this work.

\begin{definition}\label{TenCI}
We define the functor $\mathcal{C}(\frac{\mathcal{C}}{\mathcal{I}},-):\mathrm{Mod}\left(\mathcal{C}\right) \longrightarrow \mathrm{Mod}\left(\mathcal{C}/\mathcal{I}\right) $ as follows: for $ M\in \mathrm{Mod}(\mathcal{C})$ we set 
$\mathcal{C}(\frac{\mathcal{C}}{\mathcal{I}},M)(C)=\mathcal{C}\left(\frac{\mathcal{C}(C,-)}{\mathcal{I}(C,-)},M\right)$ for all $C\in \C/\mathcal{I}$ and 
$\mathcal{C}(\frac{\mathcal{C}}{\mathcal{I}},M)(\overline{f})=\mathcal{C}\left( \frac{\mathcal{C}}{\mathcal{I}}(f,-),M\right) 
$ for all $\overline{f}=f+\mathcal{I}(C,C')\in \mathrm{Hom}_{\mathcal{C}/\mathcal{I}}(C,C')$.
\end{definition}
It is well known that $\mathcal{C}(\frac{\mathcal{C}}{\mathcal{I}},-):\mathrm{Mod}\left(\mathcal{C}\right) \longrightarrow \mathrm{Mod}\left(\mathcal{C}/\mathcal{I}\right)$ is right adjoint to $\pi_{\ast}$ and $\frac{\mathcal{C}}{\mathcal{I}}\otimes_{\mathcal{C}}$ is left adjoint to $\pi_{\ast}$ (see for example, \cite[Proposition 3.9]{LeOS2}), and hence by \ref{firstadjoin}, we have that $\mathcal{C}(\frac{\mathcal{C}}{\mathcal{I}},-)\simeq \overline{\mathrm{Tr}}_{\frac{\mathcal{C}}{\mathcal{I}}}$ since adjoint functors are unique up to isomorphisms. Thus, we have the following result.

\begin{proposition}\label{tresfuntores}
Let $\mathcal{I}$ be an ideal in $\mathcal{C}$  and $\pi:C\longrightarrow \mathcal{C}/\mathcal{I}$ the canonical functor. Then we have the following diagram
$$\xymatrix{\mathrm{Mod}(\mathcal{C}/\mathcal{I})\ar[rr]|{\pi_{\ast}}  &  &\mathrm{Mod}(\mathcal{C})\ar@<-2ex>[ll]_{\pi^{\ast}}\ar@<2ex>[ll]^{\pi^{!}} }$$
where $(\pi^{\ast},\pi_{\ast})$ and $(\pi_{\ast},\pi^{!})$ are adjoint pairs, $\pi^{!}:=\mathcal{C}(\frac{\mathcal{C}}{\mathcal{I}},-)\simeq \overline{\mathrm{Tr}}_{\frac{\mathcal{C}}{\mathcal{I}}}$ and $\pi^{\ast}:=\frac{\mathcal{C}}{\mathcal{I}}\otimes_{\mathcal{C}}$.
\end{proposition}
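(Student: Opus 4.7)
The plan is to assemble this statement directly from three ingredients already established or cited in the excerpt; the proposition is essentially organizational. I would open by noting that Proposition \ref{firstadjoin} already gives the adjunction $(\pi_\ast, \overline{\mathrm{Tr}}_{\frac{\mathcal{C}}{\mathcal{I}}})$, so one half of what we need is in hand once we relabel $\pi^! := \overline{\mathrm{Tr}}_{\frac{\mathcal{C}}{\mathcal{I}}}$.

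Next I would invoke the cited result \cite[Proposition 3.9]{LeOS2}, which provides both halves of the classical three-functor situation attached to a quotient: the left adjoint $\frac{\mathcal{C}}{\mathcal{I}}\otimes_{\mathcal{C}}-$ to $\pi_\ast$, and the right adjoint $\mathcal{C}(\frac{\mathcal{C}}{\mathcal{I}},-)$ to $\pi_\ast$. Setting $\pi^\ast := \frac{\mathcal{C}}{\mathcal{I}}\otimes_{\mathcal{C}}-$ immediately supplies the adjoint pair $(\pi^\ast,\pi_\ast)$ asked for.

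For the pair $(\pi_\ast,\pi^!)$, I would argue by uniqueness of adjoints: both $\overline{\mathrm{Tr}}_{\frac{\mathcal{C}}{\mathcal{I}}}$ (by \ref{firstadjoin}) and $\mathcal{C}(\frac{\mathcal{C}}{\mathcal{I}},-)$ (by \cite[Proposition 3.9]{LeOS2}) are right adjoint to $\pi_\ast : \mathrm{Mod}(\mathcal{C}/\mathcal{I}) \longrightarrow \mathrm{Mod}(\mathcal{C})$. Since right adjoints, when they exist, are unique up to a canonical natural isomorphism, we obtain the identification $\mathcal{C}(\frac{\mathcal{C}}{\mathcal{I}},-) \simeq \overline{\mathrm{Tr}}_{\frac{\mathcal{C}}{\mathcal{I}}}$. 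Defining $\pi^!$ by either side then closes the diagram as stated.

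I do not expect any real obstacle, since the work has been done in Proposition \ref{firstadjoin} and in the cited LeOS2 reference; the only point demanding a sentence of care is the appeal to uniqueness of adjoints, where one should note that the natural isomorphism $\pi^! \simeq \overline{\mathrm{Tr}}_{\frac{\mathcal{C}}{\mathcal{I}}}$ is not just an equivalence of abstract functors but is canonical (induced by conjugating the units and counits), so it is legitimate to identify them henceforth and to freely use either description in later sections.
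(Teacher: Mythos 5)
Your proposal is correct and follows essentially the same route the paper takes: cite \cite[Proposition 3.9]{LeOS2} for the adjunctions $(\frac{\mathcal{C}}{\mathcal{I}}\otimes_{\mathcal{C}}-, \pi_\ast)$ and $(\pi_\ast, \mathcal{C}(\frac{\mathcal{C}}{\mathcal{I}},-))$, combine this with Proposition \ref{firstadjoin}, and conclude $\mathcal{C}(\frac{\mathcal{C}}{\mathcal{I}},-)\simeq \overline{\mathrm{Tr}}_{\frac{\mathcal{C}}{\mathcal{I}}}$ by uniqueness of adjoints.
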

We recall that for every small preadditive category $\mathcal{C}$ it is well known that $\mathrm{Mod}(\mathcal{C})$ is an abelian category with enough projectives and enough injectives. So, we can define derived functors in $\mathrm{Mod}(\mathcal{C})$.\\
Let $M\in \mathrm{Mod}(\mathcal{C})$, we denote by
$\mathrm{Ext}^{i}_{\mathrm{Mod}(\mathcal{C})}(M,-):\mathrm{Mod}(\mathcal{C})\longrightarrow \mathbf{Ab}$ the $i$-th  derived functor of $\mathrm{Hom}_{\mathrm{Mod}(\mathcal{C})}(M,-):\mathrm{Mod}(\mathcal{C})\longrightarrow \mathbf{Ab}$. Similarly we have $\mathrm{Ext}^{i}_{\mathrm{Mod}(\mathcal{C})}(-,M):\mathrm{Mod}(\mathcal{C})^{op}\longrightarrow \mathbf{Ab}$.\\
Now, we can construct canonical morphisms.

\begin{proposition}\label{morenexte}
Let $G\in \mathrm{Mod}(\mathcal{C})$  and $F\in \mathrm{Mod}(\mathcal{C}/\mathcal{I})$. Then, there exists a morphisms of abelian groups
$\varphi^{i}_{F,G}:\mathrm{Ext}^{i}_{\mathrm{Mod}(\mathcal{C}/\mathcal{I})}(F,\overline{\mathrm{Tr}}_{\frac{\mathcal{C}}{\mathcal{I}}}(G))\longrightarrow \mathrm{Ext}^{i}_{\mathrm{Mod}(\mathcal{C})}(\pi_{\ast}(F),G)$ for each $i\geq 0$.
\end{proposition}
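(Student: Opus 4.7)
The plan is to construct $\varphi^{i}_{F,G}$ by comparing two injective coresolutions that both ``see'' the module $\pi^{!}G=\overline{\mathrm{Tr}}_{\frac{\mathcal{C}}{\mathcal{I}}}(G)$. The technical pivot is the observation that $\pi_{\ast}:\mathrm{Mod}(\mathcal{C}/\mathcal{I})\to \mathrm{Mod}(\mathcal{C})$ is exact: since it is just precomposition with the quotient functor $\pi$ (which is the identity on objects), exactness can be checked pointwise at each $A\in\mathcal{C}$ and is preserved. Consequently its right adjoint $\pi^{!}$ from Proposition \ref{tresfuntores} sends injectives to injectives, which is immediate from the adjunction isomorphism $\Hom_{\mathrm{Mod}(\mathcal{C}/\mathcal{I})}(-,\pi^{!}E)\simeq \Hom_{\mathrm{Mod}(\mathcal{C})}(\pi_{\ast}-,E)$ combined with the exactness of $\pi_{\ast}$.

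Given this, I would fix an injective coresolution $0\to G\to E^{\bullet}$ in $\mathrm{Mod}(\mathcal{C})$ and an injective coresolution $0\to \pi^{!}G\to J^{\bullet}$ in $\mathrm{Mod}(\mathcal{C}/\mathcal{I})$. Applying $\pi^{!}$ to the first yields a complex
$$0\to \pi^{!}G\to \pi^{!}E^{0}\to \pi^{!}E^{1}\to \cdots$$
of injectives in $\mathrm{Mod}(\mathcal{C}/\mathcal{I})$; left exactness of $\pi^{!}$ ensures exactness at $\pi^{!}G$ and $\pi^{!}E^{0}$, although the higher cohomology need not vanish in general. Using the injectivity of each $\pi^{!}E^{j}$ and the exactness of $J^{\bullet}$, the standard inductive lifting argument produces a chain map $\psi^{\bullet}:J^{\bullet}\to \pi^{!}E^{\bullet}$ extending $\mathrm{id}_{\pi^{!}G}$, and any two such lifts are chain-homotopic.

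Applying $\Hom_{\mathrm{Mod}(\mathcal{C}/\mathcal{I})}(F,-)$ to $\psi^{\bullet}$ and passing to cohomology, the left hand side computes $\Ext^{i}_{\mathrm{Mod}(\mathcal{C}/\mathcal{I})}(F,\pi^{!}G)$ by definition. On the right hand side, the adjunction of Proposition \ref{firstadjoin} yields a natural isomorphism of complexes
$$\Hom_{\mathrm{Mod}(\mathcal{C}/\mathcal{I})}(F,\pi^{!}E^{\bullet})\;\simeq\; \Hom_{\mathrm{Mod}(\mathcal{C})}(\pi_{\ast}F,E^{\bullet}),$$
whose cohomology is $\Ext^{i}_{\mathrm{Mod}(\mathcal{C})}(\pi_{\ast}F,G)$ since $E^{\bullet}$ is an injective coresolution of $G$. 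Composing these two steps defines $\varphi^{i}_{F,G}$, and the homotopy uniqueness of $\psi^{\bullet}$ together with the usual change-of-resolution arguments makes it independent of the chosen coresolutions and natural in both variables. The whole construction can alternatively be packaged in the Grothendieck spectral sequence $\Ext^{p}_{\mathrm{Mod}(\mathcal{C}/\mathcal{I})}(F,R^{q}\pi^{!}G)\Rightarrow \Ext^{p+q}_{\mathrm{Mod}(\mathcal{C})}(\pi_{\ast}F,G)$ associated to the composition $\Hom_{\mathrm{Mod}(\mathcal{C}/\mathcal{I})}(F,-)\circ \pi^{!}$, with $\varphi^{i}_{F,G}$ appearing as the edge morphism at $q=0$.

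The main obstacle is modest: once one establishes that $\pi^{!}$ preserves injectives (which rests on the exactness of $\pi_{\ast}$), the remainder is a textbook manipulation of injective coresolutions. The only point that still requires genuine bookkeeping is verifying naturality in $F$ and $G$, which reduces to checking that the lifts $\psi^{\bullet}$ can be chosen compatibly with morphisms of resolutions, a routine consequence of the chain-homotopy uniqueness of such lifts.
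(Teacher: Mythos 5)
Your proof is correct, but it takes a different route to the morphism than the paper does. The paper's construction is a two-step composite: it first uses the fact that the exact functor $\pi_{\ast}$ acts on $\mathrm{Ext}$ groups, producing
\[
\mathrm{Ext}^{i}_{\mathrm{Mod}(\mathcal{C}/\mathcal{I})}\bigl(F,\overline{\mathrm{Tr}}_{\frac{\mathcal{C}}{\mathcal{I}}}(G)\bigr)\longrightarrow \mathrm{Ext}^{i}_{\mathrm{Mod}(\mathcal{C})}\bigl(\pi_{\ast}(F),\pi_{\ast}\overline{\mathrm{Tr}}_{\frac{\mathcal{C}}{\mathcal{I}}}(G)\bigr),
\]
and then composes with the map induced in the second variable by the counit $\epsilon_{G}:\pi_{\ast}\overline{\mathrm{Tr}}_{\frac{\mathcal{C}}{\mathcal{I}}}(G)\to G$ of the $(\pi_{\ast},\pi^{!})$ adjunction. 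You instead work entirely at the resolution level: exactness of $\pi_{\ast}$ is used to show $\pi^{!}$ preserves injectives, the identity on $\pi^{!}G$ is lifted to a chain map $J^{\bullet}\to\pi^{!}E^{\bullet}$, and the chain-level adjunction isomorphism $\mathrm{Hom}(F,\pi^{!}E^{\bullet})\simeq\mathrm{Hom}(\pi_{\ast}F,E^{\bullet})$ is applied before taking cohomology. Both constructions hinge on the same pivotal fact (exactness of $\pi_{\ast}$) and produce the same morphism — the edge map of the Grothendieck spectral sequence for $\mathrm{Hom}(F,-)\circ\pi^{!}$, as you point out. Your resolution-level approach makes well-definedness and naturality a matter of routine homotopy bookkeeping and does not need the reader to already know that exact functors induce maps on $\mathrm{Ext}$; the paper's two-arrow decomposition is shorter and makes the counit visible, which is convenient because the counit (the inclusion $\Psi_{G}:\mathrm{Tr}_{\frac{\mathcal{C}}{\mathcal{I}}}(G)\hookrightarrow G$ from \ref{firstadjoin}) reappears in the downstream arguments.
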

\begin{proof}
Since $\pi_{\ast}$ is exact, it induces a morphism
$$\mathrm{Ext}^{i}_{\mathrm{Mod}(\mathcal{C}/\mathcal{I})}(F,\overline{\mathrm{Tr}}_{\frac{\mathcal{C}}{\mathcal{I}}}(G))\longrightarrow  \mathrm{Ext}^{i}_{\mathrm{Mod}(\mathcal{C})}(\pi_{\ast}(F),\pi_{\ast}(\overline{\mathrm{Tr}}_{\frac{\mathcal{C}}{\mathcal{I}}}(G))).$$
Moreover, the counit $\epsilon:\pi_{\ast}(\overline{\mathrm{Tr}}_{\frac{\mathcal{C}}{\mathcal{I}}}(G)\longrightarrow G$ induces a morphism
$$ \mathrm{Ext}^{i}_{\mathrm{Mod}(\mathcal{C})}(\pi_{\ast}(F),\pi_{\ast}(\overline{\mathrm{Tr}}_{\frac{\mathcal{C}}{\mathcal{I}}}(G)))\longrightarrow  \mathrm{Ext}^{i}_{\mathrm{Mod}(\mathcal{C})}(\pi_{\ast}(F),G).$$
Hence, we have the required morphism.
\end{proof}
Now, we give the following definition which is the analogous to the multiplication of an ideal and a module in the classical sense.

\begin{definition}\label{prodIfunc}
Let $G\in\mathrm{Mod}(\mathcal{C})$ and $\mathcal{I}$ an ideal in $\mathcal{C}$. We define $\mathcal{I}G$ as the subfunctor of $G$ defined as follows: for $X\in \mathcal{C}$ we set
$\displaystyle\mathcal{I}G(X):=\sum_{f\in \bigcup_{C\in \mathcal{C}}\mathcal{I}(C,X)}\mathrm{Im} (G(f)).$
\end{definition}

Let $N\in \mathrm{Mod}(\mathcal{C}^{op})$ and consider the functor
$N\otimes-:\mathrm{Mod}(\mathcal{C})\longrightarrow \mathbf{Ab}$. We denote by $\mathrm{Tor}_{i}^{\mathcal{C}}(N,-):\mathrm{Mod}(\mathcal{C})\longrightarrow \mathbf{Ab}$ the $i$-th left derived functor of $N\otimes -$.

\begin{proposition}\label{morentor}
Let $F\in \mathrm{Mod}((\mathcal{C}/\mathcal{I})^{op})$ and $G\in \mathrm{Mod}(\mathcal{C})$, and consider a projective resolution $(P^{\bullet},\epsilon_{G})$ of $G$.
Then for each $i\geq 0$, there exists a canonical morphism of abelian groups $\psi_{F,G}^{i}:\mathrm{Tor}^{\mathcal{C}}_{i}(F\circ \pi,G)\longrightarrow \mathrm{Tor}^{\mathcal{C}/\mathcal{I}}_{i}(F, G/\mathcal{I}G).$
\end{proposition}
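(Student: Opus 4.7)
The plan is to build $\psi^{i}_{F,G}$ from two ingredients: a chain-level natural isomorphism $(F\circ\pi)\otimes_{\mathcal{C}}(-)\cong F\otimes_{\mathcal{C}/\mathcal{I}}\pi^{\ast}(-)$, and the standard comparison between a projective resolution of $G$ in $\Mod(\mathcal{C})$, pushed down via $\pi^{\ast}$, and a projective resolution of $G/\mathcal{I}G$ in $\Mod(\mathcal{C}/\mathcal{I})$. The preliminary identification I need is that $G/\mathcal{I}G\cong \pi^{\ast}(G)$ naturally in $G$. This follows by tensoring the exact sequence $\mathcal{I}(-,X)\to \mathcal{C}(-,X)\to \mathcal{C}/\mathcal{I}(-,X)\to 0$ with $G$ over $\mathcal{C}$: using Proposition \ref{AProposition0}(3) we have $\mathcal{C}(-,X)\otimes_{\mathcal{C}}G=G(X)$, and the image of $\mathcal{I}(-,X)\otimes_{\mathcal{C}}G\to G(X)$ is precisely $\mathcal{I}G(X)$ of Definition \ref{prodIfunc}.

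Next I would establish the natural isomorphism
\[
\alpha_{F,G}\colon (F\circ\pi)\otimes_{\mathcal{C}}G\;\xrightarrow{\ \sim\ }\;F\otimes_{\mathcal{C}/\mathcal{I}}(G/\mathcal{I}G)
\]
by associativity of tensor over change of categories, namely
\[
F\otimes_{\mathcal{C}/\mathcal{I}}(G/\mathcal{I}G)=F\otimes_{\mathcal{C}/\mathcal{I}}\bigl(\tfrac{\mathcal{C}}{\mathcal{I}}\otimes_{\mathcal{C}}G\bigr)\cong \bigl(F\otimes_{\mathcal{C}/\mathcal{I}}\tfrac{\mathcal{C}}{\mathcal{I}}\bigr)\otimes_{\mathcal{C}}G=(F\circ\pi)\otimes_{\mathcal{C}}G,
\]
or equivalently by checking that both sides are right exact in each variable, commute with coproducts, and agree canonically on representables: for $F=\mathcal{C}/\mathcal{I}(-,D)$ and $G=\mathcal{C}(C,-)$ both sides reduce, via Proposition \ref{AProposition0}(3) and Lemma \ref{descripcionMC}(c), to $\mathcal{C}/\mathcal{I}(C,D)$.

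With $\alpha$ in hand, the construction of $\psi^{i}$ is routine. Since $\pi_{\ast}$ is exact, its left adjoint $\pi^{\ast}$ preserves projectives, so $\pi^{\ast}(P^{\bullet})$ is a complex of projective objects in $\Mod(\mathcal{C}/\mathcal{I})$, and by right exactness of $\pi^{\ast}$ the augmentation $\pi^{\ast}(P^{0})\to \pi^{\ast}(G)=G/\mathcal{I}G$ is surjective. Choose any projective resolution $Q^{\bullet}\to G/\mathcal{I}G$ in $\Mod(\mathcal{C}/\mathcal{I})$. By the comparison theorem, the identity on $G/\mathcal{I}G$ lifts to a chain map $\varphi^{\bullet}\colon \pi^{\ast}(P^{\bullet})\to Q^{\bullet}$, unique up to chain homotopy. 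Define
\[
\psi^{i}_{F,G}\colon \Tor^{\mathcal{C}}_{i}(F\circ\pi,G)=H_{i}\bigl((F\circ\pi)\otimes_{\mathcal{C}}P^{\bullet}\bigr)\xrightarrow{H_{i}(\alpha)} H_{i}\bigl(F\otimes_{\mathcal{C}/\mathcal{I}}\pi^{\ast}(P^{\bullet})\bigr)\xrightarrow{H_{i}(1_{F}\otimes\varphi^{\bullet})}\Tor^{\mathcal{C}/\mathcal{I}}_{i}(F,G/\mathcal{I}G).
\]
Uniqueness of $\varphi^{\bullet}$ up to homotopy, together with the usual uniqueness of projective resolutions, makes $\psi^{i}_{F,G}$ independent of the choices, hence canonical.

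The main obstacle is pinning down $\alpha$ cleanly; once that natural isomorphism and the identification $\pi^{\ast}(G)\cong G/\mathcal{I}G$ are in place, the rest is standard homological algebra. The subtlety worth double-checking in $\alpha$ is that the $\mathcal{C}$-module structure on the quotients $\mathcal{C}(C,-)/\mathcal{I}(C,-)$ factors through $\pi$ compatibly on both sides, so that the identification on representables is genuinely natural in all variables and extends to arbitrary $F$ and $G$ by the presentation argument.
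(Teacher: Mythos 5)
Your proposal is correct, and it fills in the substantial detail that the paper omits: the paper's proof is just the pointer ``Similar to \ref{morenexte}.'' In \ref{morenexte} the morphism $\varphi^i_{F,G}$ is assembled in two high-level steps — exactness of $\pi_{\ast}$ gives a map on $\mathrm{Ext}$, then the counit $\epsilon:\pi_{\ast}(\overline{\mathrm{Tr}}_{\mathcal{C}/\mathcal{I}}(G))\to G$ is applied in the second slot. The Tor analogue the authors have in mind is the unit $\eta_G:G\to\pi_{\ast}(G/\mathcal{I}G)$ in the second variable, followed by a base-change map coming from the exactness of $\pi_{\ast}$ together with the degree-zero isomorphism $(F\circ\pi)\otimes_{\mathcal{C}}\pi_{\ast}(-)\cong F\otimes_{\mathcal{C}/\mathcal{I}}(-)$. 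Your construction is the same morphism, organized at the chain level via the adjoint $\pi^{\ast}$ instead: a chain map $\pi^{\ast}(P^{\bullet})\to Q^{\bullet}$ lifting $1_{G/\mathcal{I}G}$ corresponds, under the $(\pi^{\ast},\pi_{\ast})$-adjunction, to a chain map $P^{\bullet}\to\pi_{\ast}(Q^{\bullet})$ lifting $\eta_G$, and your isomorphism $\alpha$ is exactly the adjoint form of the base-change isomorphism. What your version buys is concision: since $\pi^{\ast}$ preserves projectives (left adjoint of the exact $\pi_{\ast}$), you avoid the intermediate projective resolution of $\pi_{\ast}(G/\mathcal{I}G)$ that the two-step factorization would require. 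You have also correctly pinned down the two non-obvious preliminaries — the natural identification $\pi^{\ast}(G)\cong G/\mathcal{I}G$ via Proposition \ref{AProposition0}(3), and the verification of $\alpha$ on representables — which are precisely the facts the ``Similar to \ref{morenexte}'' remark silently assumes. No gap.
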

\begin{proof}
Similar to \ref{morenexte}.
\end{proof}

Now, we give the following definition.

\begin{definition}
Consider the functors $\mathcal{C}(\frac{\mathcal{C}}{\mathcal{I}},-), \frac{\mathcal{C}}{\mathcal{I}}\otimes_{\mathcal{C}}-:\mathrm{Mod}(\mathcal{C})\longrightarrow \mathrm{Mod}(\mathcal{C}/\mathcal{I})$ given in the definitions \ref{HomCI} and \ref{TenCI}. We denote by $\mathbb{EXT}^{i}_{\mathcal{C}}(\mathcal{C}/\mathcal{I},-):\mathrm{Mod}(\mathcal{C})\longrightarrow \mathrm{Mod}(\mathcal{C}/\mathcal{I})$ the $i$-th right derived functor of $\mathcal{C}(\frac{\mathcal{C}}{\mathcal{I}},-)$ and $\mathbb{TOR}_{i}^{\mathcal{C}}(\mathcal{C}/\mathcal{I},-):\mathrm{Mod}(\mathcal{C})\longrightarrow \mathrm{Mod}(\mathcal{C}/\mathcal{I})$ the $i$-th left derived functor of $ \frac{\mathcal{C}}{\mathcal{I}}\otimes_{\mathcal{C}}$.
\end{definition}

\begin{Remark}\label{descEXT}
Consider the functor $\mathbb{EXT}^{i}_{\mathcal{C}}(\mathcal{C}/\mathcal{I},-):\mathrm{Mod}(\mathcal{C})\longrightarrow \mathrm{Mod}(\mathcal{C}/\mathcal{I})$.  For $C\in \mathcal{C}/\mathcal{I}$ we have that: $\mathbb{EXT}^{i}_{\mathcal{C}}(\mathcal{C}/\mathcal{I},M)(C)=\mathrm{Ext}^{i}_{\mathrm{Mod}(\mathcal{C})}\left(\frac{\mathrm{Hom}_{\mathcal{C}}(C,-)}{\mathcal{I}(C,-)},M\right).$
\end{Remark}

Now, we have the following proposition which will help us to characterize $k$-idempotent ideals in the forthcoming sections.

\begin{proposition}\label{Extiniciores}
Let $\mathcal{I}$ an ideal in $\mathcal{C}$, $\pi:C\longrightarrow \mathcal{C}/\mathcal{I}$ the canonical functor and consider the diagram given in \ref{tresfuntores}. Let $G\in \mathrm{Mod}(\mathcal{C})$, and $0\rightarrow G\rightarrow I_{0}\rightarrow I_{1}\rightarrow \dots\rightarrow$ an injective coresolution of $G$. For $1\leq k\leq \infty$, the following conditions are equivalent.
\begin{enumerate}
\item [(a)]  $0\rightarrow \overline{\mathrm{Tr}}_{\frac{\mathcal{C}}{\mathcal{I}}}(G)\rightarrow \overline{\mathrm{Tr}}_{\frac{\mathcal{C}}{\mathcal{I}}}(I_{0})\rightarrow \overline{\mathrm{Tr}}_{\frac{\mathcal{C}}{\mathcal{I}}}(I_{1})\rightarrow \dots\rightarrow \overline{\mathrm{Tr}}_{\frac{\mathcal{C}}{\mathcal{I}}}(I_{k})$ is the beginning of an injective coresolution of  $\overline{\mathrm{Tr}}_{\frac{\mathcal{C}}{\mathcal{I}}}(G)\in \mathrm{Mod}(\mathcal{C}/\mathcal{I})$.

\item [(b)] $\mathbb{EXT}^{i}_{\mathcal{C}}(\mathcal{C}/\mathcal{I},G)=0$ for all $1\leq i\leq k$.

\item [(c)] For $F\in \mathrm{Mod}(\mathcal{C}/\mathcal{I})$ the morphisms given in \ref{morenexte},
$$\varphi^{i}_{F,G}:\mathrm{Ext}^{i}_{\mathrm{Mod}(\mathcal{C}/\mathcal{I})}(F,\overline{\mathrm{Tr}}_{\frac{\mathcal{C}}{\mathcal{I}}}(G))\longrightarrow \mathrm{Ext}^{i}_{\mathrm{Mod}(\mathcal{C})}(\pi_{\ast}(F),G),$$
are isomorphisms for $1\leq i\leq k$.
\end{enumerate}
\end{proposition}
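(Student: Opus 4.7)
The plan is to use the adjunction $(\pi_{\ast}, \pi^{!})$ from Proposition~\ref{tresfuntores} together with the fact that $\mathbb{EXT}^{i}_{\mathcal{C}}(\mathcal{C}/\mathcal{I},-)$ is by definition the $i$-th right derived functor of $\pi^{!}\simeq \overline{\mathrm{Tr}}_{\frac{\mathcal{C}}{\mathcal{I}}}$. The key preliminary observation is that $\pi_{\ast}$ is exact: exactness in $\mathrm{Mod}(\mathcal{C}/\mathcal{I})$ is pointwise, and composing with $\pi$ does not move objects. Hence its right adjoint $\pi^{!}$ is left exact and preserves injectives. Consequently, applying $\pi^{!}$ to $0\to G\to I_{0}\to I_{1}\to\cdots$ produces a cocomplex of injective $\mathcal{C}/\mathcal{I}$-modules whose cohomology in degree $i\geq 1$ computes $\mathbb{EXT}^{i}_{\mathcal{C}}(\mathcal{C}/\mathcal{I},G)$. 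Exactness at $\overline{\mathrm{Tr}}_{\frac{\mathcal{C}}{\mathcal{I}}}(G)$ and at $\overline{\mathrm{Tr}}_{\frac{\mathcal{C}}{\mathcal{I}}}(I_{0})$ is automatic from left exactness, while exactness at $\overline{\mathrm{Tr}}_{\frac{\mathcal{C}}{\mathcal{I}}}(I_{i})$ for $1\leq i\leq k$ is precisely the vanishing of $\mathbb{EXT}^{i}_{\mathcal{C}}(\mathcal{C}/\mathcal{I},G)$. This will yield $(a)\Leftrightarrow(b)$.

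For $(b)\Rightarrow (c)$, I would use that $(a)$ now holds, so $\pi^{!}(I_{\bullet})$ is a partial injective coresolution of $\overline{\mathrm{Tr}}_{\frac{\mathcal{C}}{\mathcal{I}}}(G)$ through degree $k$. Then for $0\leq i\leq k$ the group $\mathrm{Ext}^{i}_{\mathrm{Mod}(\mathcal{C}/\mathcal{I})}(F,\pi^{!}(G))$ is the $i$-th cohomology of $\mathrm{Hom}_{\mathrm{Mod}(\mathcal{C}/\mathcal{I})}(F,\pi^{!}(I_{\bullet}))$. The natural adjunction isomorphism
$$\mathrm{Hom}_{\mathrm{Mod}(\mathcal{C}/\mathcal{I})}(F,\pi^{!}(I_{j}))\simeq \mathrm{Hom}_{\mathrm{Mod}(\mathcal{C})}(\pi_{\ast}(F),I_{j})$$
commutes with the coboundaries of $I_{\bullet}$, so passing to cohomology produces an isomorphism with $\mathrm{Ext}^{i}_{\mathrm{Mod}(\mathcal{C})}(\pi_{\ast}(F),G)$. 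The main bookkeeping step, and the one place I expect friction, is verifying that the resulting isomorphism coincides with the morphism $\varphi^{i}_{F,G}$ constructed in Proposition~\ref{morenexte}; this should follow from the definition of $\varphi^{i}$ via the counit $\epsilon:\pi_{\ast}\pi^{!}\to 1$, naturality, and the triangle identities of the adjunction.

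The converse $(c)\Rightarrow (b)$ is immediate by testing: for each $C\in\mathcal{C}$, take $F:=\mathrm{Hom}_{\mathcal{C}/\mathcal{I}}(C,-)$. This functor is projective in $\mathrm{Mod}(\mathcal{C}/\mathcal{I})$ by Yoneda, so $\mathrm{Ext}^{i}_{\mathrm{Mod}(\mathcal{C}/\mathcal{I})}(F,\pi^{!}(G))=0$ for all $i\geq 1$; hypothesis $(c)$ then forces $\mathrm{Ext}^{i}_{\mathrm{Mod}(\mathcal{C})}(\pi_{\ast}(F),G)=0$ for $1\leq i\leq k$. Since $\pi_{\ast}(\mathrm{Hom}_{\mathcal{C}/\mathcal{I}}(C,-))=M_{C}$ by Lemma~\ref{descripcionMC}(c) and $\mathbb{EXT}^{i}_{\mathcal{C}}(\mathcal{C}/\mathcal{I},G)(C)=\mathrm{Ext}^{i}_{\mathrm{Mod}(\mathcal{C})}(M_{C},G)$ by Remark~\ref{descEXT}, this says $\mathbb{EXT}^{i}_{\mathcal{C}}(\mathcal{C}/\mathcal{I},G)(C)=0$ for every $C$, which is $(b)$. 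So the whole argument reduces to the exactness of $\pi_{\ast}$, the definition of $\mathbb{EXT}$, and a careful identification of $\varphi^{i}$ with the adjunction-induced map on cohomology.
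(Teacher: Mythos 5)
Your argument is correct and follows essentially the same route as the paper's: identify $\mathbb{EXT}^{i}_{\mathcal{C}}(\mathcal{C}/\mathcal{I},-)$ as the derived functor of $\pi^{!}$ to get $(a)\Leftrightarrow (b)$, use the adjunction $(\pi_{\ast},\pi^{!})$ applied term-by-term to the injective coresolution for $(a)\Rightarrow(c)$, and test $(c)$ on the representables $\mathrm{Hom}_{\mathcal{C}/\mathcal{I}}(C,-)$ via \ref{descripcionMC}(c) and \ref{descEXT} to recover $(b)$. The only addition you make over the paper is explicitly justifying that $\pi^{!}$ preserves injectives because it is right adjoint to the exact functor $\pi_{\ast}$, which the paper tacitly assumes.
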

\begin{proof}
$(b)\Leftrightarrow (a)$. By definition of the derived functor, we have that $\mathbb{EXT}^{i}_{\mathcal{C}}(\mathcal{C}/\mathcal{I},G)$ is the $i$-th homology of the complex of $\mathcal{C}/\mathcal{I}$-modules
$$\mathcal{C}(\mathcal{C}/I,I_{0})\rightarrow \mathcal{C}(\mathcal{C}/I,I_{1})\rightarrow \dots\rightarrow \mathcal{C}(\mathcal{C}/I,I_{k})\rightarrow \cdots$$
But $\overline{\mathrm{Tr}}_{\frac{\mathcal{C}}{\mathcal{I}}}=\mathcal{C}(\mathcal{C}/I,-)$, then we have that $\mathbb{EXT}^{i}_{\mathcal{C}}(\mathcal{C}/\mathcal{I},G)=0$ for all $1\leq i\leq k$ if and only if the following complex is exact
$$0\rightarrow \overline{\mathrm{Tr}}_{\frac{\mathcal{C}}{\mathcal{I}}}(G)\rightarrow \overline{\mathrm{Tr}}_{\frac{\mathcal{C}}{\mathcal{I}}}(I_{0})\rightarrow \overline{\mathrm{Tr}}_{\frac{\mathcal{C}}{\mathcal{I}}}(I_{1})\rightarrow \dots\rightarrow \overline{\mathrm{Tr}}_{\frac{\mathcal{C}}{\mathcal{I}}}(I_{k})$$
where each $\overline{\mathrm{Tr}}_{\frac{\mathcal{C}}{\mathcal{I}}}(I_{j})$ is an injective $\mathcal{C}/\mathcal{I}$-module.\\
$(a)\Rightarrow (c)$. Suppose that  $0\rightarrow \overline{\mathrm{Tr}}_{\frac{\mathcal{C}}{\mathcal{I}}}(G)\rightarrow \overline{\mathrm{Tr}}_{\frac{\mathcal{C}}{\mathcal{I}}}(I_{0})\rightarrow \overline{\mathrm{Tr}}_{\frac{\mathcal{C}}{\mathcal{I}}}(I_{1})\rightarrow \dots\rightarrow \overline{\mathrm{Tr}}_{\frac{\mathcal{C}}{\mathcal{I}}}(I_{k})$ is the beginning of an injective coresolution of  $\overline{\mathrm{Tr}}_{\frac{\mathcal{C}}{\mathcal{I}}}(G)$. We can complete to an injective coresolution: $0\rightarrow \overline{\mathrm{Tr}}_{\frac{\mathcal{C}}{\mathcal{I}}}(G)\rightarrow \overline{\mathrm{Tr}}_{\frac{\mathcal{C}}{\mathcal{I}}}(I_{0})\rightarrow \dots\rightarrow \overline{\mathrm{Tr}}_{\frac{\mathcal{C}}{\mathcal{I}}}(I_{k})\rightarrow I_{k+1}'\rightarrow I_{k+2}'\rightarrow\cdots .$ By using the construction in \ref{morenexte}, we get isomorphisms for $1\leq i\leq k$
$$\varphi^{i}_{F,G}:\mathrm{Ext}^{i}_{\mathrm{Mod}(\mathcal{C}/I)}(F,\overline{\mathrm{Tr}}_{\frac{\mathcal{C}}{\mathcal{I}}}(G))\longrightarrow \mathrm{Ext}^{i}_{\mathrm{Mod}(\mathcal{C})}(\pi_{\ast}(F),G).$$
$(c)\Rightarrow (b)$. By \ref{descripcionMC}(c), we have that 
$M_{C}:=\frac{\mathrm{Hom}_{\mathcal{C}}(C,-)}{\mathcal{I}(C,-)}\in \mathrm{Mod}(\mathcal{C})$ satisfies that  $M_{C}=\mathrm{Hom}_{\mathcal{C}/\mathcal{I}}(C,-)\circ \pi=\pi_{\ast}\Big(\mathrm{Hom}_{\mathcal{C}/\mathcal{I}}(C,-)\Big)$. Let us fix $i$ such that $1\leq i\leq k$. We have that $\mathbb{EXT}^{i}_{\mathcal{C}}(\mathcal{C}/\mathcal{I},G)\in \mathrm{Mod}(\mathcal{C}/\mathcal{I})$ is  defined for $C\in \mathcal{C}/\mathcal{I}$ as follows. By \ref{descEXT}, we have that
\begin{align*}
\mathbb{EXT}^{i}_{\mathcal{C}}(\mathcal{C}/\mathcal{I},G)(C)& :=\mathrm{Ext}^{i}_{\mathrm{Mod}(\mathcal{C})}\left(\frac{\mathrm{Hom}_{\mathcal{C}}(C,-)}{\mathcal{I}(C,-)},G\right)\\
& =\mathrm{Ext}^{i}_{\mathrm{Mod}(\mathcal{C})}\left(\pi_{\ast}\Big(\mathrm{Hom}_{\mathcal{C}/\mathcal{I}}(C,-)\Big),G\right)\\
& \simeq \mathrm{Ext}^{i}_{\mathrm{Mod}(\mathcal{C}/I)}\left(\mathrm{Hom}_{\mathcal{C}/\mathcal{I}}(C,-),\overline{\mathrm{Tr}}_{\frac{\mathcal{C}}{\mathcal{I}}}
(G)\right) \quad [\text{hypothesis}]\\
& =0\quad \quad \quad \quad\quad \quad [ \mathrm{Hom}_{\mathcal{C}/\mathcal{I}}(C,-)\,\,\text{es projective in }\,\, \mathrm{Mod}(\mathcal{C}/\mathcal{I})]
\end{align*}
\end{proof}
Now, we have the following result that is analogous to the previous result.

\begin{proposition}\label{Toreequiv}
Let $\mathcal{I}$ be an ideal in $\mathcal{C}$,  $\pi:C\longrightarrow \mathcal{C}/\mathcal{I}$ the canonical functor and consider the diagram given in \ref{tresfuntores}.
Let $G\in \mathrm{Mod}(\mathcal{C})$, and $\dots \rightarrow P_{k}\rightarrow\dots\rightarrow P_{1}\rightarrow P_{0}\rightarrow G\rightarrow 0$ a projective resolution of $G$. For $1\leq  k\leq \infty $, the following conditions are equivalent.
\begin{enumerate}
\item [(a)] $P_{k}/\mathcal{I}P_{k}\rightarrow\dots\rightarrow P_{1}/\mathcal{I}P_{1}\rightarrow P_{0}/\mathcal{I}P_{0}\rightarrow G/\mathcal{I}G\rightarrow 0$ is the beginning of a projective resolution of  $G/\mathcal{I}G\in \mathrm{Mod}(\mathcal{C}/\mathcal{I})$.

\item [(b)] $\mathbb{TOR}_{i}^{\mathcal{C}}(\mathcal{C}/\mathcal{I},G)=0$ for $1\leq i\leq k$.

\item [(c)] For $F\in \mathrm{Mod}((\mathcal{C}/\mathcal{I})^{op})$ the morphisms given in \ref{morentor},  
$$\psi_{F,G}^{i}:\mathrm{Tor}^{\mathcal{C}}_{i}((\hat{\pi})_{\ast}F),G)\longrightarrow \mathrm{Tor}^{\mathcal{C}/\mathcal{I}}_{i}(F, G/\mathcal{I}G),$$
are isomorphisms for $1\leq i\leq k$.
\end{enumerate}
\end{proposition}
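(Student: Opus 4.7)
The plan is to mirror the argument for Proposition~\ref{Extiniciores}, with the role of $\pi^{!} = \overline{\mathrm{Tr}}_{\frac{\mathcal{C}}{\mathcal{I}}}$ and injective coresolutions taken over by $\pi^{\ast} = \frac{\mathcal{C}}{\mathcal{I}} \otimes_{\mathcal{C}} -$ and projective resolutions. The auxiliary facts I will need are: (i) $\pi^{\ast}$ preserves projectives, since it is a left adjoint to the exact functor $\pi_{\ast}$ (see \ref{tresfuntores}); (ii) on representables, $\pi^{\ast}\mathrm{Hom}_{\mathcal{C}}(C,-) \cong \mathrm{Hom}_{\mathcal{C}/\mathcal{I}}(C,-)$, so that $\pi^{\ast}P_{j}$ is genuinely the $\mathcal{C}/\mathcal{I}$-module $P_{j}/\mathcal{I}P_{j}$; and (iii) the natural identification
\[
(F \circ \pi) \otimes_{\mathcal{C}} M \;\cong\; F \otimes_{\mathcal{C}/\mathcal{I}} \bigl( \tfrac{\mathcal{C}}{\mathcal{I}} \otimes_{\mathcal{C}} M \bigr) \;=\; F \otimes_{\mathcal{C}/\mathcal{I}}(M/\mathcal{I}M)
\]
for $F \in \mathrm{Mod}((\mathcal{C}/\mathcal{I})^{op})$ and $M \in \mathrm{Mod}(\mathcal{C})$, which follows from uniqueness of left adjoints and the adjunction $(\pi^{\ast},\pi_{\ast})$ of \ref{tresfuntores}.

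Equipped with (i)--(iii), the equivalence $(a)\Leftrightarrow(b)$ is immediate: by definition $\mathbb{TOR}_{i}^{\mathcal{C}}(\mathcal{C}/\mathcal{I},G)$ is the $i$-th homology of the complex $\pi^{\ast}P_{\bullet} = P_{\bullet}/\mathcal{I}P_{\bullet}$, and since every term is projective in $\mathrm{Mod}(\mathcal{C}/\mathcal{I})$ by (i)--(ii), the vanishing of these homologies in degrees $1,\dots,k$ is precisely the exactness of the truncated complex asserted in (a).

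For $(a)\Rightarrow(c)$, I would extend the partial projective resolution from (a) to a full projective resolution of $G/\mathcal{I}G$ in $\mathrm{Mod}(\mathcal{C}/\mathcal{I})$ agreeing with $P_{\bullet}/\mathcal{I}P_{\bullet}$ up to degree $k$. Applying (iii) to the projective resolution $P_{\bullet} \to G \to 0$ gives an isomorphism of complexes $(F\circ\pi) \otimes_{\mathcal{C}} P_{\bullet} \cong F \otimes_{\mathcal{C}/\mathcal{I}}(P_{\bullet}/\mathcal{I}P_{\bullet})$, whose homologies compute $\mathrm{Tor}_{i}^{\mathcal{C}}(F\circ\pi,G)$ on the left and, for $1\le i\le k$, $\mathrm{Tor}_{i}^{\mathcal{C}/\mathcal{I}}(F,G/\mathcal{I}G)$ on the right. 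The induced isomorphism is precisely the canonical morphism $\psi_{F,G}^{i}$ of \ref{morentor}. For $(c)\Rightarrow(b)$, I specialise (c) to $F=\mathrm{Hom}_{\mathcal{C}/\mathcal{I}}(-,C)$, which is projective in $\mathrm{Mod}((\mathcal{C}/\mathcal{I})^{op})$; then the right-hand side vanishes for $i\ge 1$, while the left-hand side equals $\mathrm{Tor}_{i}^{\mathcal{C}}\bigl(\tfrac{\mathcal{C}(-,C)}{\mathcal{I}(-,C)},G\bigr) = \mathbb{TOR}_{i}^{\mathcal{C}}(\mathcal{C}/\mathcal{I},G)(C)$ by the pointwise description coming from \ref{HomCI} (exactly as in \ref{descEXT} for Ext), so (b) follows by letting $C$ range over $\mathcal{C}/\mathcal{I}$.

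The main obstacle is the bookkeeping around (iii) and the identification of the canonical morphism $\psi^{i}_{F,G}$ of \ref{morentor} with the composite supplied by (iii) together with the extended resolution; one has to confirm that the construction of $\psi^{i}_{F,G}$ via the unit of $(\pi^{\ast},\pi_{\ast})$ and exactness of the restriction $(\hat{\pi})_{\ast}$ agrees with the tensor-product change-of-base map. Once this is pinned down, the remaining arguments are entirely parallel to the proof of \ref{Extiniciores}.
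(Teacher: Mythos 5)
Your proposal is correct and follows exactly the approach the paper intends: the paper gives no explicit proof for Proposition~\ref{Toreequiv}, stating only that it is ``analogous to the previous result'' (\ref{Extiniciores}), and your argument carries out precisely that dualization, with $\pi^{\ast}$ and projective resolutions replacing $\pi^{!}$ and injective coresolutions. The three auxiliary facts you isolate — $\pi^{\ast}$ preserves projectives, $\pi^{\ast}\mathrm{Hom}_{\mathcal{C}}(C,-)\cong\mathrm{Hom}_{\mathcal{C}/\mathcal{I}}(C,-)$, and the base-change isomorphism $(F\circ\pi)\otimes_{\mathcal{C}}M\cong F\otimes_{\mathcal{C}/\mathcal{I}}(M/\mathcal{I}M)$ — are exactly the right ingredients, and your $(c)\Rightarrow(b)$ reduction to $F=\mathrm{Hom}_{\mathcal{C}/\mathcal{I}}(-,C)$ mirrors $(c)\Rightarrow(b)$ in the Ext version.
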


\section{Property $A$ and restriction of adjunctions}\label{sec:4}
In this section we will use some of the notions given in \ref{subsec:3}.  First we recall the following well-known result (see \cite{AusM1}).
\begin{proposition}\label{projfingen}
Let $\mathcal{C}$ be a variety and $\mathrm{proj}(\mathcal{C})$ the category of finitely generated projective $\mathcal{C}$-modules. Consider the Yoneda functor $\mathbb{Y}:\mathcal{C}\longrightarrow \mathrm{proj}(\mathcal{C})$ defined as $\mathbb{Y}(C):=\mathrm{Hom}_{\mathcal{C}}(C,-)$. Then $\mathbb{Y}$ is a contravariant functor which is full, faithful and dense.
\end{proposition}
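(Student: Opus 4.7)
The plan is to break the statement into its three assertions: contravariance, fully faithfulness, and density, and handle them in that order, leaning on Yoneda's lemma and the fact that a variety has splitting idempotents.

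First, I would observe that each morphism $f:C\to C'$ in $\mathcal{C}$ induces by precomposition a natural transformation $\mathrm{Hom}_{\mathcal{C}}(C',-)\to\mathrm{Hom}_{\mathcal{C}}(C,-)$, giving a contravariant functor $\mathbb{Y}:\mathcal{C}\to\mathrm{Mod}(\mathcal{C})$ whose image lies in $\mathrm{proj}(\mathcal{C})$ since each $\mathrm{Hom}_{\mathcal{C}}(C,-)$ is a finitely generated projective $\mathcal{C}$-module (it is itself representable and hence appears in item (4) of the list of properties of $\mathrm{Mod}(\mathcal{C})$ recalled in Section \ref{sec:2}). For fully faithfulness, the induced map
\[
\mathbb{Y}_{C,C'}:\mathrm{Hom}_{\mathcal{C}}(C,C')\longrightarrow \mathrm{Hom}_{\mathrm{proj}(\mathcal{C})}\!\bigl(\mathrm{Hom}_{\mathcal{C}}(C',-),\mathrm{Hom}_{\mathcal{C}}(C,-)\bigr)
\]
is, by Yoneda's lemma (stated in the preliminary discussion of $\mathrm{Mod}(\mathcal{C})$), an isomorphism of abelian groups, with inverse $\eta\mapsto \eta_{C'}(1_{C'})$. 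So there is nothing further to verify beyond invoking Yoneda.

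The real work is density. Given $P\in\mathrm{proj}(\mathcal{C})$, item (4) of the preliminaries provides a finite family $\{C_{1},\dots,C_{n}\}$ in $\mathcal{C}$ such that $P$ is a direct summand of $\bigoplus_{i=1}^{n}\mathrm{Hom}_{\mathcal{C}}(C_{i},-)$. Since $\mathcal{C}$ is a variety (in particular additive), the object $C:=\bigoplus_{i=1}^{n}C_{i}$ exists in $\mathcal{C}$, and additivity of the Yoneda embedding yields $\mathrm{Hom}_{\mathcal{C}}(C,-)\simeq\bigoplus_{i=1}^{n}\mathrm{Hom}_{\mathcal{C}}(C_{i},-)$. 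Thus $P$ is a direct summand of $\mathbb{Y}(C)$, so the composite
\[
e:\mathbb{Y}(C)\twoheadrightarrow P\hookrightarrow \mathbb{Y}(C)
\]
is an idempotent endomorphism of $\mathbb{Y}(C)$ with image $P$. By fully faithfulness already established, $e=\mathbb{Y}(\bar{e})$ for a unique idempotent $\bar{e}:C\to C$ in $\mathcal{C}$. Because idempotents split in the variety $\mathcal{C}$, we can factor $\bar{e}=\iota\circ p$ with $p:C\to C'$, $\iota:C'\to C$, and $p\circ \iota=1_{C'}$. Applying $\mathbb{Y}$ gives a splitting of $e$ in $\mathrm{proj}(\mathcal{C})$ whose image is $\mathbb{Y}(C')$, so $P\simeq \mathbb{Y}(C')$, proving density.

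The main obstacle I anticipate is keeping the direction of arrows straight once idempotents are split: one must check that the image of $\mathbb{Y}(\bar{e})$ computed via the splitting $C\rightrightarrows C'$ really coincides, as a subobject of $\mathbb{Y}(C)$, with the image of the original idempotent $e$, which amounts to uniqueness of splittings of idempotents in an additive category. Apart from this bookkeeping, every step reduces to Yoneda's lemma plus the two defining axioms of a variety (additivity and splitting of idempotents).
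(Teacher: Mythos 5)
Your proposal is correct and follows exactly the standard argument that the paper implicitly refers to (the paper merely cites \cite{AusM1} and records no proof of its own): contravariance and full faithfulness come directly from Yoneda, while density uses the fact that a finitely generated projective is a summand of a representable, pulls the resulting idempotent back along $\mathbb{Y}$, splits it in $\mathcal{C}$, and pushes the splitting forward. The ``bookkeeping'' concern you flag is indeed routine: two splittings of a single idempotent in any category are canonically isomorphic (if $e=\iota_P\pi_P=\mathbb{Y}(p)\mathbb{Y}(\iota)$ with $\pi_P\iota_P=1_P$ and $\mathbb{Y}(\iota)\mathbb{Y}(p)=1_{\mathbb{Y}(C')}$, then $\mathbb{Y}(\iota)\circ\iota_P$ and $\pi_P\circ\mathbb{Y}(p)$ are mutually inverse), so no further hypotheses on $\mathcal{C}$ are needed.
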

Let $\mathcal{C}$ be a Hom-finite $R$-variety, we recall that $\mathrm{mod}(\mathcal{C})$ denotes the full subcategory of $\mathrm{Mod}(\mathcal{C})$ whose objects are the finitely presented functors (see definition \ref{defifinipre}). The aim of this section is to restrict the functors obtained in the last section.

\begin{proposition}\label{restri2fun}
Let $\mathcal{C}$ be a Hom-finite $R$-variety,  $\mathcal{I}$ an ideal in $\mathcal{C}$ and $\pi:C\longrightarrow \mathcal{C}/\mathcal{I}$ the canonical functor. 
Consider the  upper part of the diagram given in \ref{tresfuntores}.
\begin{enumerate}
\item [(a)] We can restrict $\pi^{\ast}$ to a functor $\pi^{\ast}:\mathrm{mod}(\mathcal{C})\longrightarrow \mathrm{mod}(\mathcal{C}/\mathcal{I})$. 

\item [(b)]  If for every $C\in \mathcal{C}$ there exists an epimorphism $\mathrm{Hom}_{\mathcal{C}}(C',-)\longrightarrow  \mathcal{I}(C,-)$, we can restrict the functor $\pi_{\ast}$ to a functor 
$\pi_{\ast}:\mathrm{mod}(\mathcal{C}/\mathcal{I})\longrightarrow \mathrm{mod}(\mathcal{C})$.

\item[(c)] If for every $C\in \mathcal{C}$ there exists an epimorphism $\mathrm{Hom}_{\mathcal{C}}(C',-)\longrightarrow  \mathcal{I}(C,-)$, we have the adjoint pair

$$\xymatrix{\mathrm{mod}(\mathcal{C}/\mathcal{I})\ar[rr]|{\pi_{\ast}}  &  &\mathrm{mod}(\mathcal{C}).\ar@<-2ex>[ll]_{\pi^{\ast}} }$$
\end{enumerate}
\end{proposition}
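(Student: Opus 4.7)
The plan is to prove parts $(a)$, $(b)$, $(c)$ in sequence, using the explicit description of $\pi^{\ast}$ and $\pi_{\ast}$ combined with basic closure properties of finitely presented functors.

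For $(a)$, the first step is to compute $\pi^{\ast}$ on representables. Using that $\pi^{\ast}=\frac{\mathcal{C}}{\mathcal{I}}\otimes_{\mathcal{C}}-$ and Proposition \ref{AProposition0}(3), for any $C,C'\in \mathcal{C}$ one has $\mathcal{C}(-,C)\otimes_{\mathcal{C}}\mathrm{Hom}_{\mathcal{C}}(C',-)=\mathrm{Hom}_{\mathcal{C}}(C',C)$ and $\mathcal{I}(-,C)\otimes_{\mathcal{C}}\mathrm{Hom}_{\mathcal{C}}(C',-)=\mathcal{I}(C',C)$; applying the right-exact functor $-\otimes_{\mathcal{C}}\mathrm{Hom}_{\mathcal{C}}(C',-)$ to the short exact sequence $0\to \mathcal{I}(-,C)\to \mathcal{C}(-,C)\to \mathcal{C}(-,C)/\mathcal{I}(-,C)\to 0$ then yields $\pi^{\ast}(\mathrm{Hom}_{\mathcal{C}}(C',-))\cong \mathrm{Hom}_{\mathcal{C}/\mathcal{I}}(C',-)$. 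Now given $M\in\mathrm{mod}(\mathcal{C})$ with presentation $\mathrm{Hom}_{\mathcal{C}}(C_{0},-)\to \mathrm{Hom}_{\mathcal{C}}(C_{1},-)\to M\to 0$, right exactness of $\pi^{\ast}$ (Proposition \ref{AProposition0}(1)) produces a presentation $\mathrm{Hom}_{\mathcal{C}/\mathcal{I}}(C_{0},-)\to \mathrm{Hom}_{\mathcal{C}/\mathcal{I}}(C_{1},-)\to \pi^{\ast}(M)\to 0$ in $\mathrm{mod}(\mathcal{C}/\mathcal{I})$.

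For $(b)$, first observe that by Lemma \ref{descripcionMC}(c) the functor $\pi_{\ast}$ sends the representable $\mathrm{Hom}_{\mathcal{C}/\mathcal{I}}(C,-)$ to $M_{C}=\mathrm{Hom}_{\mathcal{C}}(C,-)/\mathcal{I}(C,-)$. The hypothesis supplies, for every $C\in\mathcal{C}$, a representable $\mathrm{Hom}_{\mathcal{C}}(C',-)$ surjecting onto $\mathcal{I}(C,-)$, so that the defining exact sequence of $M_{C}$ gives a finite presentation $\mathrm{Hom}_{\mathcal{C}}(C',-)\to \mathrm{Hom}_{\mathcal{C}}(C,-)\to M_{C}\to 0$; in particular $M_{C}\in\mathrm{mod}(\mathcal{C})$. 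Next, for $F\in \mathrm{mod}(\mathcal{C}/\mathcal{I})$ take a presentation $\mathrm{Hom}_{\mathcal{C}/\mathcal{I}}(C_{0},-)\to \mathrm{Hom}_{\mathcal{C}/\mathcal{I}}(C_{1},-)\to F\to 0$ and apply the exact functor $\pi_{\ast}$ to obtain an exact sequence $M_{C_{0}}\to M_{C_{1}}\to \pi_{\ast}(F)\to 0$. Since $\mathrm{mod}(\mathcal{C})$ is closed under cokernels of its own morphisms (standard: lift between finite projective presentations and take the induced presentation on the cokernel), it follows that $\pi_{\ast}(F)\in \mathrm{mod}(\mathcal{C})$.

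For $(c)$, once $(a)$ and $(b)$ are established, the adjunction isomorphism $\theta_{F,G}$ of Proposition \ref{firstadjoin} restricts on the full subcategories $\mathrm{mod}(\mathcal{C}/\mathcal{I})\subset \mathrm{Mod}(\mathcal{C}/\mathcal{I})$ and $\mathrm{mod}(\mathcal{C})\subset \mathrm{Mod}(\mathcal{C})$ without any extra argument, since both the unit and counit described there land in the appropriate subcategories by $(a)$ and $(b)$.

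The main technical hurdle is the computation in $(a)$ that $\pi^{\ast}$ sends representables to representables; once this identification is in hand, right exactness does the rest, and the hypothesis in $(b)$ is tailored precisely so that $M_{C}$ is finitely presented, which is exactly what is needed to carry the argument through the presentation of $F$.
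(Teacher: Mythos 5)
Your proof is correct and follows essentially the same route as the paper: compute $\pi^{\ast}$ on representables (the paper does this in one line via $A\otimes_{\mathcal{C}}(C,-)=A(C)$, you via a short exact sequence, same net computation), use right exactness to push through a presentation for part $(a)$; use the hypothesis to show each $M_{C}$ is finitely presented and then apply exactness of $\pi_{\ast}$ together with closure of $\mathrm{mod}(\mathcal{C})$ under cokernels for part $(b)$ (where the paper cites \cite[Prop.\ 4.2(b),(c)]{AusM1} for the closure facts you invoke as ``standard''); and observe in $(c)$ that the adjunction of \ref{firstadjoin} restricts.
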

\begin{proof}
\begin{enumerate}
\item [(a)] Let us see that we have $\pi^{\ast}:\mathrm{mod}(\mathcal{C})\longrightarrow \mathrm{mod}(\mathcal{C}/\mathcal{I}).$  Indeed, we know that
$\pi^{\ast}:\mathrm{Mod}(\mathcal{C})\longrightarrow \mathrm{Mod}(\mathcal{C}/\mathcal{I})$ is right exact. Moreover, by the construction of $\pi^{\ast}$ it follows that $\pi^{\ast}(\mathrm{Hom}_{\mathcal{C}}(C,-))(C')=\frac{\mathcal{C}(-,C')}{\mathcal{I}(-,C')}\otimes \mathrm{Hom}_{\mathcal{C}}(C,-)=\frac{\mathcal{C}(C,C')}{\mathcal{I}(C,C')}=\mathrm{Hom}_{\mathcal{C}/\mathcal{I}}(C,C')$, and thus  $\pi^{\ast}(\mathrm{Hom}_{\mathcal{C}}(C,-))=\mathrm{Hom}_{\mathcal{C}/\mathcal{I}}(C,-)$ (see \ref{AProposition0}). From this we have the restriction $\pi^{\ast}:\mathrm{mod}(\mathcal{C})\longrightarrow \mathrm{mod}(\mathcal{C}/\mathcal{I})$. 

\item [(b)] Let us see that if $M\in \mathrm{mod}(\mathcal{C}/\mathcal{I})$, then $\pi_{\ast}(M)\in \mathrm{mod}(\mathcal{C})$. Indeed, let $M\in \mathrm{mod}(\mathcal{C}/\mathcal{I})$, then there exists an exact sequence $\mathrm{Hom}_{\mathcal{C}/\mathcal{I}}(X,-)\rightarrow \mathrm{Hom}_{\mathcal{C}/\mathcal{I}}(Y,-)\rightarrow  M\rightarrow  0$
with $X,Y\in \mathcal{C}/\mathcal{I}$. Applying $\pi_{\ast}$,  by \ref{descripcionMC} we have the following exact sequence 
$\frac{\mathrm{Hom}_{\mathcal{C}}(X,-)}{\mathcal{I}(X,-)}\rightarrow \frac{\mathrm{Hom}_{\mathcal{C}}(Y,-)}{\mathcal{I}(Y,-)}\rightarrow \pi_{\ast}(M)\rightarrow 0$.
We assert that  $\frac{\mathrm{Hom}_{\mathcal{C}}(X,-)}{\mathcal{I}(X,-)}$ is finitely presented for each $X\in \mathcal{C}$. To prove this we consider $\xymatrix{0\ar[r] & \mathcal{I}(X,-)\ar[r] & \mathrm{Hom}_{\mathcal{C}}(X,-)\ar[r] & \frac{\mathrm{Hom}_{\mathcal{C}}(X,-)}{\mathcal{I}(X,-)}\ar[r] & 0}$
By hypothesis we have that $\mathcal{I}(X,-)$ is finitely generated, then by theorem \cite[proposition 4.2(c)]{AusM1}, we have that $\frac{\mathrm{Hom}_{\mathcal{C}}(X,-)}{\mathcal{I}(X,-)}$ is finitely presented. Then by \cite[proposition 4.2(b)]{AusM1}, we conclude that $\pi_{\ast}(M)$ is finitely presented. 

\item [(c)] Follows from (a) and (b).
\end{enumerate}
\end{proof}

\begin{Remark}
Let $\mathcal{C}$ be a Hom-finite $R$-variety. We get following commutative diagram
$$\xymatrix{(\mathcal{C}/\mathcal{I},\mathrm{mod}(R))\ar[r]^{(\pi_{1})_{\ast}}\ar[d]^{\mathbb{D}_{\mathcal{C}/\mathcal{I}}} & (\mathcal{C},\mathrm{mod}(R))\ar[d]^{\mathbb{D}_{\mathcal{C}}}\\
((\mathcal{C}/\mathcal{I})^{op},\mathrm{mod}(R))\ar[r]^{(\pi_{2})_{\ast}} & (\mathcal{C}^{op},\mathrm{mod}(R))}$$
\end{Remark}

Consider the functor  $\Omega: \mathrm{Ann}(\mathcal{I})\longrightarrow \mathrm{Mod}(\mathcal{C}/\mathcal{I})$  defined in \ref{isomoan}, we know that $\Omega$ is an equivalence of categories with inverse $\pi_{\ast}:
\mathrm{Mod}(\mathcal{C}/\mathcal{I})\longrightarrow \mathrm{Ann}(\mathcal{I})$. 
Then we have the following proposition which tells us that we can restrict the functor $\pi_{\ast}$.

\begin{proposition}\label{annintermod}
Let $\mathcal{C}$ be a Hom-finite $R$-variety and $\mathcal{I}$ an ideal in $\mathcal{C}$ such that for each $C\in \mathcal{C}$ there exists an epimorphism $\mathrm{Hom}_{\mathcal{C}}(C',-)\longrightarrow  \mathcal{I}(C,-)$. Then, there exists an equivalence 
$\pi_{\ast}|_{\mathrm{mod}(\mathcal{C}/\mathcal{I})}:\mathrm{mod}(\mathcal{C}/\mathcal{I})\longrightarrow \mathrm{mod}(\mathcal{C})\cap \mathrm{Ann}(I).$
\end{proposition}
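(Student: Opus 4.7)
The plan is to upgrade the known equivalence $\pi_{*}:\mathrm{Mod}(\mathcal{C}/\mathcal{I})\xrightarrow{\sim}\mathrm{Ann}(\mathcal{I})$ (with inverse $\Omega$, see Remark \ref{isomoan}(b)) to the finitely presented level. Since $\pi_{*}$ is already fully faithful on all of $\mathrm{Mod}(\mathcal{C}/\mathcal{I})$, its restriction to any full subcategory remains fully faithful; so the real content is to verify that (i) the image of $\mathrm{mod}(\mathcal{C}/\mathcal{I})$ under $\pi_{*}$ is contained in $\mathrm{mod}(\mathcal{C})\cap\mathrm{Ann}(\mathcal{I})$, and (ii) every object of $\mathrm{mod}(\mathcal{C})\cap\mathrm{Ann}(\mathcal{I})$ is hit up to isomorphism.

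Step (i) is immediate from the results already at hand: Proposition \ref{restri2fun}(b) plus the hypothesis on $\mathcal{I}$ give $\pi_{*}(F)\in\mathrm{mod}(\mathcal{C})$ for $F\in\mathrm{mod}(\mathcal{C}/\mathcal{I})$, and $\pi_{*}(F)=F\circ\pi$ annihilates $\mathcal{I}$ by construction since $\pi$ kills it. For step (ii), given $N\in\mathrm{mod}(\mathcal{C})\cap\mathrm{Ann}(\mathcal{I})$, the natural candidate preimage is $\Omega(N)$, for which $\pi_{*}(\Omega(N))=N$ holds automatically. Everything reduces to showing $\Omega(N)\in\mathrm{mod}(\mathcal{C}/\mathcal{I})$. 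To do this, I would start from a finite presentation $\mathrm{Hom}_{\mathcal{C}}(C_{1},-)\xrightarrow{\alpha}\mathrm{Hom}_{\mathcal{C}}(C_{0},-)\xrightarrow{\beta}N\to 0$ in $\mathrm{mod}(\mathcal{C})$, observe that $N\in\mathrm{Ann}(\mathcal{I})$ forces $\beta$ to vanish on the subfunctor $\mathcal{I}(C_{0},-)$, and thus obtain a factorization through a surjection $\bar{\beta}\colon M_{C_{0}}\twoheadrightarrow N$, where $M_{C_{0}}=\mathrm{Hom}_{\mathcal{C}}(C_{0},-)/\mathcal{I}(C_{0},-)$. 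Note that $M_{C_{0}}\in\mathrm{mod}(\mathcal{C})$ by the argument in the proof of \ref{restri2fun}(b) using the hypothesis on $\mathcal{I}$, and $M_{C_{0}}\in\mathrm{Ann}(\mathcal{I})$ trivially.

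Now let $L=\ker(\bar\beta)$. Since $M_{C_{0}}$ and $N$ are finitely presented, $L$ is finitely generated (standard fact in an abelian category with enough projectives), and as a subobject of $M_{C_{0}}\in\mathrm{Ann}(\mathcal{I})$, it also lies in $\mathrm{Ann}(\mathcal{I})$. Any epimorphism $\mathrm{Hom}_{\mathcal{C}}(C_{1},-)\twoheadrightarrow L$ therefore factors through $M_{C_{1}}\twoheadrightarrow L$, yielding an exact sequence $M_{C_{1}}\to M_{C_{0}}\to N\to 0$ wholly inside $\mathrm{Ann}(\mathcal{I})$. Because $\mathrm{Ann}(\mathcal{I})$ is closed under sub- and quotient objects of $\mathrm{Mod}(\mathcal{C})$, it is a Serre subcategory, and the equivalence $\Omega$ is exact; applying $\Omega$ and using Lemma \ref{descripcionMC}(c) produces the desired presentation
\begin{equation*}
\mathrm{Hom}_{\mathcal{C}/\mathcal{I}}(C_{1},-)\longrightarrow \mathrm{Hom}_{\mathcal{C}/\mathcal{I}}(C_{0},-)\longrightarrow \Omega(N)\longrightarrow 0,
\end{equation*}
showing $\Omega(N)\in\mathrm{mod}(\mathcal{C}/\mathcal{I})$ and completing essential surjectivity. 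The main obstacle is precisely this essential surjectivity: one has to check that the vanishing condition $N\in\mathrm{Ann}(\mathcal{I})$ translates concretely into $\beta$ killing $\mathcal{I}(C_{0},-)$ as a subfunctor (which requires evaluating on every object of $\mathcal{C}$ and invoking the naturality of $\beta$), and then propagating this annihilation property through the kernel $L$ so that the second syzygy step can likewise be taken among the $M_{C_{i}}$'s rather than arbitrary representables.
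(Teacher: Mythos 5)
Your proof is correct, and since the paper leaves this proposition with only the remark ``It is straightforward,'' yours is the natural expansion of the intended argument: fully-faithfulness is inherited from the ambient equivalence $\pi_{\ast}\colon\mathrm{Mod}(\mathcal{C}/\mathcal{I})\xrightarrow{\sim}\mathrm{Ann}(\mathcal{I})$, the containment of the image in $\mathrm{mod}(\mathcal{C})\cap\mathrm{Ann}(\mathcal{I})$ is Proposition \ref{restri2fun}(b) combined with the obvious fact that $F\circ\pi$ kills $\mathcal{I}$, and essential surjectivity reduces to the Yoneda observation that a map $\mathrm{Hom}_{\mathcal{C}}(C_0,-)\to N$ with $N\in\mathrm{Ann}(\mathcal{I})$ automatically annihilates the subfunctor $\mathcal{I}(C_0,-)$, so a finite presentation of $N$ can be replaced by one built from the $M_{C_i}$'s. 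The only point worth emphasizing in your write-up is that $M_{C_0}$ is actually finitely \emph{presented} (not just finitely generated), which is exactly what the hypothesis on $\mathcal{I}$ buys you via the proof of \ref{restri2fun}(b), and this is what licenses the conclusion that $L=\ker(\bar\beta)$ is finitely generated; you cite this correctly. Your appeal to exactness of $\Omega$ is also fine: $\mathrm{Ann}(\mathcal{I})$ is closed under kernels and cokernels in $\mathrm{Mod}(\mathcal{C})$, hence abelian with the inherited structure, and $\Omega$ is an equivalence of abelian categories, so it transports the exact sequence to the desired finite presentation of $\Omega(N)$.
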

\begin{proof}
It is straightforward.
\end{proof}

Because of the last proposition we are now interested in ideals that satisfy the hypothesis of \ref {annintermod} . So we have the following definition.

\begin{definition}\label{propertyA}
Let $\mathcal{C}$ be a preadditive category. We say that an ideal $\mathcal{I}$ satisfies the $\textbf{property (A)}$ if  for every $C\in \mathcal{C}$ there exists epimorphisms $\mathrm{Hom}_{\mathcal{C}}(X,-)\longrightarrow \mathcal{I}(C,-)\longrightarrow 0$ and 
$\mathrm{Hom}_{\mathcal{C}}(-,Y)\longrightarrow \mathcal{I}(-,C)\longrightarrow 0.$
\end{definition}

The following result tells us that if the ideal $\mathcal{I}$  satisfies property $A$, then the category $\mathcal{C}/\mathcal{I}$ is dualizing provided $\mathcal{C}$ is also dualizing.

\begin{proposition}\label{cocienteduali}
Let $\mathcal{C}$ be a dualizing $R$-variety and $\mathcal{I}$ an ideal which satisfies property $A$. Then $\mathcal{C}/\mathcal{I}$ is a dualizing $R$-variety and the following diagram
$$\xymatrix{\mathrm{mod}(\mathcal{C}/\mathcal{I})\ar[r]^{(\pi_{1})_{\ast}}\ar[d]^{\mathbb{D}_{\mathcal{C}/\mathcal{I}}} & \mathrm{mod} (\mathcal{C})\ar[d]^{\mathbb{D}_{\mathcal{C}}}\\
\mathrm{mod}((\mathcal{C}/\mathcal{I})^{op})\ar[r]^{(\pi_{2})_{\ast}} &\mathrm{mod}(\mathcal{C}^{op})}$$
is commutative.
\end{proposition}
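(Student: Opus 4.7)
The plan is to break the argument into three stages: first verify that $\mathcal{C}/\mathcal{I}$ is a $\mathrm{Hom}$-finite $R$-variety, then establish commutativity of the square by a direct evaluation, and finally use the diagram together with Proposition \ref{annintermod} to transport the dualizing property of $\mathcal{C}$ onto $\mathcal{C}/\mathcal{I}$.

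For the first stage, $\mathcal{C}/\mathcal{I}$ is additive with the same finite direct sums as $\mathcal{C}$ (since finite coproducts descend to quotient categories), and $\mathrm{Hom}_{\mathcal{C}/\mathcal{I}}(A,B) = \mathrm{Hom}_{\mathcal{C}}(A,B)/\mathcal{I}(A,B)$ is a finitely generated $R$-module as a quotient of one; the splitting of idempotents is either routine or, if needed, handled by replacing $\mathcal{C}/\mathcal{I}$ with its Karoubi envelope without affecting the relevant module categories. For the commutativity of the square, given $F \in \mathrm{mod}(\mathcal{C}/\mathcal{I})$ and $C \in \mathcal{C}$ one computes
\[
((\pi_{2})_{\ast}\circ \mathbb{D}_{\mathcal{C}/\mathcal{I}})(F)(C) = \mathrm{Hom}_{R}(F(\pi(C)), E) = \mathrm{Hom}_{R}((F\circ \pi)(C), E) = (\mathbb{D}_{\mathcal{C}}\circ (\pi_{1})_{\ast})(F)(C),
\]
with the same identification on morphisms.

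For the last stage, fix $F \in \mathrm{mod}(\mathcal{C}/\mathcal{I})$. Property $A$ together with Proposition \ref{restri2fun}(b) shows that $(\pi_{1})_{\ast}(F) \in \mathrm{mod}(\mathcal{C})$, and because $\mathcal{C}$ is dualizing, $\mathbb{D}_{\mathcal{C}}((\pi_{1})_{\ast}(F)) \in \mathrm{mod}(\mathcal{C}^{op})$. The commutative square then gives $(\pi_{2})_{\ast}(\mathbb{D}_{\mathcal{C}/\mathcal{I}}(F)) \in \mathrm{mod}(\mathcal{C}^{op})$, and this object automatically lies in $\mathrm{Ann}(\mathcal{I}^{op})$ since every $\mathcal{C}^{op}$-module pulled back along $\pi$ annihilates $\mathcal{I}^{op}$. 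Applying Proposition \ref{annintermod} to the opposite category $\mathcal{C}^{op}$---which is permissible precisely because property $A$ is symmetric and thus supplies the required epimorphisms $\mathrm{Hom}_{\mathcal{C}}(-,Y) \to \mathcal{I}(-,C)$---one obtains an equivalence between $\mathrm{mod}((\mathcal{C}/\mathcal{I})^{op})$ and $\mathrm{mod}(\mathcal{C}^{op})\cap \mathrm{Ann}(\mathcal{I}^{op})$, whose inverse is induced by $\Omega$. Since $\Omega\circ (\pi_{2})_{\ast}\cong 1$, this forces $\mathbb{D}_{\mathcal{C}/\mathcal{I}}(F) \in \mathrm{mod}((\mathcal{C}/\mathcal{I})^{op})$. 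The symmetric argument, swapping the roles of $\mathcal{C}$ and $\mathcal{C}^{op}$, gives the restriction in the other direction, and combined with the natural isomorphism $\mathbb{D}_{(\mathcal{C}/\mathcal{I})^{op}}\circ \mathbb{D}_{\mathcal{C}/\mathcal{I}} \cong 1$ on all of $(\mathcal{C}/\mathcal{I}, \mathrm{mod}(R))$, this exhibits $\mathcal{C}/\mathcal{I}$ as a dualizing $R$-variety.

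The main obstacle is the implication ``$(\pi_{2})_{\ast}(G) \in \mathrm{mod}(\mathcal{C}^{op})$ implies $G \in \mathrm{mod}((\mathcal{C}/\mathcal{I})^{op})$'', which is exactly what the dualized form of Proposition \ref{annintermod} provides and which makes the symmetric formulation of property $A$ essential: without the second epimorphism in Definition \ref{propertyA}, the opposite-side equivalence would be unavailable and the reflection through the commutative square would break down.
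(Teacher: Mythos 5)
Your proof is correct and follows essentially the same strategy as the paper: both arguments reduce to the observation that $\mathbb{D}_{\mathcal{C}}$ carries $\mathrm{mod}(\mathcal{C})\cap\mathrm{Ann}(\mathcal{I})$ into $\mathrm{mod}(\mathcal{C}^{op})\cap\mathrm{Ann}(\mathcal{I}^{op})$ and then invoke Proposition \ref{annintermod} (and its opposite version) to identify these with $\mathrm{mod}(\mathcal{C}/\mathcal{I})$ and $\mathrm{mod}((\mathcal{C}/\mathcal{I})^{op})$. The only presentational difference is that you establish commutativity of the square first by direct evaluation and then transport the finitely-presented property across it, whereas the paper shows the restriction of $\mathbb{D}_{\mathcal{C}}$ directly and reads the commutativity off at the end.
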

\begin{proof}
Let $\mathbb{D}_{\mathcal{C}}:\mathrm{mod}(\mathcal{C})\longrightarrow \mathrm{mod}(\mathcal{C}^{op})$ be the duality. It is enough to see that we have a functor $\mathbb{D}_{\mathcal{C}}:\mathrm{mod}(\mathcal{C})\cap \mathrm{Ann}(\mathcal{I})\longrightarrow \mathrm{mod}(\mathcal{C}^{op})\cap \mathrm{Ann}(\mathcal{I}^{op})$.\\
Indeed, let $M\in \mathrm{mod}(\mathcal{C})\cap \mathrm{Ann}(\mathcal{I})$ and consider $\mathbb{D}_{\mathcal{C}}(M)\in\mathrm{mod}(\mathcal{C}^{op})$. Let $f^{op}\in\mathcal{I}^{op}(B^{op},A^{op})$ then $f\in \mathcal{I}(A,B)$. Therefore, $\mathbb{D}(M)(f^{op})):=\mathrm{Hom}_{R}(M(f),I(R/r))=0$ since $M(f)=0$. Then $\mathcal{C}/\mathcal{I}$ is dualizing and $\mathbb{D}_{\mathcal{C}/\mathcal{I}}\simeq (\mathbb{D}_{\mathcal{C}})|_{\mathrm{mod}(\mathcal{C})\cap \mathrm{Ann}(\mathcal{I})}$. This way we have proved that the required diagram is commutative.
\end{proof}

The proof of the following result is similar to the one given in \cite[Proposition 2.7]{Yasuaki}. 

\begin{proposition}\label{restresfuntores}
Let $\mathcal{C}$ be a dualizing $R$-variety and $\mathcal{I}$ an ideal which satisfies property $A$.  Let $\pi_{1}:\mathcal{C}\longrightarrow \mathcal{C}/\mathcal{I}$ be the canonical functor, then we can restrict the diagram given in \ref{tresfuntores} to the finitely presented modules
$$\xymatrix{\mathrm{mod}(\mathcal{C}/\mathcal{I})\ar[rr]|{(\pi_{1}){\ast}}  &  &\mathrm{mod}(\mathcal{C})\ar@<-2ex>[ll]_{\pi_{1}^{\ast}}\ar@<2ex>[ll]^{\pi_{1}^{!}} }$$
\end{proposition}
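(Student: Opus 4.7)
By parts (a) and (b) of Proposition~\ref{restri2fun} the functors $\pi_1^{\ast}$ and $(\pi_1)_{\ast}$ already restrict to the finitely presented subcategories (property $A$ is precisely what is needed for $(\pi_1)_{\ast}$). Hence the only new content of this statement is to show that the right adjoint $\pi_1^{!}$ also sends $\mathrm{mod}(\mathcal{C})$ into $\mathrm{mod}(\mathcal{C}/\mathcal{I})$. My plan is to construct a candidate for this restriction via a detour through the Auslander--Reiten dualities, and then invoke uniqueness of adjoints to identify it with $\pi_1^{!}$.

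First, property $A$ is manifestly self-dual by Definition~\ref{propertyA}: $\mathcal{I}^{op}$ is an ideal of $\mathcal{C}^{op}$ satisfying it. Since $\mathcal{C}^{op}$ is again a dualizing $R$-variety and $\mathcal{C}/\mathcal{I}$ is dualizing by Proposition~\ref{cocienteduali}, Proposition~\ref{restri2fun}(a) applied on the opposite side furnishes a restriction
$$\pi_2^{\ast}:\mathrm{mod}(\mathcal{C}^{op})\longrightarrow \mathrm{mod}((\mathcal{C}/\mathcal{I})^{op}).$$
Writing $\mathbb{D}_{(\mathcal{C}/\mathcal{I})^{op}}:\mathrm{mod}((\mathcal{C}/\mathcal{I})^{op})\to \mathrm{mod}(\mathcal{C}/\mathcal{I})$ for the quasi-inverse of $\mathbb{D}_{\mathcal{C}/\mathcal{I}}$, I then define
$$F\;:=\;\mathbb{D}_{(\mathcal{C}/\mathcal{I})^{op}}\circ \pi_2^{\ast}\circ \mathbb{D}_{\mathcal{C}}\;:\;\mathrm{mod}(\mathcal{C})\longrightarrow \mathrm{mod}(\mathcal{C}/\mathcal{I}),$$
which is well-defined since each of its three factors is.

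Combining the commutative square $\mathbb{D}_{\mathcal{C}}\circ(\pi_1)_{\ast}=(\pi_2)_{\ast}\circ\mathbb{D}_{\mathcal{C}/\mathcal{I}}$ of Proposition~\ref{cocienteduali}, the $\mathrm{Hom}$-flipping coming from the duality $\mathbb{D}$, and the adjunction $(\pi_2^{\ast},(\pi_2)_{\ast})$, for $N\in\mathrm{mod}(\mathcal{C}/\mathcal{I})$ and $M\in\mathrm{mod}(\mathcal{C})$ I obtain a chain of natural isomorphisms
\begin{align*}
\mathrm{Hom}((\pi_1)_{\ast}N,M)
&\simeq \mathrm{Hom}(\mathbb{D}_{\mathcal{C}}M,\,\mathbb{D}_{\mathcal{C}}(\pi_1)_{\ast}N)
= \mathrm{Hom}(\mathbb{D}_{\mathcal{C}}M,\,(\pi_2)_{\ast}\mathbb{D}_{\mathcal{C}/\mathcal{I}}N) \\
&\simeq \mathrm{Hom}(\pi_2^{\ast}\mathbb{D}_{\mathcal{C}}M,\,\mathbb{D}_{\mathcal{C}/\mathcal{I}}N)
\simeq \mathrm{Hom}(N,\,F(M)).
\end{align*}
Thus $F$ is right adjoint to $(\pi_1)_{\ast}|_{\mathrm{mod}(\mathcal{C}/\mathcal{I})}$. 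Since right adjoints are unique up to natural isomorphism and $\pi_1^{!}$ is already known to be right adjoint to $(\pi_1)_{\ast}$ at the level of the full module categories (Proposition~\ref{tresfuntores}), we conclude $F\simeq \pi_1^{!}|_{\mathrm{mod}(\mathcal{C})}$; this gives the desired restriction of $\pi_1^{!}$ to finitely presented modules and completes the diagram.

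The main obstacle is bookkeeping: keeping track of two distinct dualities with their quasi-inverses and verifying that every intermediate term of the $\mathrm{Hom}$-chain lives in the finitely presented subcategory, so that the commutative square of Proposition~\ref{cocienteduali} (which is asserted on $\mathrm{mod}$) is legitimately applicable. The input that makes this possible is precisely the symmetry built into Definition~\ref{propertyA}, which lets one invoke Proposition~\ref{restri2fun}(a) on both $\mathcal{C}$ and $\mathcal{C}^{op}$ simultaneously.
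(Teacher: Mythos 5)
Your argument is correct and essentially self-contained, whereas the paper simply cites Ogawa \cite[Proposition 2.7]{Yasuaki} and asserts that the argument there transfers; since the paper does not reproduce that proof, yours provides a genuinely different, internal route. The core idea — obtaining the restriction of the right adjoint $\pi_1^{!}$ by transporting the already-established restriction of the left adjoint $\pi_2^{\ast}$ across the dualities $\mathbb{D}_{\mathcal{C}}$ and $\mathbb{D}_{\mathcal{C}/\mathcal{I}}$, using the commutative square of Proposition~\ref{cocienteduali} — is a clean exploitation of the self-duality of property $A$ together with the dualizing hypothesis, and the Hom-chain you write is correct: each intermediate term lies in a $\mathrm{mod}$-category, and the adjunction $(\pi_2^{\ast},(\pi_2)_{\ast})$ on $\mathrm{Mod}$ restricts to those subcategories because both functors land in $\mathrm{mod}$ by the same propositions you cite.

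One step should be tightened: the final identification $F\simeq\pi_1^{!}|_{\mathrm{mod}(\mathcal{C})}$ does not quite follow from ``uniqueness of right adjoints'' as stated, because $\pi_1^{!}$ and $F$ are right adjoints to $(\pi_1)_{\ast}$ with respect to different ambient categories ($\mathrm{Mod}$ versus $\mathrm{mod}$), and what is at stake is precisely whether $\pi_1^{!}(M)$ is finitely presented. The correct argument is Yoneda: for $M\in\mathrm{mod}(\mathcal{C})$, the $\mathrm{Mod}$-level adjunction of Proposition~\ref{tresfuntores} gives $\pi_1^{!}(M)(C)\simeq\mathrm{Hom}\bigl((\pi_1)_{\ast}\mathrm{Hom}_{\mathcal{C}/\mathcal{I}}(C,-),M\bigr)$, which by your chain is $\simeq\mathrm{Hom}\bigl(\mathrm{Hom}_{\mathcal{C}/\mathcal{I}}(C,-),F(M)\bigr)\simeq F(M)(C)$, naturally in $C$; hence $\pi_1^{!}(M)\simeq F(M)\in\mathrm{mod}(\mathcal{C}/\mathcal{I})$. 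Once this is made explicit your proof is complete.
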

\begin{proof}
The proof given in \cite[Proposition 2.7]{Yasuaki} works in this setting.
\end{proof}

Now, we give some examples where the property $A$ holds. We recall that 
the (Jacobson) $\textbf{radical}$ of an additive category $\mathcal{C}$ is the two-sided ideal $\mathrm{rad}_{\mathcal C}$ in $\mathcal{C}$ defined by the formula
$\mathrm{rad}_{\mathcal {C}}(X,Y)=\{h\in\mathcal{C}(X,Y)\mid 1_X-gh\text{ is invertible for any } g\in\mathcal{C}(Y,X)\}$ for all objects $X$ and $Y$ of $\mathcal{C}$.

\begin{proposition}
Let $\mathcal{C}$ be a dualizing $R$-variety and  $\mathcal{I}=\mathrm{rad}(\mathcal{C})(-,-)$  the radical ideal. Then $\mathcal{I}$ satisfies the property $A$.
\end{proposition}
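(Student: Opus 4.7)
The plan is to use that $\mathcal{C}$ is Krull--Schmidt to reduce the problem to the indecomposable case, and then to realize $\mathrm{rad}(\mathcal{C})(C,-)$ as the kernel of a morphism between finitely presented $\mathcal{C}$-modules, exploiting that simple $\mathcal{C}$-modules are finitely presented in a dualizing variety.

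First, I would note that a dualizing $R$-variety $\mathcal{C}$ is Krull--Schmidt: each endomorphism ring $\mathrm{End}_{\mathcal{C}}(X)$ is a finite $R$-algebra over the commutative artinian ring $R$, hence semiperfect, while idempotents split in $\mathcal{C}$ by the variety assumption. Thus every $C \in \mathcal{C}$ admits a decomposition $C = \bigoplus_{i=1}^{n} C_i$ with $C_i$ indecomposable and $\mathrm{End}_{\mathcal{C}}(C_i)$ local. A direct computation with the block structure of $\mathrm{End}_{\mathcal{C}}(C)$ (using locality of each $\mathrm{End}_{\mathcal{C}}(C_i)$ and the fact that $\mathrm{Hom}_{\mathcal{C}}(C_i,C_j) = \mathrm{rad}(\mathcal{C})(C_i,C_j)$ whenever $C_i \not\cong C_j$) yields
\[
\mathrm{rad}(\mathcal{C})(C,-) \;=\; \bigoplus_{i=1}^{n} \mathrm{rad}(\mathcal{C})(C_i,-),
\]
and symmetrically for $\mathrm{rad}(\mathcal{C})(-,C)$. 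Since a finite direct sum of finitely generated functors is finitely generated, it suffices to verify the property at each indecomposable summand.

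For indecomposable $C$, I would work with the canonical short exact sequence in $\mathrm{Mod}(\mathcal{C})$
\[
0 \longrightarrow \mathrm{rad}(\mathcal{C})(C,-) \longrightarrow \mathrm{Hom}_{\mathcal{C}}(C,-) \longrightarrow S_C \longrightarrow 0,
\]
where $S_C(X) := \mathrm{Hom}_{\mathcal{C}}(C,X)/\mathrm{rad}(\mathcal{C})(C,X)$ is the simple functor at $C$. The crucial input is that $S_C$ in fact lies in $\mathrm{mod}(\mathcal{C})$: identifying $S_C$ with the socle of the indecomposable finitely presented injective $\mathbb{D}_{\mathcal{C}^{op}}(\mathrm{Hom}_{\mathcal{C}}(-,C)) \in \mathrm{mod}(\mathcal{C})$, the Auslander--Reiten theory for dualizing $R$-varieties guarantees that simple functors in $\mathrm{mod}(\mathcal{C})$ are finitely presented. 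Granted $S_C \in \mathrm{mod}(\mathcal{C})$, and since $\mathrm{mod}(\mathcal{C})$ is abelian by Theorem \ref{moddualizin}, the kernel $\mathrm{rad}(\mathcal{C})(C,-)$ of a morphism between finitely presented functors is itself finitely presented. In particular it is finitely generated, producing the required epimorphism $\mathrm{Hom}_{\mathcal{C}}(X,-) \twoheadrightarrow \mathrm{rad}(\mathcal{C})(C,-)$.

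The argument for $\mathrm{rad}(\mathcal{C})(-,C)$ proceeds in entirely dual fashion, carried out in $\mathrm{Mod}(\mathcal{C}^{op})$ after noting that $\mathcal{C}^{op}$ is also a dualizing $R$-variety via $\mathbb{D}_{\mathcal{C}}$. The main obstacle is justifying that the simple functor $S_C$ is finitely presented: this is folklore for dualizing varieties but genuinely relies on the deeper structural theory of $\mathrm{mod}(\mathcal{C})$ due to Auslander and Reiten, and is the one non-routine step in the argument.
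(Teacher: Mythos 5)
The paper's own proof is a one-line citation to Iyama's \cite[Prop.\ 2.10(2)]{Iyama1}, which asserts the existence of source and sink morphisms in a dualizing $R$-variety; this is exactly the statement that $\mathrm{rad}_{\mathcal{C}}(C,-)$ and $\mathrm{rad}_{\mathcal{C}}(-,C)$ are finitely generated. Your argument ultimately rests on the same non-routine input, repackaged as ``simple functors in $\mathrm{mod}(\mathcal{C})$ are finitely presented,'' and the two formulations are equivalent via the short exact sequence
\[
0 \longrightarrow \mathrm{rad}_{\mathcal{C}}(C,-) \longrightarrow \mathrm{Hom}_{\mathcal{C}}(C,-) \longrightarrow S_C \longrightarrow 0
\]
that you write down. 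So this is essentially the same route as the paper, with the organizing scaffolding made visible. The reduction to indecomposable $C$ via Krull--Schmidt is correct but not really needed: additivity of $\mathrm{rad}_{\mathcal{C}}$ already gives the decomposition $\mathrm{rad}_{\mathcal{C}}(\bigoplus C_i,-) = \bigoplus \mathrm{rad}_{\mathcal{C}}(C_i,-)$, and the top of $\mathrm{Hom}_{\mathcal{C}}(C,-)$ for general $C$ is a finite direct sum of simples, so the same conclusion follows directly.

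One step in your sketch deserves a warning, even though you flag it as the crux: identifying $S_C$ with $\mathrm{soc}\bigl(\mathbb{D}_{\mathcal{C}^{op}}(\mathrm{Hom}_{\mathcal{C}}(-,C))\bigr)$ does not by itself establish $S_C \in \mathrm{mod}(\mathcal{C})$, because socles of finitely presented objects need not be finitely presented in $\mathrm{Mod}(\mathcal{C})$ in general; $\mathrm{mod}(\mathcal{C})$ is closed under kernels of maps between its objects, not under arbitrary subobjects. Moreover, to identify the socle of the injective with $\mathbb{D}(S_C^{op})$ one already needs $S_C^{op}$ to live in $\mathrm{mod}(\mathcal{C}^{op})$, which is precisely the dual of the claim being justified. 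The correct underlying theorem (that simple functors are finitely presented in a dualizing $R$-variety) is not a consequence of the socle identification; it is an independent structural fact due to Auslander--Reiten, and the paper's reference to Iyama is the appropriate resolution. Since you acknowledge this as the non-routine step, the proposal is honest about where the real content lies, but the socle argument should not be read as furnishing a proof of it.
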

\begin{proof}
See \cite[Prop. 2.10 (2)]{Iyama1} in p. 128.
\end{proof}

In order to give more examples of ideals satisfying the property $A$ we recall the following definition.

\begin{definition}\label{finitype}
Let $\mathcal{C}$ be small abelian $R$-category with the following properties.
\begin{enumerate}
\item [(a)] There is only a finite number of nonisomorphic simple objects in $\mathcal{C}$.
\item [(b)] Every object in $\mathcal{C}$ is of finite length.
\end{enumerate}
(It is well known that under this hypothesis $\mathcal{C}$ is a Krull-Schmidt category). It is said that $\mathcal{C}$ is of $\textbf{finite representation type}$ if $\mathcal{C}$  has only a finite number of non-isomorphic indecomposable objects (see \cite[p. 12]{AusM2}).
\end{definition}

Following the notation in  \cite[p. 3]{AusM2}, an object $M\in \mathrm{Mod}(\mathcal{C})$ is $\textbf{finite}$ if it is both noetherian an artinian. That is, if $M$ satisfies the ascending and descending chain condition on submodules.

\begin{proposition}
Let $\mathcal{C}$ be of finite representation type as in definition \ref{finitype}. Then  every ideal $\mathcal{I}(-,-)$ in $\mathcal{C}$ satisfies property $A$.
\end{proposition}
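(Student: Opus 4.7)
The plan is to reduce Property $A$ to the statement that every subfunctor of a representable $\mathcal{C}$-module is finitely generated, and then to exploit the equivalence between $\mathrm{Mod}(\mathcal{C})$ and the module category of an Auslander-type algebra attached to an additive generator. Since $\mathcal{C}$ is Krull--Schmidt with only finitely many non-isomorphic indecomposable objects $X_{1},\ldots,X_{n}$, the object $X:=X_{1}\oplus\cdots\oplus X_{n}$ is an additive generator, so $\mathcal{C}=\mathrm{add}(X)$, and setting $\Gamma:=\mathrm{End}_{\mathcal{C}}(X)$, the evaluation functor $(-)(X):\mathrm{Mod}(\mathcal{C})\longrightarrow \mathrm{Mod}(\Gamma)$ is an equivalence of categories. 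Under this equivalence $\mathrm{Hom}_{\mathcal{C}}(C,-)$ corresponds to the finitely generated projective $\Gamma$-module $\mathrm{Hom}_{\mathcal{C}}(C,X)$.

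The central step is to show that $\mathrm{Hom}_{\mathcal{C}}(C,-)$ is a \emph{noetherian} object of $\mathrm{Mod}(\mathcal{C})$. Via the equivalence above, this reduces to showing that the corresponding finitely generated projective $\Gamma$-module is noetherian. Because $\mathcal{C}$ has only finitely many simples and every object is of finite length, a Jordan--H\"older argument makes every Hom-space $\mathcal{C}(X_{i},X_{j})$ a module of finite length over $\mathrm{End}_{\mathcal{C}}(X_{j})$, whence $\Gamma$ is semiprimary and in particular left noetherian; this is the ``finite $=$ noetherian $+$ artinian'' content cited from \cite[p.~3]{AusM2}. Granted noetherianness, the subfunctor $\mathcal{I}(C,-)\subseteq\mathrm{Hom}_{\mathcal{C}}(C,-)$ is finitely generated, so there exist morphisms $f_{k}\in\mathcal{I}(C,Y_{k})$ with $Y_{k}\in\mathcal{C}$, $k=1,\ldots,r$, providing an epimorphism $\bigoplus_{k=1}^{r}\mathrm{Hom}_{\mathcal{C}}(Y_{k},-)\longrightarrow\mathcal{I}(C,-)\longrightarrow 0$. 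Since $\mathcal{C}$ is additive, $Y:=\bigoplus_{k}Y_{k}\in\mathcal{C}$, and this rewrites as the first epimorphism $\mathrm{Hom}_{\mathcal{C}}(Y,-)\longrightarrow\mathcal{I}(C,-)\longrightarrow 0$ required by Property $A$.

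For the second epimorphism I would apply exactly the same argument to $\mathcal{C}^{op}$, which is again small abelian, Krull--Schmidt, and of finite representation type with indecomposables $X_{i}^{op}$, and to the opposite ideal $\mathcal{I}^{op}$; this yields the symmetric epimorphism $\mathrm{Hom}_{\mathcal{C}}(-,Y')\longrightarrow\mathcal{I}(-,C)\longrightarrow 0$. The main obstacle is the noetherianness statement in the central step: one must identify $\Gamma$ as a genuinely artinian (or at least left noetherian) ring, which requires carefully combining the finite-representation-type hypothesis with the finite-length hypothesis on objects of $\mathcal{C}$, as in Auslander's original treatment. Once that ring-theoretic fact is in hand, everything else in the argument is formal transport along the Yoneda-type equivalence $\mathrm{Mod}(\mathcal{C})\simeq\mathrm{Mod}(\Gamma)$.
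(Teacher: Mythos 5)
Your proposal takes a genuinely different route from the paper's proof: you pass to the endomorphism ring $\Gamma=\mathrm{End}_{\mathcal{C}}(X)$ of an additive generator and argue inside $\mathrm{Mod}(\Gamma)$, whereas the paper stays entirely inside the functor category. The paper's argument is: by \cite[3.1]{AusM2} each representable $\mathrm{Hom}_{\mathcal{C}}(C,-)$ and $\mathrm{Hom}_{\mathcal{C}}(-,C)$ is a \emph{finite} (noetherian and artinian) object of $\mathrm{Mod}(\mathcal{C})$; finite objects form a Serre subcategory, so the subfunctors $\mathcal{I}(C,-)$ and $\mathcal{I}(-,C)$ are again finite; and by \cite[Corollary 1.7]{AusM2} finite objects are finitely generated, which is exactly property $A$. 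Your reduction to $\Gamma$ is conceptually clean and buys concreteness (ordinary modules over a ring instead of a functor category), but it does not shorten the work: the finiteness of the representables is \emph{the} content you would be transporting across the equivalence, and you still have to establish it.

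The one concrete flaw in the sketch is the inference ``$\Gamma$ is semiprimary and in particular left noetherian.'' Semiprimary does not imply noetherian; the standard counterexample is a triangular matrix ring $\begin{pmatrix} k & V \\ 0 & k\end{pmatrix}$ with $V$ an infinite-dimensional $k$-vector space, which is semiprimary (radical squared zero, semisimple quotient) but not left noetherian. Likewise, the claim that a Jordan--H\"older filtration makes $\mathcal{C}(X_{i},X_{j})$ of finite length over $\mathrm{End}_{\mathcal{C}}(X_{j})$ is not automatic from the hypotheses: the filtration reduces matters to Hom-spaces between simples, but there is no a priori bound on the $\mathrm{End}_{\mathcal{C}}(X_{j})$-length of those pieces. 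You do flag the noetherianness of $\Gamma$ as the crux and defer to ``Auslander's original treatment,'' which is fair, and it is precisely the reference the paper invokes: under the evaluation equivalence, \cite[3.1]{AusM2} is the statement that $\Gamma$ is a finite $\Gamma$-module, i.e.\ left noetherian and artinian. If you replace the semiprimary shortcut with that citation, your argument closes; as it stands, the central step is asserted rather than proved.
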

\begin{proof}
By \cite[3.6 (a) and (b)]{AusM2}, we have that $\mathrm{Mod}(\mathcal{C})$ and $\mathrm{Mod}(\mathcal{C}^{op})$ are locally finite. By \cite[3.1]{AusM2}, we have that each $\mathrm{Hom}_{\mathcal{C}}(C,-)$ and  $\mathrm{Hom}_{\mathcal{C}}(-,C)$ are finite for each $C\in \mathcal{C}$. Since the subcategory of finite modules is a Serre subcategory, we have that the submodules of $\mathrm{Hom}_{\mathcal{C}}(C,-)$  and $\mathrm{Hom}_{\mathcal{C}}(-,C)$ are finite. In particular, each $\mathcal{I}(C,-)$ and $\mathcal{I}(-,C)$ are finite. By \cite[Corollary 1.7]{AusM2}, we have that $\mathcal{I}(C,-)$ and $\mathcal{I}(-,C)$ are finitely generated. Then $\mathcal{I}$ satisfies property $A$.
\end{proof}

\begin{corollary}
If $\Lambda$ is an artin algebra of finite representation type, then every ideal in $\mathcal{C}=\mathrm{mod}(\Lambda)$ satisfies property $A$.
\end{corollary}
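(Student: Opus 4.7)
The plan is to reduce the corollary directly to the previous proposition by verifying that the additive category $\mathcal{C}=\mathrm{mod}(\Lambda)$ satisfies the hypotheses of Definition \ref{finitype} whenever $\Lambda$ is an artin algebra of finite representation type. Because the previous result concludes ``every ideal satisfies property $A$'' from those hypotheses, once the verification is done the corollary is immediate.

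First, I would recall the standard facts about an artin $R$-algebra $\Lambda$: the category $\mathrm{mod}(\Lambda)$ is an abelian $R$-category, it is essentially small (a skeleton is a set), every $\Lambda$-module in $\mathrm{mod}(\Lambda)$ has finite length since $\Lambda$ is an artin algebra, and up to isomorphism there are only finitely many simple $\Lambda$-modules (in bijection with the primitive idempotents of $\Lambda/\mathrm{rad}(\Lambda)$). This already gives conditions (a) and (b) in Definition \ref{finitype}.

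Next, I would translate the hypothesis ``$\Lambda$ is of finite representation type'' into the statement that $\mathrm{mod}(\Lambda)$ has only finitely many non-isomorphic indecomposable objects, which is precisely the definition of finite representation type for the abelian category $\mathcal{C}=\mathrm{mod}(\Lambda)$ in the sense of Definition \ref{finitype}. By the Krull--Schmidt property for $\mathrm{mod}(\Lambda)$, every object decomposes into finitely many indecomposables with local endomorphism rings, consistent with the remark included in Definition \ref{finitype}.

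Having verified all the hypotheses, the previous proposition applies to $\mathcal{C}=\mathrm{mod}(\Lambda)$ and yields that every ideal $\mathcal{I}(-,-)$ in $\mathrm{mod}(\Lambda)$ satisfies property $A$. The only part that is not completely automatic is the implicit matching of frameworks (viewing $\mathrm{mod}(\Lambda)$ simultaneously as the abelian category of modules and as the preadditive ``ring with several objects'' to which the notion of ideal and property $A$ in Definition \ref{propertyA} is applied); I do not expect any real obstacle here, since both the domain of ideals and the category of functors $\mathrm{Mod}(\mathrm{mod}(\Lambda))$ are built from the same Hom-sets, and the finiteness of submodules of each $\mathrm{Hom}_{\mathrm{mod}(\Lambda)}(M,-)$ and $\mathrm{Hom}_{\mathrm{mod}(\Lambda)}(-,M)$ used in the proof of the preceding proposition carries over verbatim.
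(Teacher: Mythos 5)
Your proposal is correct and follows exactly the route the paper intends (the corollary is stated without proof, and the expected argument is precisely the verification that $\mathrm{mod}(\Lambda)$ meets the hypotheses of Definition~\ref{finitype} followed by an application of the preceding proposition). You correctly note that $\Lambda$ being artin gives finitely many simples and finite length for every finitely generated module, that finite representation type of $\Lambda$ translates to finitely many indecomposables in $\mathrm{mod}(\Lambda)$, and that the Krull--Schmidt property holds; the only caveat worth naming — that ``small'' here means essentially small — you flag appropriately.
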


We recall the following notions. Let $\mathcal{A}$ be an arbitrary category and $\mathcal{B}$ a full subcategory in $\mathcal{A}$.  The full subcategory $\mathcal{B}$ is $\textbf{contravariantly finite}$ if for every $A\in \mathcal{A}$ there exists a morphism $f_{A}:B\longrightarrow A$ with $B\in \mathcal{B}$ such that if $f':B'\longrightarrow A$ is other morphism with $B'\in \mathcal{B}$, then there exist a morphism $g:B'\longrightarrow B$ such that $f'=f_{A}\circ g$. Dually, is defined the notion of $\textbf{covariantly finite}$. We say that $\mathcal{B}$ is $\textbf{functorially finite}$ if $\mathcal{B}$ is contravariantly finite and covariantly finite.\\

\begin{proposition}\label{euifunproA}
Let $\mathcal{C}$ be an additive category and $\mathcal{X}$ an additive full subcategory of $\mathcal{C}$. Let  $\mathcal{I}=\mathcal{I}_{\mathcal{X}}$  be the ideal of morphisms in $\mathcal{C}$ which factor through some object in $\mathcal{X}$. Then $\mathcal{I}$ satisfies property $A$ 
if and only if $\mathcal{X}$ is functorially finite in $\mathcal{C}$.
\end{proposition}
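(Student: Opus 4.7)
My plan is to apply the Yoneda lemma to translate property $A$ into explicit factorization statements about morphisms, and then observe that these are precisely the defining properties of left/right $\mathcal{X}$-approximations. The two halves of property $A$ will correspond, respectively, to covariant and contravariant finiteness of $\mathcal{X}$.

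First I would unpack what the first half of property $A$ means for $\mathcal{I}=\mathcal{I}_{\mathcal{X}}$. By Yoneda, a natural transformation $\mathrm{Hom}_{\mathcal{C}}(X,-)\to \mathcal{I}(C,-)$ corresponds to an element $\alpha\in\mathcal{I}(C,X)$, i.e.\ a morphism $\alpha:C\to X$ that factors through $\mathcal{X}$, say $\alpha=g\circ h$ with $h:C\to X_{0}$, $g:X_{0}\to X$ and $X_{0}\in\mathcal{X}$. Evaluated at $C'$, this natural transformation sends $\beta:X\to C'$ to $\beta\circ\alpha$. So being an epimorphism says: every $f\in\mathcal{I}(C,C')$ factors as $\beta\circ g\circ h$, and hence factors through $h:C\to X_{0}$. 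A symmetric analysis handles the second half of property $A$ via morphisms $X_{0}\to C$.

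For the direction ($\Leftarrow$), assume $\mathcal{X}$ is functorially finite and fix $C\in\mathcal{C}$. Choose a left $\mathcal{X}$-approximation $h_{C}:C\to X_{0}$, noting $h_{C}\in\mathcal{I}(C,X_{0})$ trivially. I claim the induced natural transformation $\eta:\mathrm{Hom}_{\mathcal{C}}(X_{0},-)\to\mathcal{I}(C,-)$ given by $\beta\mapsto\beta\circ h_{C}$ is an epimorphism: any $f\in\mathcal{I}(C,C')$ admits a factorization $f=g\circ h$ with $h:C\to X'$, $g:X'\to C'$, $X'\in\mathcal{X}$, and by the approximation property $h=\phi\circ h_{C}$ for some $\phi:X_{0}\to X'$, giving $f=(g\circ\phi)\circ h_{C}$. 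The second epimorphism required by property $A$ is produced dually from a right $\mathcal{X}$-approximation.

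For the direction ($\Rightarrow$), assume property $A$ and take the epimorphism $\mathrm{Hom}_{\mathcal{C}}(X,-)\to\mathcal{I}(C,-)$ corresponding to some $\alpha=g\circ h_{C}\in\mathcal{I}(C,X)$ with $h_{C}:C\to X_{0}$ and $X_{0}\in\mathcal{X}$. I then check that $h_{C}$ is a left $\mathcal{X}$-approximation: for any $h':C\to X'$ with $X'\in\mathcal{X}$ we have $h'=1_{X'}\circ h'\in\mathcal{I}(C,X')$, so surjectivity yields $h'=\beta\circ\alpha=(\beta\circ g)\circ h_{C}$ for some $\beta:X\to X'$. This proves $\mathcal{X}$ is covariantly finite, and the dual argument on the second epimorphism proves contravariant finiteness.

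The only real subtlety, and where I expect the main obstacle to lie, is that the object $X$ appearing in property $A$ is not required to belong to $\mathcal{X}$; the approximating object must be extracted from the factorization of $\alpha$ through $\mathcal{X}$. Once one recognises that $X_{0}$ rather than $X$ is the relevant target, both implications reduce to routine applications of Yoneda and of the definition of $\mathcal{I}_{\mathcal{X}}$.
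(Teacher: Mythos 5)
Your argument is correct and follows essentially the same route as the paper: use Yoneda to identify natural transformations $\mathrm{Hom}_{\mathcal{C}}(X_0,-)\to\mathcal{I}(C,-)$ with morphisms $C\to X_0$ factoring through $\mathcal{X}$, and check that epimorphism onto $\mathcal{I}(C,-)$ is exactly the left $\mathcal{X}$-approximation condition (dually for the other half). The paper only writes out the ($\Leftarrow$) direction and leaves ($\Rightarrow$) to the reader; your treatment of ($\Rightarrow$), extracting the approximating object $X_0\in\mathcal{X}$ from the factorization of $\alpha$ rather than using $X$ itself, is the right way to fill that in.
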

\begin{proof}
$(\Longleftarrow)$. Suppose that $\mathcal{X}$ is contravariantly finite. Then for each $C\in \mathcal{C}$, there exists a right $\mathcal{X}$-approximation $f_{C}:X\longrightarrow C$. Thus,  we have a morphism
$$\mathrm{Hom}_{\mathcal{C}}(-,f_{C}):\mathrm{Hom}_{\mathcal{C}}(-,X)\longrightarrow \mathrm{Hom}_{\mathcal{C}}(-,C).$$
We assert that $\mathrm{Im}(\mathrm{Hom}_{\mathcal{C}}(-,f_{C}))=\mathcal{I}(-,C)$. Indeed, let $C'\in \mathcal{C}$ and $\alpha\in \mathcal{I}(C',C)$. Since $\mathcal{I}=\mathcal{I}_{\mathcal{X}}$, there exists $X'\in \mathcal{X}$ and morphisms $\alpha':C'\longrightarrow X'$ and $\alpha'':X'\longrightarrow C$ such that $\alpha=\alpha''\alpha'$. Since $f_{C}$ is an $\mathcal{X}$-approximation, there exists $\beta:X'\longrightarrow X$ such that $\alpha''=f_{C}\beta$. Then
$\alpha=\alpha''\alpha'=f_{C}\beta\alpha'$. Then we have that $\alpha\in  \mathrm{Im}(\mathrm{Hom}_{\mathcal{C}}(-,f_{C})_{C'})$. Now, for $\gamma:C'\longrightarrow X$ we have that $(\mathrm{Hom}_{\mathcal{C}}(-,f_{C}))_{C'}(\gamma)=f_{C}\gamma\in \mathcal{I}(C',C)$ since $f_{C}\gamma$ factors through $X\in \mathcal{X}$ and $\mathcal{I}=\mathcal{I}_{\mathcal{X}}$, proving that $\mathrm{Im}(\mathrm{Hom}_{\mathcal{C}}(-,f_{C}))=\mathcal{I}(-,C)$. Thus,  there exists an epimorphism $\mathrm{Hom}_{\mathcal{C}}(-,f_{C}):\mathrm{Hom}_{\mathcal{C}}(-,X)\longrightarrow \mathcal{I}(-,C).$
Similarly, we can prove that if $\mathcal{X}$ is covariantly finite, then there exists and epimorphism $\mathrm{Hom}_{\mathcal{C}}(Y,-)\longrightarrow \mathcal{I}(C,-)\longrightarrow 0$ for each $C\in \mathcal{C}$. Therefore, we have that if $\mathcal{X}$ is functorially finite, then $\mathcal{I}$ satisfies property $A$.\\
The other implication is similar and the details are left to the reader.
\end{proof}

Now let us consider the $\textbf{transfinite radical}$ of $\mathcal{C}$ denoted by $\mathrm{rad}^{\ast}_{\mathcal{C}}(-,-)$ (see \cite{Stovi} for details).\\

\begin{proposition}
Let $\mathcal{C}$ be Hom-finite $R$-variety and suppose that  $\mathrm{rad}^{\ast}_{\mathcal{C}}(-,-)=0$. Let $\mathcal{I}$ be an idempotent ideal in $\mathcal{C}$ and let $\mathcal{X}=\{X\in \mathcal{C}\mid 1_{X}\in \mathcal{I}(X,X)\}.$
If $\mathcal{X}$ is functorially finite, then $\mathcal{I}$ satisfies property $A$.
\end{proposition}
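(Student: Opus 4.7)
The plan is to show that under the hypotheses $\mathcal{I}$ coincides with the ideal $\mathcal{I}_{\mathcal{X}}$ of morphisms factoring through objects of $\mathcal{X}$, and then invoke Proposition \ref{euifunproA}. The inclusion $\mathcal{I}_{\mathcal{X}}\subseteq \mathcal{I}$ is immediate: if $f=\alpha\beta$ with the middle object $X$ in $\mathcal{X}$, then $f=\alpha\circ 1_{X}\circ \beta$ with $1_{X}\in\mathcal{I}(X,X)$, so $f\in\mathcal{I}$ by the ideal axiom. The real content is the reverse inclusion $\mathcal{I}\subseteq \mathcal{I}_{\mathcal{X}}$, which is where the idempotence, the Krull--Schmidt structure of a Hom-finite $R$-variety, and the vanishing of $\mathrm{rad}^{\ast}_{\mathcal{C}}$ all enter.

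The key elementary observation is the following: if $C,C'\in \mathcal{C}$ are indecomposable and neither lies in $\mathcal{X}$, and $h\in \mathcal{I}(C,C')$ is an isomorphism, then $1_{C}=h^{-1}h\in \mathcal{I}(C,C)$ would force $C\in\mathcal{X}$, a contradiction. Hence every morphism of $\mathcal{I}$ between two indecomposables lying outside $\mathcal{X}$ is non-invertible and therefore belongs to $\mathrm{rad}_{\mathcal{C}}$. Using this, I would prove by transfinite induction on the ordinal $\alpha$ that
\[
\mathcal{I}(A,B)\subseteq \mathcal{I}_{\mathcal{X}}(A,B)+\mathrm{rad}^{\alpha}_{\mathcal{C}}(A,B)\qquad \text{for every }A,B\in\mathcal{C}.
\]
The base case $\alpha=0$ is trivial. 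For the successor step, given $f\in\mathcal{I}(A,B)$ one may assume $A$ and $B$ are indecomposable and not in $\mathcal{X}$ (otherwise $f$ factors through $A$ or $B\in\mathcal{X}$ via $1_{A}$ or $1_{B}\in\mathcal{I}$). Use $\mathcal{I}=\mathcal{I}^{2}$ and Krull--Schmidt to write $f=\sum_{i}\alpha_{i}\beta_{i}$ with $\beta_{i}\in\mathcal{I}(A,C_{i})$ and $\alpha_{i}\in\mathcal{I}(C_{i},B)$, each $C_{i}$ indecomposable. Summands with $C_{i}\in\mathcal{X}$ lie in $\mathcal{I}_{\mathcal{X}}$; for $C_{i}\notin\mathcal{X}$, the inductive hypothesis gives $\alpha_{i}=\alpha_{i}'+\alpha_{i}''$ with $\alpha_{i}'\in\mathcal{I}_{\mathcal{X}}$ and $\alpha_{i}''\in\mathrm{rad}^{\alpha}$, while the key observation yields $\beta_{i}\in\mathrm{rad}$, so $\alpha_{i}''\beta_{i}\in\mathrm{rad}^{\alpha+1}$ and $\alpha_{i}'\beta_{i}\in\mathcal{I}_{\mathcal{X}}$. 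For limit $\lambda$, the inclusion at stage $\lambda$ follows by taking intersections in the finite length $R$-module $\mathrm{Hom}_{\mathcal{C}}(A,B)$.

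Once this is established, the hypothesis $\mathrm{rad}^{\ast}_{\mathcal{C}}=0$ finishes the argument: since $\mathrm{Hom}_{\mathcal{C}}(A,B)$ has finite length as an $R$-module, the transfinite descending chain $\{\mathrm{rad}^{\alpha}_{\mathcal{C}}(A,B)\}_{\alpha}$ of $R$-submodules stabilises at its limit value $\mathrm{rad}^{\ast}_{\mathcal{C}}(A,B)$, which is zero by assumption. Choosing $\alpha$ large enough that $\mathrm{rad}^{\alpha}_{\mathcal{C}}(A,B)=0$ yields $\mathcal{I}(A,B)\subseteq \mathcal{I}_{\mathcal{X}}(A,B)$, and thus $\mathcal{I}=\mathcal{I}_{\mathcal{X}}$. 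Since $\mathcal{X}$ is functorially finite, Proposition \ref{euifunproA} then gives that $\mathcal{I}=\mathcal{I}_{\mathcal{X}}$ satisfies property $A$, completing the proof.

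The main obstacle is carrying out the successor step of the transfinite induction cleanly, specifically extracting simultaneously (i) the idempotent factorisation $\mathcal{I}=\mathcal{I}^{2}$, (ii) the Krull--Schmidt decomposition of the intermediate object, and (iii) the inductive information that raises the radical exponent by one. The limit step is essentially a book-keeping argument in a finite length $R$-module, and the base case is trivial; the delicate point is ensuring that the non-$\mathcal{X}$ summands in the factorisation genuinely produce morphisms in $\mathrm{rad}^{\alpha+1}$ rather than merely in $\mathrm{rad}\cdot\mathrm{rad}^{\alpha-1}$ or similar.
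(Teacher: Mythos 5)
Your proposal follows the same overall route as the paper's: establish the equality $\mathcal{I}=\mathcal{I}_{\mathcal{X}}$ and then invoke Proposition \ref{euifunproA}, which is exactly what the paper's proof does. The difference is that the paper dispatches the crucial identity $\mathcal{I}=\mathcal{I}_{\mathcal{X}}$ in one line by citing \cite[Corollary 10]{Stovi}, whereas you reconstruct a direct argument for it. In effect you are re-proving the cited result of \v{S}\v{t}ov\'{\i}\v{c}ek rather than using it as a black box. Your mechanism is sound: the trivial inclusion $\mathcal{I}_{\mathcal{X}}\subseteq\mathcal{I}$; the observation that a morphism of $\mathcal{I}$ between indecomposables neither of which lies in $\mathcal{X}$ cannot be invertible (else $1$ would lie in $\mathcal{I}$), hence lies in $\mathrm{rad}_{\mathcal{C}}$; the use of $\mathcal{I}=\mathcal{I}^{2}$ together with Krull--Schmidt to write any $f\in\mathcal{I}(A,B)$ as a sum of compositions through indecomposable intermediate objects, splitting into an $\mathcal{I}_{\mathcal{X}}$-part and a part that, by the inductive hypothesis, picks up one extra power of $\mathrm{rad}$; and the conclusion from $\mathrm{rad}^{\ast}_{\mathcal{C}}=0$. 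The subtlety you flag in the successor step is a non-issue so long as you fix the convention $\mathrm{rad}^{\alpha+1}=\mathrm{rad}^{\alpha}\cdot\mathrm{rad}$ and note that $\mathcal{I}_{\mathcal{X}}$ and each $\mathrm{rad}^{\alpha}$ are two-sided ideals, so that both pieces $\alpha_{i}'\beta_{i}$ and $\alpha_{i}''\beta_{i}$ land where you want them. The limit step is fine too: for a fixed pair $(A,B)$, $\mathrm{Hom}_{\mathcal{C}}(A,B)$ has finite $R$-length, so the decreasing chain $\{\mathrm{rad}^{\alpha}(A,B)\}_{\alpha<\lambda}$ has only finitely many distinct values and hence stabilises strictly below $\lambda$, which is what lets you push the intersection past the summand $\mathcal{I}_{\mathcal{X}}(A,B)$. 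The trade-off is clear: the paper's proof is shorter and delegates the transfinite bookkeeping, while yours is more self-contained and makes visible exactly how idempotence, Krull--Schmidt, and $\mathrm{rad}^{\ast}=0$ combine to force $\mathcal{I}=\mathcal{I}_{\mathcal{X}}$.
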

\begin{proof}
Since $R$ is artinian and $\mathcal{C}$ is a Hom-finite $R$-variety, we have by  \cite{Stovi}, that $\mathcal{C}$ is Krull-Schmidt with local d.c.c on ideals (see \cite[Definition 5]{Stovi}). By \cite[Corollary 10]{Stovi}, we have that $\mathcal{I}=\mathcal{I}_{\mathcal{X}}$ where $\mathcal{X}=\{X\in \mathcal{C}\mid 1_{X}\in \mathcal{I}(X,X)\}.$ Now, if $\mathcal{X}$ is functorially finite, by \ref{euifunproA}, we have that $\mathcal{I}$ satisfies property $A$.
\end{proof}

\begin{example}
Let $\mathcal{C}=\mathrm{mod}(\Lambda)$ where  $\Lambda$ is a finite dimensional $K$-algebra over an algebraically closed field. If $\Lambda$ is a
standard selfinjective algebra of domestic representation type or $\Lambda$ is a special biserial algebra of domestic representation type, then  $\mathrm{rad}^{\ast}_{\mathcal{C}}(-,-)=0$ (see  \cite{Schroer}). We recall that  $\Lambda$ is of domestic representation type if there is a natural number $N$ such that for each dimension d, all but finitely many indecomposable modules of dimension d belong to at most $N $ one-parameter families.
\end{example}

\section{$k$-idempotent ideals}\label{sec:5}
In this section we will work in preadditive categories as well as in dualizing $R$-varieties. So, we will say explicitely in which context we are working on.\\
We introduce the definition of $k$-idempotent ideal in $\mathcal{C}$, which is the analogous to the one given by Aulander-Platzeck-Todorov in \cite{APG} for the case of artin algebras.
In order to do this we consider the morphisms given in \ref{morenexte}.
Then for $F,F'\in \mathrm{Mod}(\mathcal{C}/\mathcal{I})$ we have canonical morphisms
$$\varphi^{i}_{F,\pi_{\ast}(F')}:\mathrm{Ext}^{i}_{\mathrm{Mod}(\mathcal{C}/I)}(F,F')\longrightarrow \mathrm{Ext}^{i}_{\mathrm{Mod}(\mathcal{C})}(\pi_{\ast}(F),\pi_{\ast}(F')).$$

\begin{definition}\label{kidemcat}
Let $\mathcal{C}$ be a preadditive category  and $\mathcal{I}$ an ideal in $\mathcal{C}$.
\begin{enumerate}
\item [(a)] We say that $\mathcal{I}$ is $\textbf{k-idempotent}$ if 
$$\varphi^{i}_{F,\pi_{\ast}(F')}:\mathrm{Ext}^{i}_{\mathrm{Mod}(\mathcal{C}/I)}(F,F')\longrightarrow \mathrm{Ext}^{i}_{\mathrm{Mod}(\mathcal{C})}(\pi_{\ast}(F),\pi_{\ast}(F'))$$ is an isomorphism for all $F,F'\in \mathrm{Mod}(\mathcal{C}/\mathcal{I})$ and for all  $0\leq i\leq k$.

\item [(b)] We say that $\mathcal{I}$ is $\textbf{strongly idempotent}$ if 
$$\varphi^{i}_{F,\pi_{\ast}(F')}:\mathrm{Ext}^{i}_{\mathrm{Mod}(\mathcal{C}/I)}(F,F')\longrightarrow \mathrm{Ext}^{i}_{\mathrm{Mod}(\mathcal{C})}(\pi_{\ast}(F),\pi_{\ast}(F'))$$ is an isomorphism for all $F,F'\in \mathrm{Mod}(\mathcal{C}/\mathcal{I})$ and for all $0\leq i < \infty$.
\end{enumerate}
\end{definition}

We note that we have defined $k$-idempotent ideal in $\mathrm{Mod}(\mathcal{C})$, but this concept can also be defined in the category of finitely presented $\mathcal{C}$-modules $\mathrm{mod}(\mathcal{C})$. Next, we have a characterization of $k$-idempotent ideals in terms of the vanishing of certain derived functors.

\begin{proposition}\label{caractidem}
Let $\mathcal{C}$ be a preadditive category, $\mathcal{I}$ an ideal in $\mathcal{C}$ and $1\leq i\leq k$.  The following conditions are equivalent.
\begin{enumerate}
\item [(a)] $\mathcal{I}$ es $k$-idempotent
\item [(b)] $\varphi^{i}_{F,\pi_{\ast}(F')}:\mathrm{Ext}^{i}_{\mathrm{Mod}(\mathcal{C}/I)}(F,F')\longrightarrow \mathrm{Ext}^{i}_{\mathrm{Mod}(\mathcal{C})}(\pi_{\ast}(F),\pi_{\ast}(F'))$ is an isomorphism for all $F,F'\in \mathrm{Mod}(\mathcal{C}/\mathcal{I})$ and for all  $0\leq i\leq k$.

\item [(c)] $\mathbb{EXT}^{i}_{\mathcal{C}}(\mathcal{C}/\mathcal{I},F'\circ \pi)=0$ for $1\leq i\leq k$ and for $F'\in \mathrm{Mod}(\mathcal{C}/\mathcal{I})$.

\item [(d)] $\mathbb{EXT}^{i}_{\mathcal{C}}(\mathcal{C}/\mathcal{I},J\circ \pi)=0$ for $1\leq i\leq k$ and for each $J\in \mathrm{Mod}(\mathcal{C}/\mathcal{I})$ which is injective.

\end{enumerate}
\end{proposition}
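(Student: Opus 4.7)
The plan is to use Proposition~\ref{Extiniciores} as a black box to bridge (a)/(b) with (c), and then pass from (c) to (d) by a standard dimension-shifting argument.

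First, I would observe that (a) and (b) coincide with the definition of $k$-idempotence (see \ref{kidemcat}), so the only content is the equivalence with (c) and (d). For (b)$\Leftrightarrow$(c), I would specialize Proposition~\ref{Extiniciores} to the module $G:=\pi_{\ast}(F')$ where $F'\in\mathrm{Mod}(\mathcal{C}/\mathcal{I})$. Recall that $\overline{\mathrm{Tr}}_{\frac{\mathcal{C}}{\mathcal{I}}}\circ\pi_{\ast}=1_{\mathrm{Mod}(\mathcal{C}/\mathcal{I})}$ (as established in the proof of \ref{firstadjoin}), so in this case $\overline{\mathrm{Tr}}_{\frac{\mathcal{C}}{\mathcal{I}}}(G)=F'$. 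Thus the map $\varphi^{i}_{F,G}$ of \ref{morenexte} becomes exactly
\[
\varphi^{i}_{F,\pi_{\ast}(F')}:\mathrm{Ext}^{i}_{\mathrm{Mod}(\mathcal{C}/\mathcal{I})}(F,F')\longrightarrow \mathrm{Ext}^{i}_{\mathrm{Mod}(\mathcal{C})}(\pi_{\ast}(F),\pi_{\ast}(F')),
\]
so (c) for a fixed $F'$ translates via \ref{Extiniciores}(b)$\Leftrightarrow$(c) into the assertion that $\varphi^{i}_{F,\pi_{\ast}(F')}$ is an isomorphism for all $F$ and all $1\le i\le k$. For $i=0$, the map is always an isomorphism because $\pi_{\ast}$ is fully faithful (as noted after \ref{Rproposition2}). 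This gives the equivalence (b)$\Leftrightarrow$(c).

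The implication (c)$\Rightarrow$(d) is immediate since (d) is just the restriction of (c) to injective $F'$. The substantive part is (d)$\Rightarrow$(c), which I would prove by induction on $i$. Fix $F'\in\mathrm{Mod}(\mathcal{C}/\mathcal{I})$ and choose a short exact sequence $0\to F'\to J\to F''\to 0$ in $\mathrm{Mod}(\mathcal{C}/\mathcal{I})$ with $J$ injective. Since $\pi_{\ast}$ is exact, applying it produces a short exact sequence in $\mathrm{Mod}(\mathcal{C})$, and the associated long exact sequence of $\mathbb{EXT}^{\bullet}_{\mathcal{C}}(\mathcal{C}/\mathcal{I},-)$ yields
\[
\mathbb{EXT}^{i-1}_{\mathcal{C}}(\mathcal{C}/\mathcal{I},\pi_{\ast}(J))\to\mathbb{EXT}^{i-1}_{\mathcal{C}}(\mathcal{C}/\mathcal{I},\pi_{\ast}(F''))\to\mathbb{EXT}^{i}_{\mathcal{C}}(\mathcal{C}/\mathcal{I},\pi_{\ast}(F'))\to\mathbb{EXT}^{i}_{\mathcal{C}}(\mathcal{C}/\mathcal{I},\pi_{\ast}(J))
\]
for every $i\ge 1$. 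For the base case $i=1$, identify $\mathbb{EXT}^{0}_{\mathcal{C}}(\mathcal{C}/\mathcal{I},\pi_{\ast}(-))$ with $\overline{\mathrm{Tr}}_{\frac{\mathcal{C}}{\mathcal{I}}}\circ\pi_{\ast}=1$, so the map $\mathbb{EXT}^{0}(\mathcal{C}/\mathcal{I},\pi_{\ast}(J))\to\mathbb{EXT}^{0}(\mathcal{C}/\mathcal{I},\pi_{\ast}(F''))$ is the original surjection $J\twoheadrightarrow F''$; combined with the vanishing at $\pi_{\ast}(J)$ in degree~$1$ given by (d), this forces $\mathbb{EXT}^{1}(\mathcal{C}/\mathcal{I},\pi_{\ast}(F'))=0$. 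For $2\le i\le k$, the vanishing of $\mathbb{EXT}^{i-1}(\mathcal{C}/\mathcal{I},\pi_{\ast}(F''))$ is supplied by the inductive hypothesis (applied to $F''$), and the vanishing of $\mathbb{EXT}^{i}(\mathcal{C}/\mathcal{I},\pi_{\ast}(J))$ by (d); the long exact sequence then yields $\mathbb{EXT}^{i}(\mathcal{C}/\mathcal{I},\pi_{\ast}(F'))=0$, closing the induction.

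The only delicate point is checking that the connecting map in degree zero really is the original surjection $J\to F''$ and not something subtler; this is a naturality check using that $\overline{\mathrm{Tr}}_{\frac{\mathcal{C}}{\mathcal{I}}}\circ\pi_{\ast}$ is the identity functor on $\mathrm{Mod}(\mathcal{C}/\mathcal{I})$. Everything else reduces to the standard mechanics of derived functors together with the exactness and full faithfulness of $\pi_{\ast}$ recorded in Section~\ref{sec:3}.
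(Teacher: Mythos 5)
Your proof is correct and follows the same route as the paper's: reduce (a)/(b)/(c) to Proposition~\ref{Extiniciores} by taking $G=\pi_{\ast}(F')$ and using $\overline{\mathrm{Tr}}_{\frac{\mathcal{C}}{\mathcal{I}}}\circ\pi_{\ast}=\mathrm{id}$, then establish (d)$\Rightarrow$(c) by dimension-shifting induction over a monomorphism into an injective of $\mathrm{Mod}(\mathcal{C}/\mathcal{I})$. The only cosmetic difference is at the base case $i=1$: the paper sets up a commutative ladder to show the degree-zero connecting map $\delta$ vanishes, while you argue directly that the degree-zero map $\mathbb{EXT}^{0}(\pi_{\ast}(J))\to\mathbb{EXT}^{0}(\pi_{\ast}(F''))$ is (identified with) the surjection $J\twoheadrightarrow F''$, giving the same injection $\mathbb{EXT}^{1}(\pi_{\ast}(F'))\hookrightarrow\mathbb{EXT}^{1}(\pi_{\ast}(J))=0$. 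These are the same observation dressed differently, and your inductive step matches the paper's exactly.

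One small attribution slip: full faithfulness of $\pi_{\ast}$ is better cited from Remark~\ref{isomoan}(b) (the equivalence $\Omega\colon\mathrm{Ann}(\mathcal{I})\to\mathrm{Mod}(\mathcal{C}/\mathcal{I})$ with inverse $\pi_{\ast}$) rather than from Proposition~\ref{Rproposition2}, which concerns the restriction-functor adjunction for a subcategory, not the quotient functor $\pi$. This does not affect the correctness of the argument.
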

\begin{proof}
The equivalences between (a), (b) and (c) are straightforward using \ref{Extiniciores}, and 
$(c)\Rightarrow (d)$ is trivial.\\
$(d)\Rightarrow (c)$. Let us see by induction on $i$ that  $\mathbb{EXT}^{i}_{\mathcal{C}}(\mathcal{C}/\mathcal{I},F'\circ \pi)=0$ for  all $F'\in \mathrm{Mod}(\mathcal{C}/\mathcal{I})$. Let us suppose that $i=1$ and $F'\in \mathrm{Mod}(\mathcal{C}/\mathcal{I})$. Consider the exact sequence
$$\xymatrix{0\ar[r] &F'\ar[r]^{\mu} & I\ar[r] & \frac{I}{F'}\ar[r] & 0}$$
where $I$ is an injective $\mathcal{C}$-module. Since $\overline{\mathrm{Tr}}_{\frac{\mathcal{C}}{\mathcal{I}}}\simeq \mathcal{C}(\frac{\mathcal{C}}{\mathcal{I}},-)$, and by the proof of the adjunction \ref{firstadjoin}, we have that $\eta:1_{\mathrm{Mod}(\mathcal{C}/\mathcal{I})}\longrightarrow \overline{\mathrm{Tr}}_{\frac{\mathcal{C}}{\mathcal{I}}}\circ \pi_{\ast}$ is an isomorphism.  Then we have the following commutative and exact diagram
$$\xymatrix{0\ar[r] & \mathcal{C}(\frac{\mathcal{C}}{\mathcal{I}},F'\circ \pi)\ar[r] &  \mathcal{C}(\frac{\mathcal{C}}{\mathcal{I}},I\circ \pi)\ar[r] & \mathcal{C}(\frac{\mathcal{C}}{\mathcal{I}},\frac{I}{F'}\circ \pi)\ar[r]^(.4){\delta} & \mathbb{EXT}^{1}_{\mathcal{C}}(\mathcal{C}/\mathcal{I},F'\circ \pi)\\
0\ar[r] &F'\ar[r]^{\mu}\ar[u] & I\ar[r]\ar[u] & \frac{I}{F'}\ar[r]\ar[u] & 0}$$
where the vertical morphisms are isomorphisms.
We conclude that $\delta=0$. Then we have the following exact sequence
$$\xymatrix{0 \ar[r] &  \mathbb{EXT}^{1}_{\mathcal{C}}(\mathcal{C}/\mathcal{I},F'\circ \pi)\ar[r]^(.35){} &   \mathbb{EXT}^{1}_{\mathcal{C}}(\mathcal{C}/\mathcal{I},I\circ \pi)\ar[r] &  \mathbb{EXT}^{1}_{\mathcal{C}}(\mathcal{C}/\mathcal{I},\frac{I}{F'}\circ \pi)\ar[r] & }$$ By hypothesis we have that $ \mathbb{EXT}^{1}_{\mathcal{C}}(\mathcal{C}/\mathcal{I},I\circ \pi)=0$, then we get that $\mathbb{EXT}^{1}_{\mathcal{C}}(\mathcal{C}/\mathcal{I},F'\circ \pi)=0$, proving the case $i=1$.\\
Now, let us suppose that $\mathbb{EXT}^{i-1}_{\mathcal{C}}(\mathcal{C}/\mathcal{I},N\circ \pi)=0$ for  all $N\in \mathrm{Mod}(\mathcal{C}/\mathcal{I})$.
Let $F'\in \mathrm{Mod}(\mathcal{C}/\mathcal{I})$. From the long exact homology sequence we have the exact sequence
$$\xymatrix{\mathbb{EXT}^{i-1}_{\mathcal{C}}(\mathcal{C}/\mathcal{I},\frac{I}{F'}\circ \pi)\ar[r]^(.35){} &   \mathbb{EXT}^{i}_{\mathcal{C}}(\mathcal{C}/\mathcal{I},F'\circ \pi)\ar[r] &  \mathbb{EXT}^{i}_{\mathcal{C}}(\mathcal{C}/\mathcal{I},I \circ \pi). }$$ 
Since $\frac{I}{F'}\in \mathrm{Mod}(\mathcal{C}/\mathcal{I})$, by induction we have that $\mathbb{EXT}^{i-1}_{\mathcal{C}}(\mathcal{C}/\mathcal{I},\frac{I}{F'}\circ \pi)=0$ and by hypothesis we have that 
$\mathbb{EXT}^{i}_{\mathcal{C}}(\mathcal{C}/\mathcal{I},I \circ \pi)=0$, then we conclude that $\mathbb{EXT}^{i}_{\mathcal{C}}(\mathcal{C}/\mathcal{I},F'\circ \pi)=0$, therefore, proving  the proposition.
\end{proof}

The following proposition tells us that we can restrict the result given in \ref{caractidem} to the category of finitely presented modules.

\begin{proposition}\label{caractidemfin}
Let $\mathcal{C}$ be a dualizing $R$-variety, $\mathcal{I}$ an ideal which satisfies property $A$ and $1\leq i\leq k$.  The following are equivalent.
\begin{enumerate}
\item [(a)] $\mathcal{I}$ es $k$-idempotent.
\item [(b)] $\varphi^{i}_{F,(\pi_{1})_{\ast}(F')}:\mathrm{Ext}^{i}_{\mathrm{mod}(\mathcal{C}/I)}(F,F')\longrightarrow \mathrm{Ext}^{i}_{\mathrm{mod}(\mathcal{C})}((\pi_{1})_{\ast}(F),(\pi_{1})_{\ast}(F'))$ is an isomorphism for all $F,F'\in \mathrm{mod}(\mathcal{C}/\mathcal{I})$ and for all  $0\leq i\leq k$.

\item [(c)] $\mathbb{EXT}^{i}_{\mathcal{C}}(\mathcal{C}/\mathcal{I},F'\circ \pi_{1})=0$ for $1\leq i\leq k$ and for $F'\in \mathrm{mod}(\mathcal{C}/\mathcal{I})$.

\item [(d)] $\mathbb{EXT}^{i}_{\mathcal{C}}(\mathcal{C}/\mathcal{I},J\circ \pi_{1})=0$ for $1\leq i\leq k$ and for each $J\in \mathrm{mod}(\mathcal{C}/\mathcal{I})$ which is injective.

\end{enumerate}
\end{proposition}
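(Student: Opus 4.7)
Plan: The hypotheses provide the two key ingredients from Section~\ref{sec:4}. By \ref{cocienteduali}, $\mathcal{C}/\mathcal{I}$ is a dualizing $R$-variety, so $\mathrm{mod}(\mathcal{C}/\mathcal{I})$ has enough injectives and projectives; and by \ref{restresfuntores}, the adjoint triple $\pi_1^{*}\dashv(\pi_1)_{*}\dashv\pi_1^{!}$ restricts to the finitely presented subcategories. These two facts allow us to replay the proof of \ref{caractidem} verbatim inside $\mathrm{mod}$. Specifically, I would first establish (b) $\Leftrightarrow$ (c) by applying the finitely-presented analog of \ref{Extiniciores} to $G=(\pi_1)_{*}(F')\in\mathrm{mod}(\mathcal{C})$ with an injective coresolution taken inside $\mathrm{mod}(\mathcal{C})$; the argument in \ref{Extiniciores} uses only the adjunction, the identification $\pi_1^{!}\simeq\overline{\mathrm{Tr}}_{\frac{\mathcal{C}}{\mathcal{I}}}$, and \ref{descripcionMC}(c), all of which survive the restriction. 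The equivalence (c) $\Leftrightarrow$ (d) is then obtained by repeating the induction on $i$ from the proof of \ref{caractidem}: (c) $\Rightarrow$ (d) is trivial, and for (d) $\Rightarrow$ (c) one embeds $F'\in\mathrm{mod}(\mathcal{C}/\mathcal{I})$ into an injective $J$ of $\mathrm{mod}(\mathcal{C}/\mathcal{I})$ (using that $\mathrm{mod}(\mathcal{C}/\mathcal{I})$ has enough injectives) and chases the long exact sequence of $\mathbb{EXT}$ inside $\mathrm{mod}(\mathcal{C})$, using that $(\pi_1)_{*}$ is exact and that $1_{\mathrm{mod}(\mathcal{C}/\mathcal{I})}\cong\pi_1^{!}\circ(\pi_1)_{*}$ by the unit of \ref{firstadjoin}.

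The only subtle point is bridging to (a), which is phrased in terms of all of $\mathrm{Mod}(\mathcal{C}/\mathcal{I})$. The direction (a) $\Rightarrow$ (c) is immediate by restriction from \ref{caractidem}(c). For the converse, by \ref{descEXT} and property $A$ the pointwise value $\mathbb{EXT}^{i}_{\mathcal{C}}(\mathcal{C}/\mathcal{I},-)(C)$ equals $\mathrm{Ext}^{i}_{\mathrm{Mod}(\mathcal{C})}(M_{C},-)$ with $M_{C}$ finitely presented (cf.\ the proof of \ref{restri2fun}(b)). Since $\mathcal{C}$ is dualizing, $\mathrm{mod}(\mathcal{C})$ is an abelian subcategory of $\mathrm{Mod}(\mathcal{C})$ closed under syzygies, so $M_{C}$ admits a resolution by finitely generated projectives of the form $\mathrm{Hom}_{\mathcal{C}}(C',-)$; hence $\mathrm{Ext}^{i}_{\mathrm{Mod}(\mathcal{C})}(M_{C},-)$ commutes with filtered colimits in its second argument. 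Combined with the fact that every object of $\mathrm{Mod}(\mathcal{C}/\mathcal{I})$ is a filtered colimit of finitely presented modules and that $(-)\circ\pi=\pi_{*}$ preserves filtered colimits, vanishing of $\mathbb{EXT}^{i}_{\mathcal{C}}(\mathcal{C}/\mathcal{I},F'\circ\pi_1)$ on $\mathrm{mod}(\mathcal{C}/\mathcal{I})$ propagates to all of $\mathrm{Mod}(\mathcal{C}/\mathcal{I})$, which is exactly condition (c) of \ref{caractidem}, and thus (a).

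The main obstacle will be this last propagation step: justifying that $M_{C}$ admits a projective resolution by finitely generated projectives to arbitrary length in $\mathrm{Mod}(\mathcal{C})$ (a coherence-type property). This rests on the standard fact that for dualizing $\mathcal{C}$ the subcategory $\mathrm{mod}(\mathcal{C})$ is abelian inside $\mathrm{Mod}(\mathcal{C})$ and stable under syzygies, so that $M_{C}$ inherits $FP_{\infty}$-type resolutions; property $A$ enters essentially here already at the first step, through the finite generation of $\mathcal{I}(C,-)$. The remaining paragraphs are then straightforward formal manipulations paralleling \ref{caractidem}.
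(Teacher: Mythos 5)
Your proposal is correct, and it is noticeably more careful than the paper's own proof. The paper proves \ref{caractidemfin} in one line: ``By \ref{restresfuntores}, we can restrict the diagram given in \ref{tresfuntores} to the category of finitely presented modules, then \ref{caractidem} holds for the case of finitely presented modules.'' This gives the equivalence of (b), (c), (d) in the finitely presented setting verbatim, but it does not explicitly address the bridge to (a), which by definition \ref{kidemcat} is a statement about $\mathrm{Mod}(\mathcal{C}/\mathcal{I})$, not $\mathrm{mod}(\mathcal{C}/\mathcal{I})$. You correctly noticed this asymmetry, and your filtered-colimit argument is exactly what is needed to close it: since $\mathcal{C}$ is dualizing, $\mathrm{mod}(\mathcal{C})$ is abelian and closed under syzygies, so each $M_C=\mathrm{Hom}_{\mathcal{C}}(C,-)/\mathcal{I}(C,-)$ admits a resolution by finitely generated (hence compact) projectives, so $\mathrm{Ext}^{i}_{\mathrm{Mod}(\mathcal{C})}(M_C,-)$ commutes with filtered colimits; combined with the facts that $\mathrm{Mod}(\mathcal{C}/\mathcal{I})$ is locally finitely presented and $\pi_{*}=(-)\circ\pi$ is a pointwise functor (hence colimit-preserving), the vanishing in (c) propagates from $\mathrm{mod}$ to $\mathrm{Mod}$, which is exactly condition (c) of \ref{caractidem}. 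The rest of your plan --- replaying the induction of \ref{caractidem} using $\pi_{1}^{!}\circ(\pi_1)_{*}\cong 1$, enough injectives in $\mathrm{mod}(\mathcal{C}/\mathcal{I})$ from \ref{cocienteduali}, and the restricted adjoint triple of \ref{restresfuntores} --- matches the paper's intent. So the two proofs reach the same conclusion, but yours supplies the mod-to-Mod propagation step that the paper treats as implicit; if one instead reads the paper's (a) as silently meaning the $\mathrm{mod}$-level variant (``k-f.p-idempotent'', as annotated later in the text), the extra argument becomes unnecessary, but the more explicit version is certainly safer.
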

\begin{proof}
By \ref{restresfuntores}, we can restrict the diagram given in \ref{tresfuntores} to the category of finitely presented modules, then \ref{caractidem} holds for the case of finitely presented modules. 
\end{proof}

Now, we will work in the category $\mathrm{Mod}(\mathcal{C}^{op})$ and we will consider the corresponding canonical morphisms analogous to $\varphi^{i}_{F,\pi_{\ast}(F')}$, which we will denote by
 $$\delta^{i}_{F,(\pi_{2})_{\ast}(F')}:\mathrm{Ext}^{i}_{\mathrm{mod}((\mathcal{C}/\mathcal{I})^{op})}(F,F')\longrightarrow \mathrm{Ext}^{i}_{\mathrm{mod}(\mathcal{C}^{op})}\Big((\pi_{2})_{\ast}(F),(\pi_{2})_{\ast}(F')\Big)$$  for all $F,F'\in \mathrm{mod}((\mathcal{C}/\mathcal{I})^{op})$ and for all  $0\leq i\leq k$, where $\pi_{2}:\mathcal{C}^{op}\longrightarrow \mathcal{C}^{op}/\mathcal{I}^{op}$ is the projection. Therefore, we have that the result \ref{caractidem} holds for the category $\mathrm{Mod}(\mathcal{C}^{op})$.
 
\begin{proposition}
Let $\mathcal{C}$ be a dualizing $R$-variety  and $\mathcal{I}$ an ideal which satisfies property $A$. Then 
$\mathcal{I}$ is $k$-idempotent in $\mathcal{C}$ if and only if $\mathcal{I}^{op}$ is $k$-idempotent in $\mathcal{C}^{op}$.
\end{proposition}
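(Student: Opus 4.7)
The plan is to transport the $k$-idempotency condition across the duality $\mathbb{D}_{\mathcal{C}}\colon \mathrm{mod}(\mathcal{C})\to \mathrm{mod}(\mathcal{C}^{op})$. By Proposition \ref{caractidemfin}, $\mathcal{I}$ is $k$-idempotent if and only if $\varphi^{i}_{F,(\pi_{1})_{\ast}(F')}$ is an isomorphism for every $F,F'\in \mathrm{mod}(\mathcal{C}/\mathcal{I})$ and every $0\leq i\leq k$; and by the $\mathcal{C}^{op}$-analogue of that same proposition, $\mathcal{I}^{op}$ is $k$-idempotent iff $\delta^{i}_{G,(\pi_{2})_{\ast}(G')}$ is an isomorphism for every $G,G'\in \mathrm{mod}((\mathcal{C}/\mathcal{I})^{op})$ and every $0\leq i\leq k$. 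Note that since $\mathcal{I}$ satisfies property $A$, Proposition \ref{cocienteduali} guarantees that $\mathcal{C}/\mathcal{I}$ is a dualizing $R$-variety, so $\mathbb{D}_{\mathcal{C}/\mathcal{I}}$ is also a duality. Hence it suffices to construct a natural bijection between the two systems of morphisms $\varphi^{i}$ and $\delta^{i}$.

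First, I would recall the standard fact that any duality $\mathbb{D}\colon \mathcal{A}\to \mathcal{B}^{op}$ between abelian categories (with enough projectives and injectives) sends injective coresolutions to projective resolutions and therefore induces natural isomorphisms
\[
\mathrm{Ext}^{i}_{\mathcal{A}}(X,Y)\simeq \mathrm{Ext}^{i}_{\mathcal{B}}(\mathbb{D}(Y),\mathbb{D}(X))
\]
for every $X,Y\in \mathcal{A}$ and every $i\geq 0$. Applying this to $\mathbb{D}_{\mathcal{C}}$ and to $\mathbb{D}_{\mathcal{C}/\mathcal{I}}$, and using the commutative square $\mathbb{D}_{\mathcal{C}}\circ (\pi_{1})_{\ast}\simeq (\pi_{2})_{\ast}\circ \mathbb{D}_{\mathcal{C}/\mathcal{I}}$ from Proposition \ref{cocienteduali}, I obtain natural isomorphisms
\[
\mathrm{Ext}^{i}_{\mathrm{mod}(\mathcal{C}/\mathcal{I})}(F,F')\simeq \mathrm{Ext}^{i}_{\mathrm{mod}((\mathcal{C}/\mathcal{I})^{op})}(\mathbb{D}_{\mathcal{C}/\mathcal{I}}(F'),\mathbb{D}_{\mathcal{C}/\mathcal{I}}(F))
\]
and
\[
\mathrm{Ext}^{i}_{\mathrm{mod}(\mathcal{C})}((\pi_{1})_{\ast}(F),(\pi_{1})_{\ast}(F'))\simeq \mathrm{Ext}^{i}_{\mathrm{mod}(\mathcal{C}^{op})}\Bigl((\pi_{2})_{\ast}\mathbb{D}_{\mathcal{C}/\mathcal{I}}(F'),(\pi_{2})_{\ast}\mathbb{D}_{\mathcal{C}/\mathcal{I}}(F)\Bigr).
\]

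Next, I would verify that the two isomorphisms above fit into a commutative square whose vertical maps are $\varphi^{i}_{F,(\pi_{1})_{\ast}(F')}$ and $\delta^{i}_{\mathbb{D}_{\mathcal{C}/\mathcal{I}}(F'),(\pi_{2})_{\ast}\mathbb{D}_{\mathcal{C}/\mathcal{I}}(F)}$. The point here is that both canonical morphisms $\varphi^{i}$ and $\delta^{i}$ were constructed in Proposition \ref{morenexte} from the same two ingredients: the exactness of $\pi_{\ast}$ (which is preserved under any duality) together with the counit $\epsilon\colon \pi_{\ast}\overline{\mathrm{Tr}}_{\mathcal{C}/\mathcal{I}}\to 1$. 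Thus the naturality in question reduces to the compatibility of these counits with the duality, which follows from the commutativity of the dualizing square applied to the defining monomorphism $\Psi_{G}\colon \mathrm{Tr}_{\mathcal{C}/\mathcal{I}}(G)\hookrightarrow G$ of Definition \ref{definitraza}. I expect this compatibility check to be the main obstacle: although it is conceptually clear, writing it out requires careful bookkeeping of how $\mathbb{D}_{\mathcal{C}}$ interacts with the adjunction $(\pi^{\ast},\pi_{\ast},\pi^{!})$ of Proposition \ref{tresfuntores}.

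Having set up the commutative square, the proof finishes by a symmetric ``surjectivity of the duality'' argument. If $\mathcal{I}$ is $k$-idempotent then $\varphi^{i}_{F,(\pi_{1})_{\ast}(F')}$ is an isomorphism for all $F,F'\in \mathrm{mod}(\mathcal{C}/\mathcal{I})$, so the commutative square forces $\delta^{i}_{\mathbb{D}_{\mathcal{C}/\mathcal{I}}(F'),(\pi_{2})_{\ast}\mathbb{D}_{\mathcal{C}/\mathcal{I}}(F)}$ to be an isomorphism as well; since $\mathbb{D}_{\mathcal{C}/\mathcal{I}}$ is a duality, the pair $(\mathbb{D}_{\mathcal{C}/\mathcal{I}}(F'),\mathbb{D}_{\mathcal{C}/\mathcal{I}}(F))$ ranges over all of $\mathrm{mod}((\mathcal{C}/\mathcal{I})^{op})\times \mathrm{mod}((\mathcal{C}/\mathcal{I})^{op})$, so $\mathcal{I}^{op}$ is $k$-idempotent. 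The converse is entirely symmetric, using that $(\mathcal{C}^{op})^{op}=\mathcal{C}$, $(\mathcal{I}^{op})^{op}=\mathcal{I}$ and that $\mathcal{I}^{op}$ inherits property $A$ from $\mathcal{I}$.
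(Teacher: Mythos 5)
Your plan is correct in outline but follows a more laborious route than the paper. You opt for characterization (b) of Proposition \ref{caractidemfin} (that $\varphi^{i}$ be an isomorphism) and therefore must prove that the canonical comparison maps $\varphi^{i}$ and $\delta^{i}$ correspond under the duality, i.e.\ that a square of the form
\[
\xymatrix{
\mathrm{Ext}^{i}_{\mathrm{mod}(\mathcal{C}/\mathcal{I})}(F,F') \ar[r]^-{\sim}\ar[d]_{\varphi^{i}} & \mathrm{Ext}^{i}_{\mathrm{mod}((\mathcal{C}/\mathcal{I})^{op})}\bigl(\mathbb{D}_{\mathcal{C}/\mathcal{I}}(F'),\mathbb{D}_{\mathcal{C}/\mathcal{I}}(F)\bigr)\ar[d]^{\delta^{i}}\\
\mathrm{Ext}^{i}_{\mathrm{mod}(\mathcal{C})}\bigl((\pi_{1})_{\ast}F,(\pi_{1})_{\ast}F'\bigr) \ar[r]^-{\sim} & \mathrm{Ext}^{i}_{\mathrm{mod}(\mathcal{C}^{op})}\bigl((\pi_{2})_{\ast}\mathbb{D}_{\mathcal{C}/\mathcal{I}}(F'),(\pi_{2})_{\ast}\mathbb{D}_{\mathcal{C}/\mathcal{I}}(F)\bigr)
}
\]
commutes. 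You yourself flag this compatibility check as ``the main obstacle,'' and you do not actually carry it out: one has to trace the two-step construction of \ref{morenexte} (exactness of $\pi_{\ast}$, followed by the counit) through the duality, which involves nontrivial bookkeeping with the three adjoints of \ref{tresfuntores}.

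The paper avoids this difficulty entirely by using the equivalent vanishing criterion (c) of \ref{caractidemfin}: $\mathcal{I}^{op}$ is $k$-idempotent iff $\mathbb{EXT}^{i}_{\mathcal{C}^{op}}(\mathcal{C}^{op}/\mathcal{I}^{op},F'\circ\pi_{2})=0$ for $1\le i\le k$. Since one is only asked to verify that an abelian group is zero, no compatibility of maps is needed; it is enough to have a chain of abstract isomorphisms. Concretely the paper evaluates this functor at a fixed object $C$, rewrites it as $\mathrm{Ext}^{i}_{\mathrm{mod}(\mathcal{C}^{op})}\bigl((\pi_{2})_{\ast}(\mathrm{Hom}_{\mathcal{C}/\mathcal{I}}(-,C)),(\pi_{2})_{\ast}(F')\bigr)$, applies $\mathbb{D}_{\mathcal{C}}$ and the square from \ref{cocienteduali} to move over to $\mathrm{mod}(\mathcal{C})$, invokes $k$-idempotency of $\mathcal{I}$ to descend to $\mathrm{mod}(\mathcal{C}/\mathcal{I})$, and observes the result vanishes because $\mathbb{D}_{\mathcal{C}/\mathcal{I}}^{-1}(\mathrm{Hom}_{\mathcal{C}/\mathcal{I}}(-,C))$ is injective. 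This uses exactly the same duality data as your argument but buys a shorter and gap-free proof. If you want to finish your version as written, you must supply the naturality of the square above; otherwise, you should switch to conditions (c) or (d) of \ref{caractidemfin}, which reduces the burden to a chain of (not necessarily natural) isomorphisms.
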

\begin{proof}
Suppose that $\mathcal{I}$ is $k$-idempotent in $\mathcal{C}$.
Let us see that  $$\delta^{i}_{F,(\pi_{2})_{\ast}(F')}:\mathrm{Ext}^{i}_{\mathrm{mod}((\mathcal{C}/\mathcal{I})^{op})}(F,F')\longrightarrow \mathrm{Ext}^{i}_{\mathrm{mod}(\mathcal{C}^{op})}\Big((\pi_{2})_{\ast}(F),(\pi_{2})_{\ast}(F')\Big)$$  is an isomorphism for all $F,F'\in \mathrm{mod}((\mathcal{C}/\mathcal{I})^{op})$ and for all  $0\leq i\leq k$. By the proposition \ref{caractidemfin} it is enough to see that $\mathbb{EXT}^{i}_{\mathcal{C}^{op}}(\mathcal{C}^{op}/\mathcal{I}^{op},F'\circ \pi_{2})=0$ for $1\leq i\leq k$ and for $F'\in \mathrm{mod}(\mathcal{C}^{op}/\mathcal{I}^{op})$. Indeed, for $C\in \mathcal{C}^{op}/\mathcal{I}^{op}$ we have that
\begin{align*}
& \mathbb{EXT}^{i}_{\mathcal{C}^{op}}(\mathcal{C}^{op}/\mathcal{I}^{op},F'\circ \pi_{2})(C)=\\
& =\mathrm{Ext}^{i}_{\mathrm{mod}(\mathcal{C}^{op})}\left(\frac{\mathrm{Hom}_{\mathcal{C}}(-,C)}{\mathcal{I}(-,C)},(\pi_{2})_{\ast}(F')\right) \quad \quad \quad \quad \quad \quad \quad \quad \quad \quad \quad \quad \, \, \,\, [\text{see}\,\, \ref{descEXT}]\\
& =\mathrm{Ext}^{i}_{\mathrm{mod}(\mathcal{C}^{op})}\left((\pi_{2})_{\ast}\Big(\mathrm{Hom}_{\mathcal{C}/\mathcal{I}}(-,C)\Big),(\pi_{2})_{\ast}(F')\right) \quad \quad \quad \quad \quad \quad \quad \quad \quad \, [\text{see}\,\, \ref{descripcionMC}]\\
& \simeq \mathrm{Ext}^{i}_{\mathrm{mod}(\mathcal{C})}
\left(\mathbb{D}_{\mathcal{C}}^{-1}\Big((\pi_{2})_{\ast}(F')	\Big), \mathbb{D}_{\mathcal{C}}^{-1}\Big((\pi_{2})_{\ast}\Big(\mathrm{Hom}_{\mathcal{C}/\mathcal{I}}(-,C)\Big)\Big)\right) \quad [\mathbb{D}_{\mathcal{C}}\,\,\text{is a duality}] \\
& \simeq \mathrm{Ext}^{i}_{\mathrm{mod}(\mathcal{C})}
\left((\pi_{1})_{\ast}(\mathbb{D}_{\mathcal{C}/\mathcal{I}}^{-1}(F')), (\pi_{1})_{\ast}\Big(\mathbb{D}_{\mathcal{C}/\mathcal{I}}^{-1}\Big(\mathrm{Hom}_{\mathcal{C}/\mathcal{I}}(C,-)\Big)\Big)\right)\quad [\text{diagram in}\,\,\, \ref{cocienteduali}]\\
& \simeq \mathrm{Ext}^{i}_{\mathrm{mod}(\mathcal{C}/\mathcal{I})}
\left(\mathbb{D}_{\mathcal{C}/\mathcal{I}}^{-1}(F'), \mathbb{D}_{\mathcal{C}/\mathcal{I}}^{-1}\Big(\mathrm{Hom}_{\mathcal{C}/\mathcal{I}}(-,C)\Big)\right)\quad \quad  \quad \quad \, [\mathcal{I}\,\,\text{is}\,\,\text{k-f.p-idempotent}] \\
& =0\quad \quad \quad \quad \quad \quad \quad \quad \quad\quad \quad \quad \quad [ \mathbb{D}_{\mathcal{C}/\mathcal{I}}^{-1}(\mathrm{Hom}_{\mathcal{C}/\mathcal{I}}(-,C))\,\,\text{is injective in }\,\, \mathrm{mod}(\mathcal{C}/\mathcal{I}))]
\end{align*}
In the third equality we are using  $\mathrm{Ext}^{i}_{\mathrm{mod}(\mathcal{C})}(X,Y)\simeq \mathrm{Ext}^{i}_{\mathrm{mod}(\mathcal{C}^{op})}\Big(\mathbb{D}_{\mathcal{C}}(Y),\mathbb{D}_{\mathcal{C}}(X)\Big)$ for $X,Y\in \mathrm{mod}(\mathcal{C})$. Hence,  $\mathbb{EXT}^{i}_{\mathcal{C}{op}}(\mathcal{C}^{op}/\mathcal{I}^{op},F'\circ \pi_{2})=0$ for $1\leq i\leq k$ and for $F'\in \mathrm{mod}(\mathcal{C}^{op}/\mathcal{I}^{op})$, proving by \ref{caractidem} that $I^{op}$ is $k$-idempotent. The other implication is similar.\\
\end{proof}
Now, consider the morphism given in  \ref{morentor}. For $G=F'\circ \pi_{1}$ with $F'\in \mathrm{Mod}(\mathcal{C}/\mathcal{I})$ we have that $G/\mathcal{I}G\simeq F'$. Then for $F\in \mathrm{Mod}((\mathcal{C}/\mathcal{I})^{op})$  and  $F'\in \mathrm{Mod}(\mathcal{C}/\mathcal{I})$ we have the morphism $\psi_{F,(\pi_{1})_{\ast}(F')}^{i}:\mathrm{Tor}^{\mathcal{C}}_{i}(F\circ \pi_{2},F'\circ \pi_{1})\longrightarrow \mathrm{Tor}^{\mathcal{C}/\mathcal{I}}_{i}(F, F').$ 
The proof of the following two propositions are similar to \ref{caractidem} and \ref{caractidemfin}.

\begin{proposition}\label{psiisoeq} 
Let $\mathcal{C}$ be a preadditive category, $\mathcal{I}$ an ideal in $\mathcal{C}$ and $1\leq i\leq k$.  The following  conditions are equivalent.
\begin{enumerate}
\item [(a)] $\psi_{F,(\pi_{1})_{\ast}(F')}^{i}:\mathrm{Tor}^{\mathcal{C}}_{i}(F\circ \pi_{2},F'\circ \pi_{1})\longrightarrow \mathrm{Tor}^{\mathcal{C}/\mathcal{I}}_{i}(F, F')$ is an isomorphism for all $1\leq i\leq k$,  $F\in \mathrm{Mod}((\mathcal{C}/\mathcal{I})^{op})$ and  $F'\in \mathrm{Mod}(\mathcal{C}/\mathcal{I})$.

\item [(b)] $\mathbb{TOR}_{i}^{\mathcal{C}}(\mathcal{C}/\mathcal{I},F'\circ\pi_{1})=0$ for $1\leq i\leq k$  and for all $F'\in \mathrm{Mod}(\mathcal{C}/\mathcal{I})$.

\item [(c)] $\mathbb{TOR}_{i}^{\mathcal{C}}(\mathcal{C}/\mathcal{I},P\circ\pi_{1})=0$ for $1\leq i\leq k$ and for all $P\in \mathrm{Mod}(\mathcal{C}/\mathcal{I})$ that is projective.
\end{enumerate}
\end{proposition}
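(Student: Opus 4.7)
The plan is to mimic the proof of Proposition \ref{caractidem}, replacing injective coresolutions by projective resolutions, right derived functors by left derived functors, and the right-adjoint $\pi^{!}$ by the left-adjoint $\pi^{\ast}=\frac{\mathcal{C}}{\mathcal{I}}\otimes_{\mathcal{C}}-$. The equivalence (a)$\Leftrightarrow$(b) will be a direct appeal to Proposition \ref{Toreequiv}, while (b)$\Rightarrow$(c) is trivial and (c)$\Rightarrow$(b) will be proved by induction on $i$ using a projective presentation.

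For (a)$\Leftrightarrow$(b), I would apply Proposition \ref{Toreequiv} to the $\mathcal{C}$-module $G:=(\pi_{1})_{\ast}(F')=F'\circ\pi_{1}$ for each $F'\in\mathrm{Mod}(\mathcal{C}/\mathcal{I})$. Since $G\in\mathrm{Ann}(\mathcal{I})$ by Remark \ref{isomoan}(a), we have $\mathcal{I}G=0$ and hence $G/\mathcal{I}G\simeq F'$ under the equivalence $\Omega$. The canonical morphism $\psi^{i}_{F,G}$ of Proposition \ref{morentor} then coincides with $\psi^{i}_{F,(\pi_{1})_{\ast}(F')}$, and the equivalence of clauses (b) and (c) in Proposition \ref{Toreequiv} supplies (a)$\Leftrightarrow$(b) after quantifying over $F'$.

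For (c)$\Rightarrow$(b), I would argue by induction on $i$, in the spirit of the implication (d)$\Rightarrow$(c) of Proposition \ref{caractidem}. Given $F'\in\mathrm{Mod}(\mathcal{C}/\mathcal{I})$, choose a short exact sequence $0\to K\to P\to F'\to 0$ in $\mathrm{Mod}(\mathcal{C}/\mathcal{I})$ with $P$ projective. Since $\pi_{\ast}$ is exact, it yields an exact sequence $0\to K\circ\pi_{1}\to P\circ\pi_{1}\to F'\circ\pi_{1}\to 0$ in $\mathrm{Mod}(\mathcal{C})$, and the long exact sequence of the left derived functors of $\pi^{\ast}$ provides
\begin{equation*}
\mathbb{TOR}_{i}^{\mathcal{C}}(\mathcal{C}/\mathcal{I},P\circ\pi_{1})\to \mathbb{TOR}_{i}^{\mathcal{C}}(\mathcal{C}/\mathcal{I},F'\circ\pi_{1})\to \mathbb{TOR}_{i-1}^{\mathcal{C}}(\mathcal{C}/\mathcal{I},K\circ\pi_{1}).
\end{equation*}
For $i\geq 2$, both outer terms vanish by hypothesis (c) and by the inductive hypothesis applied to $K$. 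For the base case $i=1$, the tail of the sequence is
\begin{equation*}
\mathbb{TOR}_{1}^{\mathcal{C}}(\mathcal{C}/\mathcal{I},P\circ\pi_{1})\to \mathbb{TOR}_{1}^{\mathcal{C}}(\mathcal{C}/\mathcal{I},F'\circ\pi_{1})\to \pi^{\ast}\pi_{\ast}(K)\to \pi^{\ast}\pi_{\ast}(P),
\end{equation*}
and since $\pi_{\ast}$ is fully faithful (it is essentially the inclusion $\mathrm{Mod}(\mathcal{C}/\mathcal{I})\simeq\mathrm{Ann}(\mathcal{I})\hookrightarrow\mathrm{Mod}(\mathcal{C})$ by Remark \ref{isomoan}(b)), the counit $\pi^{\ast}\pi_{\ast}\to\mathrm{id}$ of the adjunction $(\pi^{\ast},\pi_{\ast})$ from Proposition \ref{tresfuntores} is an isomorphism. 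Hence the last map is identified with the monomorphism $K\hookrightarrow P$, killing the connecting map and forcing the vanishing of the middle term.

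The only point requiring genuine attention is the identification $\pi^{\ast}\pi_{\ast}\simeq\mathrm{id}$ used in the base case; everything else is a routine long-exact-sequence chase that is strictly dual to the argument for Proposition \ref{caractidem}. I do not anticipate any obstacle beyond bookkeeping.
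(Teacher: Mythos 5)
Your proposal is correct and follows exactly the route the paper intends: the paper's own proof for \ref{psiisoeq} consists of the single line \emph{``The proof of the following two propositions are similar to \ref{caractidem} and \ref{caractidemfin}''}, and you have supplied precisely the dualization. You use \ref{Toreequiv} where \ref{caractidem} uses \ref{Extiniciores}, and for (c)$\Rightarrow$(b) you dualize the inductive argument, replacing the injective envelope $0\to F'\to I\to I/F'\to 0$ by a projective presentation $0\to K\to P\to F'\to 0$, and replacing the observation that the unit $\mathrm{id}\to\overline{\mathrm{Tr}}_{\mathcal{C}/\mathcal{I}}\circ\pi_{\ast}$ is an isomorphism (which kills the connecting map in \ref{caractidem}) by the observation that the counit $\pi^{\ast}\pi_{\ast}\to\mathrm{id}$ is an isomorphism because $\pi_{\ast}$ is fully faithful (Remark \ref{isomoan}(b)), so that $\pi^{\ast}\pi_{\ast}(K)\to\pi^{\ast}\pi_{\ast}(P)$ is identified with the monomorphism $K\hookrightarrow P$ and the connecting map vanishes. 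One small point worth stating explicitly in the base case: once the connecting map is zero, $\mathbb{TOR}_{1}^{\mathcal{C}}(\mathcal{C}/\mathcal{I},P\circ\pi_{1})\to\mathbb{TOR}_{1}^{\mathcal{C}}(\mathcal{C}/\mathcal{I},F'\circ\pi_{1})$ is surjective, and it is hypothesis (c) that makes the source vanish and hence the target; your phrase ``forcing the vanishing of the middle term'' compresses this, but the logic is sound.
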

This result can be restricted  to the category of finitely presented modules.
\begin{proposition}\label{psiisoeqfin} 
Let $\mathcal{C}$ be a dualizing $R$-variety, $\mathcal{I}$ an ideal which satisfies property $A$  and $1\leq i\leq k$.  The following are equivalent.
\begin{enumerate}
\item [(a)] $\psi_{F,(\pi_{1})_{\ast}(F')}^{i}:\mathrm{Tor}^{\mathcal{C}}_{i}(F\circ \pi_{2},F'\circ \pi_{1})\longrightarrow \mathrm{Tor}^{\mathcal{C}/\mathcal{I}}_{i}(F, F')$ is an isomorphism for all $1\leq i\leq k$, $F\in \mathrm{mod}((\mathcal{C}/\mathcal{I})^{op})$ and  $F'\in \mathrm{mod}(\mathcal{C}/\mathcal{I})$.

\item [(b)] $\mathbb{TOR}_{i}^{\mathcal{C}}(\mathcal{C}/\mathcal{I},F'\circ\pi_{1})=0$ for $1\leq i\leq k$  and for all $F'\in \mathrm{mod}(\mathcal{C}/\mathcal{I})$.

\item [(c)] $\mathbb{TOR}_{i}^{\mathcal{C}}(\mathcal{C}/\mathcal{I},\mathrm{Hom}_{\mathcal{C}/\mathcal{I}}(C,-)\circ\pi_{1})=0$ for $1\leq i\leq k$ and for all $\mathrm{Hom}_{\mathcal{C}/\mathcal{I}}(C,-)\in \mathrm{mod}(\mathcal{C}/\mathcal{I})$.
\end{enumerate}
\end{proposition}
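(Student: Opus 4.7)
The plan is to mirror the strategy used for Ext in Propositions \ref{caractidem} and \ref{caractidemfin}, but with projective presentations playing the role of injective copresentations. First, by \ref{restresfuntores}, the three-functor diagram of \ref{tresfuntores} restricts to the finitely presented subcategories, so the pair $(\pi_{1}^{\ast},(\pi_{1})_{\ast})$ gives an adjunction between $\mathrm{mod}(\mathcal{C})$ and $\mathrm{mod}(\mathcal{C}/\mathcal{I})$ with $(\pi_{1})_{\ast}$ exact and fully faithful. Moreover, by \ref{cocienteduali}, $\mathcal{C}/\mathcal{I}$ is itself a dualizing $R$-variety, so $\mathrm{mod}(\mathcal{C}/\mathcal{I})$ is abelian and has enough projectives of the form $\mathrm{Hom}_{\mathcal{C}/\mathcal{I}}(C,-)$.

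For $(a)\Leftrightarrow (b)$, I would specialize \ref{Toreequiv} to the finitely presented setting: a projective resolution $\dots\to P_{1}\to P_{0}\to F'\to 0$ in $\mathrm{mod}(\mathcal{C}/\mathcal{I})$ by representables gives, after applying $(\pi_{1})_{\ast}$, a complex in $\mathrm{mod}(\mathcal{C})$ which is the beginning of a projective resolution of $F'\circ\pi_{1}$ precisely when the $\mathbb{TOR}$ groups vanish; this in turn is precisely the condition under which the canonical morphism $\psi^{i}_{F,(\pi_{1})_{\ast}(F')}$ becomes an isomorphism, by the exact same bookkeeping used for $\varphi^{i}$ in \ref{Extiniciores}. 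The implication $(b)\Rightarrow (c)$ is immediate since $\mathrm{Hom}_{\mathcal{C}/\mathcal{I}}(C,-)$ lies in $\mathrm{mod}(\mathcal{C}/\mathcal{I})$.

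The core of the argument is $(c)\Rightarrow (b)$, which I would prove by induction on $i$, dual to the inductive step in the proof of \ref{caractidem}. Given $F'\in \mathrm{mod}(\mathcal{C}/\mathcal{I})$, choose a projective presentation in $\mathrm{mod}(\mathcal{C}/\mathcal{I})$
\[
0\to K\to \bigoplus_{j=1}^{n}\mathrm{Hom}_{\mathcal{C}/\mathcal{I}}(C_{j},-)\to F'\to 0,
\]
which exists because $\mathrm{mod}(\mathcal{C}/\mathcal{I})$ is abelian. Applying the exact functor $(\pi_{1})_{\ast}$ and then the long exact sequence for $\mathbb{TOR}^{\mathcal{C}}_{\bullet}(\mathcal{C}/\mathcal{I},-)$, using the adjoint-pair identity $\frac{\mathcal{C}}{\mathcal{I}}\otimes_{\mathcal{C}}(-\circ\pi_{1})\simeq 1_{\mathrm{Mod}(\mathcal{C}/\mathcal{I})}$ (from $\pi^{\ast}\pi_{\ast}\simeq 1$, since $\pi_{\ast}$ is fully faithful), for $i=1$ one gets an exact sequence
\[
0\to \mathbb{TOR}^{\mathcal{C}}_{1}(\mathcal{C}/\mathcal{I},F'\circ\pi_{1})\to K\to \bigoplus_{j=1}^{n}\mathrm{Hom}_{\mathcal{C}/\mathcal{I}}(C_{j},-)\to F'\to 0
\]
whose second arrow is injective by construction, forcing $\mathbb{TOR}^{\mathcal{C}}_{1}(\mathcal{C}/\mathcal{I},F'\circ\pi_{1})=0$. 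For $i\geq 2$, the long exact sequence gives $\mathbb{TOR}^{\mathcal{C}}_{i}(\mathcal{C}/\mathcal{I},F'\circ\pi_{1})\cong \mathbb{TOR}^{\mathcal{C}}_{i-1}(\mathcal{C}/\mathcal{I},K\circ\pi_{1})$, which vanishes by the inductive hypothesis applied to $K\in\mathrm{mod}(\mathcal{C}/\mathcal{I})$.

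The only genuinely delicate point, and what I expect to be the main obstacle to get right rather than hard in principle, is identifying $\frac{\mathcal{C}}{\mathcal{I}}\otimes_{\mathcal{C}}(F'\circ\pi_{1})$ with $F'$ naturally enough to read off that the map $K\to \bigoplus\mathrm{Hom}_{\mathcal{C}/\mathcal{I}}(C_{j},-)$ in the base case is the original inclusion; this uses that the counit of $(\pi_{1}^{\ast},(\pi_{1})_{\ast})$ is an isomorphism by \ref{Rproposition1}(2), which passes through the restriction from $\mathrm{Mod}$ to $\mathrm{mod}$ by \ref{restresfuntores}. Once this is in place, the argument is completely parallel to \ref{caractidemfin}.
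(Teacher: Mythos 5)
Your reconstruction follows the route the paper intends: the paper's proof is literally ``similar to \ref{caractidem} and \ref{caractidemfin},'' and you correctly carry this out by restricting the three-functor diagram via \ref{restresfuntores} (with \ref{cocienteduali} supplying that $\mathrm{mod}(\mathcal{C}/\mathcal{I})$ is abelian with enough projectives), then dualizing the inductive argument of \ref{caractidem} for the step $(c)\Rightarrow(b)$. That induction is correct as you have written it: the kernel $K$ lies in $\mathrm{mod}(\mathcal{C}/\mathcal{I})$, the counit of $(\pi_{1}^{\ast},(\pi_{1})_{\ast})$ is an isomorphism because $(\pi_{1})_{\ast}$ is fully faithful, and both the base case and the inductive step go through.

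One sentence in your discussion of $(a)\Leftrightarrow(b)$ is, however, stated backwards and is false as written. Applying $(\pi_{1})_{\ast}$ to a projective resolution of $F'$ in $\mathrm{mod}(\mathcal{C}/\mathcal{I})$ does \emph{not} yield a projective resolution of $F'\circ\pi_{1}$ in $\mathrm{mod}(\mathcal{C})$: the modules $(\pi_{1})_{\ast}\bigl(\mathrm{Hom}_{\mathcal{C}/\mathcal{I}}(C_{j},-)\bigr)=\mathrm{Hom}_{\mathcal{C}}(C_{j},-)/\mathcal{I}(C_{j},-)$ are essentially never projective in $\mathrm{mod}(\mathcal{C})$, and whether they are is not what the vanishing of $\mathbb{TOR}^{\mathcal{C}}_{i}(\mathcal{C}/\mathcal{I},F'\circ\pi_{1})$ measures. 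The correct shape coming from \ref{Toreequiv} runs the other way: one takes a projective resolution of $G:=F'\circ\pi_{1}$ in $\mathrm{mod}(\mathcal{C})$ and applies $\pi_{1}^{\ast}=\frac{\mathcal{C}}{\mathcal{I}}\otimes_{\mathcal{C}}-$, and the resulting complex begins a projective resolution of $G/\mathcal{I}G\simeq F'$ in $\mathrm{mod}(\mathcal{C}/\mathcal{I})$ precisely when the $\mathbb{TOR}$ groups vanish. Since what you actually need is the equivalence $(b)\Leftrightarrow(c)$ of \ref{Toreequiv} specialized to $G=F'\circ\pi_{1}$ and ranged over $F'$, the implication $(a)\Leftrightarrow(b)$ of the present proposition does follow from the lemma you cite; so this is a slip in exposition rather than a hole in the chain of citations, but it should be corrected.
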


Now, in order to relate the functors $\mathbb{EXT}^{i}_{\mathcal{C}}(\mathcal{C}/\mathcal{I},-)$ and $\mathbb{TOR}_{i}^{\mathcal{C}}(\mathcal{C}/\mathcal{I},-)$ we need the Auslander-Reiten duality. We have the following result due to Auslander and Reiten.

\begin{proposition}\label{ARDuality}
Let $\mathcal{C}$ be a dualizing $R$-variety and $M\in \mathrm{mod}(\mathcal{C})$. Then we have an isomorphism of contravariant functors from $\mathrm{mod}(\mathcal{C}^{op})$ to $\mathrm{mod}(R)$
$$\mathbb{D}_{\mathrm{mod}(\mathcal{C}^{op})}(\mathrm{Tor}_{i}^{\mathcal{C}}(-, M))\simeq \mathrm{Ext}^{i}_{\mathrm{mod}(\mathcal{C}^{op})}(-,\mathbb{D}_{\mathcal{C}}(M)).$$
\end{proposition}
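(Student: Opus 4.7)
The plan is to imitate the classical proof of Auslander–Reiten duality by combining a Hom–tensor adjunction for $\mathcal{C}$-modules with the fact that $E$, the injective envelope of $R/\mathrm{rad}(R)$, is injective as an $R$-module so that $\mathrm{Hom}_R(-,E)$ is exact. The whole statement will therefore be deduced from a single adjunction isomorphism after computing both derived functors on the same projective resolution.

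First I would establish the natural isomorphism, for $N\in\mathrm{Mod}(\mathcal{C}^{op})$ and $M\in\mathrm{Mod}(\mathcal{C})$,
\[
\mathrm{Hom}_R\bigl(N\otimes_{\mathcal{C}}M,\,E\bigr)\;\simeq\;\mathrm{Hom}_{\mathcal{C}^{op}}\bigl(N,\,\mathbb{D}_{\mathcal{C}}(M)\bigr),
\]
natural in both variables. This is a Hom–tensor adjunction in the setting of rings with several objects: using Yoneda's lemma one checks it first when $N=\mathrm{Hom}_{\mathcal{C}}(-,C)$ is representable, in which case by Proposition \ref{AProposition0}(3) the left side is $\mathrm{Hom}_R(M(C),E)=\mathbb{D}_{\mathcal{C}}(M)(C)$, and by Yoneda the right side is also $\mathbb{D}_{\mathcal{C}}(M)(C)$. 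The general case then follows because both sides are left exact in $N$ and send arbitrary coproducts in $N$ to products, and every $\mathcal{C}^{op}$-module has a presentation by coproducts of representables.

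Next, since $\mathrm{mod}(\mathcal{C}^{op})$ has enough projectives ($\mathcal{C}$ is dualizing), I would choose a projective resolution $P_{\bullet}\to N$ in $\mathrm{mod}(\mathcal{C}^{op})$ by finitely generated projectives. By definition, $\mathrm{Tor}^{\mathcal{C}}_{i}(N,M)=H_{i}(P_{\bullet}\otimes_{\mathcal{C}}M)$. Applying the exact functor $\mathrm{Hom}_R(-,E)$ and using that homology commutes with exact functors, I get
\[
\mathbb{D}_{\mathrm{mod}(\mathcal{C}^{op})}\bigl(\mathrm{Tor}^{\mathcal{C}}_{i}(N,M)\bigr)\;=\;H^{i}\bigl(\mathrm{Hom}_R(P_{\bullet}\otimes_{\mathcal{C}}M,\,E)\bigr).
\]
Applying the adjunction from the first step termwise identifies this complex with $\mathrm{Hom}_{\mathcal{C}^{op}}(P_{\bullet},\mathbb{D}_{\mathcal{C}}(M))$, whose $i$-th cohomology is exactly $\mathrm{Ext}^{i}_{\mathrm{mod}(\mathcal{C}^{op})}(N,\mathbb{D}_{\mathcal{C}}(M))$, yielding the desired isomorphism. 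Naturality in $N$ is inherited from the naturality of the adjunction and from the comparison theorem for projective resolutions.

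The only genuinely delicate point is the first step: setting up the Hom–tensor adjunction carefully for the ``ring with several objects'' tensor product and verifying that the duality $\mathbb{D}_{\mathcal{C}}$ lands in $\mathrm{mod}(\mathcal{C}^{op})$ so that $\mathrm{Ext}^{i}_{\mathrm{mod}(\mathcal{C}^{op})}(-,\mathbb{D}_{\mathcal{C}}(M))$ makes sense (this uses that $\mathcal{C}$ is dualizing and $M\in\mathrm{mod}(\mathcal{C})$). Once that is in place, the remainder is a formal consequence of exactness of $\mathrm{Hom}_R(-,E)$, and no further hypothesis on $M$ or $N$ is required beyond what is needed to keep the values in $\mathrm{mod}(R)$.
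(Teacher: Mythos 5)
The paper does not give a proof of this proposition; it simply cites Auslander–Reiten's original article (\cite[Proposition 7.3]{AusVarietyI}). Your argument is a correct reconstruction of the standard proof that appears in that reference, so in effect you have supplied the proof that the paper delegates. The two ingredients you isolate are exactly the right ones: the adjunction $\mathrm{Hom}_R(N\otimes_{\mathcal{C}}M,E)\simeq\mathrm{Hom}_{\mathcal{C}^{op}}(N,\mathbb{D}_{\mathcal{C}}(M))$, verified on representables via Proposition \ref{AProposition0}(3) together with Yoneda and extended to all $N$ using left exactness and preservation of (co)products, and the exactness of $\mathrm{Hom}_R(-,E)$, which lets one pull $\mathbb{D}$ past homology of $P_{\bullet}\otimes_{\mathcal{C}}M$. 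The one point worth making explicit, which you only gesture at, is why $\mathrm{Tor}_i^{\mathcal{C}}(-,M)$ and $\mathrm{Ext}^i_{\mathrm{mod}(\mathcal{C}^{op})}(-,\mathbb{D}_{\mathcal{C}}(M))$ may be computed with the same resolution $P_{\bullet}$ chosen inside $\mathrm{mod}(\mathcal{C}^{op})$: since $\mathcal{C}$ is dualizing, $\mathrm{mod}(\mathcal{C}^{op})$ has enough projectives and these are also projective in $\mathrm{Mod}(\mathcal{C}^{op})$, so the two $\mathrm{Tor}$ groups agree, and $\mathrm{mod}(\mathcal{C}^{op})$ being abelian and extension-closed guarantees the same for $\mathrm{Ext}$. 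With that noted, the proof is complete and matches the route of the cited source.
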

\begin{proof}
See \cite[Proposition 7.3]{AusVarietyI} in p. 341.
\end{proof}

Now, we have the following result that characterizes $k$-idempotent ideals in terms of the morphisms $\psi_{F,(\pi_{1})_{\ast}(F')}^{i}$.
\begin{proposition}\label{isopsiidem}
Let $\mathcal{C}$ be a dualizing $R$-variety and $\mathcal{I}$ an ideal which satisfies property $A$. Then $\mathcal{I}$ is $k$-idempotent if and only if $\psi_{F,(\pi_{1})_{\ast}(F')}^{i}$ is an isomorphism for every $F\in \mathrm{mod}((\mathcal{C}/\mathcal{I})^{op})$  and for every  $F'\in \mathrm{mod}(\mathcal{C}/\mathcal{I})$.
\end{proposition}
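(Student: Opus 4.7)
The plan is to reduce both sides of the equivalence to vanishing conditions on the bifunctors $\mathbb{TOR}_{i}^{\mathcal{C}}(\mathcal{C}/\mathcal{I},-)$ and $\mathbb{EXT}^{i}_{\mathcal{C}^{op}}(\mathcal{C}^{op}/\mathcal{I}^{op},-)$, and then use Auslander--Reiten duality (Proposition \ref{ARDuality}) together with the commutative square of Proposition \ref{cocienteduali} to swap one for the other.

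First, by Proposition \ref{psiisoeqfin}, $\psi^{i}_{F,(\pi_{1})_{\ast}(F')}$ is an isomorphism for all $F\in\mathrm{mod}((\mathcal{C}/\mathcal{I})^{op})$, all $F'\in\mathrm{mod}(\mathcal{C}/\mathcal{I})$ and all $1\le i\le k$ if and only if $\mathbb{TOR}_{i}^{\mathcal{C}}(\mathcal{C}/\mathcal{I},F'\circ\pi_{1})=0$ for such $i$ and $F'$. Similarly, by the $\mathrm{op}$-version of Proposition \ref{caractidemfin}, $\mathcal{I}^{op}$ is $k$-idempotent in $\mathcal{C}^{op}$ if and only if $\mathbb{EXT}^{i}_{\mathcal{C}^{op}}(\mathcal{C}^{op}/\mathcal{I}^{op},G\circ\pi_{2})=0$ for all $1\le i\le k$ and all $G\in\mathrm{mod}((\mathcal{C}/\mathcal{I})^{op})$. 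Finally, the proposition just before shows that $\mathcal{I}$ is $k$-idempotent in $\mathcal{C}$ if and only if $\mathcal{I}^{op}$ is $k$-idempotent in $\mathcal{C}^{op}$. So it suffices to establish the equivalence
\[
\mathbb{TOR}_{i}^{\mathcal{C}}(\mathcal{C}/\mathcal{I},F'\circ\pi_{1})=0\ \Longleftrightarrow\ \mathbb{EXT}^{i}_{\mathcal{C}^{op}}(\mathcal{C}^{op}/\mathcal{I}^{op},G\circ\pi_{2})=0,
\]
with $G=\mathbb{D}_{\mathcal{C}/\mathcal{I}}(F')$, which ranges over $\mathrm{mod}((\mathcal{C}/\mathcal{I})^{op})$ as $F'$ ranges over $\mathrm{mod}(\mathcal{C}/\mathcal{I})$.

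The key computation is pointwise at each $C\in\mathcal{C}/\mathcal{I}$. Evaluating the tensor definition of $\mathbb{TOR}$, one has $\mathbb{TOR}_{i}^{\mathcal{C}}(\mathcal{C}/\mathcal{I},F'\circ\pi_{1})(C)=\mathrm{Tor}_{i}^{\mathcal{C}}(\mathcal{C}(-,C)/\mathcal{I}(-,C),F'\circ\pi_{1})$, where $\mathcal{C}(-,C)/\mathcal{I}(-,C)$ lies in $\mathrm{mod}(\mathcal{C}^{op})$ by property $A$, and $F'\circ\pi_{1}=(\pi_{1})_{\ast}(F')\in\mathrm{mod}(\mathcal{C})$ by Proposition \ref{restresfuntores}. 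Applying Auslander--Reiten duality \ref{ARDuality} gives
\[
\mathbb{D}\bigl(\mathbb{TOR}_{i}^{\mathcal{C}}(\mathcal{C}/\mathcal{I},F'\circ\pi_{1})\bigr)(C)\simeq \mathrm{Ext}^{i}_{\mathrm{mod}(\mathcal{C}^{op})}\!\left(\tfrac{\mathcal{C}(-,C)}{\mathcal{I}(-,C)},\mathbb{D}_{\mathcal{C}}(F'\circ\pi_{1})\right).
\]
The commutative diagram of Proposition \ref{cocienteduali} yields $\mathbb{D}_{\mathcal{C}}\circ(\pi_{1})_{\ast}=(\pi_{2})_{\ast}\circ\mathbb{D}_{\mathcal{C}/\mathcal{I}}$, so $\mathbb{D}_{\mathcal{C}}(F'\circ\pi_{1})=\mathbb{D}_{\mathcal{C}/\mathcal{I}}(F')\circ\pi_{2}=G\circ\pi_{2}$. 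Recognising $\mathcal{C}(-,C)/\mathcal{I}(-,C)$ as $\mathrm{Hom}_{\mathcal{C}^{op}}(C,-)/\mathcal{I}^{op}(C,-)$, the right-hand side is exactly $\mathbb{EXT}^{i}_{\mathcal{C}^{op}}(\mathcal{C}^{op}/\mathcal{I}^{op},G\circ\pi_{2})(C)$ by (the op-version of) Remark \ref{descEXT}.

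Since $\mathbb{D}$ is a duality on $\mathrm{mod}(R)$, vanishing of the left-hand $\mathrm{Tor}$ at $C$ is equivalent to vanishing of its dual, hence equivalent to vanishing of $\mathbb{EXT}^{i}_{\mathcal{C}^{op}}(\mathcal{C}^{op}/\mathcal{I}^{op},G\circ\pi_{2})(C)$. Letting $C$ and $F'$ (equivalently $G$) vary delivers the desired equivalence of vanishing conditions, and the chain of reductions above finishes the proof. The main technical point I expect to watch carefully is checking that each object fed to AR duality actually sits in the finitely presented subcategory; this is precisely what property $A$ (for $\mathcal{I}(-,C)$) and Proposition \ref{restresfuntores} (for $(\pi_{1})_{\ast}(F')$) are there to guarantee. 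Everything else is a diagram chase through the previously established adjunctions and dualities.
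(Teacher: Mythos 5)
Your proof is correct and rests on the same ingredients the paper uses: AR duality (Proposition \ref{ARDuality}) to convert $\mathbb{TOR}$ into an $\mathrm{Ext}$ over $\mathcal{C}^{op}$, the commutative square of Proposition \ref{cocienteduali} to transport $\mathbb{D}$ across the quotient functors, and the vanishing criteria of \ref{psiisoeqfin} and \ref{caractidemfin}. The paper inlines the $k$-idempotence step directly into the pointwise computation, whereas you factor it through the preceding proposition ($\mathcal{I}$ $k$-idempotent $\Leftrightarrow$ $\mathcal{I}^{op}$ $k$-idempotent) and the $\mathrm{op}$-version of \ref{caractidemfin}; this is a cleaner modular packaging of essentially the same argument.
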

\begin{proof}
$(\Longrightarrow)$. Let $F'\in \mathrm{mod}(\mathcal{C}/\mathcal{I})$.
Let us see that $\mathbb{TOR}_{i}^{\mathcal{C}}(\mathcal{C}/\mathcal{I},F'\circ\pi_{1})=0$ for $1\leq i\leq k$.  Indeed, by using \ref{ARDuality} and the fact that $\mathcal{I}$ is $k$-idempotent,  for $C\in\mathcal{C}/\mathcal{I}$ we have 
\begin{align*}
& \mathbb{TOR}_{i}^{\mathcal{C}}(\mathcal{C}/\mathcal{I},F'\circ \pi_{1} )(C)\simeq \mathrm{Hom}_{R}\Big(\mathrm{Ext}_{\mathrm{Mod}(\mathcal{C}/\mathcal{I})}^{i}\Big(F',\mathbb{D}_{\mathcal{C}/\mathcal{I}}^{-1}\Big(\mathrm{Hom}_{\mathcal{C}/\mathcal{I}}(-,C)\Big)\Big),E\Big)= 0,
\end{align*}
where the last equality is because $\mathrm{Ext}_{\mathrm{Mod}(\mathcal{C}/\mathcal{I})}^{i}\Big(F',\mathbb{D}_{\mathcal{C}/\mathcal{I}}^{-1}\Big(\mathrm{Hom}_{\mathcal{C}/\mathcal{I}}(-,C)\Big)\Big)=0$ since the functor $\mathbb{D}_{\mathcal{C}/\mathcal{I}}^{-1}\Big(\mathrm{Hom}_{\mathcal{C}/\mathcal{I}}(-,C)\Big)$ is injective in $\mathrm{mod}(\mathcal{C}/\mathcal{I})$.
Therefore, by \ref{psiisoeqfin}  we have that  $\psi_{F,(\pi_{1})_{\ast}(F')}^{i}$ is isomorphism.
The other implication is similar.
\end{proof}

We finish this section with the following result, which is analogous to proposition 1.3 in \cite{APG}. 

\begin{corollary}\label{caractidemfin2}
Let $\mathcal{C}$ be a dualizing $R$-variety, $\mathcal{I}$ an ideal which satisfies property $A$. For $1\leq i\leq k$,  the following are equivalent.
\begin{enumerate}
\item [(a)] $\mathcal{I}$ es $k$-idempotent.
\item [(b)] $\varphi^{i}_{F,(\pi_{1})_{\ast}(F')}:\mathrm{Ext}^{i}_{\mathrm{mod}(\mathcal{C}/I)}(F,F')\longrightarrow \mathrm{Ext}^{i}_{\mathrm{mod}(\mathcal{C})}((\pi_{1})_{\ast}(F),(\pi_{1})_{\ast}(F'))$ is an isomorphism for all $F,F'\in \mathrm{mod}(\mathcal{C}/\mathcal{I})$ and for all  $0\leq i\leq k$.

\item [(c)] $\mathbb{EXT}^{i}_{\mathcal{C}}(\mathcal{C}/\mathcal{I},F'\circ \pi_{1})=0$ for $1\leq i\leq k$ and for $F'\in \mathrm{mod}(\mathcal{C}/\mathcal{I})$.

\item [(d)] $\mathbb{EXT}^{i}_{\mathcal{C}}(\mathcal{C}/\mathcal{I},J\circ \pi_{1})=0$ for $1\leq i\leq k$ and for each $J\in \mathrm{mod}(\mathcal{C}/\mathcal{I})$ which is injective.

\item [(e)] $\psi_{F,(\pi_{1})_{\ast}(F')}^{i}:\mathrm{Tor}^{\mathcal{C}}_{i}(F\circ \pi_{2},F'\circ \pi_{1})\longrightarrow \mathrm{Tor}^{\mathcal{C}/\mathcal{I}}_{i}(F, F')$ is an isomorphism for all $1\leq i\leq k$ and $F\in \mathrm{mod}((\mathcal{C}/\mathcal{I})^{op})$ and  $F'\in \mathrm{mod}(\mathcal{C}/\mathcal{I})$.

\item [(f)] $\mathbb{TOR}_{i}^{\mathcal{C}}(\mathcal{C}/\mathcal{I},F'\circ\pi_{1})=0$ for $1\leq i\leq k$  and for all $F'\in \mathrm{mod}(\mathcal{C}/\mathcal{I})$.

\item [(g)] $\mathbb{TOR}_{i}^{\mathcal{C}}(\mathcal{C}/\mathcal{I},\mathrm{Hom}_{\mathcal{C}/\mathcal{I}}(C,-)\circ\pi_{1})=0$ for $1\leq i\leq k$ and for all $\,$ $\mathrm{Hom}_{\mathcal{C}/\mathcal{I}}(C,-)\in \mathrm{mod}(\mathcal{C}/\mathcal{I})$.

\end{enumerate}
\end{corollary}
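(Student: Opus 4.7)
The plan is to assemble this corollary by quoting the three preceding propositions, which together cover all seven conditions. No new computation is needed; the work is just tracking which implications come from which result.

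First, I would observe that Proposition \ref{caractidemfin} already establishes the equivalence of conditions (a), (b), (c) and (d) directly, since that proposition is precisely the restriction of Proposition \ref{caractidem} to $\mathrm{mod}(\mathcal{C})$ using that $\mathcal{I}$ satisfies property $A$ (which, via Proposition \ref{restresfuntores}, ensures the adjoint triple $(\pi_{1}^{\ast},(\pi_{1})_{\ast},\pi_{1}^{!})$ restricts to the finitely presented subcategories). So the first block (a)$\Leftrightarrow$(b)$\Leftrightarrow$(c)$\Leftrightarrow$(d) is immediate.

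Next, I would invoke Proposition \ref{psiisoeqfin}, which gives the equivalence of the three Tor-flavored conditions (e), (f) and (g) independently of $k$-idempotency. So it remains only to wedge the Tor block into the Ext block via a single equivalence.

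For that bridge, I would use Proposition \ref{isopsiidem}, which states that $\mathcal{I}$ is $k$-idempotent if and only if $\psi^{i}_{F,(\pi_{1})_{\ast}(F')}$ is an isomorphism for all $F\in\mathrm{mod}((\mathcal{C}/\mathcal{I})^{op})$ and $F'\in\mathrm{mod}(\mathcal{C}/\mathcal{I})$, i.e.\ (a)$\Leftrightarrow$(e). Concatenating, we obtain (a)$\Leftrightarrow$(b)$\Leftrightarrow$(c)$\Leftrightarrow$(d)$\Leftrightarrow$(e)$\Leftrightarrow$(f)$\Leftrightarrow$(g), which is the corollary. The only non-formal ingredient is the Auslander--Reiten duality $\mathbb{D}_{\mathcal{C}}(\mathrm{Tor}_{i}^{\mathcal{C}}(-,M))\simeq \mathrm{Ext}^{i}(-,\mathbb{D}_{\mathcal{C}}(M))$ of Proposition \ref{ARDuality}, but this has already been absorbed into Proposition \ref{isopsiidem}, so I would not need to redo that calculation here.

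In short, the proof would read essentially: ``By \ref{caractidemfin} the conditions (a)--(d) are equivalent; by \ref{psiisoeqfin} the conditions (e)--(g) are equivalent; and by \ref{isopsiidem} (a) is equivalent to (e). This chains all seven conditions together.'' There is no real obstacle — the only thing to be careful about is that the hypotheses (dualizing $R$-variety, property $A$) used in the earlier propositions are exactly those assumed in the corollary, so all citations apply without any additional assumption.
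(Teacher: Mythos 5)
Your proposal is correct and matches the paper's proof essentially verbatim: the paper also simply cites \ref{caractidemfin} for (a)--(d), \ref{psiisoeqfin} for (e)--(g), and \ref{isopsiidem} to bridge (a) and (e). Your added remarks on hypothesis-tracking and the role of Auslander--Reiten duality are accurate and consistent with the earlier propositions as stated.
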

\begin{proof}
It follows from \ref{caractidemfin},  \ref{isopsiidem} and \ref{psiisoeqfin}.
\end{proof}

\section{Projective resolutions of $k$-idempotent ideals}\label{sec:6}

In this section we will work in preadditive categories as well as in dualizing $R$-varieties. So, we will say explicitely in which context we are working on.\\
In the previous section we characterized $k$-idempotent ideals in terms of
the projective resolutions of all $\mathcal{C}/\mathcal{I}$-modules. We will show here that knowing the projective resolutions of $\mathcal{I}(C,-)$ for all $C\in \mathcal{C}$  is enough to determine for which $k$ the ideal $\mathcal{I}$ is $k$-idempotent.\\

Next, we present a generalization of the so-called dual basis lemma  in classical ring theory (see \cite{Lamlec}).

\begin{proposition}(Dual basis Lemma)\label{dualbasislema}
Let $\mathcal{C}$ be a preadditive category. An object $P\in \mathrm{Mod}(\mathcal{C})$ is projective if and only if there exists a family of morphisms $\{\beta_{j}:P\longrightarrow \mathrm{Hom}_{\mathcal{C}}(C_{j},-)\}_{j\in J}$ and a family  $\{x_{j}\}_{j\in J}$ with $x_{j}\in P(C_{j})$ such that for all $X\in \mathcal{C}$ and for every $a\in P(X)$ there exists a finite subset $J_{X,a}\subseteq J$  such that
$$a=\sum_{j\in J_{X,a}}P([\beta_{j}]_{X}(a))(x_{j}).$$
\end{proposition}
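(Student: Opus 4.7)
The plan is to recognize the dual-basis data as an explicit retraction of a ``free cover'' of $P$ by representables, thereby reducing both directions to the Yoneda lemma. Concretely, $P$ is projective if and only if it is a retract of a coproduct $Q:=\coprod_{j\in J}\mathrm{Hom}_{\mathcal{C}}(C_{j},-)$ of representable functors (such coproducts are projective, and every $\mathcal{C}$-module admits an epimorphism from one, by the properties recalled in Section~\ref{sec:2}). The families $\{x_{j}\}$ and $\{\beta_{j}\}$ are intended to encode, via Yoneda, respectively a canonical map $\alpha:Q\to P$ and a section $\sigma:P\to Q$ with $\alpha\circ\sigma=1_{P}$.

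For the forward direction, assume $P$ is projective. Choose any epimorphism $\alpha:Q=\coprod_{j\in J}\mathrm{Hom}_{\mathcal{C}}(C_{j},-)\to P$; projectivity of $P$ yields a section $\sigma:P\to Q$ with $\alpha\sigma=1_{P}$. By Yoneda, the $j$-th component $\alpha_{j}:\mathrm{Hom}_{\mathcal{C}}(C_{j},-)\to P$ corresponds to $x_{j}:=[\alpha_{j}]_{C_{j}}(1_{C_{j}})\in P(C_{j})$, so $[\alpha_{j}]_{X}(f)=P(f)(x_{j})$ for every $f:C_{j}\to X$. Define $\beta_{j}:=p_{j}\circ\sigma$, where $p_{j}:Q\to\mathrm{Hom}_{\mathcal{C}}(C_{j},-)$ is the $j$-th coordinate map. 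For $a\in P(X)$ the element $\sigma_{X}(a)$ lies in the direct sum $Q(X)=\bigoplus_{j}\mathrm{Hom}_{\mathcal{C}}(C_{j},X)$ and so has finite support $J_{X,a}:=\{j:[\beta_{j}]_{X}(a)\neq 0\}$; applying $\alpha_{X}$ to $\sigma_{X}(a)=\sum_{j\in J_{X,a}}\iota_{j}([\beta_{j}]_{X}(a))$ then produces the required identity.

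For the reverse direction, given the data, use Yoneda to define $\alpha_{j}:\mathrm{Hom}_{\mathcal{C}}(C_{j},-)\to P$ from $x_{j}$, assemble these into $\alpha:Q\to P$, and set $\sigma_{X}(a):=\sum_{j\in J_{X,a}}\iota_{j,X}([\beta_{j}]_{X}(a))\in Q(X)$. The hypothesis reads $\alpha_{X}\sigma_{X}(a)=a$, so once $\sigma$ is promoted to a morphism of $\mathcal{C}$-modules, $P$ is a retract of the projective object $Q$ and is therefore projective. The main obstacle is exactly this promotion: showing that $\sigma$ is additive, independent of the choice of $J_{X,a}$, and natural in $X$. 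Naturality follows from the naturality of each $\beta_{j}$, which gives $g\circ[\beta_{j}]_{X}(a)=[\beta_{j}]_{Y}(P(g)(a))$ for $g:X\to Y$; additivity and choice-independence are handled as in the classical dual-basis argument by observing that the hypothesis forces the tuple $([\beta_{j}]_{X}(a))_{j}$ to have finite support and thus to lie in the coproduct $Q(X)$ rather than merely the product. Once this bookkeeping is settled, the remainder is a routine Yoneda computation.
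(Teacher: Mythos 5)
Your forward direction reproduces the paper's proof essentially verbatim: choose an epimorphism from a coproduct of representables onto $P$, split it by projectivity, and read off $x_{j}$ and $\beta_{j}$ via Yoneda; the finite set $J_{X,a}$ is the support of the section applied to $a$, exactly as in the paper. The paper leaves the reverse direction to the reader, and your sketch of it is reasonable, but be careful with the claim that ``the hypothesis forces $([\beta_{j}]_{X}(a))_{j}$ to have finite support'': as literally written, the statement only asserts that some finite $J_{X,a}$ makes the sum equal to $a$, not that $[\beta_{j}]_{X}(a)=0$ off $J_{X,a}$, so finite support must be read into (or added to) the statement --- which is consistent with what the paper's forward-direction proof actually produces, but is not a formal consequence of the displayed identity; once that is granted, naturality and independence of the choice of $J_{X,a}$ follow as you describe and $P$ is exhibited as a retract of $Q$.
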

\begin{proof}
$(\Longrightarrow)$
Since $\{\mathrm{Hom}_{\mathcal{C}}(C,-)\}_{C\in \mathcal{C}}$ is a generating set of projective modules, there exists an epimorphism $f:\bigoplus_{j\in J}\mathrm{Hom}_{\mathcal{C}}(C_{j},-)\longrightarrow P.$
We get the morphisms $\eta_{j}:=f u_{j}:\mathrm{Hom}_{\mathcal{C}}(C_{j},-)\longrightarrow P$, where $u_{j}:\mathrm{Hom}_{\mathcal{C}}(C_{j},-)$ $\longrightarrow \bigoplus_{j\in J}\mathrm{Hom}_{\mathcal{C}}(C_{j},-)$ is the $j$-th inclusion. By Yoneda lemma, $f u_{j}$ corresponds to one element $x_{j}:=[\eta_{j}]_{C_{j}}(1_{C_{j}})\in P(C_{j})$; furthermore,  $\eta_{j}:\mathrm{Hom}_{\mathcal{C}}(C_{j},-)\longrightarrow P$ is such that $[\eta_{j}]_{X}(\alpha)=P(\alpha)(x_{j})$ for all $X\in \mathcal{C}$.\\ 
Now, let $\gamma=(\gamma_{j})_{j\in J}\in \bigoplus_{j\in J}\mathrm{Hom}_{\mathcal{C}}(C_{j},X)$. Then, there exists a finite subset $J_{\gamma}$ of $J$ such that $\gamma_{j}=0$ if $j\notin J_{\gamma}$. We know that $f_{X}:\bigoplus_{j\in J}\mathrm{Hom}_{\mathcal{C}}(C_{j},X)\longrightarrow P(X)$ is defined for $\gamma=(\gamma_{j})_{j\in J}\in \bigoplus_{j\in J}\mathrm{Hom}_{\mathcal{C}}(C_{j},X)$ as follows: $f_{X}((\gamma_{j})_{j\in J})=\sum_{j\in J_{\gamma}}[\eta_{j}]_{X}(\gamma_{j})=\sum_{j\in J_{\gamma}}P(\gamma_{j})(x_{j}).$
Now, since $P$ is projective we have that $f$ is a split epimorphism, and then there exists $g:P\longrightarrow \bigoplus_{j\in J}\mathrm{Hom}_{\mathcal{C}}(C_{j},-)$ such that $fg=1_{P}$. Let us consider the projection
$\pi_{j}: \bigoplus_{j\in J}\mathrm{Hom}_{\mathcal{C}}(C_{j},-)\longrightarrow \mathrm{Hom}_{\mathcal{C}}(C_{j},-)$, then we have $\beta_{j}:=\pi_{j} g:P\longrightarrow \mathrm{Hom}_{\mathcal{C}}(C_{j},-).$ Then for $X\in \mathcal{C}$ we have that 
$g_{X}:P(X)\longrightarrow \bigoplus_{j\in  J}\mathrm{Hom}_{\mathcal{C}}(C_{j},X)$ is defined  as follows: 
$$g_{X}(a)=([\beta_{j}]_{X}(a))_{j\in J} \,\, \forall a\in P(X)$$ where $[\beta_{j}]_{X}(a):C_{j}\longrightarrow X$. Since $g_{X}(a)\in \bigoplus_{j\in J}\mathrm{Hom}_{\mathcal{C}}(C_{j},X)$, there exists a finite subset $J_{X,a}\subseteq J$ such that $[\beta_{j}]_{X}(a)=0$ if $j\notin J_{X,a}$. Then
$$a=f_{X}(g_{X}(a))=f_{X}\Big(([\beta_{j}]_{X}(a))_{j\in J}\Big)=\sum_{j\in J_{X,a}}P([\beta_{j}]_{X}(a))(x_{j}).$$
The other implication is similar and is left to the reader.
\end{proof}

We recall that  given a family of objects  $\mathcal{F}=\{F_{i}\}_{i\in I}$ and $M\in \mathrm{Mod}(\mathcal{C})$, in \ref{definitraza} we defined the $\textbf{trace}$ in $M$ of the family $\mathcal{F}$ which is denoted by $\mathrm{Tr}_{\mathcal{F}}(M)$. We have the following description of the trace.

\begin{Remark}\label{descritraxza}
Let $\mathcal{F}=\lbrace F_{i}\rbrace_{i\in I} $ be a family in $ \mathrm{Mod}(\mathcal{C})$.  For each $N\in \mathrm{Mod}(\mathcal{C})$ and  $X\in \mathcal{C}$ we have that $\mathrm{Tr}_{\mathcal{F}}(N)(X)=\!\!\!\!\!\!
\!\!\!\!\!\!\displaystyle\sum _{\{f\in\mathrm{Hom}(F, N)\;\mid\; F\in \mathcal{F}\}}
\!\!\!\!\!\!\!\!\!\!\!\!\!\!\!\!\!\!\mathrm{Im}(f_{X}).$ In the case $\mathcal{F}=\{F\}$ is just one object, we will write $\mathrm{Tr}_{F}N$.
\end{Remark}

We recall the following definition (see section 1.3 in \cite{Parra}).

\begin{definition}\label{deftraceideal}
Let $\mathcal{C}$ be a preadditive category and $\mathcal{F}=\lbrace F_{i}\rbrace_{i\in I} $ a family of objects in $ \mathrm{Mod}(\mathcal{C})$. For each $C\in \mathcal{C}$ consider the $\mathcal{C}$-submodule $\Tr_{\mathcal{F}}(\mathrm{Hom}_{\mathcal{C}}(C,-))$ of $\mathrm{Hom}_{\mathcal{C}}(C,-)$. We define the subfunctor  $\mathrm{Tr}_{\mathcal{F}}\mathcal{C}$ of
$\mathrm{Hom}_{\mathcal{C}}(-,-):\mathcal{C}^{op}\times \mathcal{C}\longrightarrow \mathbf{Ab}$ as follows:
$$(\mathrm{Tr}_{\mathcal{F}}\mathcal{C})(C,C'):=\mathrm{Tr}_{\mathcal{F}}(\mathrm{Hom}_{\mathcal{C}}(C,-))(C')$$ for all 
$C,C'\in \mathcal{C}$. This ideal will be called $\textbf{trace ideal}$. In the case that $\mathcal{F}=\{P\}$ with $P$ a projective $\mathcal{C}$-module, we will write $\mathrm{Tr}_{P}\mathcal{C}$.
\end{definition}

It is easy to see that $\mathrm{Tr}_{\mathcal{F}}\mathcal{C}(-,-)$ is a subfunctor of $\mathrm{Hom}_{\mathcal{C}}(-,-)$ and thus an ideal.\\

\begin{proposition}\label{TrPidempo}
Let $\mathcal{C}$ be a preadditive category and let $P$ be a projective $\mathcal{C}$-module. Then 
$\mathrm{Tr}_{P}\mathcal{C}$ defines an idempotent ideal of $\mathcal{C}$.
\end{proposition}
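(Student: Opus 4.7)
The plan is to verify the two inclusions $(\mathrm{Tr}_P\mathcal{C})^2 \subseteq \mathrm{Tr}_P\mathcal{C}$ and $\mathrm{Tr}_P\mathcal{C} \subseteq (\mathrm{Tr}_P\mathcal{C})^2$ separately. Write $\mathcal{I} := \mathrm{Tr}_P\mathcal{C}$. The first inclusion is immediate from the fact that $\mathcal{I}$ is an ideal: any composition of morphisms involving a factor in $\mathcal{I}$ lands again in $\mathcal{I}$. So the only real content is proving $\mathcal{I}(C,C') \subseteq \mathcal{I}^2(C,C')$ for each pair of objects $C,C' \in \mathcal{C}$.

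For this direction, I would fix $C,C' \in \mathcal{C}$ and a typical generator $g \in \mathcal{I}(C,C')$. By Remark \ref{descritraxza} it suffices to handle the case $g = f_{C'}(\alpha)$ with $f \colon P \to \mathrm{Hom}_{\mathcal{C}}(C,-)$ a morphism of $\mathcal{C}$-modules and $\alpha \in P(C')$. The key step is to invoke the dual basis lemma (Proposition \ref{dualbasislema}) for the projective module $P$: this produces families $\{\beta_j \colon P \to \mathrm{Hom}_{\mathcal{C}}(C_j,-)\}_{j\in J}$ and $\{x_j \in P(C_j)\}_{j\in J}$ such that, for the element $\alpha \in P(C')$, there is a finite $J_{C',\alpha}\subseteq J$ with
$$\alpha = \sum_{j \in J_{C',\alpha}} P\bigl([\beta_j]_{C'}(\alpha)\bigr)(x_j).$$

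Now I would apply $f_{C'}$ to this equality and use the naturality of $f$. Setting $\gamma_j := [\beta_j]_{C'}(\alpha) \in \mathrm{Hom}_{\mathcal{C}}(C_j,C')$, naturality gives $f_{C'}\circ P(\gamma_j) = \mathrm{Hom}_{\mathcal{C}}(C,\gamma_j)\circ f_{C_j}$, so
$$f_{C'}(\alpha) = \sum_{j\in J_{C',\alpha}} \gamma_j \circ f_{C_j}(x_j).$$
By construction, $f_{C_j}(x_j) \in \mathrm{Im}(f_{C_j}) \subseteq \mathrm{Tr}_P(\mathrm{Hom}_{\mathcal{C}}(C,-))(C_j) = \mathcal{I}(C,C_j)$, and similarly $\gamma_j \in \mathrm{Im}([\beta_j]_{C'}) \subseteq \mathrm{Tr}_P(\mathrm{Hom}_{\mathcal{C}}(C_j,-))(C') = \mathcal{I}(C_j,C')$. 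Each summand is therefore a composition of a morphism in $\mathcal{I}$ with another morphism in $\mathcal{I}$, exhibiting $f_{C'}(\alpha)$ as an element of $\mathcal{I}^2(C,C')$.

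Since every element of $\mathcal{I}(C,C')$ is a finite sum of generators of the form $f_{C'}(\alpha)$, this shows $\mathcal{I}(C,C') \subseteq \mathcal{I}^2(C,C')$, completing the proof. The only subtle point is keeping the naturality square for $f$ straight when moving $P(\gamma_j)$ past $f_{C'}$; this is where the projectivity of $P$ (through the dual basis lemma) does the real work, since it lets us rewrite an arbitrary element of $P$ in a form where the naturality trick produces the required factorisation. No further technicalities should arise.
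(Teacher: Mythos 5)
Your proposal is correct and follows exactly the route the paper indicates: invoke the dual basis lemma (Proposition \ref{dualbasislema}) and adapt the classical argument for trace ideals in ring theory, the only nontrivial direction being $\mathcal{I}\subseteq\mathcal{I}^2$. You have in fact spelled out the naturality computation that the paper leaves implicit by merely citing Lam's Proposition 2.40, and the factorization $f_{C'}(\alpha)=\sum_j\gamma_j\circ f_{C_j}(x_j)$ with $f_{C_j}(x_j)\in\mathcal{I}(C,C_j)$ and $\gamma_j\in\mathcal{I}(C_j,C')$ is exactly what is needed.
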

\begin{proof}
By using \ref{dualbasislema}, we can adapt the classical proof given \cite[Proposition 2.40]{Lamlec}; or see Lemma 2.5 in \cite{Parra}.
\end{proof}
Since $\mathrm{Tr}_{P}\mathcal{C}$ is an ideal,  we can define  $(\mathrm{Tr}_{P}\mathcal{C}\cdot F)(X):=
\sum_{f\in \bigcup_{C\in \mathcal{C}}(\mathrm{Tr}_{P}\mathcal{C})(C,X)}\mathrm{Im} (F(f))$ (see 
\ref{prodIfunc}). Now, we have the following result which is a generalization of the basic result in modules over a ring $R$.

\begin{proposition}\label{TRismultipli} 
Let $\mathcal{C}$ be a preadditive category, $F\in \mathrm{Mod}(\mathcal{C})$ and $\mathrm{Tr}_{P}\mathcal{C}$ the trace ideal. Then $\mathrm{Tr}_{P}\mathcal{C}\cdot F=\mathrm{Tr}_{P}(F).$
\end{proposition}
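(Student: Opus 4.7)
My plan is to prove the equality $\mathrm{Tr}_{P}\mathcal{C}\cdot F=\mathrm{Tr}_{P}(F)$ by establishing the two inclusions pointwise at each $X\in\mathcal{C}$. For the first, I would use the dual basis lemma (Proposition \ref{dualbasislema}) applied to the projective $\mathcal{C}$-module $P$; for the second, a direct Yoneda-style construction of morphisms $P\to F$ will suffice. Throughout I shall use that, by Remark \ref{descritraxza} and Definition \ref{deftraceideal}, $(\mathrm{Tr}_{P}\mathcal{C})(C,X)$ is the subgroup of $\mathrm{Hom}_\mathcal{C}(C,X)$ generated by images of the component maps $g_X$ of morphisms $g:P\longrightarrow \mathrm{Hom}_\mathcal{C}(C,-)$.

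For the inclusion $\mathrm{Tr}_{P}(F)(X)\subseteq (\mathrm{Tr}_{P}\mathcal{C}\cdot F)(X)$, I would fix $f:P\longrightarrow F$ and $a\in P(X)$, and apply the dual basis lemma to write
\[
a\;=\;\sum_{j\in J_{X,a}}P([\beta_{j}]_{X}(a))(x_{j}),
\]
where $\beta_{j}:P\longrightarrow \mathrm{Hom}_{\mathcal{C}}(C_{j},-)$ and $x_{j}\in P(C_{j})$. Applying $f_{X}$ and using naturality of $f$ yields
\[
f_{X}(a)\;=\;\sum_{j}F([\beta_{j}]_{X}(a))\bigl(f_{C_{j}}(x_{j})\bigr).
\]
The key observation is that each morphism $[\beta_{j}]_{X}(a):C_{j}\longrightarrow X$ lies in $(\mathrm{Tr}_{P}\mathcal{C})(C_{j},X)$ by definition, since it is literally in the image of the component $[\beta_{j}]_{X}$ of a morphism $P\to\mathrm{Hom}_{\mathcal{C}}(C_{j},-)$. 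Hence each summand lies in $(\mathrm{Tr}_{P}\mathcal{C}\cdot F)(X)$ by Definition \ref{prodIfunc}, giving the first inclusion.

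For the reverse inclusion, I would take a generator of $(\mathrm{Tr}_{P}\mathcal{C}\cdot F)(X)$, that is, an element of the form $F(f)(y)$ where $f\in(\mathrm{Tr}_{P}\mathcal{C})(C,X)$ and $y\in F(C)$. Up to passing to summands I may assume $f=g_{X}(p)$ for some $g:P\longrightarrow \mathrm{Hom}_{\mathcal{C}}(C,-)$ and $p\in P(X)$. Via Yoneda's lemma, the element $y\in F(C)$ corresponds to a natural transformation $\widehat{y}:\mathrm{Hom}_{\mathcal{C}}(C,-)\longrightarrow F$ with $[\widehat{y}]_{X}(\alpha)=F(\alpha)(y)$. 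Composing, I set $h:=\widehat{y}\circ g:P\longrightarrow F$, and then
\[
h_{X}(p)\;=\;[\widehat{y}]_{X}(g_{X}(p))\;=\;[\widehat{y}]_{X}(f)\;=\;F(f)(y),
\]
showing $F(f)(y)\in\mathrm{Im}(h_{X})\subseteq\mathrm{Tr}_{P}(F)(X)$. This completes the second inclusion.

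I do not expect a serious obstacle: both directions reduce to bookkeeping once the dual basis lemma is in place on one side and Yoneda on the other. The mildly delicate point is the very first inclusion, where one must be sure that the elements $[\beta_{j}]_{X}(a)$ produced by the dual basis are members of the trace ideal $(\mathrm{Tr}_{P}\mathcal{C})(C_{j},X)$ rather than of some larger group; this is transparent once one unfolds Definition \ref{deftraceideal} and Remark \ref{descritraxza}, because the morphism $\beta_{j}:P\to\mathrm{Hom}_{\mathcal{C}}(C_{j},-)$ is exactly one of the morphisms whose image defines the trace.
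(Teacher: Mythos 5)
Your proof is correct, and the argument in both directions is sound. The paper itself gives no proof for this proposition: it simply refers to Proposition 2.7 of the Parra--Saor\'in--Virili paper \cite{Parra}, so there is no in-text argument to compare against. Your write-up is therefore a genuine self-contained derivation, and it proceeds in the way that the surrounding text clearly anticipates: the dual basis lemma (Proposition \ref{dualbasislema}) is invoked to decompose an arbitrary $a\in P(X)$ through the $\beta_j$, naturality of $f:P\to F$ moves the decomposition into $F$, and the membership of $[\beta_j]_X(a)$ in $(\mathrm{Tr}_P\mathcal{C})(C_j,X)$ is exactly what Definition \ref{deftraceideal} and Remark \ref{descritraxza} deliver, because $\beta_j$ is itself a morphism $P\to \mathrm{Hom}_{\mathcal{C}}(C_j,-)$. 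The reverse inclusion via Yoneda is clean: reducing a generator $F(f)(y)$ of $(\mathrm{Tr}_P\mathcal{C}\cdot F)(X)$ to the case $f=g_X(p)$ is legitimate because $F$ is additive and $\mathrm{Tr}_P(F)(X)$ is a subgroup, and composing $g$ with the natural transformation $\widehat{y}$ associated to $y$ produces the needed morphism $h:P\to F$ with $h_X(p)=F(f)(y)$. I see no gap. The only thing worth polishing in a final version is to say explicitly (as you note in passing) that a general element of $(\mathrm{Tr}_P\mathcal{C}\cdot F)(X)$ is a finite sum of such generators and likewise a general $f\in (\mathrm{Tr}_P\mathcal{C})(C,X)$ is a finite sum of $g_X(p)$'s, so that the reductions to single terms are justified by additivity on both sides.
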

\begin{proof}
See Proposition 2.7 in \cite{Parra}.
\end{proof}

Let $M\in \mathrm{Mod}(\mathcal{C})$. We recall that $\mathrm{add}(M)$ is the full subcategory of $\mathrm{Mod}(\mathcal{C})$ whose objects are direct summands of finite coproducts of the module $M$. That is, $X\in \mathrm{add}(M)$ if and only if there exists a module $Y$ such that $X\oplus Y\simeq M^{n}$ for some $n\in \mathbb{N}$. The following proposition tells us when two finitely generated projective $\mathcal{C}$-modules produce the same ideal.

\begin{proposition}
Let $P$ and $Q$ finitely generated projective $\mathcal{C}$-modules. Then $\mathrm{Tr}_{P}\mathcal{C}=\mathrm{Tr}_{Q}\mathcal{C}$ if and only if $\mathrm{add}(P)=\mathrm{add}(Q)$.
\end{proposition}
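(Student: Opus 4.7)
The plan is to translate the equality of trace ideals into an equality of traces in arbitrary modules via Proposition \ref{TRismultipli}, and then extract the $\mathrm{add}$-closure information by evaluating at $F=P$ (respectively $F=Q$). Concretely, from $\mathrm{Tr}_{P}\mathcal{C}=\mathrm{Tr}_{Q}\mathcal{C}$ and \ref{TRismultipli} it follows that $\mathrm{Tr}_{P}(F)=\mathrm{Tr}_{P}\mathcal{C}\cdot F=\mathrm{Tr}_{Q}\mathcal{C}\cdot F=\mathrm{Tr}_{Q}(F)$ for every $F\in\mathrm{Mod}(\mathcal{C})$, and conversely, since $\mathrm{Tr}_{P}\mathcal{C}(C,-)=\mathrm{Tr}_{P}(\mathrm{Hom}_{\mathcal{C}}(C,-))$, equality of all such traces on representables recovers equality of trace ideals. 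So the task reduces to comparing $\mathrm{Tr}_{P}$ and $\mathrm{Tr}_{Q}$ as subfunctor assignments.

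For the implication $\mathrm{add}(P)=\mathrm{add}(Q)\Rightarrow\mathrm{Tr}_{P}\mathcal{C}=\mathrm{Tr}_{Q}\mathcal{C}$, I would first note two easy facts: (i) $\mathrm{Tr}_{P^{n}}(F)=\mathrm{Tr}_{P}(F)$, because a morphism $P^{n}\to F$ is a tuple $(f_{1},\dots,f_{n})$ of morphisms $P\to F$ and its image is the sum of their images; and (ii) if $Q$ is a direct summand of $P^{n}$ with section $\iota:Q\to P^{n}$ and retraction $\rho:P^{n}\to Q$, then any $f:Q\to F$ equals $(f\rho)\iota$, so $\mathrm{Im}(f)\subseteq\mathrm{Im}(f\rho)\subseteq\mathrm{Tr}_{P^{n}}(F)=\mathrm{Tr}_{P}(F)$, giving $\mathrm{Tr}_{Q}(F)\subseteq\mathrm{Tr}_{P}(F)$. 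Applying this symmetrically yields equality of traces on every $F$, and in particular on every representable, so the trace ideals coincide.

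For the converse $\mathrm{Tr}_{P}\mathcal{C}=\mathrm{Tr}_{Q}\mathcal{C}\Rightarrow\mathrm{add}(P)=\mathrm{add}(Q)$, I would evaluate the equality of traces at $F=P$. Since the identity $1_{P}$ contributes $P$ to $\mathrm{Tr}_{P}(P)$, we get $P=\mathrm{Tr}_{P}(P)=\mathrm{Tr}_{Q}(P)$, i.e.\ for every $X\in\mathcal{C}$ every element of $P(X)$ is a finite sum of images, evaluated at $X$, of morphisms $Q\to P$. Now use that $P$ is finitely generated: pick an epimorphism $\bigoplus_{i=1}^{n}\mathrm{Hom}_{\mathcal{C}}(C_{i},-)\twoheadrightarrow P$ with generators $p_{i}\in P(C_{i})$. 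Each $p_{i}$ already lies in $\mathrm{Tr}_{Q}(P)(C_{i})$, so it is a finite sum $p_{i}=\sum_{j}(g_{i,j})_{C_{i}}(q_{i,j})$ with $g_{i,j}:Q\to P$. Assembling all $g_{i,j}$ yields a single morphism $g:Q^{m}\to P$ whose image contains each $p_{i}$, hence generates $P$, hence is an epimorphism. Projectivity of $P$ splits $g$, so $P\in\mathrm{add}(Q)$; by symmetry $Q\in\mathrm{add}(P)$, and therefore $\mathrm{add}(P)=\mathrm{add}(Q)$.

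The only delicate point, and the main thing to verify with care, is the passage from pointwise trace information to a global epimorphism $Q^{m}\twoheadrightarrow P$: one must check that the finitely many morphisms $g_{i,j}$ produced locally at the objects $C_{i}$ assemble into a single morphism of $\mathcal{C}$-modules whose image is all of $P$, which uses the explicit description of $\mathrm{Tr}_{Q}(P)(X)$ as $\sum_{g:Q\to P}\mathrm{Im}(g_{X})$ from \ref{descritraxza} together with the fact that generators of $P$ propagate along morphisms of representables via Yoneda. Everything else reduces to routine bookkeeping with traces and direct summands.
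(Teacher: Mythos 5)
Your proof is correct and takes essentially the same approach as the paper: reduce the equality of trace ideals to an equality of trace subfunctors via Proposition \ref{TRismultipli}, use $P=\mathrm{Tr}_{Q}(P)$ together with finite generation and projectivity of $P$ to split an epimorphism $Q^{m}\twoheadrightarrow P$, and argue symmetrically. The paper compresses the construction of the finite epimorphism into a citation of \cite[2.1(b)]{AusM1} and leaves the direction $\mathrm{add}(P)=\mathrm{add}(Q)\Rightarrow\mathrm{Tr}_{P}\mathcal{C}=\mathrm{Tr}_{Q}\mathcal{C}$ as "similar," both of which you spell out explicitly; the ``delicate point'' you flag (assembling local data into a global epimorphism) is handled correctly by your observation that $\mathrm{Im}(g)$ is a subfunctor containing all the chosen generators.
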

\begin{proof}
Suppose that  $\mathrm{Tr}_{P}\mathcal{C}=\mathrm{Tr}_{Q}\mathcal{C}$. We have that $\mathrm{Tr}_{P}(P)=P$. Since $\mathrm{Tr}_{P}\mathcal{C}=\mathrm{Tr}_{Q}\mathcal{C}$ by \ref{TRismultipli}, we have that $P=\mathrm{Tr}_{P}P=\mathrm{Tr}_{P}\mathcal{C}\cdot P=\mathrm{Tr}_{Q}\mathcal{C}\cdot P=\mathrm{Tr}_{Q}P$. Then there exists an epimorphism $\eta:Q^{(I)}\longrightarrow P$. Since $P$ is finitely generated, we have that there exists a finite subset $J\subseteq I$  and an epimorphism $\eta':Q^{J}\longrightarrow P$ (see \cite[2.1(b)]{AusM1}). Since $P$ is projective, we have that $P$ is a direct summand of $Q^{J}$. Then $P\in \mathrm{add}(Q)$. Similarly, $Q\in \mathrm{add}(P)$, and thus $\mathrm{add}(P)=\mathrm{add}(Q)$. The other implication is similar.
\end{proof}

\begin{lemma}\label{addnoadd}
Let $\mathcal{C}$ be an additive category and $P=\mathrm{Hom}_{\mathcal{C}}(C,-)\in \mathrm{Mod}(\mathcal{C})$. Let us consider $\mathcal{B}:=\mathrm{add}(C)\subseteq \mathcal{C}$  and $\mathcal{F}:=\{\mathrm{Hom}_{\mathcal{C}}(C',-)\}_{C'\in \mathcal{B}}$, then for each $\mathrm{Hom}_{\mathcal{C}}(X,-)\in \mathrm{Mod}(\mathcal{C})$ we have that 
$\mathrm{Tr}_{\mathcal{F}}\Big(\mathrm{Hom}_{\mathcal{C}}(X,-)\Big)=\mathrm{Tr}_{P}\Big(\mathrm{Hom}_{\mathcal{C}}(X,-)\Big).$
\end{lemma}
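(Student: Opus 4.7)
The plan is to prove both inclusions of the desired equality directly from the description of the trace given in Remark \ref{descritraxza}.

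First I would observe the easy inclusion: since $C\in\mathrm{add}(C)=\mathcal{B}$, the module $P=\mathrm{Hom}_{\mathcal{C}}(C,-)$ belongs to the family $\mathcal{F}$, hence every morphism $P\to \mathrm{Hom}_{\mathcal{C}}(X,-)$ contributes to both traces. By the description in \ref{descritraxza} this gives $\mathrm{Tr}_{P}(\mathrm{Hom}_{\mathcal{C}}(X,-))\subseteq \mathrm{Tr}_{\mathcal{F}}(\mathrm{Hom}_{\mathcal{C}}(X,-))$ evaluated pointwise at each $Y\in\mathcal{C}$.

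For the reverse inclusion I would pick an arbitrary $C'\in \mathcal{B}=\mathrm{add}(C)$, so there exist $C''\in\mathcal{C}$ and $n\in\mathbb{N}$ with $C'\oplus C''\cong C^{n}$. Applying the (contravariant, additive) Yoneda embedding $\mathbb{Y}$ produces a split monomorphism $\iota:\mathrm{Hom}_{\mathcal{C}}(C',-)\hookrightarrow P^{n}$ with a retraction $\rho:P^{n}\to \mathrm{Hom}_{\mathcal{C}}(C',-)$ satisfying $\rho\circ\iota=1$. Given any morphism $h:\mathrm{Hom}_{\mathcal{C}}(C',-)\to \mathrm{Hom}_{\mathcal{C}}(X,-)$, I would factor it as $h=(h\circ\rho)\circ\iota$ and decompose $h\circ\rho:P^{n}\to \mathrm{Hom}_{\mathcal{C}}(X,-)$ as an $n$-tuple $(g_{1},\dots,g_{n})$ with each $g_{k}:P\to \mathrm{Hom}_{\mathcal{C}}(X,-)$.

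Evaluating at $Y\in\mathcal{C}$ and chasing an element, any $y\in \mathrm{Im}(h_{Y})$ satisfies $y=\sum_{k=1}^{n}(g_{k})_{Y}(\alpha_{k})$ for some $(\alpha_{1},\dots,\alpha_{n})\in P(Y)^{n}$, so $y\in \sum_{k=1}^{n}\mathrm{Im}((g_{k})_{Y})\subseteq \mathrm{Tr}_{P}(\mathrm{Hom}_{\mathcal{C}}(X,-))(Y)$. Since this holds for every $C'\in\mathcal{B}$ and every $h$, Remark \ref{descritraxza} yields $\mathrm{Tr}_{\mathcal{F}}(\mathrm{Hom}_{\mathcal{C}}(X,-))(Y)\subseteq \mathrm{Tr}_{P}(\mathrm{Hom}_{\mathcal{C}}(X,-))(Y)$, completing the equality. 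No step is genuinely difficult here; the only point requiring a bit of care is the passage from the isomorphism $C'\oplus C''\cong C^{n}$ in $\mathcal{C}$ to a splitting of representables in $\mathrm{Mod}(\mathcal{C})$, which is immediate from the additivity of $\mathbb{Y}$ (Proposition \ref{projfingen}).
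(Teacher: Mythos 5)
Your proof is correct and is a reasonable filling-in of the paper's argument, which is left to the reader ("Straightforward"): the easy inclusion comes from $P\in\mathcal F$, and the reverse inclusion follows because any $C'\in\mathrm{add}(C)$ gives, via the contravariant Yoneda embedding, a split monomorphism $\mathrm{Hom}_{\mathcal C}(C',-)\hookrightarrow P^{n}$, through which any morphism out of $\mathrm{Hom}_{\mathcal C}(C',-)$ can be routed as a sum of morphisms out of $P$. One small cosmetic point: you do not actually need the complement $C''$ to exist as an object of $\mathcal C$ (and in a general additive category it need not, since idempotents are not assumed to split); all you use, and all that membership in $\mathrm{add}(C)$ guarantees, is a split monomorphism $C'\to C^{n}$ with a retraction, which already yields the split monomorphism and retraction between the representables.
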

\begin{proof}
Straightforward.
\end{proof}

\begin{corollary}\label{Traza=Iadd}
Let $\mathcal{C}$ be a Hom-finite $R$-variety and $P=\mathrm{Hom}_{\mathcal{C}}(C,-)\in \mathrm{Mod}(\mathcal{C})$. Then $\mathrm{Tr}_{P}\Big(\mathrm{Hom}_{\mathcal{C}}(X,-)\Big)=\mathcal{I}_{\mathrm{add}(C)}(X,-)$, where $\mathcal{I}_{\mathrm{add}(C)}$ is the ideal of the morphisms in $\mathcal{C}$ which factor through some object of $\mathrm{add}(C)$. 
\end{corollary}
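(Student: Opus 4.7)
The plan is to combine Lemma \ref{addnoadd} with the Yoneda lemma and a direct computation of images. By Lemma \ref{addnoadd}, taking $\mathcal{F}=\{\mathrm{Hom}_{\mathcal{C}}(C',-)\}_{C'\in\mathrm{add}(C)}$, we have $\mathrm{Tr}_{P}(\mathrm{Hom}_{\mathcal{C}}(X,-))=\mathrm{Tr}_{\mathcal{F}}(\mathrm{Hom}_{\mathcal{C}}(X,-))$, so it suffices to evaluate the latter at an arbitrary $Y\in\mathcal{C}$ and check it coincides with $\mathcal{I}_{\mathrm{add}(C)}(X,Y)$.

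First I would use the description in Remark \ref{descritraxza}, writing
\[
\mathrm{Tr}_{\mathcal{F}}(\mathrm{Hom}_{\mathcal{C}}(X,-))(Y)=\sum_{C'\in\mathrm{add}(C)}\,\sum_{f\in\mathrm{Hom}(\mathrm{Hom}_{\mathcal{C}}(C',-),\mathrm{Hom}_{\mathcal{C}}(X,-))}\mathrm{Im}(f_{Y}).
\]
By Yoneda's lemma each such $f$ corresponds uniquely to an $\alpha_{f}\in\mathrm{Hom}_{\mathcal{C}}(X,C')$, and under this identification the map $f_{Y}:\mathrm{Hom}_{\mathcal{C}}(C',Y)\longrightarrow\mathrm{Hom}_{\mathcal{C}}(X,Y)$ sends $g\mapsto g\circ\alpha_{f}$. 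Hence $\mathrm{Im}(f_{Y})$ consists exactly of those morphisms $X\to Y$ that factor as $X\xrightarrow{\alpha_{f}}C'\xrightarrow{g}Y$ for some $g$. Ranging over all $f$ and all $C'\in\mathrm{add}(C)$, this union is precisely the set of morphisms $X\to Y$ factoring through some object of $\mathrm{add}(C)$, which is $\mathcal{I}_{\mathrm{add}(C)}(X,Y)$.

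The only subtle point is checking that the sum of images on the left really equals this set (rather than just contains it), since a trace is, by definition, the abelian subgroup generated by such images. But if $h_{1}=g_{1}\alpha_{1}$ factors through $C_{1}\in\mathrm{add}(C)$ and $h_{2}=g_{2}\alpha_{2}$ factors through $C_{2}\in\mathrm{add}(C)$, then $h_{1}+h_{2}$ factors as $X\xrightarrow{(\alpha_{1},\alpha_{2})}C_{1}\oplus C_{2}\xrightarrow{(g_{1},g_{2})}Y$ through $C_{1}\oplus C_{2}\in\mathrm{add}(C)$; similarly $-h_{1}$ factors through $C_{1}$. Thus $\mathcal{I}_{\mathrm{add}(C)}(X,Y)$ is already an abelian subgroup of $\mathrm{Hom}_{\mathcal{C}}(X,Y)$, and the sum of images computed above agrees with it on the nose. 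This gives the claimed equality of subfunctors.

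I do not anticipate a serious obstacle here; the main step to be careful with is the bookkeeping between the trace (a sum of images over \emph{all} morphisms from objects in $\mathcal{F}$) and $\mathcal{I}_{\mathrm{add}(C)}$ (morphisms factoring through \emph{some} single object of $\mathrm{add}(C)$), which is precisely where closure of $\mathrm{add}(C)$ under finite biproducts is used.
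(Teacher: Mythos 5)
Your proof is correct and follows essentially the same route as the paper: both reduce via Lemma \ref{addnoadd} to the family $\mathcal{F}=\{\mathrm{Hom}_{\mathcal{C}}(C',-)\}_{C'\in\mathrm{add}(C)}$ and then identify $\mathrm{Tr}_{\mathcal{F}}(\mathrm{Hom}_{\mathcal{C}}(X,-))$ with $\mathcal{I}_{\mathrm{add}(C)}(X,-)$; the paper delegates this second step to the cited Lemma 2.3 of \cite{MOSS}, whereas you supply the Yoneda computation directly, including the necessary observation that closure of $\mathrm{add}(C)$ under finite biproducts makes $\mathcal{I}_{\mathrm{add}(C)}(X,Y)$ an abelian subgroup so that the sum of images equals the union.
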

\begin{proof}
It follows from \cite[Lemma 2.3]{MOSS} and \ref{addnoadd}.
\end{proof}
Now, we recall the following well known result.
\begin{proposition}\label{addfun}
Let $\mathcal{C}$ be an $R$-category which is Hom-finite, where $R$ is a commutative ring. Then for every $C\in \mathcal{C}$ we have that $\mathrm{add}(C)$ is functorially finite.
\end{proposition}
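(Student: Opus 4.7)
My plan is to prove each half of the functorial-finiteness condition separately, using Hom-finiteness to pick finitely many $R$-module generators of the relevant Hom-space and assembling them into a single morphism with the correct universal property.

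For contravariant finiteness, fix $X\in\mathcal{C}$. Since $\mathcal{C}$ is Hom-finite over $R$, the $R$-module $\mathrm{Hom}_{\mathcal{C}}(C,X)$ is finitely generated, say by $f_{1},\dots,f_{n}$. I will show that the morphism
\[
F=(f_{1},\dots,f_{n})\colon C^{n}\longrightarrow X
\]
is a right $\mathrm{add}(C)$-approximation. Given any $g\colon C'\to X$ with $C'\in\mathrm{add}(C)$, there exist $m\in\mathbb{N}$ together with $\iota\colon C'\to C^{m}$ and $p\colon C^{m}\to C'$ satisfying $p\iota=1_{C'}$. Using the canonical inclusions $\sigma_{k}\colon C\to C^{m}$ and projections $\pi_{k}\colon C^{m}\to C$, I can decompose $gp=\sum_{k=1}^{m}(gp\sigma_{k})\pi_{k}$, and each component $gp\sigma_{k}\colon C\to X$ lies in $\sum_{i}Rf_{i}$, so $gp\sigma_{k}=\sum_{i}r_{ik}f_{i}$ for suitable $r_{ik}\in R$. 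Assembling these scalars into a morphism $h\colon C^{m}\to C^{n}$ (whose $i$-th component on the $k$-th copy of $C$ is $r_{ik}\cdot 1_{C}$) and using $R$-bilinearity of composition, I obtain $Fh=gp$ and hence $F(h\iota)=g$, which is the required factorization.

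Covariant finiteness is proved dually: choose a finite set of $R$-generators $g_{1},\dots,g_{m}$ of $\mathrm{Hom}_{\mathcal{C}}(X,C)$, and show that
\[
G=(g_{1},\dots,g_{m})^{t}\colon X\longrightarrow C^{m}
\]
is a left $\mathrm{add}(C)$-approximation by exactly the same argument applied to $\mathcal{C}^{op}$ (which is again a Hom-finite $R$-category). Combining both halves gives that $\mathrm{add}(C)$ is functorially finite.

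The only genuine subtlety is reducing an arbitrary morphism $g\colon C'\to X$ with $C'\in\mathrm{add}(C)$ to morphisms out of a direct sum $C^{m}$; this is handled uniformly via the splitting $p\iota=1_{C'}$ coming from the definition of $\mathrm{add}(C)$, together with the $R$-bilinearity of composition which guarantees that coefficient matrices $(r_{ik})$ in $R$ produce honest morphisms $C^{m}\to C^{n}$. Everything else is bookkeeping, so no essential difficulty remains.
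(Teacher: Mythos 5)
Your proof is correct and is exactly the Auslander–Smal\o{} approximation argument that the paper defers to by citing \cite[Theorem 4.2]{AusSmalo}, written out explicitly for the Hom-finite $R$-category setting: picking $R$-module generators of $\mathrm{Hom}_{\mathcal{C}}(C,X)$ to build the approximation $C^{n}\to X$, and reducing an arbitrary $C'\in\mathrm{add}(C)$ to $C^{m}$ via a split, are precisely the steps of the adaptation the paper leaves implicit. No gaps.
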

\begin{proof}
The proof given in \cite[Theorem 4.2]{AusSmalo} can be adapted to this setting.
\end{proof}
In the following results we focus in projective modules of the form $\mathrm{Hom}_{\mathcal{C}}(C,-)$ since in $R$-varieties all the finitely generated projectives are of such form (see \ref{projfingen}).
\begin{proposition}\label{trasatproA}
Let $\mathcal{C}$ be a dualizing $R$-variety, $P=\mathrm{Hom}_{\mathcal{C}}(C,-)$ a finitely generated projective $\mathcal{C}$-module and $\mathcal{I}=\mathrm{Tr}_{P}\mathcal{C}$.  Then $\mathcal{I}$ satisfies property $A$.
\end{proposition}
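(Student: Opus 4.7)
The plan is to reduce the statement to the ideal-of-morphisms-factoring-through-a-subcategory description of $\mathcal{I}$, and then to invoke the functorially finite criterion already proved in Proposition \ref{euifunproA}. The key observation is that the trace ideal $\mathrm{Tr}_P\mathcal{C}$ associated with the finitely generated projective $P=\mathrm{Hom}_{\mathcal{C}}(C,-)$ coincides, objectwise, with the ideal $\mathcal{I}_{\mathrm{add}(C)}$ of morphisms of $\mathcal{C}$ that factor through some object of $\mathrm{add}(C)$.

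First, I would unfold the definitions and apply Corollary \ref{Traza=Iadd}. For any $C'\in\mathcal{C}$ one has
\[
\mathcal{I}(C',-)=\mathrm{Tr}_{P}\mathcal{C}(C',-)=\mathrm{Tr}_{P}\bigl(\mathrm{Hom}_{\mathcal{C}}(C',-)\bigr)=\mathcal{I}_{\mathrm{add}(C)}(C',-),
\]
where the last equality is exactly Corollary \ref{Traza=Iadd}. A symmetric computation on the contravariant side gives $\mathcal{I}(-,C')=\mathcal{I}_{\mathrm{add}(C)}(-,C')$ as subfunctors of $\mathrm{Hom}_{\mathcal{C}}(-,C')$; this uses that $\mathrm{Tr}_{P}\mathcal{C}$ is literally the trace ideal of $P$, so its values on any pair of objects are controlled by the family $\mathcal{F}=\{\mathrm{Hom}_{\mathcal{C}}(C'',-)\}_{C''\in\mathrm{add}(C)}$ via Lemma \ref{addnoadd}.

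Next, I would apply Proposition \ref{addfun}: since $\mathcal{C}$ is a dualizing $R$-variety (in particular a Hom-finite $R$-category), the subcategory $\mathrm{add}(C)$ is functorially finite in $\mathcal{C}$. With this in hand, Proposition \ref{euifunproA} applies directly to the subcategory $\mathcal{X}=\mathrm{add}(C)$ and the associated ideal $\mathcal{I}_{\mathrm{add}(C)}$: it yields that $\mathcal{I}_{\mathrm{add}(C)}$ satisfies property $A$, i.e.\ there are epimorphisms
\[
\mathrm{Hom}_{\mathcal{C}}(X,-)\twoheadrightarrow \mathcal{I}_{\mathrm{add}(C)}(C',-),\qquad \mathrm{Hom}_{\mathcal{C}}(-,Y)\twoheadrightarrow \mathcal{I}_{\mathrm{add}(C)}(-,C')
\]
for every $C'\in\mathcal{C}$, where the representing objects $X,Y$ come from right and left $\mathrm{add}(C)$-approximations of $C'$ (these exist because $\mathrm{add}(C)$ is functorially finite). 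Combining this with the identification from the first step, $\mathcal{I}=\mathcal{I}_{\mathrm{add}(C)}$, gives the desired epimorphisms for $\mathcal{I}$ itself, which is exactly property $A$.

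I do not expect a real obstacle here: every ingredient is already packaged in the lemmas preceding the statement, so the proof is essentially a concatenation of Corollary \ref{Traza=Iadd}, Proposition \ref{addfun}, and Proposition \ref{euifunproA}. The only point that requires a small sanity check is that Corollary \ref{Traza=Iadd} is stated with the projective $P=\mathrm{Hom}_{\mathcal{C}}(C,-)$ evaluated at representable functors $\mathrm{Hom}_{\mathcal{C}}(C',-)$, which is exactly the setting of the trace ideal on representables — so no additional work is needed to extend the identification to all of $\mathcal{I}$.
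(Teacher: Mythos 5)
Your proof is correct and follows essentially the same route as the paper: identify $\mathrm{Tr}_P\mathcal{C}$ with $\mathcal{I}_{\mathrm{add}(C)}$ via Corollary \ref{Traza=Iadd}, apply Proposition \ref{addfun} to conclude that $\mathrm{add}(C)$ is functorially finite, and then invoke Proposition \ref{euifunproA}. The extra paragraph about the contravariant side is a reasonable sanity check but not strictly necessary, since Corollary \ref{Traza=Iadd} already fixes the values of $\mathrm{Tr}_P\mathcal{C}$ on every pair of objects and hence identifies it with $\mathcal{I}_{\mathrm{add}(C)}$ as an ideal.
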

\begin{proof}
By \ref{Traza=Iadd}, we have that $\mathrm{Tr}_{P}\mathcal{C}=\mathcal{I}_{\mathrm{add}(C)}.$ By \ref{addfun}, we have that  $\mathrm{add}(C)$ is funtorially finite. By \ref{euifunproA}, we have that $\mathcal{I}_{\mathrm{add}(C)}$ satisfies the property $A$.
\end{proof}

\begin{corollary}\label{almostrecolle}
Let $\mathcal{C}$ be a dualizing $R$-variety, $P=\mathrm{Hom}_{\mathcal{C}}(C,-)\in \mathrm{mod}(\mathcal{C})$ and $\mathcal{I}=\mathrm{Tr}_{P}\mathcal{C}$.  Then we can restrict the diagram given in \ref{tresfuntores} to the finitely presented modules
$$\xymatrix{\mathrm{mod}(\mathcal{C}/\mathcal{I})\ar[rr]|{(\pi_{1}){\ast}}  &  &\mathrm{mod}(\mathcal{C})\ar@<-2ex>[ll]_{\pi_{1}^{\ast}}\ar@<2ex>[ll]^{\pi_{1}^{!}} }$$
\end{corollary}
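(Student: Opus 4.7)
The plan is to apply Proposition \ref{restresfuntores} directly, using the ingredients already assembled in this section. Proposition \ref{restresfuntores} asserts that for any ideal $\mathcal{I}$ in a dualizing $R$-variety $\mathcal{C}$ which satisfies property $(A)$, the triple $(\pi_{1}^{\ast},(\pi_{1})_{\ast},\pi_{1}^{!})$ of adjoint functors from Proposition \ref{tresfuntores} restricts to the subcategories of finitely presented modules. Hence the only thing to check in the present setting is that the trace ideal $\mathcal{I}=\mathrm{Tr}_{P}\mathcal{C}$ with $P=\mathrm{Hom}_{\mathcal{C}}(C,-)$ satisfies property $(A)$.

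First I would observe that property $(A)$ for $\mathcal{I}$ is exactly the content of Proposition \ref{trasatproA}. That proposition, in turn, is proved by chaining together three earlier results: Corollary \ref{Traza=Iadd} identifies $\mathrm{Tr}_{P}\mathcal{C}$ with the ideal $\mathcal{I}_{\mathrm{add}(C)}$ of morphisms factoring through objects of $\mathrm{add}(C)$; Proposition \ref{addfun} says that $\mathrm{add}(C)$ is functorially finite in $\mathcal{C}$ because $\mathcal{C}$ is Hom-finite over the commutative ring $R$; and Proposition \ref{euifunproA} then guarantees that an ideal of the form $\mathcal{I}_{\mathcal{X}}$ for a functorially finite subcategory $\mathcal{X}$ satisfies property $(A)$. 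So once these are cited, the property $(A)$ hypothesis of Proposition \ref{restresfuntores} is in place.

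The second and final step is to invoke Proposition \ref{restresfuntores} verbatim: it produces the restricted diagram
$$\xymatrix{\mathrm{mod}(\mathcal{C}/\mathcal{I})\ar[rr]|{(\pi_{1})_{\ast}}  &  &\mathrm{mod}(\mathcal{C})\ar@<-2ex>[ll]_{\pi_{1}^{\ast}}\ar@<2ex>[ll]^{\pi_{1}^{!}} }$$
together with the two adjunctions $(\pi_{1}^{\ast},(\pi_{1})_{\ast})$ and $((\pi_{1})_{\ast},\pi_{1}^{!})$ inherited from \ref{tresfuntores}. Since $\mathcal{C}$ is dualizing and $\mathcal{I}$ satisfies property $(A)$, Proposition \ref{cocienteduali} moreover ensures that $\mathcal{C}/\mathcal{I}$ is itself dualizing, so both $\mathrm{mod}(\mathcal{C}/\mathcal{I})$ and $\mathrm{mod}(\mathcal{C})$ are well-behaved on the relevant side.

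There is essentially no obstacle here — the corollary is a direct specialization of \ref{restresfuntores} once \ref{trasatproA} has identified $\mathrm{Tr}_{P}\mathcal{C}$ as an ideal satisfying property $(A)$. The only mild care needed is to emphasize that the hypothesis $P=\mathrm{Hom}_{\mathcal{C}}(C,-)\in\mathrm{mod}(\mathcal{C})$ (rather than merely a projective object of $\mathrm{Mod}(\mathcal{C})$) is what lets us invoke \ref{Traza=Iadd} and \ref{addfun}, since both rely on having an actual object $C\in\mathcal{C}$ with $\mathrm{add}(C)$ functorially finite in $\mathcal{C}$.
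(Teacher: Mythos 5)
Your proposal is correct and follows exactly the paper's own proof: establish property $(A)$ for $\mathcal{I}=\mathrm{Tr}_{P}\mathcal{C}$ via Proposition \ref{trasatproA}, then invoke Proposition \ref{restresfuntores}. The extra unpacking of \ref{trasatproA} through \ref{Traza=Iadd}, \ref{addfun}, and \ref{euifunproA} is accurate but not strictly needed once \ref{trasatproA} is cited.
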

\begin{proof}
It follows by \ref{trasatproA} and \ref{restresfuntores}.
\end{proof}
For a preadditive category $\mathcal{C}$, we recall the construction of the functor 
$(-)^{\ast}:\mathrm{Mod}(\mathcal{C})\longrightarrow \mathrm{Mod}(\mathcal{C}^{op})$
which is a generalization of the functor $\mathrm{Mod}(A)\longrightarrow \mathrm{Mod}(A^{op})$ given by $M\mapsto \mathrm{Hom}_{A}(M,A)$ for all the $A$-modules $M$, where $A$ is a ring.\\
Indeed, for each $\mathcal{C}$-module $M$ we define $M^{\ast}:\mathcal{C}\longrightarrow \mathbf{Ab}$ given by $M^{\ast}(C)=\mathrm{Hom}_{\mathrm{Mod}(\mathcal{C})}(M,\mathrm{Hom}_{\mathcal{C}}(C,-))$. Clearly $M^{\ast}$ is a $\mathcal{C}^{op}$-module. In this way we obtain a contravariant functor $(-)^{\ast}:\mathrm{Mod}(\mathcal{C})\longrightarrow \mathrm{Mod}(\mathcal{C}^{op})$ given by $M\mapsto M^{\ast}$.\\
If $M=\mathrm{Hom}_{\mathcal{C}}(C,-)$, it can be seen that $M^{\ast}\simeq \mathrm{Hom}_{\mathcal{C}}(-,C)$; we refer the reader to section 6 in \cite{AusVarietyI} for more details.

\begin{corollary}\label{PPast}
Let $\mathcal{C}$ be a Hom-finite $R$-variety, $P=\mathrm{Hom}_{\mathcal{C}}(C,-)\in \mathrm{Mod}(\mathcal{C})$ and consider the ideal $\mathcal{I}=\mathrm{Tr}_{P}\mathcal{C}$. Then we have that $\mathcal{I}^{op}=\mathrm{Tr}_{P^{\ast}}\mathcal{C}^{op}.$
\end{corollary}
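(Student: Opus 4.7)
The plan is to reduce both sides to ideals of morphisms that factor through $\mathrm{add}(C)$, and then check that these factorizations match under the identification of $\mathcal{C}^{op}$-morphisms with reversed $\mathcal{C}$-morphisms.

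First, by Corollary \ref{Traza=Iadd}, the left-hand side admits the explicit description
\[
\mathcal{I}(X,Y) = \mathrm{Tr}_{P}\bigl(\mathrm{Hom}_{\mathcal{C}}(X,-)\bigr)(Y) = \mathcal{I}_{\mathrm{add}(C)}(X,Y),
\]
the abelian group of morphisms $X \to Y$ in $\mathcal{C}$ that factor through some object of $\mathrm{add}(C)$. Consequently, by the standard convention $\mathcal{I}^{op}(Y,X) := \mathcal{I}(X,Y)$, the ideal $\mathcal{I}^{op}$ of $\mathcal{C}^{op}$ consists exactly of morphisms in $\mathcal{C}^{op}$ which, when interpreted as $\mathcal{C}$-morphisms (in the reverse direction), factor through $\mathrm{add}(C)$.

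For the right-hand side, I would use the remark recalled just before the corollary that $(\mathrm{Hom}_{\mathcal{C}}(C,-))^{\ast} \simeq \mathrm{Hom}_{\mathcal{C}}(-,C)$, so that $P^{\ast}$ is precisely the representable $\mathrm{Hom}_{\mathcal{C}^{op}}(C,-)$ in $\mathrm{Mod}(\mathcal{C}^{op})$. Since $\mathcal{C}$ is a Hom-finite $R$-variety, so is $\mathcal{C}^{op}$, and Corollary \ref{Traza=Iadd} applies to $\mathcal{C}^{op}$ with the object $C$; this yields
\[
\mathrm{Tr}_{P^{\ast}}\mathcal{C}^{op}(X,Y) = \mathcal{I}_{\mathrm{add}_{\mathcal{C}^{op}}(C)}(X,Y),
\]
the ideal in $\mathcal{C}^{op}$ of morphisms factoring through some object of the $\mathrm{add}$ of $C$ computed in $\mathcal{C}^{op}$.

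Finally, I would observe two compatibility facts: (i) $\mathrm{add}_{\mathcal{C}^{op}}(C) = \mathrm{add}_{\mathcal{C}}(C)$, since the class of objects that are direct summands of $C^n$ is intrinsic to the underlying additive structure and is unaffected by reversing arrows; and (ii) a morphism $f : X \to Y$ in $\mathcal{C}^{op}$ factors as $X \xrightarrow{g} Z \xrightarrow{h} Y$ with $Z \in \mathrm{add}(C)$ if and only if the corresponding $\mathcal{C}$-morphism $Y \to X$ factors as $Y \xrightarrow{h} Z \xrightarrow{g} X$ through the same $Z$. Combining these two observations gives $\mathcal{I}_{\mathrm{add}_{\mathcal{C}^{op}}(C)}(X,Y) = \mathcal{I}_{\mathrm{add}(C)}(Y,X) = \mathcal{I}(Y,X) = \mathcal{I}^{op}(X,Y)$, which is exactly the claimed equality. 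There is no serious obstacle here; the only thing to take care of is pedantic bookkeeping with the $(-)^{op}$ conventions, so that the ``factor through $\mathrm{add}(C)$'' description really transfers across the duality.
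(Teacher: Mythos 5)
Your proof is correct and follows exactly the paper's intended route: both reduce to Corollary \ref{Traza=Iadd} plus the identification $P^{\ast}\simeq\mathrm{Hom}_{\mathcal{C}}(-,C)$, so that both ideals become the ideal of morphisms factoring through $\mathrm{add}(C)$. Your write-up merely makes explicit the bookkeeping with $(-)^{op}$ that the paper leaves to the reader.
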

\begin{proof}
It follows from  \ref{Traza=Iadd} and the fact that $P^{\ast}=\mathrm{Hom}_{\mathcal{C}}(-,C)$.
\end{proof}
For the next, we recall that if $\mathcal{C}$ is a dualizing $R$-variety by  \cite[Proposition 3.4]{AusVarietyI} we have that $\mathrm{mod}(\mathcal{C})$ has projective covers.

\begin{Remark}\label{porocoverad}
Let $\mathcal{C}$ be a dualizing $R$-variety, $P$  a projective $\mathcal{C}$-module and  $F\in \mathrm{mod}(\mathcal{C})$. Let $P_{0}(F)$ be the projective cover of $F$, then $\mathrm{Tr}_{P}(F)=F$ if and only if $P_{0}(F)\in \mathrm{add}(P)$.
\end{Remark}

We introduce the following definition that will be used along  the paper. 

\begin{definition}\label{defiPk}
Let $\mathcal{C}$ be a dualizing $R$-variety and $P\in \mathrm{mod}(\mathcal{C})$ be a projective module. For each $0\leq k\leq \infty$ we define $\mathbb{P}_{k}$ to be the full subcategory of $\mathrm{mod}(\mathcal{C})$ consisting of the $\mathcal{C}$-modules $X$ having a projective resolution
$$\xymatrix{\cdots P_{n}\ar[r] & P_{n-1}\ar[r] &\cdots\ar[r] & P_{1}\ar[r] & P_{0}\ar[r] & X\ar[r] & 0}$$
with $P_{i}\in \mathrm{add}(P)$ for $0\leq i\leq k$.
\end{definition}

We have the following easy lemma.

\begin{lemma}\label{percero}
Let $\mathcal{C}$ be a Hom-finite $R$-variety,   $P=\mathrm{Hom}_{\mathcal{C}}(C,-)\in \mathrm{Mod}(\mathcal{C})$ and $\mathcal{I}=\mathrm{Tr}_{P}\mathcal{C}$. Consider $\pi_{1}:\mathcal{C}\longrightarrow \mathcal{C}/\mathcal{I}$. Then 
\begin{enumerate}
\item [(a)] $\mathrm{Hom}_{\mathrm{mod}(\mathcal{C})}(Q,(\pi_{1})_{\ast}(Y))=0$ for all $Q\in \mathrm{add}(P)$ and for all $Y\in \mathrm{mod}(\mathcal{C}/\mathcal{I})$.

\item [(b)] $X\in \mathbb{P}_{0}$ if and only if $\mathrm{Hom}_{\mathrm{mod}(\mathcal{C})}(X,(\pi_{1})_{\ast}(Y))=0$ for all $Y\in \mathrm{mod}(\mathcal{C}/\mathcal{I})$.
\end{enumerate}
\end{lemma}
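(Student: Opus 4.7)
The plan is to first settle (a) by pinning down why the objects of $\mathrm{add}(C)$ collapse to zero in $\mathcal{C}/\mathcal{I}$. Since $\mathcal{I}=\mathrm{Tr}_P\mathcal{C}$ and $P=\mathrm{Hom}_{\mathcal{C}}(C,-)$, the identity map $1_P\colon P\to P$ witnesses that $1_C\in \mathrm{Tr}_P(\mathrm{Hom}_{\mathcal{C}}(C,-))(C)=\mathcal{I}(C,C)$; equivalently, $\overline{1_C}=0$ in $\mathcal{C}/\mathcal{I}$, so $C\simeq 0$ there. For any $Q\in\mathrm{add}(P)$, by \ref{projfingen} we can write $Q=\mathrm{Hom}_{\mathcal{C}}(C',-)$ with $C'\in\mathrm{add}(C)$, and the same argument (or functoriality of $\mathrm{add}$) gives $C'\simeq 0$ in $\mathcal{C}/\mathcal{I}$. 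Hence for each $Y\in\mathrm{mod}(\mathcal{C}/\mathcal{I})$, Yoneda yields
\[
\mathrm{Hom}_{\mathrm{mod}(\mathcal{C})}(Q,(\pi_1)_{\ast}(Y))\simeq (\pi_1)_{\ast}(Y)(C')=Y(\pi_1(C'))=0.
\]

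For (b), the implication $(\Rightarrow)$ is a routine consequence of (a): if $X\in\mathbb{P}_0$ there is an epimorphism $Q\twoheadrightarrow X$ with $Q\in\mathrm{add}(P)$, so any $f\colon X\to (\pi_1)_{\ast}(Y)$ precomposed with this epi vanishes by (a), forcing $f=0$.

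For $(\Leftarrow)$ the key idea is to test the vanishing hypothesis against the canonical candidate $Y=\pi_1^{\ast}(X)\in\mathrm{mod}(\mathcal{C}/\mathcal{I})$ (well-defined by \ref{almostrecolle}). The adjunction $(\pi_1^{\ast},(\pi_1)_{\ast})$ from \ref{tresfuntores} gives
\[
\mathrm{Hom}_{\mathrm{mod}(\mathcal{C}/\mathcal{I})}(\pi_1^{\ast}(X),\pi_1^{\ast}(X))\simeq \mathrm{Hom}_{\mathrm{mod}(\mathcal{C})}(X,(\pi_1)_{\ast}(\pi_1^{\ast}(X)))=0,
\]
so the identity on $\pi_1^{\ast}(X)$ vanishes, forcing $\pi_1^{\ast}(X)=0$. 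Using right-exactness of the tensor product (\ref{AProposition0}) on the sequence $\mathcal{I}(-,C)\to \mathcal{C}(-,C)\to \mathcal{C}(-,C)/\mathcal{I}(-,C)\to 0$ shows, object-wise, that $\pi_1^{\ast}(X)\simeq X/\mathcal{I}X$. Combined with \ref{TRismultipli} this gives $X=\mathcal{I}X=\mathrm{Tr}_P(X)$.

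Finally I would promote this to a surjection from an object of $\mathrm{add}(P)$. Because $X$ is finitely generated, there is already an epimorphism $\mathrm{Hom}_{\mathcal{C}}(D,-)\twoheadrightarrow X$; lifting the generator through the canonical $P^{(\Lambda_X)}\twoheadrightarrow \mathrm{Tr}_P(X)=X$ of \ref{definitraza} only uses finitely many indices, yielding an epimorphism $P^n\twoheadrightarrow X$ with $P^n\in\mathrm{add}(P)$, so $X\in\mathbb{P}_0$. The only nontrivial step I anticipate is the identification $\pi_1^{\ast}(X)\simeq X/\mathcal{I}X$, which I would do by hand from the definition in \ref{HomCI}; the rest is adjunction bookkeeping together with \ref{TRismultipli}.
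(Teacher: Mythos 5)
Your proof is correct. The paper actually omits the proof of Lemma \ref{percero}, labelling it ``easy'' and moving on, so there is nothing to compare against; your argument fills that gap in a natural way.

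For part (a) your observation that $1_{C'}\in\mathcal{I}(C',C')$ for every $C'\in\mathrm{add}(C)$ is exactly the right reduction: it makes each $C'$ a zero object of $\mathcal{C}/\mathcal{I}$, and then $\mathrm{Hom}(Q,(\pi_1)_{\ast}(Y))\simeq Y(C')=0$ follows from Yoneda. For part (b), testing the hypothesis against $Y=\pi_1^{\ast}(X)$, deducing $\pi_1^{\ast}(X)=0$ from the adjunction and the fact that $1_{\pi_1^{\ast}(X)}=0$, identifying $\pi_1^{\ast}(X)\simeq X/\mathcal{I}X$ by right-exactness of the tensor, invoking \ref{TRismultipli} to get $X=\mathrm{Tr}_P(X)$, and then extracting a finite subfamily from the canonical surjection $P^{(\Lambda_X)}\twoheadrightarrow X$ are all sound steps. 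Two small remarks. First, you cite \ref{almostrecolle} for well-definedness of $\pi_1^{\ast}(X)$ in $\mathrm{mod}(\mathcal{C}/\mathcal{I})$, but \ref{restri2fun}(a) already gives this under the Hom-finite hypothesis, without appealing to the dualizing condition; since \ref{defiPk} is stated for dualizing $R$-varieties anyway, this mismatch in the lemma's stated hypotheses is really an inconsistency in the paper, not in your argument. Second, in the very last step it would read more cleanly to invoke compactness of the representable $\mathrm{Hom}_{\mathcal{C}}(D,-)$ (or the cited \cite[2.1(b)]{AusM1}) directly to extract the finite subfamily from $P^{(\Lambda_X)}\twoheadrightarrow X$; the phrase ``lifting the generator'' gestures at this but is slightly informal.
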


In the following proposition we give a characterization of the modules in $\mathbb{P}_{k}$ that will be used in the rest of the paper.

\begin{proposition}\label{Pkcarect}
Let $\mathcal{C}$ be a dualizing $R$-variety, $P=\mathrm{Hom}_{\mathcal{C}}(C,-)\in \mathrm{mod}(\mathcal{C})$ and $\mathcal{I}=\mathrm{Tr}_{P}\mathcal{C}$. For $1\leq k\leq \infty$ and $X\in \mathrm{mod}(\mathcal{C})$, the following are equivalent.
\begin{enumerate}
\item [(a)] $X\in \mathbb{P}_{k}$.

\item [(b)] $\mathrm{Ext}^{i}_{\mathrm{mod}(\mathcal{C})}(X,(\pi_{1})_{\ast}(Y))=0$ for all $Y\in\mathrm{mod}(\mathcal{C}/\mathcal{I})$ and $i=0,\dots, k$.

\item [(c)] $\mathrm{Ext}^{i}_{\mathrm{mod}(\mathcal{C})}(X,(\pi_{1})_{\ast}(J))=0$ for all $J\in\mathrm{mod}(\mathcal{C}/\mathcal{I})$ injective and $i=0,\dots, k$.
\end{enumerate}
\end{proposition}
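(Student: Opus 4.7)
The plan is to prove (a)$\Rightarrow$(b)$\Rightarrow$(c)$\Rightarrow$(b)$\Rightarrow$(a); the two routine implications are (a)$\Rightarrow$(b) and (b)$\Rightarrow$(c), and the substantive ones are (c)$\Rightarrow$(b) and (b)$\Rightarrow$(a).

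For (a)$\Rightarrow$(b), I would pick a projective resolution of $X$ whose first $k+1$ terms $P_0,\ldots,P_k$ lie in $\mathrm{add}(P)$. By Lemma \ref{percero}(a) each group $\mathrm{Hom}_{\mathrm{mod}(\mathcal{C})}(P_i,(\pi_1)_{\ast}(Y))$ vanishes for $0\leq i\leq k$, so every cochain computing $\mathrm{Ext}^i(X,(\pi_1)_{\ast}(Y))$ in that range is zero; in particular $\ker(d^i)=0$ at $i=k$. The implication (b)$\Rightarrow$(c) is immediate.

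For (c)$\Rightarrow$(b), I would induct on $i$. Since $\mathcal{I}$ satisfies property $A$ by Proposition \ref{trasatproA}, the quotient $\mathcal{C}/\mathcal{I}$ is a dualizing $R$-variety by Proposition \ref{cocienteduali}, so $\mathrm{mod}(\mathcal{C}/\mathcal{I})$ has enough injectives. Given $Y\in\mathrm{mod}(\mathcal{C}/\mathcal{I})$, embed $Y\hookrightarrow J$ with $J$ injective and set $Y':=J/Y$. Applying the exact functor $(\pi_1)_{\ast}$ and using the long exact Ext sequence
$$\mathrm{Ext}^{i-1}(X,(\pi_1)_{\ast}(Y'))\longrightarrow \mathrm{Ext}^{i}(X,(\pi_1)_{\ast}(Y))\longrightarrow \mathrm{Ext}^{i}(X,(\pi_1)_{\ast}(J)),$$
the inductive hypothesis at degree $i-1$ (applied to $Y'$) and hypothesis (c) at degree $i$ (applied to $J$) force the middle term to vanish; the base $i=0$ is just left exactness of $\mathrm{Hom}_{\mathrm{mod}(\mathcal{C})}(X,-)$.

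For (b)$\Rightarrow$(a), I would build a projective resolution of $X$ inductively. By (b) at $i=0$ and Lemma \ref{percero}(b), $X\in\mathbb{P}_0$, so the projective cover $P_0\to X$ satisfies $P_0\in\mathrm{add}(P)$; let $K_0:=\ker(P_0\to X)$ and iterate, writing $K_j:=\ker(P_j\to P_{j-1})$ with the convention $K_{-1}:=X$. Suppose $P_0,\ldots,P_j\in\mathrm{add}(P)$ with $j<k$. To see that the projective cover $P_{j+1}$ of $K_j$ lies in $\mathrm{add}(P)$ it suffices, again by Lemma \ref{percero}(b), to show $\mathrm{Hom}_{\mathrm{mod}(\mathcal{C})}(K_j,(\pi_1)_{\ast}(Y))=0$ for every $Y\in\mathrm{mod}(\mathcal{C}/\mathcal{I})$. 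Dimension-shifting along the short exact sequences $0\to K_i\to P_i\to K_{i-1}\to 0$ for $0\leq i\leq j$, and using that each $P_i$ is projective with $\mathrm{Hom}(P_i,(\pi_1)_{\ast}(Y))=0$ by Lemma \ref{percero}(a), gives
$$\mathrm{Hom}_{\mathrm{mod}(\mathcal{C})}(K_j,(\pi_1)_{\ast}(Y))\simeq \mathrm{Ext}^{j+1}_{\mathrm{mod}(\mathcal{C})}(X,(\pi_1)_{\ast}(Y))=0$$
by hypothesis (b), since $j+1\leq k$. The main obstacle is keeping the bookkeeping of the dimension-shift consistent with the range $0\leq i\leq k$: one must ensure at each stage $j$ that $\mathrm{Ext}^{j+1}(X,-)$ (and not some higher Ext) is what governs the projective cover of $K_j$, which is precisely what (b) controls.
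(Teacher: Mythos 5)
Your proof is correct and follows essentially the same route as the paper's: (a)$\Rightarrow$(b) by applying $\mathrm{Hom}(-,(\pi_1)_*(Y))$ to a resolution by $\mathrm{add}(P)$, (c)$\Rightarrow$(b) by induction on $i$ via embedding into injectives of $\mathrm{mod}(\mathcal{C}/\mathcal{I})$ (available because $\mathcal{I}=\mathrm{Tr}_P\mathcal{C}$ satisfies property $A$), and (b)$\Rightarrow$(a) by building the resolution inductively from projective covers, using Lemma \ref{percero} and a dimension shift to show each syzygy $K_j$ kills $\mathrm{Hom}(-,(\pi_1)_*(Y))$. The paper phrases (b)$\Rightarrow$(a) as an outer induction on $k$ with a final shifting step, while you phrase it as a direct inductive construction on the stage $j$, but the underlying dimension-shift $\mathrm{Hom}(K_j,(\pi_1)_*(Y))\simeq\mathrm{Ext}^{j+1}(X,(\pi_1)_*(Y))$ is the same computation.
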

\begin{proof}
$(a)\Rightarrow (b)$.  When applying $\mathrm{Hom}(-,(\pi_{1})_{\ast}Y)$ to a projective resolution of $X$, by \ref{percero}, we have a complex with the first $k$ terms zero.\\
$(b)\Rightarrow (a)$. We will proceed by induction on $k$. Suppose that $k=1$. That is, suppose that $\mathrm{Ext}^{i}_{\mathrm{mod}(\mathcal{C})}(X,(\pi_{1})_{\ast}(Y))=0$ for all $Y\in\mathrm{mod}(\mathcal{C}/\mathcal{I})$ and $i=0,1$. By  \ref{percero}, we have that the projective cover $P_{0}(X)$ of $X$ belongs to $\mathrm{add}(P)$. 
Now, let us consider the exact sequence $\xymatrix{0\ar[r] & L_{0}\ar[r] & P_{0}(X)\ar[r] & X\ar[r] & 0.}$ Applying $ \mathrm{Hom}_{\mathrm{mod}(\mathcal{C})}(-,(\pi_{1})_{\ast}(Y))$ to the last sequence we have the exact sequence

\vspace{0.3cm}

\begin{tikzpicture}[baseline= (a).base]

\node[scale=.85] (a) at (0,0){
\begin{tikzcd}
\mathrm{Hom}_{\mathrm{mod}(\mathcal{C})}(P_{0}(X),(\pi_{1})_{\ast}(Y)) \arrow[r] 
&\mathrm{Hom}_{\mathrm{mod}(\mathcal{C})}(L_{0},(\pi_{1})_{\ast}(Y)) \arrow[r]
&  \mathrm{Ext}_{\mathrm{mod}(\mathcal{C})}^{1}(X,(\pi_{1})_{\ast}(Y)).  
\end{tikzcd}
};
\end{tikzpicture}
\vspace{0.3cm}

Since $P_{0}(X)\in \mathrm{add}(P)$, we have that $ \mathrm{Hom}_{\mathrm{mod}(\mathcal{C})}(P_{0}(X),(\pi_{1})_{\ast}(Y))=0$ (see \ref{percero}) and,  by hypothesis, we have that  $\mathrm{Ex}_{\mathrm{mod}(\mathcal{C})}^{1}(X,(\pi_{1})_{\ast}(Y))=0$. Thus, we conclude that $\mathrm{Hom}_{\mathrm{mod}(\mathcal{C})}(L_{0},(\pi_{1})_{\ast}(Y)=0$ for all $Y\in \mathrm{mod}(\mathcal{C}/\mathcal{I})$. Then, in the same way as we did for $X$, we get that $P_{0}(L)\in \mathrm{add}(P)$. Hence, we have an exact sequence $P_{0}(L)\rightarrow  P_{0}(X)\rightarrow  X\rightarrow  0,$ with
$P_{0}(L),P_{0}(X)\in \mathrm{add}(P)$, proving that $X\in \mathbb{P}_{1}$.\\
Suppose that is true for $k-1$. Let  $X\in \mathrm{mod}(\mathcal{C})$ such that $\mathrm{Ext}^{i}_{\mathrm{mod}(\mathcal{C})}(X,(\pi_{1})_{\ast}(Y))=0$ for all $Y\in\mathrm{mod}(\mathcal{C}/\mathcal{I})$ and $i=0,\dots, k$. In particular, $\mathrm{Ext}^{i}_{\mathrm{mod}(\mathcal{C})}(X,(\pi_{1})_{\ast}(Y))=0$ for all $Y\in\mathrm{mod}(\mathcal{C}/\mathcal{I})$ and $i=0,\dots, k-1$. Then, by induction, there exists a resolution $\xymatrix{\cdots\ar[r] & P_{k-1}\ar[r]^{d_{k-1}} &\cdots\ar[r] & P_{1}\ar[r]^{d_{1}} & P_{0}\ar[r]^{d_{0}} & X\ar[r] & 0}$
with $P_{i}\in \mathrm{add}(P)$ for all $i=0,\dots, k-1$.
Consider the exact sequence
$$\xymatrix{0\ar[r] & L_{k-1}=\mathrm{Ker}(d_{k-1})\ar[r] & P_{k-1}\ar[r] & \mathrm{Ker}(d_{k-2})=L_{k-2}\ar[r] & 0}$$
Applying $ \mathrm{Hom}_{\mathrm{mod}(\mathcal{C})}(-,(\pi_{1})_{\ast}(Y))$ we have the exact sequence

\vspace{0.3cm}
\begin{tikzpicture}[baseline= (a).base]
\node[scale=.8] (a) at (0,0){
\begin{tikzcd}
\mathrm{Hom}_{\mathrm{mod}(\mathcal{C})}(P_{k-1},(\pi_{1})_{\ast}(Y))\arrow[r] 
&\mathrm{Hom}_{\mathrm{mod}(\mathcal{C})}(L_{k-1},(\pi_{1})_{\ast}(Y))\arrow[r]
&  \mathrm{Ext}_{\mathrm{mod}(\mathcal{C})}^{1}(L_{k-2},(\pi_{1})_{\ast}(Y))
\end{tikzcd}
};
\end{tikzpicture}
\vspace{0.3cm}

By shifting lemma, we get $ \mathrm{Ext}_{\mathrm{mod}(\mathcal{C})}^{1}(L_{k-2},(\pi_{1})_{\ast}(Y))\simeq  \mathrm{Ext}_{\mathrm{mod}(\mathcal{C})}^{k}(X,(\pi_{1})_{\ast}(Y))$ $=0$. Since $P_{k-1}\in \mathrm{add}(P)$, we conclude that  $\mathrm{Hom}_{\mathrm{mod}(\mathcal{C})}(P_{k-1},(\pi_{1})_{\ast}(Y))=0$ (see \ref{percero}); and hence $\mathrm{Hom}_{\mathrm{mod}(\mathcal{C})}(L_{k-1},(\pi_{1})_{\ast}(Y))=0$ for all $Y\in \mathrm{mod}(\mathcal{C}/\mathcal{I})$. In the same way as we did for the case $k=1$, we get that $P_{0}(L_{k-1})\in \mathrm{add}(P)$. Then we can construct  $\cdots\rightarrow P_{k}=P_{0}(L_{k-1})\rightarrow  P_{k-1}\rightarrow \cdots\rightarrow P_{1}\rightarrow  P_{0}\rightarrow  X\rightarrow  0,$ which is exact
with $P_{i}\in \mathrm{add}(P)$ for all $i=0,\dots, k$,  proving that $X\in \mathbb{P}_{k}$.\\
$(b)\Rightarrow (c)$. Trivial.\\
$(c)\Rightarrow (b)$. Follows by induction on $i$.
\end{proof}

\begin{proposition}\label{otracaidemo}
Let $\mathcal{C}$ be a dualizing $R$-variety, $P=\mathrm{Hom}_{\mathcal{C}}(C,-)\in \mathrm{mod}(\mathcal{C})$ and $\mathcal{I}=\mathrm{Tr}_{P}\mathcal{C}$. Let  $1\leq k\leq \infty$, then $\mathcal{I}$ is $(k+1)$-idempotent  if and only if $\mathcal{I}(C',-)\in \mathbb{P}_{k}$ for all $C'\in \mathcal{C}$.
\end{proposition}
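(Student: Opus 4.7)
My plan is to reduce the statement to the two existing characterizations of the objects involved: the description of $(k+1)$-idempotence via vanishing of $\mathbb{EXT}^i_{\mathcal{C}}(\mathcal{C}/\mathcal{I},-)$ on injectives (Proposition~\ref{caractidemfin}), and the description of $\mathbb{P}_k$ via vanishing of $\mathrm{Ext}^i(-,(\pi_1)_*(J))$ on injectives (Proposition~\ref{Pkcarect}). Since $\mathcal{I}=\mathrm{Tr}_P\mathcal{C}$ satisfies property $(A)$ by Proposition~\ref{trasatproA}, both results apply. The bridge between the two characterizations is the canonical short exact sequence
\[
0\longrightarrow \mathcal{I}(C',-)\longrightarrow \mathrm{Hom}_{\mathcal{C}}(C',-)\longrightarrow M_{C'}\longrightarrow 0,
\]
where $M_{C'}=\mathrm{Hom}_{\mathcal{C}}(C',-)/\mathcal{I}(C',-)=(\pi_1)_*(\mathrm{Hom}_{\mathcal{C}/\mathcal{I}}(C',-))$ by Lemma~\ref{descripcionMC}(c).

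The key computation is to apply $\mathrm{Hom}_{\mathrm{mod}(\mathcal{C})}(-,(\pi_1)_*(J))$ to this sequence, for $J$ injective in $\mathrm{mod}(\mathcal{C}/\mathcal{I})$, and unwind the resulting long exact sequence. Since $\mathrm{Hom}_{\mathcal{C}}(C',-)$ is projective, $\mathrm{Ext}^i(\mathrm{Hom}_{\mathcal{C}}(C',-),-)$ vanishes for $i\ge1$, and the connecting morphisms give isomorphisms
\[
\mathrm{Ext}^i(\mathcal{I}(C',-),(\pi_1)_*(J))\;\simeq\;\mathrm{Ext}^{i+1}(M_{C'},(\pi_1)_*(J))\quad \text{for } i\ge 1.
\]
For $i=0$, the point I have to check is that the map
\[
\mathrm{Hom}(M_{C'},(\pi_1)_*(J))\longrightarrow \mathrm{Hom}(\mathrm{Hom}_{\mathcal{C}}(C',-),(\pi_1)_*(J))
\]
induced by the projection $\mathrm{Hom}_{\mathcal{C}}(C',-)\twoheadrightarrow M_{C'}$ is an isomorphism. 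This follows from the adjunction $\pi_1^{*}\dashv (\pi_1)_*$ (Proposition~\ref{tresfuntores}) together with the full faithfulness of $(\pi_1)_*$ recorded in Remark~\ref{isomoan}: both Hom groups identify naturally with $J(C')$, and the map between them is the identity. Consequently the same identification $\mathrm{Hom}(\mathcal{I}(C',-),(\pi_1)_*(J))\simeq \mathrm{Ext}^1(M_{C'},(\pi_1)_*(J))$ holds, so combining with Remark~\ref{descEXT} one gets, for every $0\le i\le k$ and every $C'\in\mathcal{C}$,
\[
\mathrm{Ext}^i(\mathcal{I}(C',-),(\pi_1)_*(J))\;\simeq\;\mathbb{EXT}^{i+1}_{\mathcal{C}}(\mathcal{C}/\mathcal{I},(\pi_1)_*(J))(C').
\]

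With this isomorphism in hand the proposition is essentially a translation between the two characterizations. By Proposition~\ref{Pkcarect}, $\mathcal{I}(C',-)\in\mathbb{P}_k$ for every $C'\in\mathcal{C}$ is equivalent to the vanishing of the left-hand side for all $C'$, all injective $J\in\mathrm{mod}(\mathcal{C}/\mathcal{I})$ and all $0\le i\le k$; by the isomorphism this is equivalent to the vanishing of $\mathbb{EXT}^j_{\mathcal{C}}(\mathcal{C}/\mathcal{I},(\pi_1)_*(J))$ for every injective $J$ and all $1\le j\le k+1$, which by Proposition~\ref{caractidemfin} is precisely $(k+1)$-idempotence of $\mathcal{I}$. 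The main obstacle is the $i=0$ step, where one must carefully check the identification above; the rest is formal long-exact-sequence manipulation.
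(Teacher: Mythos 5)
Your proposal is correct and follows essentially the same route as the paper: start from the short exact sequence $0\to\mathcal{I}(C',-)\to\mathrm{Hom}_{\mathcal{C}}(C',-)\to M_{C'}\to 0$, apply $\mathrm{Hom}_{\mathrm{mod}(\mathcal{C})}(-,(\pi_1)_*(J))$, use the long exact sequence to obtain the degree shift $\mathrm{Ext}^i(\mathcal{I}(C',-),(\pi_1)_*(J))\simeq \mathrm{Ext}^{i+1}(M_{C'},(\pi_1)_*(J))$, and then translate via Propositions~\ref{Pkcarect} and \ref{caractidemfin}. The only cosmetic divergence is the treatment of the degree-zero step: the paper gets $\mathrm{Hom}_{\mathrm{mod}(\mathcal{C})}(\mathcal{I}(C',-),(\pi_1)_*(Y))=0$ directly by picking an epimorphism $P^n\twoheadrightarrow\mathcal{I}(C',-)$ and invoking Lemma~\ref{percero}, whereas you instead argue that the induced map $\mathrm{Hom}(M_{C'},N)\to\mathrm{Hom}(\mathrm{Hom}_{\mathcal{C}}(C',-),N)$ is an isomorphism via the adjunction; these are equivalent observations (each follows from the other using the long exact sequence), so both lead to the same conclusion.
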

\begin{proof}
First, we note that by \ref{trasatproA}, we have that $\mathcal{I}$ satisfies property $A$.\\
Let $\pi:\mathcal{C}\longrightarrow \mathcal{C}/\mathcal{I}$ be canonical functor.
By \ref{almostrecolle}, we have that $\pi_{\ast}(\mathrm{Hom}_{\mathcal{C}/\mathcal{I}}(C',-))=\frac{\mathrm{Hom}_{\mathcal{C}}(C',-)}{\mathcal{I}(C',-)}\in \mathrm{mod}(\mathcal{C})$.
Consider the following exact sequence in $\mathrm{Mod}(\mathcal{C})$
$$\xymatrix{0\ar[r] & \mathcal{I}(C',-)\ar[r] & \mathrm{Hom}_{\mathcal{C}}(C',-)\ar[r] & \frac{\mathrm{Hom}_{\mathcal{C}}(C',-)}{\mathcal{I}(C',-)}\ar[r] & 0.}$$
Since $\mathcal{C}$ is a dualizing $R$-variety, we have that $\mathrm{mod}(\mathcal{C})$ is abelian subcategory of $\mathrm{Mod}(\mathcal{C})$ and thus $\mathcal{I}(C',-)\in \mathrm{mod}(\mathcal{C})$ (see \cite[Theorem 2.4]{AusVarietyI}).\\
Since $\mathcal{I}=\mathrm{Tr}_{P}\mathcal{C}$, there exists an epimorphism $\gamma:P^{n}\longrightarrow \mathcal{I}(C',-)$ for some $n\in \mathbb{N}$, this is because $\mathcal{I}(C',-)$ is finitely generated (see \cite[2.1(b)]{AusM1}). Let $Y\in \mathrm{mod}(\mathcal{C}/\mathcal{I})$. If there exists a non zero morphism $\alpha:\mathcal{I}(C',-)\longrightarrow \pi_{\ast}(Y)$ we have that $\alpha \gamma \neq 0$, which contradicts \ref{percero}. Therefore, we have that $\mathrm{Hom}_{\mathrm{mod}(\mathcal{C})}\Big(\mathcal{I}(C',-),\pi_{\ast}(Y)\Big)=0$. On the other hand, applying $\mathrm{Hom}_{\mathrm{mod}(\mathcal{C})}(-,\pi_{\ast}(Y))$ to the last exact sequence we get
an isomorphism for $i\geq 1$
$$(\ast):\mathrm{Ext}^{i}_{\mathrm{mod}(\mathcal{C})}\Big(\mathcal{I}(C',-),\pi_{\ast}(Y)\Big)\longrightarrow \mathrm{Ext}^{i+1}_{\mathrm{mod}(\mathcal{C})}\Big(\frac{\mathrm{Hom}_{\mathcal{C}}(C',-)}{\mathcal{I}(C',-)},\pi_{\ast}(Y)\Big).$$
We know that $\mathcal{I}$ is $(k+1)$-idempotent if and only if 
$\mathbb{EXT}^{i}_{\mathcal{C}}\Big(\mathcal{C}/\mathcal{I},\pi_{\ast}(Y)\Big)=0$ for $1\leq i\leq k+1$ and for all $Y\in \mathrm{mod}(\mathcal{C}/\mathcal{I})$ (see \ref{caractidemfin}). Now, the result follows from \ref{caractidemfin}, \ref{Pkcarect}.

\end{proof}

\begin{corollary}\label{caractstronidemp}
Let $\mathcal{C}$ be a dualizing $R$-variety, $P=\mathrm{Hom}_{\mathcal{C}}(C,-)\in \mathrm{mod}(\mathcal{C})$ and $\mathcal{I}=\mathrm{Tr}_{P}\mathcal{C}$.
Then $\mathcal{I}$ is strongly idempotent if and only if $\mathcal{I}(C',-)\in \mathbb{P}_{\infty}$ $\forall C'\in \mathcal{C}$.
\end{corollary}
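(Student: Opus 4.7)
The plan is to obtain this as a direct consequence of Proposition \ref{otracaidemo} together with the cohomological characterization of $\mathbb{P}_k$ given in Proposition \ref{Pkcarect}. By definition \ref{kidemcat}(b), $\mathcal{I}$ being strongly idempotent means that $\varphi^{i}_{F,\pi_{\ast}(F')}$ is an isomorphism for every $0 \leq i < \infty$ and all $F,F' \in \mathrm{Mod}(\mathcal{C}/\mathcal{I})$, which is the same as saying that $\mathcal{I}$ is $(k+1)$-idempotent for every $k \geq 1$. So the first step is to rewrite the strongly idempotent hypothesis as a countable family of $k$-idempotent conditions.

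Next I would invoke Proposition \ref{otracaidemo} for each $k \geq 1$: $\mathcal{I}$ is $(k+1)$-idempotent if and only if $\mathcal{I}(C',-)\in \mathbb{P}_{k}$ for all $C'\in \mathcal{C}$. Therefore $\mathcal{I}$ is strongly idempotent if and only if $\mathcal{I}(C',-)\in \bigcap_{k\geq 1}\mathbb{P}_{k}$ for every $C'\in \mathcal{C}$. So the remaining issue is to show that $\bigcap_{k\geq 1}\mathbb{P}_{k}=\mathbb{P}_{\infty}$.

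This last identification is the step where one has to be slightly careful, because $X \in \mathbb{P}_k$ for every finite $k$ only says that for each $k$ there exists a resolution whose first $k$ terms are in $\mathrm{add}(P)$; a priori these resolutions need not be compatible. However, Proposition \ref{Pkcarect} bypasses this by characterizing $\mathbb{P}_k$ (for $1\leq k\leq \infty$) by the vanishing of $\mathrm{Ext}^{i}_{\mathrm{mod}(\mathcal{C})}(X,(\pi_{1})_{\ast}(Y))$ for $i=0,\dots,k$ and all $Y\in \mathrm{mod}(\mathcal{C}/\mathcal{I})$. Thus $X\in \bigcap_{k\geq 1}\mathbb{P}_k$ exactly when all those Ext groups vanish for every $i\geq 0$, which is the same as $X\in \mathbb{P}_{\infty}$ by the $k=\infty$ case of the same proposition. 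Chaining the three equivalences finishes the proof.

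The main (mild) obstacle is precisely the step $\bigcap_{k}\mathbb{P}_{k}=\mathbb{P}_{\infty}$; without the Ext--characterization of Proposition \ref{Pkcarect} one would have to produce a single compatible resolution, but with it the argument is immediate. No property $(A)$ issue arises since, by \ref{trasatproA}, the ideal $\mathcal{I}=\mathrm{Tr}_{P}\mathcal{C}$ satisfies property $(A)$ automatically, so Propositions \ref{otracaidemo} and \ref{Pkcarect} apply to every $C'\in \mathcal{C}$.
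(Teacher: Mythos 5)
Your proof is correct, but it takes a slightly longer route than the paper intends. The paper leaves Corollary~\ref{caractstronidemp} without proof because it is literally the case $k=\infty$ of Proposition~\ref{otracaidemo}: that proposition is stated for $1\leq k\leq\infty$, and under the paper's conventions ``$(k+1)$-idempotent'' for $k=\infty$ is exactly ``strongly idempotent'', while $\mathbb{P}_{\infty}$ is already one of the admissible values of $\mathbb{P}_{k}$. You instead apply \ref{otracaidemo} only for finite $k$, translate the strongly idempotent hypothesis into the countable family of $(k+1)$-idempotent conditions, and then close the gap by checking that $\bigcap_{k\geq 1}\mathbb{P}_{k}=\mathbb{P}_{\infty}$ via the Ext-vanishing characterization of \ref{Pkcarect}. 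This detour is harmless, and it does make explicit a point the paper glosses over (membership in $\mathbb{P}_{k}$ for every finite $k$ does not a priori give a single compatible resolution; the Ext criterion is what resolves this), but it is not needed here, since \ref{otracaidemo} was already proved uniformly for $1\leq k\leq\infty$ and can be specialized directly to $k=\infty$. Both arguments rest on \ref{otracaidemo} and \ref{Pkcarect} and are interchangeable.
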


Let $\mathcal{C}$ be a dualizing $R$-variety. Given $M\in \mathrm{mod}(\mathcal{C})$, we recall that $\mathrm{rad}(M)$ denotes the $\textbf{radical}$ of $M$, that is, $\mathrm{rad}(M)$ is the intersection of the maximal submodules of $M$.

\begin{definition}\label{defiIk} 
Let $\mathcal{C}$ be a dualizing $R$-variety,  $P\in \mathrm{mod}(\mathcal{C})$  a projective module and $J:=I_{0}(\frac{P}{\mathrm{rad}(P)})\in \mathrm{mod}(\mathcal{C})$ the injective envelope of $\frac{P}{\mathrm{rad}(P)}$. For each $0\leq k\leq \infty$ we define $\mathbb{I}_{k}$ to be the full subcategory of $\mathrm{mod}(\mathcal{C})$ consisting of the $\mathcal{C}$-modules $Y$ having an injective coresolution $\xymatrix{0\ar[r] & Y\ar[r] &  J_{0}\ar[r] & J_{1}\ar[r] &\cdots }$ with $J_{i}\in \mathrm{add}(J)$ for $0\leq i\leq k$.
\end{definition}

Let $\mathcal{C}$ be a dualizing $R$-variety. Since the endomorphism ring of each object in $\mathcal{C}$ is an artin algebra, it follows that $\mathcal{C}$ is a 
Krull-Schmidt category \cite[p.337]{AusVarietyI}. By \ref{projfingen}, we conclude that $P\in \mathrm{proj}(\mathcal{C})$ is indecomposable if and only if $P\simeq \mathrm{Hom}_{\mathcal{C}}(C,-)$ where $C\in \mathcal{C}$ is indecomposable (see also  \cite[Lemma 2.2 (b)]{MOSS}).

\begin{lemma}\label{HOmInje}
Let $\mathcal{C}$ be a dualizing $R$-variety, $P=\mathrm{Hom}_{\mathcal{C}}(C,-)\in \mathrm{mod}(\mathcal{C})$ an indecomposable projective module, $J:=I_{0}(\frac{P}{\mathrm{rad}(P)})\in \mathrm{mod}(\mathcal{C})$ and $\mathcal{I}=\mathrm{Tr}_{P}\mathcal{C}$.
Consider the projection $\pi_{1}:\mathcal{C}\longrightarrow \mathcal{C}/\mathcal{I}$. Then 
$\mathrm{Hom}_{\mathrm{mod}(\mathcal{C})}((\pi_{1})_{\ast}(X),J)=0$ for all $X\in \mathrm{mod}(\mathcal{C}/\mathcal{I})$. 
\end{lemma}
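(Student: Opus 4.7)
The plan is to argue by contradiction. Suppose $f\colon (\pi_{1})_{\ast}(X)\to J$ is a nonzero morphism. The goal is to produce a simple subquotient $S$ of $(\pi_{1})_{\ast}(X)$ such that $S$ cannot possibly lie in $\mathrm{Ann}(\mathcal{I})$, contradicting the fact that $(\pi_{1})_{\ast}(X)=X\circ \pi_{1}$ does lie in $\mathrm{Ann}(\mathcal{I})$.

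First, I would identify the top $S:=P/\mathrm{rad}(P)$. Since $\mathcal{C}$ is a dualizing $R$-variety it is Krull--Schmidt, and $P=\mathrm{Hom}_{\mathcal{C}}(C,-)$ is indecomposable, so by Yoneda $\mathrm{End}(P)^{op}\simeq \mathrm{End}_{\mathcal{C}}(C)$ is local; hence $P$ has a unique maximal subfunctor $\mathrm{rad}(P)$ and $S$ is simple. By construction $J=I_{0}(S)$, so $S=\mathrm{soc}(J)$ is an essential submodule of $J$. In particular, any nonzero submodule of $J$ meets $S$ nontrivially, and since $S$ is simple this intersection must equal $S$. Applied to $\mathrm{Im}(f)\neq 0$ this gives $S\subseteq \mathrm{Im}(f)$. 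Setting $N:=f^{-1}(S)\subseteq (\pi_{1})_{\ast}(X)$, the corestriction $f|_{N}\colon N\twoheadrightarrow S$ shows that $S$ is a subquotient of $(\pi_{1})_{\ast}(X)$.

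Next, I would verify that $\mathrm{Ann}(\mathcal{I})$ is closed under subobjects and under quotients (both are immediate from the definition $M(g)=0$ for all $g\in\mathcal{I}$, using naturality in the quotient case). Since $(\pi_{1})_{\ast}(X)=X\circ \pi_{1}$ vanishes on every morphism in $\mathcal{I}$ (because $\pi_{1}(g)=0$ for $g\in \mathcal{I}$), we have $(\pi_{1})_{\ast}(X)\in \mathrm{Ann}(\mathcal{I})$, and therefore $S\in \mathrm{Ann}(\mathcal{I})$.

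Finally, I would derive the contradiction by exhibiting a morphism in $\mathcal{I}$ on which $S$ does not vanish. Since $P=\mathrm{Hom}_{\mathcal{C}}(C,-)$ and $\mathcal{I}=\mathrm{Tr}_{P}\mathcal{C}$, the identity endomorphism $\mathbf{1}_{P}\colon P\to P$ yields $\mathrm{Tr}_{P}(P)=P$, so $\mathcal{I}(C,C)=\mathrm{Tr}_{P}(P)(C)=P(C)=\mathrm{End}_{\mathcal{C}}(C)$; in particular $1_{C}\in \mathcal{I}(C,C)$. On the other hand $S(C)\neq 0$ (otherwise the canonical epimorphism $P=\mathrm{Hom}_{\mathcal{C}}(C,-)\to S$ would be forced to be zero by Yoneda), and $S(1_{C})$ is the identity on $S(C)$, hence nonzero. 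This contradicts $S\in \mathrm{Ann}(\mathcal{I})$, so no nonzero $f$ can exist. The main delicate point is step two, namely invoking the essentiality of $S=\mathrm{soc}(J)$ in $J$ to force $\mathrm{Im}(f)$ to contain $S$; everything else is a clean bookkeeping argument once the subquotient $S$ has been extracted.
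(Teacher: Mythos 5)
Your proof is correct, and it takes a genuinely different route from the paper. The paper argues via the adjunction $\pi_{\ast}\dashv\overline{\mathrm{Tr}}_{\mathcal{C}/\mathcal{I}}$ of Proposition~\ref{firstadjoin}: since $\mathrm{Hom}_{\mathrm{mod}(\mathcal{C})}((\pi_1)_{\ast}(X),J)\simeq\mathrm{Hom}_{\mathrm{mod}(\mathcal{C}/\mathcal{I})}(X,\overline{\mathrm{Tr}}_{\mathcal{C}/\mathcal{I}}(J))$, it suffices to know $\mathrm{Tr}_{\mathcal{C}/\mathcal{I}}(J)=0$, but the paper merely asserts this (``it can be seen'') without justification. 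Your argument bypasses the adjunction entirely and works directly from the definition of $\mathrm{Ann}(\mathcal{I})$: you exploit that $S=P/\mathrm{rad}(P)$ is the essential simple socle of $J=I_0(S)$ to force $S\subseteq\mathrm{Im}(f)$ for any nonzero $f$, deduce $S\in\mathrm{Ann}(\mathcal{I})$ since that class is closed under subquotients and contains $(\pi_1)_\ast(X)$, and then contradict this with the observation that $1_C\in\mathcal{I}(C,C)$ (because $\mathrm{Tr}_P(P)=P$) while $S(1_C)=\mathrm{id}_{S(C)}\neq 0$ (via Yoneda applied to the nonzero canonical epimorphism $P\twoheadrightarrow S$). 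In effect you have supplied the missing content behind the paper's one-line assertion: showing $\mathrm{Tr}_{\mathcal{C}/\mathcal{I}}(J)=0$ amounts to the same essentiality/annihilator argument, since $\mathrm{Tr}_{\mathcal{C}/\mathcal{I}}(J)$ is a submodule of $J$ lying in $\mathrm{Ann}(\mathcal{I})$ and would therefore contain $S$ if nonzero. Your version is more elementary and self-contained; the paper's is shorter but relies on machinery whose crucial input is left to the reader.
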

\begin{proof}
It can be seen that $\mathrm{Tr}_{\frac{\mathcal{C}}{\mathcal{I}}}(J)=0$, and hence  $\overline{\mathrm{Tr}}_{\frac{\mathcal{C}}{\mathcal{I}}}(J)=\Omega(\mathrm{Tr}_{\frac{\mathcal{C}}{\mathcal{I}}}(J))=0$. Now, by adjunction (see \ref{firstadjoin}),
$\mathrm{Hom}_{\mathrm{mod}(\mathcal{C})}((\pi_{1})_{\ast}(X),J)\simeq \mathrm{Hom}_{\mathrm{mod}(\mathcal{C}/\mathcal{I})}(X,\overline{\mathrm{Tr}}_{\frac{\mathcal{C}}{\mathcal{I}}}(J))=\mathrm{Hom}_{\mathrm{mod}(\mathcal{C}/\mathcal{I})}(X,0)=0.$
\end{proof}

\begin{proposition}\label{desenvoline}
Let $\mathcal{C}$ be a dualizing $R$-variety, $P=\mathrm{Hom}_{\mathcal{C}}(C,-)\in \mathrm{mod}(\mathcal{C})$ an indecomposable projective module, $J:=I_{0}(\frac{P}{\mathrm{rad}(P)})\in \mathrm{mod}(\mathcal{C})$ and $\mathcal{I}=\mathrm{Tr}_{P}\mathcal{C}$.
Consider the projection $\pi_{1}:\mathcal{C}\longrightarrow \mathcal{C}/\mathcal{I}$. Then $J\simeq \mathbb{D}_{\mathcal{C}}^{-1}(\mathrm{Hom}_{\mathcal{C}}(-,C)).$
\end{proposition}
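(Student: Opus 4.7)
The proof plan rests on the structural fact that the duality $\mathbb{D}_{\mathcal{C}}:\mathrm{mod}(\mathcal{C})\to \mathrm{mod}(\mathcal{C}^{op})$ exchanges projective covers with injective envelopes, so the injective envelope of the top of $P$ can be identified by dualizing an appropriate projective cover in $\mathrm{mod}(\mathcal{C}^{op})$.

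First I would record the standard Krull--Schmidt/simple-module setup. Since $C$ is indecomposable in the Krull--Schmidt category $\mathcal{C}$ and $\mathcal{C}$ is dualizing, $\mathrm{End}_{\mathcal{C}}(C)$ is a local artin $R$-algebra. Consequently $P=\mathrm{Hom}_{\mathcal{C}}(C,-)$ is an indecomposable projective in $\mathrm{mod}(\mathcal{C})$ and its top $S:=P/\mathrm{rad}(P)$ is a simple $\mathcal{C}$-module, supported only at $C$ with $S(C)=\mathrm{End}_{\mathcal{C}}(C)^{op}/\mathrm{rad}$. Symmetrically, $Q:=\mathrm{Hom}_{\mathcal{C}}(-,C)\in\mathrm{mod}(\mathcal{C}^{op})$ is an indecomposable projective and $T:=Q/\mathrm{rad}(Q)$ is a simple $\mathcal{C}^{op}$-module supported at $C$.

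Next, I would verify that $\mathbb{D}_{\mathcal{C}}(S)\simeq T$. Because $\mathbb{D}_{\mathcal{C}}$ is an exact contravariant equivalence (a duality) between $\mathrm{mod}(\mathcal{C})$ and $\mathrm{mod}(\mathcal{C}^{op})$, it takes simple objects to simple objects. Moreover, from the formula $\mathbb{D}_{\mathcal{C}}(M)(C')=\mathrm{Hom}_{R}(M(C'),E)$, the functor $\mathbb{D}_{\mathcal{C}}(S)$ is supported exactly at $C$ (the support of $S$). Since up to isomorphism there is a unique simple object of $\mathrm{mod}(\mathcal{C}^{op})$ supported at the indecomposable object $C$, namely $T$, we conclude $\mathbb{D}_{\mathcal{C}}(S)\simeq T$, equivalently $\mathbb{D}_{\mathcal{C}}^{-1}(T)\simeq S$.

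Now I would apply $\mathbb{D}_{\mathcal{C}}^{-1}$ to the projective cover $Q\twoheadrightarrow T$ in $\mathrm{mod}(\mathcal{C}^{op})$. The duality sends projectives to injectives, so $\mathbb{D}_{\mathcal{C}}^{-1}(Q)$ is an indecomposable injective in $\mathrm{mod}(\mathcal{C})$, and it sends the essential epimorphism $Q\twoheadrightarrow T$ to an essential monomorphism
\[
S\simeq\mathbb{D}_{\mathcal{C}}^{-1}(T)\hookrightarrow \mathbb{D}_{\mathcal{C}}^{-1}(Q)=\mathbb{D}_{\mathcal{C}}^{-1}(\mathrm{Hom}_{\mathcal{C}}(-,C)).
\]
This exhibits $\mathbb{D}_{\mathcal{C}}^{-1}(\mathrm{Hom}_{\mathcal{C}}(-,C))$ as an injective envelope of $S=P/\mathrm{rad}(P)$, and by uniqueness of injective envelopes we obtain $J\simeq \mathbb{D}_{\mathcal{C}}^{-1}(\mathrm{Hom}_{\mathcal{C}}(-,C))$ as required. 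The main delicate point is the identification $\mathbb{D}_{\mathcal{C}}(S)\simeq T$ together with the fact that the duality sends the projective cover to an injective envelope (not just to an injective with an embedding of the simple), which forces appealing to local duality of the simple artinian quotient of $\mathrm{End}_{\mathcal{C}}(C)$ against the $R$-injective envelope $E$; once this is in place the rest is formal.
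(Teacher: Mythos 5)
Your proof is correct, but it follows a genuinely different route from the paper. The paper works from Lemma \ref{HOmInje}, i.e.\ from the vanishing $\mathrm{Hom}_{\mathrm{mod}(\mathcal{C})}((\pi_{1})_{\ast}(X),J)=0$ for all $X\in\mathrm{mod}(\mathcal{C}/\mathcal{I})$; transporting this through the duality and the characterization of the $\mathbb{P}_{k}$-subcategories (Proposition \ref{Pkcarect} and its degree-zero analogue \ref{percero}), it deduces that $\mathbb{D}_{\mathcal{C}}(J)$ sits in $\mathbb{P}_{0}^{\ast}$, i.e.\ is a quotient of some $Q_{0}\in\mathrm{add}(P^{\ast})$; projectivity of $\mathbb{D}_{\mathcal{C}}(J)$ then makes it a summand of $Q_{0}$, and the Krull--Schmidt property plus indecomposability of $P^{\ast}=\mathrm{Hom}_{\mathcal{C}}(-,C)$ forces $\mathbb{D}_{\mathcal{C}}(J)\simeq P^{\ast}$. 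Your argument bypasses all of this and instead uses only the structural behaviour of the duality itself: $\mathbb{D}_{\mathcal{C}}$ carries the simple top $S=P/\mathrm{rad}(P)$ to the simple top $T$ of $P^{\ast}$ (via the support/cogenerator argument), and carries the projective cover $P^{\ast}\twoheadrightarrow T$ to the injective envelope $S\hookrightarrow\mathbb{D}_{\mathcal{C}}^{-1}(P^{\ast})$, whence $J\simeq\mathbb{D}_{\mathcal{C}}^{-1}(P^{\ast})$ by uniqueness of injective envelopes. Your route is more elementary and self-contained, making no reference to the trace ideal $\mathcal{I}=\mathrm{Tr}_{P}\mathcal{C}$ or the auxiliary functorial vanishing; the paper's route, while more roundabout here, is set up so that the same machinery ($\mathbb{P}_{k}$, Lemma \ref{HOmInje}) can be reused in later sections. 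One small point worth making explicit if you wrote this out in full: that the simple $\mathcal{C}^{op}$-module supported at $C$ is unique, and that simple functors lie in $\mathrm{mod}(\mathcal{C}^{op})$, both of which hold because $\mathcal{C}$ is a dualizing $R$-variety and hence Krull--Schmidt with finitely generated radical.
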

\begin{proof}
Since $J$ is the injective envelope of the simple $\frac{P}{\mathrm{rad}(P)}$, we have that  $J$ is indecomposable. Then we have that $\mathbb{D}_{\mathcal{C}}(J)\in \mathrm{mod}(\mathcal{C}^{op})$ is an indecomposable projective  $\mathcal{C}$-module.  By \ref{HOmInje} and  \ref{Pkcarect}, we get that there exists an exact sequence $\xymatrix{Q_{1}\ar[r] & Q_{0}\ar[r] & \mathbb{D}_{\mathcal{C}}(J)\ar[r] & 0}$
with $Q_{i}\in \mathrm{add}(P^{\ast})=\mathrm{add}(\mathrm{Hom}_{\mathcal{C}}(-,C))$. Since $\mathbb{D}_{\mathcal{C}}(J)$, is projective we conclude that $\mathbb{D}_{\mathcal{C}}(J)$ is a direct summand of $Q_{0}$ and therefore $\mathbb{D}_{\mathcal{C}}(J)\in \mathrm{add}(\mathrm{Hom}_{\mathcal{C}}(-,C))$. Since $\mathrm{Hom}_{\mathcal{C}}(-,C)$ is indecomposable and $\mathrm{mod}(\mathcal{C})$ is Krull-Schmidt (because $\mathrm{mod}(\mathcal{C})$ is a dualizing $R$-variety), we conclude that $\mathbb{D}_{\mathcal{C}}(J)\simeq \mathrm{Hom}_{\mathcal{C}}(-,C)$.
\end{proof}

The following shows that the last proposition holds for every finitely generated projective $\mathcal{C}$-module.
\begin{proposition}\label{desenvolinegen}
Let $\mathcal{C}$ be a dualizing $R$-variety, $P=\mathrm{Hom}_{\mathcal{C}}(C,-)\in \mathrm{mod}(\mathcal{C})$ a projective module, $J:=I_{0}(\frac{P}{\mathrm{rad}(P)})\in \mathrm{mod}(\mathcal{C})$ the injective envelope of $\frac{P}{\mathrm{rad}(P)}$ and $\mathcal{I}=\mathrm{Tr}_{P}\mathcal{C}$. Then $J\simeq \mathbb{D}_{\mathcal{C}}^{-1}(\mathrm{Hom}_{\mathcal{C}}(-,C)).$
\end{proposition}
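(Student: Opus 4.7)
The plan is to reduce the general statement to the indecomposable case already handled in Proposition \ref{desenvoline}, by exploiting the Krull-Schmidt property of $\mathcal{C}$ and $\mathrm{mod}(\mathcal{C})$. Since $\mathcal{C}$ is a dualizing $R$-variety, the endomorphism ring of every object is an artin algebra, so $\mathcal{C}$ is Krull-Schmidt by \cite[p.337]{AusVarietyI}. Thus one can decompose $C = \bigoplus_{i=1}^{n} C_i$ with each $C_i$ indecomposable in $\mathcal{C}$.

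First I would translate this decomposition through the Yoneda functor to obtain $P = \mathrm{Hom}_{\mathcal{C}}(C,-) \simeq \bigoplus_{i=1}^{n} \mathrm{Hom}_{\mathcal{C}}(C_i,-) =: \bigoplus_{i=1}^{n} P_i$, where each $P_i$ is indecomposable projective in $\mathrm{mod}(\mathcal{C})$ (by the Krull-Schmidt property of $\mathrm{proj}(\mathcal{C})$ discussed right before Proposition \ref{desenvoline}). Next I would use the standard facts that the radical of a finite direct sum is the direct sum of the radicals, and that the injective envelope of a finite direct sum of semisimples equals the direct sum of their injective envelopes (valid because $\mathrm{mod}(\mathcal{C})$ has injective envelopes by \cite[Proposition 3.4]{AusVarietyI}). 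These give
\[
\frac{P}{\mathrm{rad}(P)} \simeq \bigoplus_{i=1}^{n} \frac{P_i}{\mathrm{rad}(P_i)},
\qquad
J \;=\; I_0\!\left(\tfrac{P}{\mathrm{rad}(P)}\right) \;\simeq\; \bigoplus_{i=1}^{n} I_0\!\left(\tfrac{P_i}{\mathrm{rad}(P_i)}\right).
\]

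Then I would apply Proposition \ref{desenvoline} to each indecomposable summand $P_i$ to obtain $I_0(P_i/\mathrm{rad}(P_i)) \simeq \mathbb{D}_{\mathcal{C}}^{-1}(\mathrm{Hom}_{\mathcal{C}}(-,C_i))$. Finally, since $\mathbb{D}_{\mathcal{C}}$ is a duality between $\mathrm{mod}(\mathcal{C})$ and $\mathrm{mod}(\mathcal{C}^{op})$, its quasi-inverse $\mathbb{D}_{\mathcal{C}}^{-1}$ is additive and hence preserves finite direct sums; combining everything,
\[
J \;\simeq\; \bigoplus_{i=1}^{n} \mathbb{D}_{\mathcal{C}}^{-1}\!\bigl(\mathrm{Hom}_{\mathcal{C}}(-,C_i)\bigr) \;\simeq\; \mathbb{D}_{\mathcal{C}}^{-1}\!\left(\bigoplus_{i=1}^{n}\mathrm{Hom}_{\mathcal{C}}(-,C_i)\right) \;\simeq\; \mathbb{D}_{\mathcal{C}}^{-1}\!\bigl(\mathrm{Hom}_{\mathcal{C}}(-,C)\bigr),
\]
which is the desired isomorphism.

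There is no real obstacle here; the whole argument is a Krull-Schmidt bookkeeping routine on top of Proposition \ref{desenvoline}. The only point that requires a moment of care is verifying that injective envelopes in $\mathrm{mod}(\mathcal{C})$ behave well with respect to finite direct sums of semisimple modules, but this follows immediately from the uniqueness (up to isomorphism) of injective envelopes and the fact that a finite direct sum of injective objects in $\mathrm{mod}(\mathcal{C})$ is again injective, which holds because $\mathrm{mod}(\mathcal{C})$ is a dualizing $R$-variety by Theorem \ref{moddualizin}.
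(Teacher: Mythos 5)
Your proof is correct, and since the paper gives no explicit proof of Proposition \ref{desenvolinegen} — it simply states it after \ref{desenvoline} with the remark that the result extends to every finitely generated projective — the Krull–Schmidt reduction you wrote out is exactly the argument the authors clearly have in mind. Your handling of the three bookkeeping points (radicals commute with finite direct sums, injective envelopes commute with finite direct sums of semisimples, and $\mathbb{D}_{\mathcal{C}}^{-1}$ is additive) is careful and right, and you correctly noted implicitly that although $\mathbb{Y}$ is contravariant, finite products and coproducts coincide in $\mathrm{mod}(\mathcal{C})$ so $\mathrm{Hom}_{\mathcal{C}}(\bigoplus_i C_i,-)\simeq\bigoplus_i\mathrm{Hom}_{\mathcal{C}}(C_i,-)$. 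One small point worth making explicit, though it does not break anything: when you apply \ref{desenvoline} to each indecomposable summand $P_i$, the relevant ideal there is $\mathrm{Tr}_{P_i}\mathcal{C}$ rather than the global $\mathcal{I}=\mathrm{Tr}_P\mathcal{C}$; this is harmless because the conclusion $I_0(P_i/\mathrm{rad}(P_i))\simeq\mathbb{D}_{\mathcal{C}}^{-1}(\mathrm{Hom}_{\mathcal{C}}(-,C_i))$ makes no reference to the ideal, but it is worth saying so to forestall confusion about which $\mathcal{I}$ is in play.
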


Similarly, the result given in \ref{HOmInje} holds for every finitely generated projective $\mathcal{C}$-module. That is, we have the following result.

\begin{Remark}\label{HOmInjegen}
Let $\mathcal{C}$ be a dualizing $R$-variety, $P=\mathrm{Hom}_{\mathcal{C}}(C,-)\in \mathrm{mod}(\mathcal{C})$ a projective module, $J:=I_{0}(\frac{P}{\mathrm{rad}(P)})\in \mathrm{mod}(\mathcal{C})$ and $\mathcal{I}=\mathrm{Tr}_{P}\mathcal{C}$.
Consider the projection $\pi_{1}:\mathcal{C}\longrightarrow \mathcal{C}/\mathcal{I}$. Then 
$\mathrm{Hom}_{\mathrm{mod}(\mathcal{C})}((\pi_{1})_{\ast}(X),J)=0$ for all $X\in \mathrm{mod}(\mathcal{C}/\mathcal{I})$. 
\end{Remark}

\begin{corollary}\label{otherpercer}
Let $\mathcal{C}$ be a dualizing $R$-variety, $P=\mathrm{Hom}_{\mathcal{C}}(C,-)\in \mathrm{mod}(\mathcal{C})$ a projective module, $J:=I_{0}(\frac{P}{\mathrm{rad}(P)})\in \mathrm{mod}(\mathcal{C})$ and $\mathcal{I}=\mathrm{Tr}_{P}\mathcal{C}$.
Consider the projection $\pi_{1}:\mathcal{C}\longrightarrow \mathcal{C}/\mathcal{I}$. Then $Y\in \mathbb{I}_{0}$ if and only if $\mathrm{Hom}_{\mathrm{mod}(\mathcal{C})}((\pi_{1})_{\ast}(X),Y)=0$ for all $X\in \mathrm{mod}(\mathcal{C}/\mathcal{I})$.
\end{corollary}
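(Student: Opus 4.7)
I would prove both implications separately, leveraging the characterization of $\mathbb{P}_{0}$ provided by Lemma \ref{percero}(b) together with the duality $\mathbb{D}_{\mathcal{C}}$.

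For $(\Rightarrow)$, if $Y\in\mathbb{I}_{0}$ then there is a monomorphism $\iota\colon Y\hookrightarrow J_{0}$ with $J_{0}\in\mathrm{add}(J)$. Given any $f\colon (\pi_{1})_{\ast}(X)\to Y$ with $X\in\mathrm{mod}(\mathcal{C}/\mathcal{I})$, the composite $\iota\circ f$ lands in $J_{0}$, and Remark \ref{HOmInjegen} together with the fact that $J_{0}$ is a direct summand of a finite power of $J$ forces $\iota\circ f=0$. Since $\iota$ is a monomorphism, $f=0$.

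For $(\Leftarrow)$, the strategy is to dualize to $\mathrm{mod}(\mathcal{C}^{op})$ and invoke the opposite-category version of Lemma \ref{percero}(b). By Corollary \ref{PPast} one has $\mathcal{I}^{op}=\mathrm{Tr}_{P^{\ast}}\mathcal{C}^{op}$, and Proposition \ref{desenvolinegen} gives $\mathbb{D}_{\mathcal{C}}(J)\simeq P^{\ast}=\mathrm{Hom}_{\mathcal{C}}(-,C)$. Because $\mathbb{D}_{\mathcal{C}}$ is an exact duality that converts injective envelopes to projective covers, the condition $I_{0}(Y)\in\mathrm{add}(J)$ is equivalent to the projective cover of $\mathbb{D}_{\mathcal{C}}(Y)$ lying in $\mathrm{add}(P^{\ast})$, i.e. $\mathbb{D}_{\mathcal{C}}(Y)\in\mathbb{P}_{0}$ relative to $P^{\ast}$ inside $\mathrm{mod}(\mathcal{C}^{op})$. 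By Lemma \ref{percero}(b), this is equivalent to $\mathrm{Hom}_{\mathrm{mod}(\mathcal{C}^{op})}(\mathbb{D}_{\mathcal{C}}(Y),(\pi_{2})_{\ast}(X'))=0$ for every $X'\in\mathrm{mod}((\mathcal{C}/\mathcal{I})^{op})$.

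To translate this back, I would use the commutative square of Proposition \ref{cocienteduali}, which yields $(\pi_{2})_{\ast}\circ\mathbb{D}_{\mathcal{C}/\mathcal{I}}=\mathbb{D}_{\mathcal{C}}\circ(\pi_{1})_{\ast}$, combined with the standard isomorphism induced by a duality
\[
\mathrm{Hom}_{\mathrm{mod}(\mathcal{C}^{op})}\bigl(\mathbb{D}_{\mathcal{C}}(Y),(\pi_{2})_{\ast}(X')\bigr)\;\simeq\;\mathrm{Hom}_{\mathrm{mod}(\mathcal{C})}\bigl((\pi_{1})_{\ast}(\mathbb{D}_{\mathcal{C}/\mathcal{I}}^{-1}(X')),Y\bigr).
\]
As $X'$ traverses $\mathrm{mod}((\mathcal{C}/\mathcal{I})^{op})$, the object $\mathbb{D}_{\mathcal{C}/\mathcal{I}}^{-1}(X')$ traverses all of $\mathrm{mod}(\mathcal{C}/\mathcal{I})$, so the given vanishing hypothesis supplies exactly the required Hom-vanishing. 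The main obstacle I expect is making the dualization step rigorous: one must check that the equivalence between $\mathbb{I}_{0}\subseteq\mathrm{mod}(\mathcal{C})$ (relative to $J$) and $\mathbb{P}_{0}\subseteq\mathrm{mod}(\mathcal{C}^{op})$ (relative to $P^{\ast}$) really is transported by $\mathbb{D}_{\mathcal{C}}$, which hinges on the identification $\mathbb{D}_{\mathcal{C}}(J)\simeq P^{\ast}$ from Proposition \ref{desenvolinegen} and the compatibility of $\mathbb{D}_{\mathcal{C}}$ with $(\pi_{i})_{\ast}$ given by Proposition \ref{cocienteduali}.
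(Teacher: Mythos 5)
Your proposal is correct and follows essentially the same route as the paper, whose proof is the one-line pointer ``It follows by duality using \ref{PPast}, \ref{percero} and \ref{desenvolinegen}.'' Your backward direction is exactly what that pointer intends: identify $\mathbb{D}_{\mathcal{C}}(J)\simeq P^{\ast}$ (Proposition~\ref{desenvolinegen}), match $\mathcal{I}^{op}=\mathrm{Tr}_{P^{\ast}}\mathcal{C}^{op}$ (Corollary~\ref{PPast}), apply the opposite-category version of Lemma~\ref{percero}(b), and pull the Hom-vanishing back across $\mathbb{D}_{\mathcal{C}}$ using the commutative square of Proposition~\ref{cocienteduali}. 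You usefully spell out that the duality exchanges injective envelopes and projective covers so that $\mathbb{I}_{0}$ relative to $J$ corresponds to $\mathbb{P}_{0}$ relative to $P^{\ast}$; that is the crux the paper leaves implicit, and your reduction is sound. The only place you genuinely diverge is the forward implication, where instead of dualizing you argue directly: take a monomorphism $Y\hookrightarrow J_{0}$ with $J_{0}\in\mathrm{add}(J)$, observe that $\mathrm{Hom}_{\mathrm{mod}(\mathcal{C})}((\pi_{1})_{\ast}X,J_{0})=0$ by Remark~\ref{HOmInjegen} and additivity, and conclude $f=0$ since a composite into $J_{0}$ vanishes and the embedding is monic. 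This is a slightly more elementary route than running the duality in both directions, and it reads more transparently; the paper's phrasing would establish the equivalence in one stroke via the $\mathcal{C}^{op}$ picture. Both are correct, and nothing in your argument has a gap.
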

\begin{proof}
If follows by duality using \ref{PPast},  \ref{percero} and  \ref{desenvolinegen}.
\end{proof}

\begin{proposition}\label{muchaseuiii}
Let $P=\mathrm{Hom}_{\mathcal{C}}(C,-)\in \mathrm{mod}(\mathcal{C})$ be a projective module, $\mathcal{I}=\mathrm{Tr}_{P}\mathcal{C}$ and $1\leq k\leq \infty$. Let $\pi_{1}:\mathcal{C}\longrightarrow \mathcal{C}/\mathcal{I}$ the canonical projection. The following conditions are equivalent for $Y\in \mathrm{mod}(\mathcal{C})$.
\begin{enumerate}
\item [(a)] $Y\in \mathbb{I}_{k}$.

\item [(b)] $\mathrm{Ext}^{i}_{\mathrm{mod}(\mathcal{C})}((\pi_{1})_{\ast}(X),Y)=0$ for all $X\in\mathrm{mod}(\mathcal{C}/\mathcal{I})$ and $i=0,\dots, k$.

\item [(c)] $\mathrm{Ext}^{i}_{\mathrm{mod}(\mathcal{C})}((\pi_{1})_{\ast}(Q),Y)=0$ for all $Q\in\mathrm{mod}(\mathcal{C}/\mathcal{I})$ projective and $i=0,\dots, k$.
\end{enumerate}
\end{proposition}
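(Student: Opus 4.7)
The strategy is to prove this by dualizing the argument of Proposition \ref{Pkcarect}: replace $P$ by $J$, projective resolutions by injective coresolutions, and $\mathrm{Hom}_{\mathrm{mod}(\mathcal{C})}(-,(\pi_{1})_{\ast}(Y))$ by $\mathrm{Hom}_{\mathrm{mod}(\mathcal{C})}((\pi_{1})_{\ast}(X),-)$. The two key ingredients are already in place: Remark \ref{HOmInjegen} (giving $\mathrm{Hom}_{\mathrm{mod}(\mathcal{C})}((\pi_{1})_{\ast}(X),J')=0$ for every $J'\in\mathrm{add}(J)$ and every $X\in\mathrm{mod}(\mathcal{C}/\mathcal{I})$) and Corollary \ref{otherpercer} (which is precisely the base case $k=0$ of the equivalence (a)$\Leftrightarrow$(b)). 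Moreover, since $\mathcal{I}=\mathrm{Tr}_{P}\mathcal{C}$ satisfies property $A$ by \ref{trasatproA}, Proposition \ref{cocienteduali} gives that $\mathcal{C}/\mathcal{I}$ is a dualizing $R$-variety, so $\mathrm{mod}(\mathcal{C}/\mathcal{I})$ has enough finitely generated projectives and is closed under kernels.

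The implication (a)$\Rightarrow$(b) follows by applying $\mathrm{Hom}((\pi_{1})_{\ast}(X),-)$ to an injective coresolution of $Y$ whose first $k+1$ terms lie in $\mathrm{add}(J)$; by Remark \ref{HOmInjegen} the first $k+1$ entries of the resulting complex vanish, so $\mathrm{Ext}^{i}((\pi_{1})_{\ast}(X),Y)=0$ for $0\le i\le k$. The implication (b)$\Rightarrow$(c) is immediate. For (c)$\Rightarrow$(b) I would induct on $i$: choose a short exact sequence $0\to X'\to Q\to X\to 0$ in $\mathrm{mod}(\mathcal{C}/\mathcal{I})$ with $Q$ finitely generated projective, apply the exact functor $(\pi_{1})_{\ast}$, and invoke the long exact Ext-sequence; the outer terms $\mathrm{Ext}^{i-1}((\pi_{1})_{\ast}(X'),Y)$ and $\mathrm{Ext}^{i}((\pi_{1})_{\ast}(Q),Y)$ vanish respectively by the inductive hypothesis applied to $X'$ and by condition (c) applied to $Q$.

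The main work lies in (b)$\Rightarrow$(a), which I would prove by induction on $k$. The case $k=0$ is exactly Corollary \ref{otherpercer}. For $k\ge 1$, the inductive hypothesis provides an injective coresolution $0\to Y\to J_{0}\to J_{1}\to\cdots\to J_{k-1}\to\cdots$ with $J_{0},\ldots,J_{k-1}\in\mathrm{add}(J)$. Let $K_{k-1}\in\mathrm{mod}(\mathcal{C})$ denote the cokernel of $J_{k-2}\to J_{k-1}$, sitting in the short exact sequence $0\to K_{k-2}\to J_{k-1}\to K_{k-1}\to 0$ (with the convention $K_{-1}:=Y$ so that the $k=1$ case reduces to $0\to Y\to J_{0}\to K_{0}\to 0$). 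Applying $\mathrm{Hom}((\pi_{1})_{\ast}(X),-)$ yields the exact fragment
$$\mathrm{Hom}((\pi_{1})_{\ast}(X),J_{k-1})\to\mathrm{Hom}((\pi_{1})_{\ast}(X),K_{k-1})\to\mathrm{Ext}^{1}((\pi_{1})_{\ast}(X),K_{k-2});$$
the left term vanishes by Remark \ref{HOmInjegen}, and iterated dimension-shift along the short exact sequences $0\to K_{i-1}\to J_{i}\to K_{i}\to 0$ for $0\le i\le k-2$ (using injectivity of each $J_{i}$) identifies the right term with $\mathrm{Ext}^{k}((\pi_{1})_{\ast}(X),Y)$, which vanishes by hypothesis. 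Hence $\mathrm{Hom}((\pi_{1})_{\ast}(X),K_{k-1})=0$ for every $X\in\mathrm{mod}(\mathcal{C}/\mathcal{I})$, so Corollary \ref{otherpercer} gives $I_{0}(K_{k-1})\in\mathrm{add}(J)$; choosing $J_{k}:=I_{0}(K_{k-1})$ extends the coresolution one step further inside $\mathrm{add}(J)$ and closes the induction. The principal technicality is the bookkeeping of the dimension-shift isomorphisms and verifying that the intermediate cosyzygies remain in $\mathrm{mod}(\mathcal{C})$, which is automatic since $\mathrm{mod}(\mathcal{C})$ is an abelian subcategory of $\mathrm{Mod}(\mathcal{C})$ when $\mathcal{C}$ is dualizing.
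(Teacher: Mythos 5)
Your proof is correct, but it takes a genuinely different route from the paper's. The paper's proof is a one-liner: ``It follows from \ref{Pkcarect} using the duality.'' That is, the paper transports the whole statement through $\mathbb{D}_{\mathcal{C}}$, using \ref{PPast} (so $\mathcal{I}^{op}=\mathrm{Tr}_{P^{\ast}}\mathcal{C}^{op}$), \ref{desenvolinegen} (so $\mathbb{D}_{\mathcal{C}}(J)\simeq P^{\ast}$, hence $Y\in\mathbb{I}_{k}$ iff $\mathbb{D}_{\mathcal{C}}(Y)\in\mathbb{P}_{k}^{\ast}$ in $\mathrm{mod}(\mathcal{C}^{op})$), and \ref{cocienteduali} (so the $\mathrm{Ext}$-vanishing conditions match up under the commutative square of dualities), and then simply quotes \ref{Pkcarect} in $\mathrm{mod}(\mathcal{C}^{op})$. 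You instead re-run the homological argument of \ref{Pkcarect} in dual form: a dimension-shifting induction on $k$ based on \ref{HOmInjegen} and the $k=0$ case \ref{otherpercer}. The bookkeeping you describe is accurate --- the cosyzygy short exact sequences $0\to K_{i-1}\to J_i\to K_i\to 0$ give $\mathrm{Ext}^{1}((\pi_1)_\ast(X),K_{k-2})\simeq\mathrm{Ext}^{k}((\pi_1)_\ast(X),Y)$, the intermediate cosyzygies stay in $\mathrm{mod}(\mathcal{C})$ because it is abelian, and your (c)$\Rightarrow$(b) induction correctly relies on $\mathrm{mod}(\mathcal{C}/\mathcal{I})$ having enough projectives and being closed under kernels, which holds since $\mathcal{C}/\mathcal{I}$ is dualizing by \ref{trasatproA} and \ref{cocienteduali}. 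Note, though, that your route is not fully duality-free: \ref{otherpercer} itself is proved in the paper by duality from \ref{percero} and \ref{desenvolinegen}, so you are still importing the duality at the $k=0$ base case. What your approach buys is a self-contained, explicit inductive argument that does not require tracking the statement across $\mathbb{D}_{\mathcal{C}}$ at every degree; what the paper's approach buys is brevity and the elimination of repeated dimension-shift bookkeeping, at the cost of depending on the full package of duality lemmas being already in place.
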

\begin{proof}
It follows from \ref{Pkcarect} using the duality.
\end{proof}

\section{A recollement}\label{sec:7}

Let $\mathcal{C}$ be a preadditive category. Throught this section $P$ will be a finitely generated projective module in $\mathrm{Mod}(\mathcal{C})$ and $R_{P}:=\mathrm{End}_{\mathrm{Mod}(\mathcal{C})}(P)^{op}$. In this section we will study the functor $\mathrm{Hom}_{\mathrm{Mod}(\mathcal{C})}(P,-):\mathrm{Mod}(\mathcal{C})\longrightarrow \mathrm{Mod}(R_{P})$. The following remark is straightforward and we left the details to the reader.
 
\begin{Remark}\label{equicovacon}
Let $\mathcal{C}$ be a preadditive category and $P\in \mathrm{proj}(\mathcal{C})$.
\begin{enumerate}
\item [(a)] Consider the Yoneda embedding $\mathbb{Y}:\mathcal{C}\longrightarrow \mathrm{proj}(\mathcal{C})$ defined as $\mathbb{Y}(C):=\mathrm{Hom}_{\mathcal{C}}(C,-)$. Then $\mathbb{Y}_{\ast}:\mathrm{Mod}((\mathrm{proj}(\mathcal{C}))^{op})\longrightarrow \mathrm{Mod}(\mathcal{C})$
is an equivalence.
\item [(b)] Consider the inclusion $\{P\}^{op}\subseteq \mathrm{proj}(\mathcal{C})^{op}$ and the restriction functor $\mathrm{res}:\mathrm{Mod}((\mathrm{proj}(\mathcal{C}))^{op})\longrightarrow \mathrm{Mod}(\{P\}^{op})$ given as: $M\mapsto M|_{\{P^{op}\}}$. Then the following diagram is commutative
$$(\ast):\quad \xymatrix{\mathrm{Mod}((\mathrm{proj}(\mathcal{C}))^{op})\ar[rr]^{\mathrm{res}}\ar[d]^{\mathbb{Y}_{\ast}} & &  \mathrm{Mod}(\{P\}^{op})\ar[d]^{e_{P}}\\
\mathrm{Mod}(\mathcal{C})\ar[rr]_{\mathrm{Hom}_{\mathrm{Mod}(\mathcal{C})}(P,-)} & & \mathrm{Mod}(R_{P})}$$
where $R_{P}:=\mathrm{End}_{\mathrm{Mod}(\mathcal{C})}(P)^{op}$ and $e_{P}$ is the evaluation functor defined as follows: $e_{P}(M)=M(P)$ for $M\in  \mathrm{Mod}(\{P\}^{op})$.
\item [(c)] We define a functor $P\otimes_{R_{P}}- :\mathrm{Mod}(R_{P})\longrightarrow  \mathrm{Mod}(\mathcal{C})$ as follows: $(P\otimes_{R_{P}}M)(C)=P(C)\otimes_{R_{P}}M$ for all $M\in \mathrm{Mod}(R_{P})$ and $C\in \mathcal{C}$. Then the following diagram commutes
$$\xymatrix{\mathrm{Mod}(\{P\}^{op})\ar[rrr]^{(\mathrm{proj}(\mathcal{C})^{op})\otimes_{\{P\}^{op}}-}\ar[d]_{e_{P}'} & & & \mathrm{Mod}(\mathrm{proj}(\mathcal{C}))^{op})\ar[d]^{\mathbb{Y}_{\ast}}\\
\mathrm{Mod}(R_{P})\ar[rrr]_{P\otimes_{R_{P}}-} & & & \mathrm{Mod}(\mathcal{C})}$$
where $e_{P}'$ is the evaluation and $(\mathrm{proj}(\mathcal{C})^{op})\otimes_{\{P\}^{op}}-$  is the functor defined in \ref{Rproposition1}.
\item [(d)] Recall that  $P^{\ast}\in \mathrm{Mod}(\mathcal{C}^{op})$ is given by  $P^{\ast}(C):=\mathrm{Hom}_{\mathrm{Mod}(\mathcal{C})}(P,\mathrm{Hom}_{\mathcal{C}}(C,-))$. Now we can construct a functor $\mathrm{Hom}_{R_{P}}(P^{\ast},-):\mathrm{Mod}(R_{P})\longrightarrow \mathrm{Mod}(\mathcal{C})$ where for $M\in \mathrm{Mod}(R_{P})$ we define $\mathrm{Hom}_{R_{P}}(P^{\ast},M):\mathcal{C}\longrightarrow \mathbf{Ab}$ as follows: 
$\Big(\mathrm{Hom}_{R_{P}}(P^{\ast},M)\Big)(C):=\mathrm{Hom}_{R_{P}}(P^{\ast}(C),M)$. Then the following diagram commutes 
$$\xymatrix{\mathrm{Mod}(\{P\}^{op})\ar[rrr]^{\{P\}^{op}(\mathrm{proj}(\mathcal{C})^{op},-)}\ar[d]_{e_{P}'} & & & \mathrm{Mod}(\mathrm{proj}(\mathcal{C}))^{op})\ar[d]^{\mathbb{Y}_{\ast}}\\
\mathrm{Mod}(R_{P})\ar[rrr]_{\mathrm{Hom}_{R_{P}}(P^{\ast},-)} & & & \mathrm{Mod}(\mathcal{C})}$$
where $e_{P}'$ is the evaluation and $\{P\}^{op}(\mathrm{proj}(\mathcal{C})^{op},-)$ is the functor defined in \ref{Rproposition2}.
\end{enumerate}
\end{Remark}

Now, we give the following definition which encodes the information of several adjunctions.

\begin{definition}\label{recolldef}
Let $\mathcal{A}$, $\mathcal{B}$ and $\mathcal{C}$ be abelian categories. Then the diagram

$$\xymatrix{\mathcal{B}\ar[rr]|{i_{\ast}=i_{!}}  &  &\mathcal{A}\ar[rr]|{j^{!}=j^{\ast}}\ar@<-2ex>[ll]_{i^{\ast}}\ar@<2ex>[ll]^{i^{!}}  &   &\mathcal{C}\ar@<-2ex>[ll]_{j_{!}}\ar@<2ex>[ll]^{j_{\ast}}}$$
is called a $\textbf{recollement}$, if the additive functors $i^{\ast},i_{\ast}=i_{!}, i^{!},j_{!},j^{!}=j^{*}$ and $j_{*}$ satisfy the following conditions:
\begin{itemize}
\item[(R1)] $(i^{*},i_{*}=i_{!},i^{!})$ and $(j_{!},j^{!}=j^{*},j_{*})$ are adjoint  triples, i.e. $(i^{*},i_{*})$, $(i_{!},i^{!})$  $(j_{!},j^{!})$ and $(j^{*},j_{*})$ are adjoint pairs;
\item[(R2)] $j^{*}i_{*}=0$;
\item[(R3)] $i_{*}, j_{!},j_{*}$ are full embedding functors.
\end{itemize}
\end{definition}

Next, we will see that we can construct a recollement.

\begin{proposition}\label{recolleproye}
Let $\mathcal{C}$ be an $R$-category, $P=\mathrm{Hom}_{\mathcal{C}}(C,-)\in \mathrm{Mod}(\mathcal{C})$ a finitely generated projective module, $\mathcal{B}=\mathrm{add}(C)\subseteq \mathcal{C}$ and $R_{P}=\mathrm{End}_{\mathrm{Mod}(\mathcal{C})}(P)^{op}$.  Then, there exists a recollement of the form
$$\xymatrix{\mathrm{Mod}(\mathcal{C}/\mathcal{I}_{\mathcal{B}})\ar[rrr]|{\pi_{\ast}}  &  &&\mathrm{Mod}(\mathcal{C})\ar[rrr]|{\mathrm{Hom}_{\mathrm{Mod}(\mathcal{C})}(P,-)}
\ar@<-2ex>[lll]_{\mathcal{C}/\mathcal{I}_{\mathcal{B}}\otimes_{\mathcal{C}}-}\ar@<2ex>[lll]^{\mathcal{C}(\mathcal{C}/\mathcal{I}_{\mathcal{B}},-)}  & &   &\mathrm{Mod}(R_{P})\ar@<-2ex>[lll]_{P\otimes_{R_{P}}-}\ar@<2ex>[lll]^{\mathrm{Hom}_{R_{P}}(P^{\ast},-)}}$$
where $\mathcal{I}_{\mathcal{B}}$ is the ideal of morphisms in $\mathcal{C}$ which factor through objects in $\mathcal{B}$.
\end{proposition}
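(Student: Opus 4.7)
The plan is to reduce the right half of the recollement to the abstract adjoint triple associated to an inclusion of preadditive subcategories (Propositions \ref{Rproposition1} and \ref{Rproposition2}), transported through the Yoneda equivalence, while the left half is exactly the triple of Proposition \ref{tresfuntores}. The recollement axioms then reduce to three small verifications.

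First I would handle the right adjoint triple. Using Remark \ref{equicovacon}, the three functors $\mathrm{Hom}_{\mathrm{Mod}(\mathcal{C})}(P,-)$, $P\otimes_{R_{P}}-$, and $\mathrm{Hom}_{R_{P}}(P^{\ast},-)$ correspond, via the equivalences $\mathbb{Y}_{\ast}$ and $e_{P}$, to the restriction functor $\mathrm{res}\colon \mathrm{Mod}(\mathrm{proj}(\mathcal{C})^{op})\to\mathrm{Mod}(\{P\}^{op})$ along the inclusion $\{P\}^{op}\subseteq\mathrm{proj}(\mathcal{C})^{op}$, and to its left and right adjoints $\mathrm{proj}(\mathcal{C})^{op}\otimes_{\{P\}^{op}}-$ and $\{P\}^{op}(\mathrm{proj}(\mathcal{C})^{op},-)$. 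Thus Propositions \ref{Rproposition1} and \ref{Rproposition2} yield at once that $(P\otimes_{R_{P}}-,\ \mathrm{Hom}_{\mathrm{Mod}(\mathcal{C})}(P,-),\ \mathrm{Hom}_{R_{P}}(P^{\ast},-))$ is an adjoint triple, giving the $j$-side of condition (R1); the $i$-side is immediate from Proposition \ref{tresfuntores}.

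Next I would verify (R2), that is $\mathrm{Hom}_{\mathrm{Mod}(\mathcal{C})}(P,\pi_{\ast}(F))=0$ for every $F\in\mathrm{Mod}(\mathcal{C}/\mathcal{I}_{\mathcal{B}})$. Since $P=\mathrm{Hom}_{\mathcal{C}}(C,-)$, Yoneda's lemma gives $\mathrm{Hom}_{\mathrm{Mod}(\mathcal{C})}(P,\pi_{\ast}(F))\simeq \pi_{\ast}(F)(C)=F(\pi(C))$. Because $C\in\mathcal{B}=\mathrm{add}(C)$, the identity $1_{C}\colon C\to C$ factors trivially through $C\in\mathcal{B}$, so $1_{C}\in\mathcal{I}_{\mathcal{B}}(C,C)$; hence $\pi(1_{C})=0$ in $\mathrm{Hom}_{\mathcal{C}/\mathcal{I}_{\mathcal{B}}}(C,C)$, and additivity of $F$ forces $F(\pi(C))=0$. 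Therefore $\mathrm{Hom}_{\mathrm{Mod}(\mathcal{C})}(P,\pi_{\ast}(F))=0$, which is (R2).

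Finally I would check (R3). The functor $\pi_{\ast}$ is a full embedding because by Remark \ref{isomoan}(b) it is quasi-inverse to the equivalence $\Omega\colon \mathrm{Ann}(\mathcal{I}_{\mathcal{B}})\to\mathrm{Mod}(\mathcal{C}/\mathcal{I}_{\mathcal{B}})$. The functors $P\otimes_{R_{P}}-$ and $\mathrm{Hom}_{R_{P}}(P^{\ast},-)$ are full embeddings by Proposition \ref{Rproposition1}(5) and Proposition \ref{Rproposition2}(4), transported through the equivalences of Remark \ref{equicovacon}. This completes (R3).

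The main obstacle is not any single step but the careful bookkeeping of Remark \ref{equicovacon}: one has to confirm that under the chain of equivalences $\mathbb{Y}_{\ast}$ and $e_{P}$, the named functors $P\otimes_{R_{P}}-$, $\mathrm{Hom}(P,-)$, $\mathrm{Hom}_{R_{P}}(P^{\ast},-)$ do correspond precisely to the tensor/restriction/hom triple of Propositions \ref{Rproposition1} and \ref{Rproposition2}, so that adjointness and fully faithfulness transfer verbatim. Once this identification is in place, the axioms (R1)--(R3) follow essentially by quoting the earlier results, with only the small direct computation of (R2) above being genuinely new.
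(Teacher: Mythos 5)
Your proof is correct and takes a somewhat different route from the paper. The paper's proof is a one-line citation: it invokes Theorem 3.10 of \cite{LeOS2} together with the identifications in Remark \ref{equicovacon}. You instead give a self-contained verification: after using Remark \ref{equicovacon} to identify the right-hand triple with the restriction/tensor/hom triple of Propositions \ref{Rproposition1}--\ref{Rproposition2} and the left-hand triple with that of Proposition \ref{tresfuntores}, you check (R1), (R2), (R3) of Definition \ref{recolldef} directly. The only genuinely new computation, your proof of (R2), is correct: Yoneda gives $\mathrm{Hom}_{\mathrm{Mod}(\mathcal{C})}(P,\pi_{\ast}F)\simeq(\pi_{\ast}F)(C)=F(C)$, and since $1_{C}$ factors through $C\in\mathcal{B}$ one gets $1_{C}\in\mathcal{I}_{\mathcal{B}}(C,C)$, hence $\overline{1_{C}}=0$ in $\mathcal{C}/\mathcal{I}_{\mathcal{B}}$ and additivity of $F$ forces $F(C)=0$. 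For (R3), $\pi_{\ast}$ is a full embedding via the equivalence $\Omega$ of Remark \ref{isomoan}(b), and the two outer functors inherit full faithfulness from \ref{Rproposition1}(5) and \ref{Rproposition2}(4) through the equivalences. What your approach buys is transparency and self-containment: the three axioms are seen to follow from facts already in the paper, rather than from an external recollement theorem. What the paper's citation buys is brevity and, presumably, a slightly more general packaged result in \cite{LeOS2} that also applies elsewhere. Both arguments ultimately rest on the same bookkeeping in Remark \ref{equicovacon}, so the identifications you rightly flag as the delicate point are shared between the two proofs.
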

\begin{proof}
This follows by \cite[Theorem 3.10]{LeOS2}, using the identifications \ref{equicovacon}.
\end{proof}

We can restrict the last recollement to the finitely presented modules. So, we have the following result that is an analogous to the one given in artin algebras.

\begin{proposition}\label{recolleproyefin}
Let $\mathcal{C}$ be a dualizing $R$-variety, $P=\mathrm{Hom}_{\mathcal{C}}(C,-)\in \mathrm{mod}(\mathcal{C})$, $\mathcal{B}=\mathrm{add}(C)$ and $R_{P}=\mathrm{End}_{\mathrm{Mod}(\mathcal{C})}(P)^{op}$. Then,  there exist a recollement
$$\xymatrix{\mathrm{mod}(\mathcal{C}/\mathcal{I}_{\mathcal{B}})\ar[rrr]|{\pi_{\ast}}  &  &&\mathrm{mod}(\mathcal{C})\ar[rrr]|{\mathrm{Hom}_{\mathrm{mod}(\mathcal{C})}(P,-)}
\ar@<-2ex>[lll]_{\mathcal{C}/\mathcal{I}_{\mathcal{B}}\otimes_{\mathcal{C}}-}\ar@<2ex>[lll]^{\mathcal{C}(\mathcal{C}/\mathcal{I}_{\mathcal{B}},-)}  & &   &\mathrm{mod}(R_{P})\ar@<-2ex>[lll]_{P\otimes_{R_{P}}-}\ar@<2ex>[lll]^{\mathrm{Hom}_{R_{P}}(P^{\ast},-)}}$$
where $\mathcal{I}_{\mathcal{B}}$ is the ideal of morphisms in $\mathcal{C}$ which factor through objects in $\mathcal{B}$. Moreover, we have that $\mathcal{I}_{\mathcal{B}}=\mathrm{Tr}_{P}\mathcal{C}$.
\end{proposition}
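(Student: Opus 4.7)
The plan is to derive the desired recollement from the one in Proposition \ref{recolleproye} by restricting each of the six functors to the subcategories of finitely presented modules, and then to argue that the recollement axioms transfer automatically. The second assertion, $\mathcal{I}_{\mathcal{B}}=\mathrm{Tr}_{P}\mathcal{C}$, is immediate from Corollary \ref{Traza=Iadd}, since $\mathcal{B}=\mathrm{add}(C)$ and $P=\mathrm{Hom}_{\mathcal{C}}(C,-)$.

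Once this identification is in hand, Proposition \ref{trasatproA} guarantees that $\mathcal{I}_{\mathcal{B}}$ satisfies property $(A)$, so Corollary \ref{almostrecolle} yields restrictions of $\pi_{\ast}$, $\pi^{\ast}=\frac{\mathcal{C}}{\mathcal{I}_{\mathcal{B}}}\otimes_{\mathcal{C}}-$ and $\pi^{!}=\mathcal{C}(\frac{\mathcal{C}}{\mathcal{I}_{\mathcal{B}}},-)$ to the finitely presented subcategories, preserving the two adjunctions. This handles the left half of the diagram.

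For the right half, I would first invoke the Yoneda isomorphism $\mathrm{Hom}_{\mathrm{Mod}(\mathcal{C})}(P,M)\cong M(C)$. Because $\mathcal{C}$ is a Hom-finite $R$-variety and $R_{P}\cong\mathrm{End}_{\mathcal{C}}(C)^{op}$ is a finitely generated $R$-algebra, $M(C)$ is a finitely generated $R$-module for every $M\in\mathrm{mod}(\mathcal{C})$, and hence finitely generated over $R_{P}$; this shows $\mathrm{Hom}_{\mathrm{mod}(\mathcal{C})}(P,-)$ lands in $\mathrm{mod}(R_{P})$. For its adjoints $P\otimes_{R_{P}}-$ and $\mathrm{Hom}_{R_{P}}(P^{\ast},-)$ I would start from a finite presentation $R_{P}^{m}\to R_{P}^{n}\to M\to 0$ of an object $M\in\mathrm{mod}(R_{P})$ and use right-exactness of the tensor product (respectively left-exactness of Hom combined with the duality $\mathbb{D}_{\mathcal{C}}$ of Theorem \ref{moddualizin}, under which $P^{\ast}=\mathrm{Hom}_{\mathcal{C}}(-,C)$ corresponds to an injective envelope of a simple in $\mathrm{mod}(\mathcal{C}^{op})$, cf. Proposition \ref{desenvolinegen}) to reduce to the fact that $P\otimes_{R_{P}}R_{P}=P$ and $\mathrm{Hom}_{R_{P}}(P^{\ast},R_{P})\cong P$ are themselves finitely presented. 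The abelian closure of $\mathrm{mod}(\mathcal{C})$ inside $\mathrm{Mod}(\mathcal{C})$, valid in the dualizing setting, then implies the cokernels, respectively kernels, stay in $\mathrm{mod}(\mathcal{C})$.

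With all six functors shown to restrict, the recollement axioms (R1)--(R3) pass to the finitely presented level for free: adjoint pairs restrict to adjoint pairs between full subcategories on which the unit and counit land (which is automatic once both source and target consist of finitely presented modules), the composite $\mathrm{Hom}_{\mathrm{mod}(\mathcal{C})}(P,-)\circ\pi_{\ast}$ vanishes because it already did in $\mathrm{Mod}$, and full embeddings remain full embeddings when restricted to full subcategories. The principal obstacle I expect is the verification that $\mathrm{Hom}_{R_{P}}(P^{\ast},-)$ preserves finite presentation; this is the step furthest from purely categorical formalities and is precisely the place where one genuinely needs $\mathcal{C}$ to be dualizing rather than merely Hom-finite.
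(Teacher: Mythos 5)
Your proof is correct in outline, and the first and last steps (identifying $\mathcal{I}_{\mathcal{B}}=\mathrm{Tr}_{P}\mathcal{C}$ via \ref{Traza=Iadd}, and invoking \ref{trasatproA} together with \ref{almostrecolle} for the left-hand adjoint triple) match the spirit of the paper. But the paper's actual argument is much shorter and takes a different route for the whole diagram at once: it transports the problem along the Yoneda equivalence $\mathcal{C}\simeq\mathrm{proj}(\mathcal{C})^{op}$, observes (via \ref{addfun}) that $\mathrm{add}(P)^{op}$ is a functorially finite subcategory of the dualizing variety $\mathrm{proj}(\mathcal{C})^{op}$, and then applies a single external result, Theorem 2.5 of the cited work of Ogawa (Yasuaki), which says precisely that a recollement of $\mathrm{Mod}$-categories arising from a functorially finite subcategory of a dualizing variety restricts to finitely presented modules. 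Your approach instead verifies the restriction of each of the six functors by hand, which is a legitimate and more self-contained route; the trade-off is that you must supply in full the argument you delegate to a vague sketch for the hardest case, namely that $\mathrm{Hom}_{R_{P}}(P^{\ast},-)$ preserves finite presentation.

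That sketch as written is slightly misstated. You say that under $\mathbb{D}_{\mathcal{C}}$ the module $P^{\ast}=\mathrm{Hom}_{\mathcal{C}}(-,C)$ "corresponds to an injective envelope of a simple in $\mathrm{mod}(\mathcal{C}^{op})$," but $P^{\ast}$ is projective in $\mathrm{mod}(\mathcal{C}^{op})$; it is the \emph{dual} $\mathbb{D}_{\mathcal{C}}(P^{\ast})=J$ that is injective, and it lives in $\mathrm{mod}(\mathcal{C})$, as in Proposition \ref{desenvolinegen}. The correct version of the duality argument you are gesturing at is: for $M\in\mathrm{mod}(R_{P})$ one has a natural isomorphism $\mathrm{Hom}_{R_{P}}(P^{\ast}(C'),M)\cong\mathbb{D}_{R}\bigl(\mathbb{D}_{R_{P}}(M)\otimes_{R_{P}}P^{\ast}(C')\bigr)$, so $\mathrm{Hom}_{R_{P}}(P^{\ast},M)\cong\mathbb{D}_{\mathcal{C}^{op}}\bigl(\mathbb{D}_{R_{P}}(M)\otimes_{R_{P}}P^{\ast}\bigr)$, and now the tensor argument you already ran for $P\otimes_{R_{P}}-$ shows the inner object lies in $\mathrm{mod}(\mathcal{C}^{op})$, whence the outer one lies in $\mathrm{mod}(\mathcal{C})$ by the dualizing property. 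Alternatively, note $\mathrm{Hom}_{R_{P}}(P^{\ast},\mathbb{D}_{R_{P}}(R_{P}))\cong J\in\mathrm{add}(J)$, then use that $\mathrm{Hom}_{R_{P}}(P^{\ast},-)$ is left exact and that $\mathrm{mod}(\mathcal{C})$ is closed under kernels. With either of these, your proof closes the gap and yields a genuinely more elementary, reference-light alternative to the paper's citation of Ogawa's theorem.
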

\begin{proof}
Let us take $\mathcal{C}'=\mathrm{proj}(\mathcal{C})^{op}$ and $\mathcal{B}'=\mathrm{add}(P)^{op}$. In the proof of \ref{recolleproye} we have a recollement where $\mathcal{I}_{\mathcal{B}'}=\mathcal{I}_{\mathrm{add}(P)}$ is the ideal of morphisms in $\mathcal{C}'$ which factor through objects in $\mathrm{add}(P)$. The Yoneda's embedding gives us an equivalence $\mathcal{C}\simeq \mathrm{proj}(\mathcal{C})^{op}$ (see \ref{projfingen}), and hence $\mathcal{C}'=\mathrm{proj}(\mathcal{C})^{op}$ is a dualizing $R$-variety. Now, $\mathcal{B'}=\mathrm{add}(P)^{op}$ is a functorially finite subcategory of $\mathcal{C}'$ (see \ref{addfun}). Therefore, by \cite[Theorem 2.5]{Yasuaki} and \ref{recolleproye}, we can restrict the last recollement to the finitely presented modules where $\mathcal{I}_{\mathcal{B}}$ is the ideal of morphisms in $\mathcal{C}$ which factor through objects in $\mathcal{B}$, since $\mathrm{mod}(R_{P})$ coincides with the finitely presented $R$-modules (because $R_{P}$ is an artin $R$-algebra). Finally, by \ref{Traza=Iadd}, we have that  $\mathcal{I}_{\mathcal{B}}=\mathrm{Tr}_{P}\mathcal{C}$.
\end{proof}

We have the following definition due to Auslander \cite{AusM1}.

\begin{definition}\label{defiprojinjpres}
Let $\mathcal{C}$ a dualizing $R$-category and  $P=\mathrm{Hom}_{\mathcal{C}}(C,-)\in \mathrm{mod}(\mathcal{C})$.
Let $M\in \mathrm{mod}(\mathcal{C})$.
\begin{enumerate}
\item [(a)] It is said that $M$ is $\textbf{proyectively presented}$ over $P$ if $\epsilon_{M}'$ is an isomorphism.  Let us denote by $\mathbb{F.P.P}(P)$ the full subcategory of $\mathrm{mod}(\mathcal{C})$ consisting of the projectively presented modules

\item [(b)] It is said that $M$ is $\textbf{inyectively copresented}$ over $P$ if $\eta_{M}$ is an isomorphism. Let us denote by $\mathbb{F.I.C}(P)$ the full subcategory of $\mathrm{mod}(\mathcal{C})$ consisting of the injectively copresented modules. 
\end{enumerate}
\end{definition}

We recall that in the case of a dualizing $R$-variety every finitely generated projective $\mathcal{C}$-module is of the form $\mathrm{Hom}_{\mathcal{C}}(C,-)$ (see \ref{projfingen}).

\begin{proposition}\label{euivalenciaproj1}
Let $\mathcal{C}$ be a dualizing $R$-variety and  $P=\mathrm{Hom}_{\mathcal{C}}(C,-)\in \mathrm{mod}(\mathcal{C})$.  For  $M\in \mathrm{mod}(\mathcal{C})$, the following are equivalent.
\begin{enumerate}
\item [(a)] $M\in \mathbb{F.P.P}(P)$.

\item [(b)] There exists a module $X\in \mathrm{mod}(R_{P})$ such that
$M\simeq P\otimes_{R_{P}}X$.

\item [(c)]  There exists an exact sequence $P_{1}\rightarrow P_{0}\rightarrow M\rightarrow 0$ with $P_{1},P_{0}\in \mathrm{add}(P)$.

\item [(d)] $\mathrm{Hom}_{\mathrm{Mod}(\mathcal{C})}(M,N)\rightarrow \mathrm{Hom}_{R_{P}}\Big(\mathrm{Hom}_{\mathrm{Mod}(\mathcal{C})}(P,M), \mathrm{Hom}_{\mathrm{Mod}(\mathcal{C})}(P,N)\Big)$
is an isomorphism for each module $N\in \mathrm{mod}(\mathcal{C})$.
\end{enumerate}
\end{proposition}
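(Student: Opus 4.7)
The plan is to establish the cycle $(a)\Rightarrow(b)\Rightarrow(c)\Rightarrow(a)$ and then deduce $(a)\Leftrightarrow(d)$ separately. Throughout, $\epsilon'_{M}\colon P\otimes_{R_{P}}\mathrm{Hom}_{\mathrm{mod}(\mathcal{C})}(P,M)\to M$ denotes the counit of the adjoint pair $(j_{!},j^{!})=(P\otimes_{R_{P}}-,\mathrm{Hom}_{\mathrm{mod}(\mathcal{C})}(P,-))$ provided by the recollement in Proposition \ref{recolleproyefin}, so $(a)$ is the assertion that $\epsilon'_{M}$ is an isomorphism.

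For $(a)\Rightarrow(b)$ it suffices to take $X:=\mathrm{Hom}_{\mathrm{mod}(\mathcal{C})}(P,M)$; this lies in $\mathrm{mod}(R_{P})$ because $\mathcal{C}$ is a dualizing $R$-variety and $R_{P}$ is an artin $R$-algebra, so $\mathrm{mod}(R_{P})$ coincides with the finitely presented $R_{P}$-modules. For $(b)\Rightarrow(c)$, fix an isomorphism $M\simeq P\otimes_{R_{P}}X$ with $X\in\mathrm{mod}(R_{P})$ and choose a free presentation $R_{P}^{n}\to R_{P}^{m}\to X\to 0$ over the artin algebra $R_{P}$. Applying the right exact functor $P\otimes_{R_{P}}-$ and using the canonical identification $P\otimes_{R_{P}}R_{P}\simeq P$ produces an exact sequence $P^{n}\to P^{m}\to M\to 0$ with both terms in $\mathrm{add}(P)$, which is exactly what $(c)$ asks for.

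For $(c)\Rightarrow(a)$, the key observation is that $\epsilon'_{Q}$ is an isomorphism for every $Q\in\mathrm{add}(P)$: indeed $\epsilon'_{P}$ is the canonical isomorphism $P\otimes_{R_{P}}R_{P}\simeq P$, and both $j_{!}$ and $j^{!}$ preserve finite biproducts and direct summands. Given a presentation $P_{1}\to P_{0}\to M\to 0$ with $P_{0},P_{1}\in\mathrm{add}(P)$, the projectivity of $P$ in $\mathrm{mod}(\mathcal{C})$ makes $j^{!}$ exact on this sequence, and the right exactness of $j_{!}$ then yields the top row in the commutative diagram
\[
\begin{tikzcd}
j_{!}j^{!}(P_{1})\ar[r]\ar[d,"\epsilon'_{P_{1}}"] & j_{!}j^{!}(P_{0})\ar[r]\ar[d,"\epsilon'_{P_{0}}"] & j_{!}j^{!}(M)\ar[r]\ar[d,"\epsilon'_{M}"] & 0\\
P_{1}\ar[r] & P_{0}\ar[r] & M\ar[r] & 0.
\end{tikzcd}
\]
Since $\epsilon'_{P_{0}}$ and $\epsilon'_{P_{1}}$ are isomorphisms, a diagram chase (the right exact version of the five lemma) forces $\epsilon'_{M}$ to be an isomorphism as well.

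Finally, for $(a)\Leftrightarrow(d)$, under the adjunction isomorphism
\[
\mathrm{Hom}_{\mathrm{mod}(\mathcal{C})}\bigl(P\otimes_{R_{P}}\mathrm{Hom}(P,M),N\bigr)\simeq \mathrm{Hom}_{R_{P}}\bigl(\mathrm{Hom}(P,M),\mathrm{Hom}(P,N)\bigr),
\]
the morphism of $(d)$ corresponds to the precomposition map $f\mapsto f\circ\epsilon'_{M}$. By the Yoneda lemma this precomposition is an isomorphism for every $N$ if and only if $\epsilon'_{M}$ itself is an isomorphism, giving $(a)\Leftrightarrow(d)$. The only mildly delicate point of the whole argument is the exactness of $j^{!}$ needed in $(c)\Rightarrow(a)$, but this follows immediately from the projectivity of $P$; the rest is the adjunction formalism combined with the identification $P\otimes_{R_{P}}R_{P}\simeq P$.
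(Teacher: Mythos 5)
Your proof is correct. The paper itself does not supply an argument for this proposition but simply cites Auslander's original result (Proposition 3.2 of his paper on rings with several objects), so the comparison is necessarily against what that citation points to rather than an in-paper proof; your argument is a clean, self-contained reconstruction in the adjunction formalism. The cycle $(a)\Rightarrow(b)\Rightarrow(c)\Rightarrow(a)$ is sound: the step $(b)\Rightarrow(c)$ correctly uses right-exactness of $P\otimes_{R_P}-$ together with $P\otimes_{R_P}R_P\simeq P$, and the step $(c)\Rightarrow(a)$ correctly combines exactness of $\mathrm{Hom}(P,-)$ (projectivity of $P$), right-exactness of $j_!$, naturality of $\epsilon'$, and a five-lemma chase on a diagram with exact rows and two isomorphic vertical maps. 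The $(a)\Leftrightarrow(d)$ step is also right: under the adjunction the comparison map of $(d)$ is precomposition with $\epsilon'_M$, and since both $M$ and $j_!j^!M$ lie in $\mathrm{mod}(\mathcal{C})$ (the latter by the recollement restricted to finitely presented modules), Yoneda applied within $\mathrm{mod}(\mathcal{C})$ gives the equivalence. One small omission worth stating explicitly: in $(a)\Rightarrow(b)$ you should record that $\mathrm{Hom}_{\mathrm{mod}(\mathcal{C})}(P,M)$ lies in $\mathrm{mod}(R_P)$ precisely because the recollement of Proposition \ref{recolleproyefin} restricts $\mathrm{Hom}_{\mathrm{mod}(\mathcal{C})}(P,-)$ to a functor into $\mathrm{mod}(R_P)$; you gesture at this but leaning on the recollement is the cleanest justification.
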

\begin{proof}
See \cite[Proposition 3.2]{AusM1}.
\end{proof}
The next result give us a characterization of the the categories $\mathbb{F.P.P}(P)$ and $\mathbb{F.I.C}(P)$ which will help us in the forthcoming section.

\begin{proposition}\label{euiijedinifin}
Let $\mathcal{C}$ be a dualizing $R$-variety and $P=\mathrm{Hom}_{\mathcal{C}}(C,-) \in \mathrm{mod}(\mathcal{C})$. Then the following conditions hold.
\begin{enumerate}
\item [(a)] $M\in \mathbb{F.P.P}(P)$ if and only if $\mathrm{Hom}_{\mathrm{mod}(\mathcal{C})}(M,N)=0$ and  $\mathrm{Ext}_{\mathrm{mod}(\mathcal{C})}^{1}(M,N)=0$ for all $N\in \mathrm{mod}(\mathcal{C})$ with  $N\in \mathrm{Ker}(\mathrm{Hom}_{\mathrm{mod}(\mathcal{C})}(P,-))$.

\item [(b)] $N\in \mathbb{F.I.C}(P)$ if and only if $\mathrm{Hom}_{\mathrm{Mod}(\mathcal{C})}(M,N)=0$ and  $\mathrm{Ext}_{\mathrm{mod}(\mathcal{C})}^{1}(M,N)=0$ for all  $M\in \mathrm{mod}(\mathcal{C})$ with  $M\in \mathrm{Ker}(\mathrm{Hom}_{\mathrm{mod}(\mathcal{C})}(P,-))$.
\end{enumerate}
\end{proposition}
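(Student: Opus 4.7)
The plan is to reduce both statements to the characterizations of $\mathbb{P}_1$ and $\mathbb{I}_1$ already established in Propositions \ref{Pkcarect} and \ref{muchaseuiii}, once we identify the kernel of $\mathrm{Hom}_{\mathrm{mod}(\mathcal{C})}(P,-)$ with the essential image of $\pi_{\ast}$.

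The first step is to prove the identification
$$\mathrm{Ker}(\mathrm{Hom}_{\mathrm{mod}(\mathcal{C})}(P,-))=\pi_{\ast}\bigl(\mathrm{mod}(\mathcal{C}/\mathcal{I})\bigr)=\mathrm{mod}(\mathcal{C})\cap \mathrm{Ann}(\mathcal{I}),$$
where $\mathcal{I}=\mathrm{Tr}_{P}\mathcal{C}=\mathcal{I}_{\mathrm{add}(C)}$ by Corollary \ref{Traza=Iadd}. By Yoneda, $\mathrm{Hom}_{\mathrm{mod}(\mathcal{C})}(P,N)\simeq N(C)$, so membership in the kernel is the condition $N(C)=0$. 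Since $1_{C}$ factors trivially through $C\in\mathrm{add}(C)$, we have $1_{C}\in \mathcal{I}(C,C)$, hence any $F\in\mathrm{mod}(\mathcal{C}/\mathcal{I})$ satisfies $F(C)=F(\overline{1_{C}})=0$, which shows $(\pi_{\ast}F)(C)=0$. Conversely, if $N(C)=0$, then by additivity $N(C')=0$ for every $C'\in\mathrm{add}(C)$; since every $f\in\mathcal{I}(X,Y)$ factors through some $C'\in\mathrm{add}(C)$, we get $N(f)=0$, so $N\in\mathrm{Ann}(\mathcal{I})$. Proposition \ref{annintermod} together with Proposition \ref{trasatproA} then identifies $\mathrm{mod}(\mathcal{C})\cap\mathrm{Ann}(\mathcal{I})$ with the essential image of $\pi_{\ast}$.

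For part (a), the equivalence (a)$\Leftrightarrow$(c) of Proposition \ref{euivalenciaproj1} states that $M\in\mathbb{F.P.P}(P)$ iff $M$ admits an exact sequence $P_{1}\to P_{0}\to M\to 0$ with $P_{0},P_{1}\in\mathrm{add}(P)$; that is, $M\in\mathbb{P}_{1}$ in the sense of Definition \ref{defiPk}. Applying Proposition \ref{Pkcarect} with $k=1$, this is equivalent to $\mathrm{Hom}_{\mathrm{mod}(\mathcal{C})}(M,\pi_{\ast}Y)=0$ and $\mathrm{Ext}^{1}_{\mathrm{mod}(\mathcal{C})}(M,\pi_{\ast}Y)=0$ for every $Y\in\mathrm{mod}(\mathcal{C}/\mathcal{I})$. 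By the identification above, letting $N$ range over $\pi_{\ast}(\mathrm{mod}(\mathcal{C}/\mathcal{I}))$ is the same as letting $N$ range over $\mathrm{Ker}(\mathrm{Hom}_{\mathrm{mod}(\mathcal{C})}(P,-))$, which yields (a).

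For part (b), the strategy is dual. First I would establish the injective analog of Proposition \ref{euivalenciaproj1}: $N\in\mathbb{F.I.C}(P)$ iff $N$ admits an exact sequence $0\to N\to J_{0}\to J_{1}$ with $J_{0},J_{1}\in \mathrm{add}(J)$ where $J=I_{0}(P/\mathrm{rad}(P))$, i.e.\ $N\in\mathbb{I}_{1}$. This is obtained by dualizing via $\mathbb{D}_{\mathcal{C}}$, using Corollary \ref{PPast} (which gives $\mathcal{I}^{op}=\mathrm{Tr}_{P^{\ast}}\mathcal{C}^{op}$) and Proposition \ref{desenvolinegen} (which identifies $J$ as $\mathbb{D}_{\mathcal{C}}^{-1}(\mathrm{Hom}_{\mathcal{C}}(-,C))$, the correct injective counterpart of $P$). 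Then Proposition \ref{muchaseuiii} with $k=1$ reads: $N\in\mathbb{I}_{1}$ iff $\mathrm{Hom}_{\mathrm{mod}(\mathcal{C})}(\pi_{\ast}X,N)=0$ and $\mathrm{Ext}^{1}_{\mathrm{mod}(\mathcal{C})}(\pi_{\ast}X,N)=0$ for all $X\in\mathrm{mod}(\mathcal{C}/\mathcal{I})$, which by the identification in the first step is exactly the vanishing claim in (b).

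The only non-routine point is the first identification, and in particular the observation that $1_{C}\in\mathcal{I}(C,C)$ forces every $\mathcal{C}/\mathcal{I}$-module to vanish on $C$; everything else is a direct invocation of the results of Sections \ref{sec:4} and \ref{sec:6}.
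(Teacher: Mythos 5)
Your proof is correct, but it takes a genuinely different route from the paper's. The paper's proof is a one-liner deferring to Auslander's original argument (\cite[Proposition 3.7]{AusM1}), noting only that it transfers to $\mathrm{mod}(\mathcal{C})$ given the restricted adjunctions of Proposition~\ref{recolleproyefin}. You instead assemble an internal derivation: (i) identify $\mathrm{Ker}(\mathrm{Hom}_{\mathrm{mod}(\mathcal{C})}(P,-))$ with $\mathrm{mod}(\mathcal{C})\cap\mathrm{Ann}(\mathcal{I})$, i.e.\ the essential image of $\pi_{\ast}$, via Yoneda and the fact that $1_C\in\mathcal{I}(C,C)$ forces $F(C)=0$ for any $\mathcal{C}/\mathcal{I}$-module $F$; (ii) observe $\mathbb{F.P.P}(P)=\mathbb{P}_1$ from Proposition~\ref{euivalenciaproj1}(a)$\Leftrightarrow$(c) and, dually, $\mathbb{F.I.C}(P)=\mathbb{I}_1$; (iii) conclude by the orthogonality characterizations of $\mathbb{P}_1$ and $\mathbb{I}_1$ in Propositions~\ref{Pkcarect} and \ref{muchaseuiii}. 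This is clean and avoids the external citation, and it also makes visible the logical dependence that the paper only realizes implicitly in the proof of Proposition~\ref{lemainteres} (where $\mathbb{F.I.C}(P)=\mathbb{I}_1$ is derived by combining \emph{this} proposition with \ref{muchaseuiii}; your route derives that equality independently by duality, so there is no circularity). The one step you leave as a sketch --- that $\mathbb{D}_{\mathcal{C}}$ carries $\mathbb{F.I.C}(P)$ to $\mathbb{F.P.P}(P^{\ast})$, via Corollary~\ref{PPast} and Proposition~\ref{desenvolinegen} --- is exactly the kind of dualization the paper already performs (e.g.\ in passing from \ref{Pkcarect} to \ref{muchaseuiii}), so it is acceptable, but a referee might ask you to spell out why the unit $\eta_N$ being an isomorphism corresponds under $\mathbb{D}_{\mathcal{C}}$ to the counit $\epsilon'_{\mathbb{D}_{\mathcal{C}}(N)}$ being one. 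In short: the paper buys brevity by citation; your argument buys self-containment at the cost of tracking one duality.
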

\begin{proof}
The proof given in  \cite[Proposition 3.7]{AusM1} works for the finitely presented modules, since $\mathrm{mod}(\mathcal{C})$ is an abelian subcategory of $\mathrm{Mod}(\mathcal{C})$ with enough injectives and projectives and we have the adjunctions in \ref{recolleproyefin}.
\end{proof}

\begin{proposition}\label{lemainteres}
Let $\mathcal{C}$ be a dualizing $R$-variety and $P=\mathrm{Hom}_{\mathcal{C}}(C,-)\in \mathrm{mod}(\mathcal{C})$. Consider the functor $\mathrm{Hom}_{\mathrm{mod}(\mathcal{C})}(P,-):\mathrm{mod}(\mathcal{C})\longrightarrow \mathrm{mod}(R_{P})$. The following hold.
\begin{enumerate}
\item [(a)] $\mathrm{Hom}_{\mathrm{mod}(\mathcal{C})}(P,-)|_{\mathbb{P}_{1}}:\mathbb{P}_{1}\longrightarrow \mathrm{mod}(R_{P})$ and 
$\mathrm{Hom}_{\mathrm{mod}(\mathcal{C})}(P,-)|_{\mathbb{I}_{1}}:\mathbb{I}_{1}\longrightarrow \mathrm{mod}(R_{P})$ are equivalences, where $\mathbb{P}_{1}$ and $\mathbb{I}_{1}$ are the categories defined in \ref{defiPk} and  \ref{defiIk}.

\item [(b)] Let  $\xymatrix{\mathrm{Hom}_{\mathrm{mod}(\mathcal{C})}(X,Y)\ar[r]^(.3){\rho_{X,Y}} &  \mathrm{Hom}_{R_{P}}\Big( \mathrm{Hom}_{\mathrm{mod}(\mathcal{C})}(P,X),\mathrm{Hom}_{\mathrm{mod}(\mathcal{C})}(P,Y)\Big)}.$ Then:
\item [(i)] $\rho_{X,Y}$ is a monomorphism if either $X\in \mathbb{P}_{0}$ or $Y\in \mathbb{I}_{0}$,

\item [(ii)] $\rho_{X,Y}$ is an isomorphism if  $X\in \mathbb{P}_{0}$ and $Y\in \mathbb{I}_{0}$,

\item [(iii)] $\rho_{X,Y}$ is an isomorphism if either $X\in \mathbb{P}_{1}$ or $Y\in \mathbb{I}_{1}$.

\item [(c)] The functor $\mathrm{Hom}_{\mathrm{mod}(\mathcal{C})}(P,-)$ induces an equivalence of categories between $\mathrm{add}(P)$ an the category of projective $R_{P}$-modules, and between $\mathrm{add}(J)$ and the category of injective $R_{P}$-modules.

\end{enumerate}
\end{proposition}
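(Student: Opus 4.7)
The plan is to establish part (b) first and derive (a) and (c) from it together with the adjunctions assembled in the recollement \ref{recolleproyefin}. The central observation is that $\mathbb{P}_{1}$ coincides with the class $\mathbb{F.P.P}(P)$ of projectively presented modules: by \ref{euivalenciaproj1}(c), $X\in\mathbb{P}_{1}$ iff $X$ admits a presentation $P_{1}\to P_{0}\to X\to 0$ with $P_{i}\in\mathrm{add}(P)$. Dually, using \ref{desenvolinegen} together with the AR duality \ref{ARDuality}, $\mathbb{I}_{1}=\mathbb{F.I.C}(P)$. Two building blocks feed into every diagram chase: (I) $\rho_{Q,Y}$ is tautologically an isomorphism for every $Q\in\mathrm{add}(P)$, since $\rho_{P,Y}$ identifies both sides with $\mathrm{Hom}(P,Y)$ and $\rho$ is additive in its first variable; (II) $\rho_{X,J'}$ is an isomorphism for every $J'\in\mathrm{add}(J)$ and every $X\in\mathrm{mod}(\mathcal{C})$, obtained via the chain of identifications $\mathrm{Hom}(X,J)\simeq\mathrm{Hom}_{R}(\mathrm{Hom}(P,X),E)$ (coming from $J\simeq\mathbb{D}_{\mathcal{C}}^{-1}(P^{\ast})$, the Yoneda lemma, and AR duality) matched against the tensor-hom adjunction over $R_{P}$.

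For part (b)(iii) with $X\in\mathbb{P}_{1}$, I would take a presentation $P_{1}\to P_{0}\to X\to 0$ with $P_{i}\in\mathrm{add}(P)$, apply $\mathrm{Hom}_{\mathrm{mod}(\mathcal{C})}(-,Y)$ (left exact) and the composition $\mathrm{Hom}_{R_{P}}(-,\mathrm{Hom}(P,Y))\circ\mathrm{Hom}(P,-)$ (left exact since $\mathrm{Hom}(P,-)$ is exact), and use naturality of $\rho$ to obtain a ladder in which $\rho_{P_{0},Y}$ and $\rho_{P_{1},Y}$ are isomorphisms by block (I); the five-lemma then forces $\rho_{X,Y}$ to be an isomorphism. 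The case $Y\in\mathbb{I}_{1}$ proceeds by the symmetric argument, now applying $\mathrm{Hom}(X,-)$ to an injective copresentation $0\to Y\to J_{0}\to J_{1}$ with $J_{i}\in\mathrm{add}(J)$, using block (II) at $J_{0}$ and $J_{1}$.

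For (b)(i) with $X\in\mathbb{P}_{0}$, I take an epimorphism $P_{0}\to X$ with $P_{0}\in\mathrm{add}(P)$ and apply the same pair of functors; since both rows are left exact and the vertical $\rho_{P_{0},Y}$ is an isomorphism by block (I), a two-square chase forces $\rho_{X,Y}$ injective. The case $Y\in\mathbb{I}_{0}$ is dual via block (II). Part (ii) assembles both: from $0\to K\to P_{0}\to X\to 0$ with $P_{0}\in\mathrm{add}(P)$, the five-lemma gives $\rho_{X,Y}$ iso once $\rho_{P_{0},Y}$ is iso (from (iii)) and $\rho_{K,Y}$ is injective (from (i), using $Y\in\mathbb{I}_{0}$). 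Part (a) is then a direct consequence of (b)(iii): fully faithfulness on $\mathbb{P}_{1}$ is exactly (iii), and for essential surjectivity I send $N\in\mathrm{mod}(R_{P})$ to $P\otimes_{R_{P}}N\in\mathbb{P}_{1}$ (apply $P\otimes_{R_{P}}-$ to a presentation of $N$), noting that the unit $N\to\mathrm{Hom}(P,P\otimes_{R_{P}}N)$ is an isomorphism by right-exact comparison from the base case $R_{P}$. The $\mathbb{I}_{1}$ case is dual via the right adjoint $\mathrm{Hom}_{R_{P}}(P^{\ast},-)$ of \ref{recolleproyefin}.

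Finally, part (c) unpacks (a). Since $\mathrm{add}(P)\subseteq\mathbb{P}_{1}$ and $\mathrm{Hom}(P,P^{n})\simeq R_{P}^{n}$, the image of $\mathrm{add}(P)$ under $\mathrm{Hom}(P,-)$ is precisely the full subcategory of finitely generated projective $R_{P}$-modules; the restricted equivalence follows. For the injective half, $\mathrm{add}(J)\subseteq\mathbb{I}_{1}$ holds trivially (every injective admits the trivial coresolution), and the computation in block (II) specialized to $X=P$ gives $\mathrm{Hom}(P,J)\simeq\mathrm{Hom}_{R}(R_{P},E)$, which is an injective cogenerator of $\mathrm{mod}(R_{P})$. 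Every finitely generated injective $R_{P}$-module is therefore a direct summand of some $\mathrm{Hom}(P,J^{n})$, and restricting the equivalence from (a) to $\mathrm{add}(J)\subseteq\mathbb{I}_{1}$ yields the claim. \emph{The main obstacle} is precisely block (II): making AR duality interact cleanly with the artin-algebra duality on $\mathrm{mod}(R_{P})$ so that $\rho_{X,J'}$ is an isomorphism for arbitrary $X$. This is the only place where the dualizing-variety hypothesis is essentially used; all other steps reduce to adjunction formalism and short diagram chases.
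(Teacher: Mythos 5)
Your proposal is correct, and it takes a genuinely different route from the paper's proof. The paper proves $(a)$ first, by identifying $\mathbb{P}_{1}=\mathbb{F.P.P}(P)$ and (via \ref{euiijedinifin} and \ref{muchaseuiii}) $\mathbb{I}_{1}=\mathbb{F.I.C}(P)$ and then simply invoking Auslander's Propositions~3.3 and~3.6 from \cite{AusM1}; parts $(b)(i)$ and $(b)(iii)$ are likewise referred to \ref{euivalenciaproj1}(d) and its dual. Only $(b)(ii)$ is argued directly, and there the paper does not fit the data into a single short exact sequence: it takes an epimorphism $\pi:Q\to X$ with $Q\in\mathrm{add}(P)$ and a monomorphism $\mu:Y\to I$ with $I\in\mathrm{add}(J)$ \emph{separately}, lifts a given $R_{P}$-map $\varphi$ once through $\rho_{X,I}$ and once through $\rho_{Q,Y}$ to get $\lambda:X\to I$ and $\beta:Q\to Y$ with $\lambda\pi=\mu\beta$, and then factors $\lambda$ through its image and uses the universal property of the image to produce the required preimage of $\varphi$. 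You instead set up a single five-lemma chase on $0\to K\to P_{0}\to X\to 0$, which is a cleaner packaging of the same information (you need $\rho_{K,Y}$ injective via $(b)(i)$ with $Y\in\mathbb{I}_{0}$, and $\rho_{P_{0},Y}$ iso via block~(I)), and this chase is certainly valid. The other structural difference is that you derive $(a)$ from $(b)(iii)$ and the counit/unit of the recollement adjunctions, whereas the paper imports $(a)$ wholesale. What your route buys is self-containment and a clear isolation of where the dualizing hypothesis enters: everything reduces to your block~(II), i.e.\ the natural identification $\mathrm{Hom}_{\mathrm{mod}(\mathcal{C})}(X,J)\simeq\mathrm{Hom}_{R}(\mathrm{Hom}(P,X),E)\simeq\mathrm{Hom}_{R_{P}}(\mathrm{Hom}(P,X),\mathrm{Hom}(P,J))$, which you correctly flag as the one place requiring a genuine compatibility check (that the composite of the AR-duality, Yoneda, and tensor--hom identifications is the map $\rho_{X,J}$ rather than some other natural isomorphism). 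The paper avoids having to carry out that check explicitly by delegating it to Auslander's characterization of $\mathbb{F.I.C}(P)$; if you want your proof to be fully independent, that compatibility (e.g.\ via Yoneda, by tracking $1_{J}\mapsto 1_{(P,J)}$) is the one detail still to be written out.
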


\begin{proof}
$(a)$. The fact that $\mathrm{Hom}_{\mathrm{mod}(\mathcal{C})}(P,-)|_{\mathbb{P}_{1}}:\mathbb{P}_{1}\longrightarrow \mathrm{mod}(R_{P})$ is an equivalence follows from  \cite[Proposition 3.3]{AusM1}, since $\mathbb{F.P.P}(P)=\mathbb{P}_{1}$.\\
By  \ref{euiijedinifin} and \ref{muchaseuiii}, we have that $\mathbb{F.I.C}(P)=\mathbb{I}_{1}$. Then, by  \cite[Proposition 3.6]{AusM1}, we have that $\mathrm{Hom}_{\mathrm{mod}(\mathcal{C})}(P,-)|_{\mathbb{I}_{1}}:\mathbb{I}_{1}\longrightarrow \mathrm{mod}(R_{P})$ is an equivalence.\\
$(bi)$. It follows from \ref{euivalenciaproj1}(d) and the proof is left to the reader.\\
$(bii)$.  Since $X\in \mathbb{P}_{0}$, we have by $(bi)$ that 
$$\rho_{X,Y}:\mathrm{Hom}_{\mathrm{mod}(\mathcal{C})}(X,Y)\longrightarrow \mathrm{Hom}_{R_{P}}\Big( \mathrm{Hom}_{\mathrm{mod}(\mathcal{C})}(P,X),\mathrm{Hom}_{\mathrm{mod}(\mathcal{C})}(P,Y)\Big)$$ is a monomorphism. Since $X\in \mathbb{P}_{0}$ and $Y\in \mathbb{I}_{0}$, there exists an epimorphism $\xymatrix{Q\ar[r]^{\pi} & X\ar[r] &  0}$ and a monomorphism
$\xymatrix{0\ar[r] &  Y\ar[r]^{\mu} &  I}$ with
$Q\in \mathrm{add}(P)$ and $I\in \mathrm{add}(J)$. By definition  of $\mathbb{P}_{1}$ and $\mathbb{I}_{1}$, we have that $Q\in \mathbb{P}_{1}$ and $I\in \mathbb{I}_{1}$.\\
Let us see that $\rho_{X,Y}$ is surjective.\\
 Let $\varphi:\mathrm{Hom}_{\mathrm{mod}(\mathcal{C})}(P,X)\longrightarrow  \mathrm{Hom}_{\mathrm{mod}(\mathcal{C})}(P,Y)$ be a morphism of $R_{P}$-modules. Consider the morphism $\mathrm{Hom}_{\mathrm{mod}(\mathcal{C})}(P,\mu):\mathrm{Hom}_{\mathrm{mod}(\mathcal{C})}(P,Y)\longrightarrow  \mathrm{Hom}_{\mathrm{mod}(\mathcal{C})}(P,I)$ and then we get
$\mathrm{Hom}_{\mathrm{mod}(\mathcal{C})}(P,\mu)\circ \varphi:\mathrm{Hom}_{\mathrm{mod}(\mathcal{C})}(P,X)\longrightarrow  \mathrm{Hom}_{\mathrm{mod}(\mathcal{C})}(P,I)$. Since $I\in \mathbb{I}_{1}=\mathbb{F.I.C}(P)$,  we have that
$$\rho_{X,I}:\mathrm{Hom}_{\mathrm{mod}(\mathcal{C})}(X,I)\longrightarrow \mathrm{Hom}_{R_{P}}\Big( \mathrm{Hom}_{\mathrm{mod}(\mathcal{C})}(P,X),\mathrm{Hom}_{\mathrm{mod}(\mathcal{C})}(P,I)\Big)$$
is an isomorphism (see dual of \ref{euivalenciaproj1}). Then there exists a morphism $\lambda:X\longrightarrow I$ such that $\mathrm{Hom}_{\mathrm{mod}(\mathcal{C})}(P,\lambda)=\mathrm{Hom}_{\mathrm{mod}(\mathcal{C})}(P,\mu)\circ \varphi$.  Similarly,  we have that 
$$\rho_{Q,Y}:\mathrm{Hom}_{\mathrm{mod}(\mathcal{C})}(Q,Y)\longrightarrow \mathrm{Hom}_{R_{P}}\Big( \mathrm{Hom}_{\mathrm{mod}(\mathcal{C})}(P,Q),\mathrm{Hom}_{\mathrm{mod}(\mathcal{C})}(P,Y)\Big)$$
is an isomorphism (see \ref{euivalenciaproj1}(d)). Then there exists a morphism $\beta:Q\longrightarrow Y$ such that $\varphi\circ \mathrm{Hom}_{\mathrm{mod}(\mathcal{C})}(P,\pi)=\mathrm{Hom}_{\mathrm{mod}(\mathcal{C})}(P,\beta)$.\\
Then we have two morphisms  $\lambda\pi,\mu\beta:Q\longrightarrow I$ and it is straigthforward to check that $\lambda\pi=\mu\beta$. Let  us consider the factorization $\xymatrix{X\ar[r]^{p} & K\ar[r]^{\delta} & I}$ of $\lambda$ through its image. Then we have that $\mu\beta=\lambda\pi=\delta p\pi=\delta(p\pi)$ with $p\pi$ an epimorphism and $\delta$ a monomorphism. Then we have that $\delta$ is the image of $\lambda\pi$. Since $\mu$ is a monomorphism, by the universal property of the image, 
there exists $\psi:K\longrightarrow Y$ such that $\delta=\mu\psi$. Now, it is easy to see that 
$\varphi=\mathrm{Hom}_{\mathrm{mod}(\mathcal{C})}(P,\psi\circ p)$.
Therefore, we conclude that $\rho_{X,Y}$ is surjective, and then an isomorphism.\\
$(biii)$ Follows from \ref{euivalenciaproj1} and its dual
since $\mathbb{F.P.P}(P)=\mathbb{P}_{1}$ and $\mathbb{F.C.I}(P)=\mathbb{I}_{1}$.\\
$(c)$ This follows from the fact  that $\mathrm{Hom}_{\mathrm{mod}(\mathcal{C})}(P,-):\mathrm{add}(P)\longrightarrow \mathrm{add}(R_{P})=\mathrm{proj}(R_{P})$ is an equivalence.\\
\end{proof}

\section{Extension over the endomorphism ring of a projective module}\label{sec:8}
In this section we will explore the relationship between injective coresolution in $\mathrm{mod}(\mathcal{C})$ and $\mathrm{mod}(R_{P})$. For each $X,Y\in \mathrm{mod}(\mathcal{C})$ consider the mapping
$$\rho_{X,Y}:\mathrm{Hom}_{\mathrm{mod}(\mathcal{C})}(X,Y)\longrightarrow \mathrm{Hom}_{R_{P}}\Big( \mathrm{Hom}_{\mathrm{mod}(\mathcal{C})}(P,X),\mathrm{Hom}_{\mathrm{mod}(\mathcal{C})}(P,Y)\Big)$$
defined as  $\rho_{X,Y}(f)=\mathrm{Hom}_{\mathrm{mod}(\mathcal{C})}(P,f)$ for all $f\in \mathrm{Hom}_{\mathrm{mod}(\mathcal{C})}(X,Y)$. It is easy to see that $\rho_{X,Y}$ is functorial in $X$ and $Y,$ then we have the following construction.

\begin{proposition}\label{mapshomolo}
Let $\mathcal{C}$ be a dualizing $R$-variety and $P=\mathrm{Hom}_{\mathcal{C}}(C,-)\in \mathrm{mod}(\mathcal{C})$. For each $X,Y\in \mathrm{mod}(\mathcal{C})$ and for all $i\geq 0$ we have canonical  morphisms
$$\Phi_{X,Y}^{i}:\mathrm{Ext}_{\mathrm{mod}(\mathcal{C})}^{i}(X,Y)\longrightarrow \mathrm{Ext}_{R_{P}}^{i}\Big( \mathrm{Hom}_{\mathrm{mod}(\mathcal{C})}(P,X),\mathrm{Hom}_{\mathrm{mod}(\mathcal{C})}(P,Y)\Big)$$
where $\Phi_{X,Y}^{0}=\rho_{X,Y}$.
 
\end{proposition}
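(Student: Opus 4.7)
The plan is to construct $\Phi^{i}_{X,Y}$ by comparing resolutions in the two abelian categories. First, I would fix a projective resolution $(P^{\bullet},\epsilon_{X})$ of $X$ in $\mathrm{mod}(\mathcal{C})$, so that the $i$-th cohomology of the complex $\mathrm{Hom}_{\mathrm{mod}(\mathcal{C})}(P^{\bullet},Y)$ computes $\mathrm{Ext}^{i}_{\mathrm{mod}(\mathcal{C})}(X,Y)$. Since $P$ is a projective $\mathcal{C}$-module, the functor $\mathrm{Hom}_{\mathrm{mod}(\mathcal{C})}(P,-)$ is exact, so applying it termwise to $P^{\bullet}\to X\to 0$ yields the exact (though in general not projective) sequence
$$\cdots\longrightarrow \mathrm{Hom}_{\mathrm{mod}(\mathcal{C})}(P,P^{1})\longrightarrow \mathrm{Hom}_{\mathrm{mod}(\mathcal{C})}(P,P^{0})\longrightarrow \mathrm{Hom}_{\mathrm{mod}(\mathcal{C})}(P,X)\longrightarrow 0$$
in $\mathrm{mod}(R_{P})$. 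I would then choose a projective resolution $(Q^{\bullet},\epsilon')$ of $\mathrm{Hom}_{\mathrm{mod}(\mathcal{C})}(P,X)$ in $\mathrm{mod}(R_{P})$ and, by the comparison lemma, lift the identity on $\mathrm{Hom}_{\mathrm{mod}(\mathcal{C})}(P,X)$ to a chain map $\theta^{\bullet}\colon Q^{\bullet}\to \mathrm{Hom}_{\mathrm{mod}(\mathcal{C})}(P,P^{\bullet})$, which is unique up to chain homotopy.

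Naturality of $\rho_{-,Y}$ in the first variable (together with the fact that $P^{\bullet}$ is a complex) provides a morphism of complexes
$$\rho_{P^{\bullet},Y}\colon \mathrm{Hom}_{\mathrm{mod}(\mathcal{C})}(P^{\bullet},Y)\longrightarrow \mathrm{Hom}_{R_{P}}\bigl(\mathrm{Hom}_{\mathrm{mod}(\mathcal{C})}(P,P^{\bullet}),\,\mathrm{Hom}_{\mathrm{mod}(\mathcal{C})}(P,Y)\bigr).$$
Composing with the map induced by $\theta^{\bullet}$ produces a chain map
$$\mathrm{Hom}_{\mathrm{mod}(\mathcal{C})}(P^{\bullet},Y)\longrightarrow \mathrm{Hom}_{R_{P}}\bigl(Q^{\bullet},\,\mathrm{Hom}_{\mathrm{mod}(\mathcal{C})}(P,Y)\bigr),$$
whose target computes $\mathrm{Ext}^{i}_{R_{P}}\bigl(\mathrm{Hom}_{\mathrm{mod}(\mathcal{C})}(P,X),\mathrm{Hom}_{\mathrm{mod}(\mathcal{C})}(P,Y)\bigr)$ in cohomology. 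Passing to $H^{i}$ defines $\Phi^{i}_{X,Y}$. In degree zero, $\theta^{0}$ restricts to the identity on $\mathrm{Hom}_{\mathrm{mod}(\mathcal{C})}(P,X)$ and the chain-level map reduces to $\rho_{X,Y}$, so $\Phi^{0}_{X,Y}=\rho_{X,Y}$ as required.

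The only point that needs real verification is that $\Phi^{i}_{X,Y}$ is independent of the choices of $P^{\bullet}$, $Q^{\bullet}$, and $\theta^{\bullet}$, and that it is functorial in $X$ and $Y$. Both assertions are standard consequences of the homotopy-uniqueness clause of the comparison lemma: any two choices of $\theta^{\bullet}$ differ by a chain homotopy, which disappears after taking cohomology, and the same argument applied to morphisms $X\to X'$ and $Y\to Y'$ produces the required naturality squares. This is the main (and essentially only) obstacle, but it is a formal verification rather than a substantive difficulty.
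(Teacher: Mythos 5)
Your construction is correct and produces the same maps as the paper, but you resolve the first variable while the paper resolves the second: the paper's (very brief) proof says to use an injective coresolution of $Y$ in $\mathrm{mod}(\mathcal{C})$, an injective coresolution of $\mathrm{Hom}_{\mathrm{mod}(\mathcal{C})}(P,Y)$ in $\mathrm{mod}(R_P)$, and the comparison lemma, whereas you dualize and use a projective resolution of $X$ together with a projective resolution of $\mathrm{Hom}_{\mathrm{mod}(\mathcal{C})}(P,X)$. Both variants hinge on the same two facts, namely that $\mathrm{Hom}_{\mathrm{mod}(\mathcal{C})}(P,-)$ is exact because $P$ is projective (so that the image of the chosen resolution or coresolution remains exact and the comparison lemma applies), and that the resulting map is independent of all choices by homotopy uniqueness; the balancing of $\mathrm{Ext}$ guarantees the two constructions yield the same $\Phi^{i}_{X,Y}$, so your route is an equally valid and essentially equivalent way to carry out the paper's argument.
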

\begin{proof}
It is straightforward, using injective coresolutions of $Y$ and $\mathrm{Hom}_{\mathrm{mod}(\mathcal{C})}(P,Y)$, and the comparison lemma.
\end{proof}
We give conditions in order to know when the morphisms $\Phi_{X,Y}^{i}$ are isomorphisms.

\begin{proposition}\label{Phiniso}
Let $\mathcal{C}$ be a dualizing $R$-variety and $P=\mathrm{Hom}_{\mathcal{C}}(C,-)\in \mathrm{mod}(\mathcal{C})$. The mapping 
$\Phi_{X,Y}^{n}:\mathrm{Ext}_{\mathrm{mod}(\mathcal{C})}^{n}(X,Y)\longrightarrow \mathrm{Ext}_{R_{P}}^{n}\Big( \mathrm{Hom}_{\mathrm{mod}(\mathcal{C})}(P,X),\mathrm{Hom}_{\mathrm{mod}(\mathcal{C})}(P,Y)\Big)$
above defined is an isomorphism for all $n\geq 0$, provided one of the three following conditions holds:
\begin{enumerate}
\item [(a)] $X\in \mathbb{P}_{i}$, $Y\in \mathbb{I}_{j}$ and $n\leq i+j$,

\item [(b)] $X\in \mathrm{mod}(\mathcal{C})$ and $Y\in \mathbb{I}_{n+1}$,

\item [(c)] $X\in \mathbb{P}_{n+1}$ and $Y\in \mathrm{mod}(\mathcal{C})$.
\end{enumerate}
\end{proposition}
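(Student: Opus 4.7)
The plan is to establish all three cases simultaneously by induction on $n \geq 0$, using Lemma~\ref{lemainteres}(b),(c) as the base and dimension shifting for the step.

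For the base case $n = 0$, we have $\Phi^0_{X,Y} = \rho_{X,Y}$. In case~(a) with $i = j = 0$ we invoke clause~(b)(ii) of Lemma~\ref{lemainteres} using $X \in \mathbb{P}_0$ and $Y \in \mathbb{I}_0$; in every remaining subcase of~(a), and in~(b) and~(c), we have $X \in \mathbb{P}_1$ or $Y \in \mathbb{I}_1$, so clause~(b)(iii) applies.

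For the inductive step, assume $n \geq 1$ and that the statement holds for $n-1$ in all three cases. Depending on whether $X \in \mathbb{P}_{\geq 1}$ (cases~(a) with $i \geq 1$ and case~(c)) or $Y \in \mathbb{I}_{\geq 1}$ (case~(a) with $i = 0$ and case~(b)) one truncates the corresponding projective resolution or injective coresolution after one step: either $0 \to X_1 \to P_0 \to X \to 0$ with $P_0 \in \mathrm{add}(P)$ (yielding $X_1 \in \mathbb{P}_{i-1}$ or $\mathbb{P}_n$ by Definition~\ref{defiPk}), or $0 \to Y \to J^0 \to Y^1 \to 0$ with $J^0 \in \mathrm{add}(J)$ (yielding $Y^1 \in \mathbb{I}_{j-1}$ or $\mathbb{I}_n$ by Definition~\ref{defiIk}). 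Since $P$ is projective, $\mathrm{Hom}_{\mathrm{mod}(\mathcal{C})}(P,-)$ is exact, and by Lemma~\ref{lemainteres}(c) it sends $P_0$ to a projective $R_P$-module and $J^0$ to an injective $R_P$-module. Thus the long exact $\mathrm{Ext}$-sequences attached to the above short exact sequence on either side have a term vanishing in degrees~$\geq 1$. For $n \geq 2$ the connecting homomorphisms therefore produce the dimension-shift isomorphism
\[
\mathrm{Ext}^n_{\mathrm{mod}(\mathcal{C})}(X,Y) \;\cong\; \mathrm{Ext}^{n-1}_{\mathrm{mod}(\mathcal{C})}(X_1,Y) \quad \text{(or } \mathrm{Ext}^{n-1}_{\mathrm{mod}(\mathcal{C})}(X,Y^1)\text{)},
\]
and likewise on the $R_P$-side, compatibly with $\Phi$ by naturality. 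Because the reduced data remains within the same case for $n-1$ (the sum $i+j$ survives in~(a); the indices $\mathbb{I}_{(n-1)+1}$ and $\mathbb{P}_{(n-1)+1}$ survive in (b) and~(c)), the inductive hypothesis gives $\Phi^n_{X,Y}$ is an isomorphism.

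For $n = 1$ the same reduction produces six-term exact sequences on the two sides, related by a morphism of complexes whose three intermediate components are $\rho_{X,Y}$, $\rho_{P_0,Y}$ (or $\rho_{X,J^0}$), and $\rho_{X_1,Y}$ (or $\rho_{X,Y^1}$), so the Five Lemma concludes once each of these is checked to be an isomorphism via Lemma~\ref{lemainteres}(b). The main obstacle is the careful bookkeeping of the $\mathbb{P}_\bullet$ and $\mathbb{I}_\bullet$ indices when reduction drops one of them to~$0$: for instance, in case~(a) with $i = 1$ the syzygy satisfies only $X_1 \in \mathbb{P}_0$, so one must use clause~(b)(ii) of Lemma~\ref{lemainteres} together with $Y \in \mathbb{I}_j \subseteq \mathbb{I}_0$ (which is valid because $n = 1 \leq 1 + j$ forces $j \geq 0$), and dually in case~(a) with $j = 1$; all such corner situations close because the tightness of the hypotheses in (a), (b), (c) is precisely what guarantees the complementary index is large enough for the requisite $\mathbb{P}_0$ or $\mathbb{I}_0$ membership.
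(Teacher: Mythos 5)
Your proof is correct and is, in substance, the dimension-shifting argument of \cite[Theorem 3.2]{APG}, to which the paper delegates this proposition without reproducing it: a simultaneous induction over (a), (b), (c) whose base case is Lemma~\ref{lemainteres}(b) and whose step truncates a projective resolution of $X$ or an injective coresolution of $Y$ by one term, using that $\mathrm{Hom}_{\mathrm{mod}(\mathcal{C})}(P,-)$ is exact and sends $\mathrm{add}(P)$ (resp.\ $\mathrm{add}(J)$) to projective (resp.\ injective) $R_P$-modules by Lemma~\ref{lemainteres}(c); your bookkeeping of the borderline indices is accurate. The one step you assert but do not justify is the commutativity of the ladders of long exact $\mathrm{Ext}$-sequences with $\Phi$: for the $X$-truncation this is immediate, since $\Phi^{n}$ is assembled from $\rho_{-,I^\bullet}$ (natural in the first variable) composed with a comparison chain map independent of $X$, but for the $Y$-truncation one must invoke the Horseshoe Lemma to choose compatible injective coresolutions of $Y$, $J^{0}$, $Y^{1}$ and of their images under $\mathrm{Hom}_{\mathrm{mod}(\mathcal{C})}(P,-)$ so that the comparison maps form a morphism of short exact sequences of complexes; both points are standard, but since they are the hinges of the induction they deserve a sentence.
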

\begin{proof}
The prove given in \cite[Theorem 3.2]{APG} works for this setting.
\end{proof}
We recall that the $\textbf{projective dimension}$ of an object $M$ in an abelian category $\mathcal{A}$ with enough projectives is the length of the shortest  projective resolution of $M$, and it is denoted by $\mathrm{pd}(M)$.
\begin{corollary}\label{dinprojigualdimproj}
Let $\mathcal{C}$ be a dualizing $R$-variety and $P=\mathrm{Hom}_{\mathcal{C}}(C,-)\in \mathrm{mod}(\mathcal{C})$. The following conditions hold.
\begin{enumerate}
\item [(a)] If $X\in \mathbb{P}_{\infty}$ then $\mathrm{pd}(X)=\mathrm{pd}_{R_{P}}\Big((P,X)\Big)$.

\item [(b)] If $X\in \mathbb{I}_{\infty}$ then $\mathrm{id}(X)=\mathrm{id}_{R_{P}}\Big((P,X)\Big)$.
\end{enumerate}

\end{corollary}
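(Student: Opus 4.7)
The plan is to derive both parts of the corollary from Proposition \ref{Phiniso} together with the standard Ext-characterization of projective/injective dimension, using the recollement in Proposition \ref{recolleproyefin} to transfer quantifiers between $\mathrm{mod}(\mathcal{C})$ and $\mathrm{mod}(R_P)$.

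For part (a), since $X\in \mathbb{P}_\infty \subseteq \mathbb{P}_{n+1}$ for every $n\geq 0$, condition (c) of Proposition \ref{Phiniso} yields that
\[
\Phi^{n}_{X,Y}:\mathrm{Ext}_{\mathrm{mod}(\mathcal{C})}^{n}(X,Y)\longrightarrow \mathrm{Ext}_{R_{P}}^{n}\bigl((P,X),(P,Y)\bigr)
\]
is an isomorphism for all $n\geq 0$ and all $Y\in \mathrm{mod}(\mathcal{C})$. The inequality $\mathrm{pd}_{R_P}((P,X)) \leq \mathrm{pd}(X)$ follows immediately: if $\mathrm{Ext}^{n+1}_{\mathrm{mod}(\mathcal{C})}(X,-)=0$ on $\mathrm{mod}(\mathcal{C})$, then $\mathrm{Ext}^{n+1}_{R_P}((P,X),(P,Y))=0$ for every $Y\in \mathrm{mod}(\mathcal{C})$, and the recollement gives $j^{*}j_{!}\simeq \mathrm{id}_{\mathrm{mod}(R_P)}$, i.e.\ $(P,\,P\otimes_{R_P}M)\simeq M$ for every $M\in \mathrm{mod}(R_P)$; hence every $M$ is of the form $(P,Y)$ with $Y:=P\otimes_{R_P}M\in \mathrm{mod}(\mathcal{C})$, so $\mathrm{Ext}^{n+1}_{R_P}((P,X),M)=0$ for all $M$. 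The converse inequality is the trivial direction: if $\mathrm{Ext}^{n+1}_{R_P}((P,X),-)$ vanishes, then via $\Phi^{n+1}_{X,Y}$ so does $\mathrm{Ext}^{n+1}_{\mathrm{mod}(\mathcal{C})}(X,-)$ on all of $\mathrm{mod}(\mathcal{C})$.

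For part (b), I argue dually using condition (b) of Proposition \ref{Phiniso}: since $X\in \mathbb{I}_\infty\subseteq \mathbb{I}_{n+1}$ for all $n$, placing $X$ in the second slot gives that
\[
\Phi^{n}_{Z,X}:\mathrm{Ext}_{\mathrm{mod}(\mathcal{C})}^{n}(Z,X)\longrightarrow \mathrm{Ext}_{R_{P}}^{n}\bigl((P,Z),(P,X)\bigr)
\]
is an isomorphism for all $n\geq 0$ and all $Z\in \mathrm{mod}(\mathcal{C})$. Characterizing $\mathrm{id}(X)$ and $\mathrm{id}_{R_P}((P,X))$ by the vanishing of $\mathrm{Ext}^{n+1}(-,X)$ and $\mathrm{Ext}^{n+1}_{R_P}(-,(P,X))$ respectively, and using once again the identity $(P,P\otimes_{R_P}M)\simeq M$ to rewrite an arbitrary $M\in \mathrm{mod}(R_P)$ as $(P,Z)$ with $Z=P\otimes_{R_P}M$, we get the two inequalities that combine to $\mathrm{id}(X)=\mathrm{id}_{R_P}((P,X))$.

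There is really no main obstacle here, because Proposition \ref{Phiniso} does the homological work and the recollement supplies the needed surjectivity of $(P,-)$ onto $\mathrm{mod}(R_P)$ up to isomorphism; the only point deserving care is to observe that the vanishing criteria for $\mathrm{pd}$ and $\mathrm{id}$ via $\mathrm{Ext}$ apply in $\mathrm{mod}(R_P)$ (which holds since $R_P$ is an artin algebra, so $\mathrm{mod}(R_P)$ has enough projectives and injectives) and in $\mathrm{mod}(\mathcal{C})$ (which holds by Theorem \ref{moddualizin}).
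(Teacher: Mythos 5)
Your proof is correct and takes essentially the same approach as the paper, which simply invokes Proposition \ref{Phiniso}; you supply the useful extra detail that the recollement (via $j^*j_!\simeq\mathrm{id}$, equivalently the equivalence $\mathbb{P}_1\simeq\mathrm{mod}(R_P)$ of \ref{lemainteres}) guarantees every $M\in\mathrm{mod}(R_P)$ has the form $(P,Y)$, which is needed to transfer the Ext-vanishing criterion from one side to the other, and you correctly verify that the Ext-characterizations of $\mathrm{pd}$ and $\mathrm{id}$ apply in both categories.
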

\begin{proof}
It follows from \ref{Phiniso}.
\end{proof}
The $\textbf{global dimension}$ of $\mathcal{A}$ is the supremum of the projective dimensions $\mathrm{p.d}(M)$ with $M\in \mathcal{A}$; and it is denoted by $\mathrm{gl.dim}(\mathcal{A})$.
\begin{proposition}\label{cotaglobaldim}
Let $\mathcal{C}$ be a dualizing $R$-variety and $P=\mathrm{Hom}_{\mathcal{C}}(C,-)\in \mathrm{mod}(\mathcal{C})$.
If $\mathbb{P}_{1}=\mathbb{P}_{\infty}$ or $\mathbb{I}_{1}=\mathbb{I}_{\infty}$ then $\mathrm{gl. dim}(\mathrm{mod}(R_{P}))\leq \mathrm{gl. dim}(\mathrm{mod}(\mathcal{C}))$.
\end{proposition}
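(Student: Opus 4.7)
The plan is to exploit the equivalences of Proposition \ref{lemainteres}(a) together with the dimension identities of Corollary \ref{dinprojigualdimproj} to transfer the global dimension bound from $\mathrm{mod}(\mathcal{C})$ to $\mathrm{mod}(R_{P})$. Set $d:=\mathrm{gl. dim}(\mathrm{mod}(\mathcal{C}))$. If $d=\infty$ the inequality is trivial, so assume $d<\infty$.

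First I would treat the case $\mathbb{P}_{1}=\mathbb{P}_{\infty}$. By Proposition \ref{lemainteres}(a), the functor $\mathrm{Hom}_{\mathrm{mod}(\mathcal{C})}(P,-)|_{\mathbb{P}_{1}}:\mathbb{P}_{1}\longrightarrow \mathrm{mod}(R_{P})$ is an equivalence. So for any $N\in \mathrm{mod}(R_{P})$ I can pick $X\in \mathbb{P}_{1}=\mathbb{P}_{\infty}$ with $N\simeq \mathrm{Hom}_{\mathrm{mod}(\mathcal{C})}(P,X)$. Since $X\in \mathbb{P}_{\infty}$, Corollary \ref{dinprojigualdimproj}(a) gives
\[
\mathrm{pd}_{R_{P}}(N)=\mathrm{pd}_{R_{P}}\bigl(\mathrm{Hom}_{\mathrm{mod}(\mathcal{C})}(P,X)\bigr)=\mathrm{pd}(X)\leq d.
\]
Taking the supremum over $N\in \mathrm{mod}(R_{P})$ yields $\mathrm{gl.dim}(\mathrm{mod}(R_{P}))\leq d$, as required.

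The case $\mathbb{I}_{1}=\mathbb{I}_{\infty}$ is dual. Now I would use the second equivalence $\mathrm{Hom}_{\mathrm{mod}(\mathcal{C})}(P,-)|_{\mathbb{I}_{1}}:\mathbb{I}_{1}\longrightarrow \mathrm{mod}(R_{P})$ from Proposition \ref{lemainteres}(a) to write every $N\in \mathrm{mod}(R_{P})$ as $N\simeq \mathrm{Hom}_{\mathrm{mod}(\mathcal{C})}(P,Y)$ for some $Y\in \mathbb{I}_{1}=\mathbb{I}_{\infty}$. Then Corollary \ref{dinprojigualdimproj}(b) gives
\[
\mathrm{id}_{R_{P}}(N)=\mathrm{id}(Y)\leq d,
\]
and since the global dimension of an abelian category with enough injectives equals the supremum of the injective dimensions of its objects, taking the sup over $N$ again gives $\mathrm{gl.dim}(\mathrm{mod}(R_{P}))\leq d$.

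The only subtle point, and the one I would pay attention to when writing it up carefully, is making sure that the equivalences of Proposition \ref{lemainteres}(a) really are surjective on objects up to isomorphism (so that \emph{every} $N\in \mathrm{mod}(R_{P})$ has a preimage in $\mathbb{P}_{1}$, respectively $\mathbb{I}_{1}$); this is built into the meaning of ``equivalence of categories,'' so there is no real obstacle. Everything else is a direct appeal to previously established results.
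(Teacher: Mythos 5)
Your proof is correct and follows essentially the same route as the paper's: invoke the equivalences from Proposition~\ref{lemainteres}(a) to represent every $R_{P}$-module, then apply Corollary~\ref{dinprojigualdimproj} to transfer projective (resp.\ injective) dimensions. The paper's own proof compresses both cases and only speaks of projective dimensions; your version is a bit more careful in making the $\mathbb{I}_{1}=\mathbb{I}_{\infty}$ case explicit via injective dimensions and the standard fact that global dimension equals the supremum of injective dimensions.
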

\begin{proof}
We know that $\mathrm{Hom}_{\mathrm{mod}(\mathcal{C})}(P,-):\mathbb{P}_{1}\longrightarrow \mathrm{mod}(R_{P})$ and  $\mathrm{Hom}_{\mathrm{mod}(\mathcal{C})}(P,-):\mathbb{I}_{1}\longrightarrow \mathrm{mod}(R_{P})$ are equivalences (see \ref{lemainteres}(a)).  By \ref{dinprojigualdimproj}, for each $M\in \mathrm{mod}(R_{P})$ there exists $X\in \mathrm{mod}(\mathcal{C})$ such that $\mathrm{p.d}_{\mathrm{mod}(\mathcal{C})}(X)=\mathrm{p.d}_{\mathrm{mod}(R_{P})}(M)$. This implies that $\mathrm{gl. dim}(\mathrm{mod}(R_{P}))\leq \mathrm{gl. dim}(\mathrm{mod}(\mathcal{C}))$.
\end{proof}

\begin{proposition}\label{quasiRp}
Let $\mathcal{C}$ be a dualizing $R$-variety with cokernels and  consider $P=\mathrm{Hom}_{\mathcal{C}}(C,-)\in \mathrm{mod}(\mathcal{C})$. If $\mathbb{P}_{1}=\mathbb{P}_{\infty}$ or $\mathbb{I}_{1}=\mathbb{I}_{\infty}$ then $\mathrm{gl. dim}(R_{P})\leq 2$. In particular, $R_{P}$ is a quasi-hereditary algebra.
\end{proposition}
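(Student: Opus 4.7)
The plan is to combine Proposition \ref{cotaglobaldim} with a direct computation of $\mathrm{gl. dim}(\mathrm{mod}(\mathcal{C}))$, and then invoke a classical result on quasi-hereditary algebras.

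First, I would apply Proposition \ref{cotaglobaldim}: under either hypothesis $\mathbb{P}_1 = \mathbb{P}_\infty$ or $\mathbb{I}_1 = \mathbb{I}_\infty$, this yields $\mathrm{gl. dim}(R_P) \leq \mathrm{gl. dim}(\mathrm{mod}(\mathcal{C}))$, so the numerical part of the statement reduces to showing $\mathrm{gl. dim}(\mathrm{mod}(\mathcal{C})) \leq 2$.

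Next, I would exploit the new hypothesis that $\mathcal{C}$ has cokernels. For any $M \in \mathrm{mod}(\mathcal{C})$, fix a finite presentation $\mathrm{Hom}_{\mathcal{C}}(C_0,-) \to \mathrm{Hom}_{\mathcal{C}}(C_1,-) \to M \to 0$, which by Yoneda's lemma corresponds to a morphism $f\colon C_1 \to C_0$ in $\mathcal{C}$. Let $c\colon C_0 \to C_2$ be a cokernel of $f$ in $\mathcal{C}$. Since $c$ is an epimorphism, precomposition with $c$ induces a monomorphism $(c,-)\colon \mathrm{Hom}_{\mathcal{C}}(C_2,-) \to \mathrm{Hom}_{\mathcal{C}}(C_0,-)$, and the universal property of the cokernel identifies its image with the kernel of $(f,-)\colon \mathrm{Hom}_{\mathcal{C}}(C_0,-) \to \mathrm{Hom}_{\mathcal{C}}(C_1,-)$. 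This produces an exact sequence
$$0 \to \mathrm{Hom}_{\mathcal{C}}(C_2,-) \to \mathrm{Hom}_{\mathcal{C}}(C_0,-) \to \mathrm{Hom}_{\mathcal{C}}(C_1,-) \to M \to 0$$
by finitely generated projectives in $\mathrm{mod}(\mathcal{C})$, i.e., a projective resolution of $M$ of length at most two. Hence $\mathrm{pd}(M) \leq 2$ for every $M \in \mathrm{mod}(\mathcal{C})$, so $\mathrm{gl. dim}(\mathrm{mod}(\mathcal{C})) \leq 2$ and consequently $\mathrm{gl. dim}(R_P) \leq 2$.

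Finally, to conclude that $R_P$ is quasi-hereditary, I would invoke the classical theorem that every artin algebra of global dimension at most two is quasi-hereditary (noting that $R_P$ is artin since $\mathcal{C}$ is Hom-finite over the commutative artin ring $R$). The main obstacle is essentially invoking this external algebraic result; once the cokernel hypothesis is recognized as the mechanism that upgrades the generic bound $\mathrm{gl. dim}(R_P) \leq \mathrm{gl. dim}(\mathrm{mod}(\mathcal{C}))$ into the sharp numerical inequality $\leq 2$, the rest of the argument is essentially formal.
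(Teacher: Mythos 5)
Your proof is correct and follows essentially the same route as the paper: reduce via Proposition \ref{cotaglobaldim} to bounding $\mathrm{gl.dim}(\mathrm{mod}(\mathcal{C}))$ by $2$, and then invoke Dlab--Ringel. The only difference is that where the paper cites Auslander's theorem (that a variety with cokernels has $\mathrm{gl.dim}(\mathrm{mod}(\mathcal{C}))\leq 2$), you reprove it directly by completing a finite presentation $\mathrm{Hom}_{\mathcal{C}}(C_0,-)\to\mathrm{Hom}_{\mathcal{C}}(C_1,-)\to M\to 0$ to a length-two projective resolution using the cokernel of the corresponding map $f\colon C_1\to C_0$ in $\mathcal{C}$ --- a small self-contained substitution that does not change the structure of the argument.
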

\begin{proof}
If $\mathcal{C}$ has cokernels we know that  $\mathrm{gl. dim}(\mathrm{mod}(\mathcal{C}))\leq 2$ (see \cite[Theorem 2.2(b)]{AusCoheren}. By \ref{cotaglobaldim}, we have that  $\mathrm{gl. dim}(R_{P})\leq 2$. By a well known result of Dlab-Ringel (see \cite[Theorem 2]{DR2}), we now that every artin algebra with global dimension less or equal to 2 is quasi-hereditary.
\end{proof}

If we work in the category $\mathrm{mod}(\mathcal{C}^{op})$, with the projective $P^{\ast}=\mathrm{Hom}_{\mathcal{C}}(-,C)$ and $J^{\ast}:=I_{0}\Big(\frac{P^{\ast}}{\mathrm{rad}(P^{\ast})}\Big)$, the injective envelope of  $\frac{P^{\ast}}{\mathrm{rad}(P^{\ast})}$ in  $\mathrm{mod}(\mathcal{C}^{op})$; we can define $\mathbb{P}_{k}^{\ast}$ and $\mathbb{I}_{k}^{\ast}$ in a similar way to definitions \ref{defiPk} and  \ref{defiIk}.

\begin{proposition}\label{DualPkIk}
Let $\mathcal{C}$ be a dualizing $R$-variety and $P=\mathrm{Hom}_{\mathcal{C}}(C,-)\in \mathrm{mod}(\mathcal{C})$.
\begin{enumerate}
\item [(a)] Then we have that $X\in \mathbb{P}_{k}$ if and only if $\mathbb{D}_{\mathcal{C}}(X)\in \mathbb{I}_{k}^{\ast}$.

\item [(b)] Then we have that $X\in \mathbb{I}_{k}$ if and only if $\mathbb{D}_{\mathcal{C}}(X)\in \mathbb{P}_{k}^{\ast}$.

\end{enumerate}
\end{proposition}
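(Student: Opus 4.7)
The plan is to leverage the duality $\mathbb{D}_{\mathcal{C}}:\mathrm{mod}(\mathcal{C})\to \mathrm{mod}(\mathcal{C}^{op})$ (with inverse $\mathbb{D}_{\mathcal{C}^{op}}$), which is exact and contravariant, interchanges projectives with injectives, and preserves finite direct sums and direct summands. The key identification is that, under this duality, the distinguished projective $P=\mathrm{Hom}_{\mathcal{C}}(C,-)$ corresponds precisely to the distinguished injective $J^{*}$ used to define $\mathbb{I}_{k}^{*}$, and symmetrically $J$ corresponds to $P^{*}$.

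First I would verify the dictionary $\mathbb{D}_{\mathcal{C}}(P)\simeq J^{*}$ and $\mathbb{D}_{\mathcal{C}}(J)\simeq P^{*}$. The second isomorphism is exactly Proposition \ref{desenvolinegen}. For the first, I apply Proposition \ref{desenvolinegen} inside $\mathrm{mod}(\mathcal{C}^{op})$: here the distinguished projective is $P^{*}=\mathrm{Hom}_{\mathcal{C}}(-,C)$, whose associated ``star'' in $\mathrm{mod}(\mathcal{C}^{op})$ is $P=\mathrm{Hom}_{\mathcal{C}}(C,-)$, so the proposition gives $J^{*}\simeq \mathbb{D}_{\mathcal{C}^{op}}^{-1}(P)=\mathbb{D}_{\mathcal{C}}(P)$. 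Consequently $\mathbb{D}_{\mathcal{C}}$ restricts to equivalences $\mathrm{add}(P)\xrightarrow{\sim} \mathrm{add}(J^{*})$ and $\mathrm{add}(J)\xrightarrow{\sim} \mathrm{add}(P^{*})$.

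Now for (a), given $X\in \mathbb{P}_{k}$, pick a projective resolution $\cdots\to P_{n}\to\cdots\to P_{0}\to X\to 0$ with $P_{i}\in \mathrm{add}(P)$ for $0\le i\le k$. Applying $\mathbb{D}_{\mathcal{C}}$ yields an exact sequence
\[
0\to \mathbb{D}_{\mathcal{C}}(X)\to \mathbb{D}_{\mathcal{C}}(P_{0})\to \mathbb{D}_{\mathcal{C}}(P_{1})\to\cdots
\]
in $\mathrm{mod}(\mathcal{C}^{op})$. Each $\mathbb{D}_{\mathcal{C}}(P_{n})$ is injective, and the first $k+1$ terms lie in $\mathrm{add}(J^{*})$ by the dictionary above, so $\mathbb{D}_{\mathcal{C}}(X)\in \mathbb{I}_{k}^{*}$. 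For the converse, start with an injective coresolution of $\mathbb{D}_{\mathcal{C}}(X)$ witnessing membership in $\mathbb{I}_{k}^{*}$, apply $\mathbb{D}_{\mathcal{C}^{op}}=\mathbb{D}_{\mathcal{C}}^{-1}$, and use $\mathbb{D}_{\mathcal{C}^{op}}(J^{*})\simeq P$ to recover a projective resolution of $X$ whose first $k+1$ terms lie in $\mathrm{add}(P)$. Part (b) is completely symmetric; in fact it is part (a) applied to $\mathcal{C}^{op}$ with the projective $P^{*}$, after observing that $(P^{*})^{*}\simeq P$ so that $\mathbb{P}_{k}^{*}$ (defined inside $\mathrm{mod}(\mathcal{C}^{op})$ via $P^{*}$) is what arises on the other side.

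I do not expect a serious obstacle: the argument reduces entirely to the behaviour of $\mathbb{D}_{\mathcal{C}}$ on projectives, injectives, and exact sequences, together with the identifications of $\mathbb{D}_{\mathcal{C}}(P)$ and $\mathbb{D}_{\mathcal{C}}(J)$ already supplied by Proposition \ref{desenvolinegen}. The only minor point requiring care is confirming that the duality sends the full subcategory $\mathrm{add}(P)$ \emph{onto} $\mathrm{add}(J^{*})$ (not merely into an equivalent copy), but this is immediate from $\mathbb{D}_{\mathcal{C}}$ being an additive equivalence and therefore preserving isomorphism classes of direct summands of finite direct sums.
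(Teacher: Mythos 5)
Your proof is correct, and it takes a genuinely different route from the paper. The paper's one-line proof invokes the Ext-vanishing characterizations of $\mathbb{P}_{k}$ and $\mathbb{I}_{k}$ (Propositions \ref{Pkcarect} and \ref{muchaseuiii}), together with the identification $\mathcal{I}^{op}=\mathrm{Tr}_{P^{\ast}}\mathcal{C}^{op}$ from \ref{PPast} and the commutative square in \ref{cocienteduali}: membership in $\mathbb{P}_{k}$ is translated into a condition of the form $\mathrm{Ext}^{i}_{\mathrm{mod}(\mathcal{C})}(X,(\pi_{1})_{\ast}(Y))=0$, the duality converts this to $\mathrm{Ext}^{i}_{\mathrm{mod}(\mathcal{C}^{op})}((\pi_{2})_{\ast}(Y'),\mathbb{D}_{\mathcal{C}}(X))=0$, and that is recognized as membership of $\mathbb{D}_{\mathcal{C}}(X)$ in $\mathbb{I}_{k}^{\ast}$. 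You instead establish the dictionary $\mathbb{D}_{\mathcal{C}}(P)\simeq J^{\ast}$ and $\mathbb{D}_{\mathcal{C}}(J)\simeq P^{\ast}$ directly from \ref{desenvolinegen} and its $\mathcal{C}^{op}$-version, and then simply dualize the defining resolutions term by term. Your argument is more elementary and more transparent, bypassing the Ext machinery and the trace-ideal identification entirely; the cost is that it depends on \ref{desenvolinegen} (which the paper's proof does not cite), and it does not automatically recycle the Ext characterizations that the paper needs anyway in nearby results. Both proofs are sound; yours is closer to what a reader would reconstruct from the definitions, while the paper's is a corollary of machinery already in place. One small point you handled correctly but is worth being explicit about: applying \ref{desenvolinegen} ``inside $\mathrm{mod}(\mathcal{C}^{op})$'' is legitimate because $\mathcal{C}$ dualizing implies $\mathcal{C}^{op}$ dualizing, and $\mathbb{D}_{\mathcal{C}^{op}}^{-1}=\mathbb{D}_{\mathcal{C}}$.
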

\begin{proof}
It follows from duality using \ref{PPast}, \ref{muchaseuiii} and  \ref{Pkcarect}.
\end{proof}

\begin{proposition}\label{IkisoP}
Let $\mathcal{C}$ be a dualizing $R$-variety and $P=\mathrm{Hom}_{\mathcal{C}}(C,-)\in \mathrm{mod}(\mathcal{C})$. Let $1\leq k\leq \infty$. Then 
\begin{enumerate}
\item [(a)] $Y\in \mathbb{I}_{k}$ if and only if
$\Phi_{X,Y}^{i}:\mathrm{Ext}^{i}_{\mathrm{mod}(\mathcal{C})}(X,Y)\longrightarrow \mathrm{Ext}^{i}_{R_{P}}\Big((P,X),(P,Y)\Big)$
is an isomorphism for all $0\leq i\leq k-1$ and for all $X\in \mathrm{mod}(\mathcal{C})$.

\item [(b)] $X\in \mathbb{P}_{k}$ if and only if $\Phi_{X,Y}^{i}:\mathrm{Ext}^{i}_{\mathrm{mod}(\mathcal{C})}(X,Y)\longrightarrow \mathrm{Ext}^{i}_{R_{P}}\Big((P,X),(P,Y)\Big)$
is an isomorphism for all $0\leq i\leq k-1$ and for all $Y\in \mathrm{mod}(\mathcal{C})$.
\end{enumerate}
\end{proposition}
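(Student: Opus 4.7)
I plan to establish (a) and (b) by induction on $k\in\{1,2,\ldots,\infty\}$. The forward directions are immediate: if $Y\in\mathbb{I}_k$, then for each $0\le i\le k-1$ we have $Y\in\mathbb{I}_{i+1}$, so Proposition \ref{Phiniso}(b) makes $\Phi^i_{X,Y}$ an isomorphism for every $X$; dually, \ref{Phiniso}(c) handles the forward direction of (b). I describe the nontrivial backward direction only for (a); part (b) is obtained by the completely analogous dual argument, using $\mathbb{P}_1$ in place of $\mathbb{I}_1$, projective resolutions in place of injective coresolutions, and naturality in $Y$ in the Yoneda step.

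For the base case $k=1$ of (a), assume $\rho_{X,Y}=\Phi^0_{X,Y}$ is an isomorphism for every $X\in\mathrm{mod}(\mathcal{C})$. By \ref{lemainteres}(a), the restriction $\mathrm{Hom}_{\mathrm{mod}(\mathcal{C})}(P,-)\colon\mathbb{I}_1\to\mathrm{mod}(R_P)$ is an equivalence, so there exists $Y'\in\mathbb{I}_1$ with an isomorphism $(P,Y')\simeq(P,Y)$ in $\mathrm{mod}(R_P)$. Since $Y'\in\mathbb{I}_1$, \ref{lemainteres}(biii) gives that $\rho_{X,Y'}$ is also an isomorphism for every $X$. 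Composing $\rho_{X,Y}$, the induced iso $\mathrm{Hom}_{R_P}((P,X),(P,Y))\simeq\mathrm{Hom}_{R_P}((P,X),(P,Y'))$, and $\rho_{X,Y'}^{-1}$ yields a natural isomorphism $\mathrm{Hom}_{\mathrm{mod}(\mathcal{C})}(X,Y)\simeq\mathrm{Hom}_{\mathrm{mod}(\mathcal{C})}(X,Y')$ in the variable $X$, and the Yoneda lemma forces $Y\simeq Y'\in\mathbb{I}_1$.

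For the inductive step $k\ge 2$, the base case already gives $Y\in\mathbb{I}_1$, so there is an exact sequence $0\to Y\to J_0\to K\to 0$ with $J_0=J_0(Y)\in\mathrm{add}(J)$. I will show $\Phi^i_{X,K}$ is an isomorphism for every $X$ and $0\le i\le k-2$; the induction hypothesis applied to $K$ at level $k-1$ then yields $K\in\mathbb{I}_{k-1}$, and hence $Y\in\mathbb{I}_k$. For $i\ge 1$, the injectivity of $J_0$ in $\mathrm{mod}(\mathcal{C})$ and of $(P,J_0)$ in $\mathrm{mod}(R_P)$ (the latter by \ref{lemainteres}(c)) yields compatible dimension-shifting isomorphisms $\mathrm{Ext}^i(X,K)\simeq\mathrm{Ext}^{i+1}(X,Y)$ and $\mathrm{Ext}^i_{R_P}((P,X),(P,K))\simeq\mathrm{Ext}^{i+1}_{R_P}((P,X),(P,Y))$, and the naturality of $\Phi^\bullet$ transports the iso from level $(i+1,Y)$ (a hypothesis when $i+1\le k-1$) to level $(i,K)$. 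For $i=0$, I apply the four-lemma to the comparison of the two Hom/$\mathrm{Ext}^1$ long exact sequences coming from $0\to Y\to J_0\to K\to 0$ and its $(P,-)$-image (which is exact): the vertical maps $\rho_{X,Y}$, $\rho_{X,J_0}$ (iso via \ref{lemainteres}(biii), since $J_0\in\mathbb{I}_\infty\subseteq\mathbb{I}_1$) and $\Phi^1_{X,Y}$ are isomorphisms, forcing $\rho_{X,K}$ to be one as well. The case $k=\infty$ is then handled by $\mathbb{I}_\infty=\bigcap_{n\ge 1}\mathbb{I}_n$.

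The principal obstacle is the base case $k=1$: specializing the hypothesis to $X=(\pi_1)_\ast X'$ only delivers $\mathrm{Hom}_{\mathrm{mod}(\mathcal{C})}((\pi_1)_\ast X',Y)=0$, which via \ref{muchaseuiii} places $Y$ merely in $\mathbb{I}_0$, one injective layer short of $\mathbb{I}_1$. Closing this gap is precisely what the equivalence in \ref{lemainteres}(a), together with Yoneda, accomplishes; once the base case is available, the passage from $k-1$ to $k$ is a routine dimension shift plus four-lemma.
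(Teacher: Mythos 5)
Your proof is correct, but it follows a genuinely different route from the one in the paper. The paper proves the backward implication of (a) without induction: it specializes the hypothesis to $X=\mathrm{Hom}_{\mathcal{C}}(C',-)$ and $X=\mathcal{I}(C',-)$ with $\mathcal{I}=\mathrm{Tr}_{P}\mathcal{C}$, observes that $(P,u)\colon (P,\mathcal{I}(C',-))\to (P,\mathrm{Hom}_{\mathcal{C}}(C',-))$ is an isomorphism (because $\mathcal{I}(C',-)=\mathrm{Tr}_{P}(\mathrm{Hom}_{\mathcal{C}}(C',-))$), and then runs $\Phi^{\bullet}$ over the commutative square arising from the short exact sequence $0\to\mathcal{I}(C',-)\to \mathrm{Hom}_{\mathcal{C}}(C',-)\to \mathrm{Hom}_{\mathcal{C}}(C',-)/\mathcal{I}(C',-)\to 0$ in the \emph{first} variable of $\mathrm{Ext}$. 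This forces $\mathrm{Ext}^{i}_{\mathrm{mod}(\mathcal{C})}\bigl(\mathrm{Hom}_{\mathcal{C}}(C',-)/\mathcal{I}(C',-),Y\bigr)=0$ for $0\le i\le k$, and then \ref{muchaseuiii} delivers $Y\in\mathbb{I}_{k}$ directly. You instead do an induction on $k$, handling $k=1$ via the equivalence $\mathbb{I}_{1}\simeq\mathrm{mod}(R_{P})$ of \ref{lemainteres}(a) together with Yoneda, and passing from $k-1$ to $k$ by dimension-shifting off the short exact sequence $0\to Y\to J_{0}\to K\to 0$ in the \emph{second} variable plus a four-lemma argument. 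Both proofs rely on $\Phi^{\bullet}$ being compatible with the long exact $\mathrm{Ext}$-sequences — the paper's in the covariant position, yours in the contravariant one — which is a standard consequence of the construction in \ref{mapshomolo} that the paper does not spell out either, so that reliance is symmetric. The trade-off: the paper's argument is shorter and reduces directly to the $\mathrm{Ext}$-vanishing criterion of \ref{muchaseuiii}; your argument is longer but avoids that criterion entirely, replacing it with the equivalence of \ref{lemainteres}(a) and an explicit Yoneda identification of $Y$, and the inductive bookkeeping makes visible exactly where each layer of the coresolution comes from. One small point you assert without justification is $\mathbb{I}_{\infty}=\bigcap_{n\ge 1}\mathbb{I}_{n}$; this does hold (the minimal injective coresolution is a direct summand of any coresolution, and $\mathrm{add}(J)$ is closed under summands), but it is worth a sentence.
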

\begin{proof}
$(a)$ $(\Rightarrow)$. It follows by \ref{Phiniso}(b).\\
$(\Leftarrow)$. Consider the ideal  $\mathcal{I}:=\mathrm{Tr}_{P}\mathcal{C}$ and the exact sequence in $\mathrm{mod}(\mathcal{C})$
$$(\ast):\xymatrix{0\ar[r]  & \mathcal{I}(C',-)\ar[r]^{u} & \mathrm{Hom}_{\mathcal{C}}(C',-)\ar[r] & \frac{\mathrm{Hom}_{\mathcal{C}}(C',-)}{\mathcal{I}(C',-)}\ar[r] & 0}$$
We have that $(P,u):(P,\mathcal{I}(C',-))\longrightarrow (P,\mathrm{Hom}_{\mathcal{C}}(C',-))$
is an isomorphism. Then 
$$\xymatrix{\mathrm{Ext}_{R_{P}}^{i}\Big((P,\mathrm{Hom}_{\mathcal{C}}(C',-)),(P,Y)\Big)\ar[r] & \mathrm{Ext}_{R_{P}}^{i}\Big((P,\mathcal{I}(C',-)),(P,Y)\Big)}$$ is an isomorphism for all $i\geq 0$. On the other hand, we have the commutative diagram
$$\xymatrix{\mathrm{Ext}_{\mathrm{mod}(\mathcal{C})}^{i}(\mathrm{Hom}_{\mathcal{C}}(C',-),Y)\ar[r]\ar[d]^{\Phi_{(C',-),Y}^{i}}& \mathrm{Ext}_{\mathrm{mod}(\mathcal{C})}^{i}(\mathcal{I}(C',-),Y)\ar[d]^{\Phi_{\mathcal{I}(C',-),Y}^{i}}\\
\mathrm{Ext}_{R_{P}}^{i}\Big((P,\mathrm{Hom}_{\mathcal{C}}(C',-)),(P,Y)\Big)\ar[r] & \mathrm{Ext}_{R_{P}}^{i}\Big((P,\mathcal{I}(C',-)),(P,Y)\Big)}$$
By hypothesis, the vertical mappings are isomorphisms for all $0\leq i\leq k-1$. Then we have the following isomorphism for all $0\leq i\leq k-1$
$$(\ast\ast):\xymatrix{\mathrm{Ext}_{\mathrm{mod}(\mathcal{C})}^{i}(\mathrm{Hom}_{\mathcal{C}}(C',-),Y)\ar[r] & \mathrm{Ext}_{\mathrm{mod}(\mathcal{C})}^{i}(\mathcal{I}(C',-),Y).}$$
For $i=0$, we get the isomorphism  $\Big(\mathrm{Hom}_{\mathcal{C}}(C',-),Y\Big)\rightarrow \Big(\mathcal{I}(C',-),Y\Big)$. Applying $\mathrm{Hom}_{\mathrm{mod}(\mathcal{C})}(-,Y)$ to the sequence $(\ast)$ we get that
$\Big(\frac{\mathrm{Hom}_{\mathcal{C}}(C',-)}{\mathcal{I}(C',-)},Y\Big)=0=\mathrm{Ext}^{1}_{\mathrm{mod}(\mathcal{C})}\Big(\frac{\mathrm{Hom}_{\mathcal{C}}(C',-)}{\mathcal{I}(C',-)},Y\Big).$
For $1\leq i\leq k-1$, using the isomorphism $(\ast\ast)$, we have that $\mathrm{Ext}_{\mathrm{mod}(\mathcal{C})}^{i}(\mathcal{I}(C',-),Y)=0$. Hence, we  get  that
$\mathrm{Ext}_{\mathrm{mod}(\mathcal{C})}^{i}\Big(\mathcal{I}(C',-),Y\Big)\rightarrow  \mathrm{Ext}_{\mathrm{mod}(\mathcal{C})}^{i+1}\Big(\frac{\mathrm{Hom}_{\mathcal{C}}(C',-)}{\mathcal{I}(C',-)},Y\Big)$ is an isomorphism for $i\geq 1$. Therefore, we obtain that    $\mathrm{Ext}_{\mathrm{mod}(\mathcal{C})}^{i}\Big(\frac{\mathrm{Hom}_{\mathcal{C}}(C',-)}{\mathcal{I}(C',-)},Y\Big)=0$ for $2\leq i\leq k$. Then, $\mathrm{Ext}_{\mathrm{mod}(\mathcal{C})}^{i}\Big(\frac{\mathrm{Hom}_{\mathcal{C}}(C',-)}{\mathcal{I}(C',-)},Y\Big)\!=\!
\mathrm{Ext}_{\mathrm{mod}(\mathcal{C})}^{i}\Big((\pi_{1})_{\ast}(\mathrm{Hom}_{\mathcal{C}/\mathcal{I}}(C',-)),Y\Big)$ $=0$ for $0\leq i\leq k$. Therefore, we have that $\mathrm{Ext}^{i}_{\mathrm{mod}(\mathcal{C})}((\pi_{1})_{\ast}(Q),Y)=0$ for all $Q\in\mathrm{mod}(\mathcal{C}/\mathcal{I})$ projective and $i=0,\dots, k$. By \ref{muchaseuiii}, we have that $Y\in \mathbb{I}_{k}$. The item $(b)$  follows from (a), by using duality and   \ref{DualPkIk}.
\end{proof}

\begin{proposition}\label{unacarusinmo}
Let $\mathcal{C}$ be a dualizing $R$-variety and $P=\mathrm{Hom}_{\mathcal{C}}(C,-)\in \mathrm{mod}(\mathcal{C})$. Let $X\in \mathbb{I}_{1}$ and $k\geq 1$. Then $X\in \mathbb{I}_{k}$ if and only if $\mathrm{Ext}^{i}_{R_{P}}(P^{\ast}(C'), (P,X))=0$ for all $1\leq i\leq k-1$ and for all $C'\in \mathcal{C}$.
\end{proposition}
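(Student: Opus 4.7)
The plan is to prove both implications, with the forward direction being immediate from Proposition \ref{IkisoP}(a) applied to a projective test object. Assuming $X\in\mathbb{I}_k$ makes $\Phi^{i}_{Y,X}$ an isomorphism for every $Y\in\mathrm{mod}(\mathcal{C})$ and $0\le i\le k-1$, so taking $Y=\mathrm{Hom}_{\mathcal{C}}(C',-)$ (which is projective and satisfies $(P,Y)=P^{\ast}(C')$ by definition of $P^{\ast}$) the left-hand side vanishes for $i\ge 1$, forcing $\mathrm{Ext}^{i}_{R_P}(P^{\ast}(C'),(P,X))=0$ for $1\le i\le k-1$.

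For the converse I intend to induct on $k$; the base case $k=1$ is trivial. In the inductive step, assuming the statement for $k$ and that the hypothesis holds up to degree $k$, the inductive hypothesis gives $X\in\mathbb{I}_k$. By Proposition \ref{muchaseuiii}(b) it then suffices to establish $\mathrm{Ext}^{k+1}_{\mathrm{mod}(\mathcal{C})}((\pi_1)_{\ast}(\mathrm{Hom}_{\mathcal{C}/\mathcal{I}}(C',-)),X)=0$ for all $C'\in\mathcal{C}$, where $\mathcal{I}=\mathrm{Tr}_P\mathcal{C}$. The canonical sequence $0\to \mathcal{I}(C',-)\to \mathrm{Hom}_{\mathcal{C}}(C',-)\to (\pi_1)_{\ast}(\mathrm{Hom}_{\mathcal{C}/\mathcal{I}}(C',-))\to 0$ combined with the projectivity of $\mathrm{Hom}_{\mathcal{C}}(C',-)$ reduces this to $\mathrm{Ext}^{k}_{\mathrm{mod}(\mathcal{C})}(\mathcal{I}(C',-),X)=0$. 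Since $\mathcal{I}=\mathrm{Tr}_P\mathcal{C}$ and $\mathcal{I}(C',-)$ is finitely generated, I fix an epimorphism $P^n\twoheadrightarrow \mathcal{I}(C',-)$ with kernel $K$ and apply the exact functor $(P,-)$ to obtain $0\to (P,K)\to R_P^n\to (P,\mathcal{I}(C',-))\to 0$, where $(P,\mathcal{I}(C',-))\simeq P^{\ast}(C')$ (the isomorphism coming from Lemma \ref{percero} applied to the quotient $(\pi_1)_{\ast}(\mathrm{Hom}_{\mathcal{C}/\mathcal{I}}(C',-))$).

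For $k\ge 2$ the argument then reduces to a dimension shift: one has $\mathrm{Ext}^{k}_{\mathrm{mod}(\mathcal{C})}(\mathcal{I}(C',-),X)\simeq \mathrm{Ext}^{k-1}_{\mathrm{mod}(\mathcal{C})}(K,X)$ and $0=\mathrm{Ext}^{k}_{R_P}(P^{\ast}(C'),(P,X))\simeq \mathrm{Ext}^{k-1}_{R_P}((P,K),(P,X))$, while $X\in\mathbb{I}_k$ makes $\Phi^{k-1}_{K,X}$ an isomorphism, yielding the vanishing. The main obstacle is the step $k=1\mapsto k=2$, where the dimension shift on $\mathrm{mod}(\mathcal{C})$ fails to produce an isomorphism in degree zero and only gives a cokernel presentation of $\mathrm{Ext}^{1}(\mathcal{I}(C',-),X)$. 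I plan to overcome this by comparing the right-exact tails $(P^n,X)\to (K,X)\to \mathrm{Ext}^{1}_{\mathrm{mod}(\mathcal{C})}(\mathcal{I}(C',-),X)\to 0$ and $(R_P^n,(P,X))\to ((P,K),(P,X))\to \mathrm{Ext}^{1}_{R_P}(P^{\ast}(C'),(P,X))\to 0$ via the natural maps $\Phi^{0}$, which are isomorphisms on every object (Proposition \ref{IkisoP}(a) at level $1$, since $X\in\mathbb{I}_1$); the bottom row ends in $0$ by hypothesis, so the top row does too, giving $\mathrm{Ext}^{1}(\mathcal{I}(C',-),X)=0$ and completing the induction.
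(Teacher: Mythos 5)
Your proof is correct, and the forward direction matches the paper's. For the converse, however, you follow a genuinely different route. The paper's induction peels off the injective coresolution of $X$: after establishing the $k=2$ case directly (via \ref{Phiniso}(a) with $\mathcal{I}(C',-)\in \mathbb{P}_{0}$ and $X\in\mathbb{I}_{1}$, followed by a dimension shift into $\mathrm{Ext}^{2}_{\mathrm{mod}(\mathcal{C})}$), it forms the short exact sequence $0\to X\to J_{0}\to L\to 0$ with $J_{0}\in\mathrm{add}(J)$ and $L\in\mathbb{I}_{1}$, applies $(P,-)$ (exact since $P$ is projective), shifts dimensions in $\mathrm{mod}(R_{P})$ against the injective $(P,J_{0})$ to transfer the vanishing hypothesis from $X$ to $L$, invokes the induction hypothesis on $L$ to get $L\in\mathbb{I}_{k-1}$, and glues back to get $X\in\mathbb{I}_{k}$. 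You instead peel off a projective presentation of $\mathcal{I}(C',-)$: you fix $0\to K\to P^{n}\to\mathcal{I}(C',-)\to 0$, transport it with $(P,-)$ to $0\to (P,K)\to R_{P}^{n}\to P^{\ast}(C')\to 0$, and shift dimensions on \emph{both} sides, with $\Phi^{k-1}_{K,X}$ (an isomorphism because $X\in\mathbb{I}_{k}$) bridging the two. Your argument is more uniform in the sense that the same syzygy $K$ does all the work, at the price of a degree-zero corner case where the dimension shift degenerates to a cokernel presentation; you handle that cleanly by comparing the two right-exact tails through $\Phi^{0}$. One small remark: the reduction to $\mathrm{Ext}^{k}(\mathcal{I}(C',-),X)=0$ is really via \ref{muchaseuiii}(c) (the criterion involving projectives in $\mathrm{mod}(\mathcal{C}/\mathcal{I})$) rather than (b) as you wrote, and the lower-degree vanishings $\mathrm{Ext}^{i}((\pi_{1})_{\ast}Q,X)=0$ for $i\le k$ are already supplied by $X\in\mathbb{I}_{k}$. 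With that bookkeeping fixed, the proof is complete and constitutes a legitimate alternative to the paper's argument.
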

\begin{proof}
$(\Rightarrow)$. Suppose that $X\in \mathbb{I}_{k}$. Then, wet that
$\mathrm{Ext}^{i}_{R_{P}}(P^{\ast}(C'), (P,X))$ $\simeq \mathrm{Ext}^{i}_{\mathrm{mod}(\mathcal{C})}\Big((C',-),X\Big)=0$
for all $1\leq i\leq k-1$ and for all $C'\in \mathcal{C}$.\\
$(\Leftarrow)$. Suppose that $\mathrm{Ext}^{i}_{R_{P}}(P^{\ast}(C'), (P,X))=0$ for all $1\leq i\leq k-1$ and for all $C'\in \mathcal{C}$. Let us see by induction on $k$ that $X\in \mathbb{I}_{k}$. For $k=1$, by hypothesis, we have that $X\in \mathbb{I}_{1}$. So let us check the first non trivial case, then suppose that $k=2$. Consider the ideal $\mathcal{I}=\mathrm{Tr}_{P}\mathcal{C}$ and $\pi_{1}:\mathcal{C}\longrightarrow \mathcal{C}/\mathcal{I}$ the projection. Since
$\mathcal{I}(C',-)=\mathrm{Tr}_{P}(\mathrm{Hom}_{\mathcal{C}}(C',-))$, we have that $\mathcal{I}(C',-)\in \mathbb{P}_{0}$. By \ref{Phiniso}(a), we have an isomorphism $\mathrm{Ext}^{1}_{\mathrm{mod}(\mathcal{C})}(\mathcal{I}(C',-),X)\longrightarrow \mathrm{Ext}_{R_{P}}^{1}\Big((P,\mathcal{I}(C',-)),(P,X)\Big)$. 
Since $\mathcal{I}(C',-)=\mathrm{Tr}_{P}(\mathrm{Hom}_{\mathcal{C}}(C',-))$, there exists an isomorphism $(P,\mathcal{I}(C',-))\simeq (P,\mathrm{Hom}_{\mathcal{C}}(C',-)).$
Then we have that
\begin{align*}
\mathrm{Ext}_{R_{P}}^{1}\Big(P^{\ast}(C'),(P,X)\Big) \simeq \mathrm{Ext}^{2}_{\mathrm{mod}(\mathcal{C})}\Big(\frac{\mathrm{Hom}_{\mathcal{C}}(C',-)}{\mathcal{I}(C',-)},X\Big)
\end{align*}
By hypothesis we have that $\mathrm{Ext}_{R_{P}}^{1}\Big(P^{\ast}(C'),(P,X)\Big) =0$, then we conclude that  $\mathrm{Ext}^{2}_{\mathrm{mod}(\mathcal{C})}\Big(\frac{\mathrm{Hom}_{\mathcal{C}}(C',-)}{\mathcal{I}(C',-)},X\Big)=0$.
This implies that  $\mathrm{Ext}^{2}_{\mathrm{mod}(\mathcal{C})}\Big((\pi_{1})_{\ast}(Q),X\Big)=0$ for all projective module $Q\in \mathrm{mod}(\mathcal{C}/\mathcal{I})$. Since $X\in \mathbb{I}_{1}$ (hypothesis), we have that $\mathrm{Ext}^{i}_{\mathrm{mod}(\mathcal{C})}\Big((\pi_{1})_{\ast}(Q),X\Big)=0$ for all projective module $Q\in \mathrm{mod}(\mathcal{C}/\mathcal{I})$ and $i=0,1$ (see \ref{muchaseuiii}.) By \ref{muchaseuiii}, we have that $X\in \mathbb{I}_{2}$.\\ 
Suppose that the theorem is true for $k-1$ with $k-1\geq 2$. Let $X\in \mathrm{mod}(\mathcal{C})$ such that $\mathrm{Ext}^{i}_{R_{P}}(P^{\ast}(C'), (P,X))=0$ for all $1\leq i\leq k-1$ and for all $C'\in \mathcal{C}$. In particular, we have that $\mathrm{Ext}^{1}_{R_{P}}(P^{\ast}(C'), (P,X))=0$  for all $C'\in \mathcal{C}$. Then, by the case $k=2$ just proved above, we have that $X\in \mathbb{I}_{2}$. Then, we have an exact sequence:
$(\star):0\rightarrow X\rightarrow I_{0}\rightarrow  L\rightarrow 0$ with
$I_{0}\in \mathrm{add}(J)$ and $L\in \mathbb{I}_{1}$. Applying $\mathrm{Hom}_{\mathrm{mod}(\mathcal{C})}(P,-)$ we get an exact sequence $0\rightarrow (P,X)\rightarrow (P, I_{0})\rightarrow (P,L)\rightarrow  0.$
Since $I_{0}\in \mathrm{add}(J)$, we have that $(P,I_{0})$ is an injective $R_{P}$-module (see \ref{lemainteres}(c)). Then, applying $\mathrm{Hom}_{R_{P}}(P^{\ast}(C'),-)$ to the last exact sequence, we have an isomorphism $\mathrm{Ext}^{i}_{R_{P}}(P^{\ast}(C'),(P,L))\simeq 	\mathrm{Ext}_{R_{P}}^{i+1}(P^{\ast}(C'),(P,X))$ for all $i\geq 1$. By hypothesis, we can conclude that  $\mathrm{Ext}^{i}_{R_{P}}(P^{\ast}(C'),(P,L))=0$ for all $i=1,\dots, k-2$. Since $L\in \mathbb{I}_{1}$, we can apply the induction to $L$. Then we conclude that $L\in \mathbb{I}_{k-1}$. From the exact sequence $(\star)$ we conclude that $X\in \mathbb{I}_{k}$. 
\end{proof}

\begin{proposition}\label{I1=Iinfi}
Let $\mathcal{C}$ be a dualizing $R$-variety and $P=\mathrm{Hom}_{\mathcal{C}}(C,-)\in \mathrm{mod}(\mathcal{C})$.
The following holds.
\begin{enumerate}
\item [(a)] $\mathbb{I}_{1}=\mathbb{I}_{\infty}$ if and only if $P^{\ast}(C')=\mathrm{Hom}_{\mathrm{mod}(\mathcal{C})}(P,\mathrm{Hom}_{\mathcal{C}}(C',-))$ is a projective $R_{P}$-module for all $C'\in \mathcal{C}$.

\item [(b)]  $\mathbb{P}_{1}=\mathbb{P}_{\infty}$ if and only if $P(C')\simeq \mathrm{Hom}_{\mathrm{mod}(\mathcal{C})}(\mathrm{Hom}_{\mathcal{C}}(C',-),P)$ is a projective $R_{P}^{op}$-module for all $C'\in \mathcal{C}$.
\end{enumerate}
\end{proposition}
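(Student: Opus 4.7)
The plan is to deduce part (a) by combining Proposition \ref{unacarusinmo}, which characterizes membership in $\mathbb{I}_k$ for $X\in\mathbb{I}_1$ via vanishing of $\mathrm{Ext}^{i}_{R_{P}}(P^{\ast}(C'),(P,X))$, with the equivalence $\mathrm{Hom}_{\mathrm{mod}(\mathcal{C})}(P,-):\mathbb{I}_{1}\to\mathrm{mod}(R_{P})$ established in Proposition \ref{lemainteres}(a); then part (b) follows from (a) applied to $\mathcal{C}^{op}$ via the duality of Proposition \ref{DualPkIk}.

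For the direction $(\Leftarrow)$ of (a), assume every $P^{\ast}(C')$ is projective in $\mathrm{mod}(R_{P})$. Given $X\in\mathbb{I}_{1}$, the vanishing $\mathrm{Ext}^{i}_{R_{P}}(P^{\ast}(C'),(P,X))=0$ holds automatically for every $i\geq 1$ and every $C'\in\mathcal{C}$, so Proposition \ref{unacarusinmo} applied with $k$ arbitrary forces $X\in\mathbb{I}_{k}$ for all $k$, hence $X\in\mathbb{I}_{\infty}$. The inclusion $\mathbb{I}_{\infty}\subseteq\mathbb{I}_{1}$ is trivial, so $\mathbb{I}_{1}=\mathbb{I}_{\infty}$.

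For the direction $(\Rightarrow)$ of (a), assume $\mathbb{I}_{1}=\mathbb{I}_{\infty}$. By the equivalence in Proposition \ref{lemainteres}(a), every $M\in\mathrm{mod}(R_{P})$ is isomorphic to $\mathrm{Hom}_{\mathrm{mod}(\mathcal{C})}(P,X)$ for some $X\in\mathbb{I}_{1}$. Since $\mathbb{I}_{1}=\mathbb{I}_{\infty}$, this $X$ belongs to $\mathbb{I}_{k}$ for every $k\geq 1$, and Proposition \ref{unacarusinmo} then yields $\mathrm{Ext}^{i}_{R_{P}}(P^{\ast}(C'),M)=0$ for all $i\geq 1$ and all $C'\in\mathcal{C}$. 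Because $R_{P}$ is an artin algebra and this vanishing holds against every finitely generated $R_{P}$-module, we conclude that $P^{\ast}(C')$ is projective in $\mathrm{mod}(R_{P})$.

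Part (b) is obtained by transporting (a) through the duality. The variety $\mathcal{C}^{op}$ is dualizing, the module $P^{\ast}=\mathrm{Hom}_{\mathcal{C}}(-,C)\in\mathrm{mod}(\mathcal{C}^{op})$ is finitely generated projective, and by Proposition \ref{DualPkIk} the condition $\mathbb{P}_{1}=\mathbb{P}_{\infty}$ in $\mathrm{mod}(\mathcal{C})$ is equivalent to $\mathbb{I}_{1}^{\ast}=\mathbb{I}_{\infty}^{\ast}$ in $\mathrm{mod}(\mathcal{C}^{op})$. Applying part (a) to $(\mathcal{C}^{op},P^{\ast})$, using the canonical ring isomorphism $R_{P^{\ast}}\simeq R_{P}^{op}$ induced by the contravariance of $(-)^{\ast}$, and the Yoneda identification $(P^{\ast})^{\ast}(C')\simeq P(C')$, we conclude that the equality $\mathbb{P}_{1}=\mathbb{P}_{\infty}$ is equivalent to $P(C')$ being a projective $R_{P}^{op}$-module for every $C'\in\mathcal{C}$. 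The only mild subtlety in the whole argument is to verify that the equivalence $\mathrm{Hom}(P,-):\mathbb{I}_{1}\to\mathrm{mod}(R_{P})$ really provides every finitely generated $R_{P}$-module as $(P,X)$ with $X\in\mathbb{I}_{1}$, and that the duality identifications in (b) are compatible with the $R_{P}$-, respectively $R_{P}^{op}$-module structures.
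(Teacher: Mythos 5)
Your proposal is correct and follows essentially the same route as the paper's proof: both directions of (a) rest on the equivalence $\mathrm{Hom}_{\mathrm{mod}(\mathcal{C})}(P,-)|_{\mathbb{I}_1}:\mathbb{I}_1\to\mathrm{mod}(R_P)$ from Proposition~\ref{lemainteres}(a) combined with the $\mathrm{Ext}$-vanishing characterization of Proposition~\ref{unacarusinmo}, and (b) is deduced by duality. You merely spell out the converse and the dualization, which the paper compresses into ``analogous'' and ``similar.''
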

\begin{proof}
$(\Rightarrow)$. Suppose that $\mathbb{I}_{1}=\mathbb{I}_{\infty}$. Consider $Z\in \mathrm{mod}(R_{P})$. Since $\mathrm{Hom}_{\mathrm{mod}(\mathcal{C})}(P,-)|_{\mathbb{I}_{1}}:\mathbb{I}_{1}\longrightarrow \mathrm{mod}(R_{P})$ is an equivalence (see  \ref{lemainteres}), there exists a $X\in \mathbb{I}_{1}$ such that 
$Z\simeq (P,X)$. Then, $\mathrm{Ext}_{R_{P}}^{i}(P^{\ast}(C'),Z)\simeq \mathrm{Ext}_{R_{P}}^{i}(P^{\ast}(C'),(P,X))=0$ for all $i\geq 1$ (since $X \in \mathbb{I}_{1}=\mathbb{I}_{\infty}$ and \ref{unacarusinmo}). This proves that $P^{\ast}(C')$ is a projective $R_{P}$-module. The converse is analogous and the proof of $(b)$ is similar.

\end{proof}

\begin{proposition}\label{especialsuc1}
Let $\mathcal{C}$ be a dualizing $R$-variety, $P=\mathrm{Hom}_{\mathcal{C}}(C,-)\in \mathrm{mod}(\mathcal{C})$ and $M\in \mathrm{mod}(\mathcal{C})$. 
\begin{enumerate}
\item [(a)]
There exists an exact sequence
$$\xymatrix{0\ar[r] & K_{1}\ar[r]^(.2){u} & P \otimes_{R_{P}} \mathrm{Hom}_{\mathrm{Mod}(\mathcal{C})}(P,M)\ar[r]^(.8){\epsilon_{M}'} &  M\ar[r]^{p} & K_{2}\ar[r] & 0}$$
with $M_{1}:=P \otimes_{R_{P}} \mathrm{Hom}_{\mathrm{Mod}(\mathcal{C})}(P,M)\in \mathbb{P}_{1}$ and $K_{1},K_{2}\in \mathrm{Ker}(\mathrm{Hom}(P,-))$.	

\item [(b)] $\mathrm{Hom}_{\mathrm{mod}(\mathcal{C})}(P,\epsilon_{M}'):\mathrm{Hom}_{\mathrm{mod}(\mathcal{C})}(P,M_{1})\longrightarrow \mathrm{Hom}_{\mathrm{mod}(\mathcal{C})}(P,M)$ is an isomorphism.

\end{enumerate}
\end{proposition}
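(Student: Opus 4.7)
The plan is to obtain the four-term exact sequence directly by letting $\epsilon_{M}'$ be the counit of the adjunction $(P\otimes_{R_{P}}-,\,\mathrm{Hom}_{\mathrm{mod}(\mathcal{C})}(P,-))$ provided by the recollement \ref{recolleproyefin}, and then setting $K_{1}:=\ker(\epsilon_{M}')$ and $K_{2}:=\mathrm{coker}(\epsilon_{M}')$. The membership $M_{1}\in\mathbb{P}_{1}$ follows from Proposition \ref{euivalenciaproj1}: since $M_{1}$ is of the form $P\otimes_{R_{P}}X$ with $X=\mathrm{Hom}_{\mathrm{mod}(\mathcal{C})}(P,M)$, the implication (b)$\Rightarrow$(c) of that proposition yields a two-term presentation of $M_{1}$ by objects of $\mathrm{add}(P)$, which by the identification $\mathbb{F.P.P}(P)=\mathbb{P}_{1}$ (already invoked in the proof of \ref{lemainteres}(a)) places $M_{1}$ in $\mathbb{P}_{1}$.

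I would next prove (b) and deduce from it that $\mathrm{Hom}_{\mathrm{mod}(\mathcal{C})}(P,-)$ kills both $K_{1}$ and $K_{2}$. By condition (R3) of the recollement \ref{recolleproyefin}, the functor $P\otimes_{R_{P}}-$ is a full embedding, so the unit $\eta_{X}:X\longrightarrow\mathrm{Hom}_{\mathrm{mod}(\mathcal{C})}(P,\,P\otimes_{R_{P}}X)$ of the adjunction is an isomorphism for every $X\in\mathrm{mod}(R_{P})$. Specialising to $X=\mathrm{Hom}_{\mathrm{mod}(\mathcal{C})}(P,M)$ and invoking the triangular identity
$$\mathrm{Hom}_{\mathrm{mod}(\mathcal{C})}(P,\epsilon_{M}')\circ\eta_{\mathrm{Hom}_{\mathrm{mod}(\mathcal{C})}(P,M)}=\mathrm{id}_{\mathrm{Hom}_{\mathrm{mod}(\mathcal{C})}(P,M)},$$
one concludes that $\mathrm{Hom}_{\mathrm{mod}(\mathcal{C})}(P,\epsilon_{M}')$ is an isomorphism, which is exactly (b).

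To finish (a), split the four-term sequence into the two short exact sequences
$$0\to K_{1}\to M_{1}\to\mathrm{Im}(\epsilon_{M}')\to 0,\qquad 0\to\mathrm{Im}(\epsilon_{M}')\to M\to K_{2}\to 0,$$
and apply $\mathrm{Hom}_{\mathrm{mod}(\mathcal{C})}(P,-)$, which is exact because $P$ is projective. Concatenating the resulting short exact sequences produces an exact sequence whose middle map is $\mathrm{Hom}_{\mathrm{mod}(\mathcal{C})}(P,\epsilon_{M}')$; since (b) identifies this map as an isomorphism, both $\mathrm{Hom}_{\mathrm{mod}(\mathcal{C})}(P,K_{1})=0$ and $\mathrm{Hom}_{\mathrm{mod}(\mathcal{C})}(P,K_{2})=0$, so $K_{1},K_{2}\in\mathrm{Ker}(\mathrm{Hom}_{\mathrm{mod}(\mathcal{C})}(P,-))$, completing (a).

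The main obstacle is really the content of (b): everything hinges on converting the fully-faithfulness of $P\otimes_{R_{P}}-$ supplied by the recollement into the statement that $\eta$ is an isomorphism, so that the triangular identity forces $\mathrm{Hom}_{\mathrm{mod}(\mathcal{C})}(P,\epsilon_{M}')$ to be an isomorphism. Once this is in place, the decomposition of the four-term sequence and the exactness of $\mathrm{Hom}_{\mathrm{mod}(\mathcal{C})}(P,-)$ dispatch the remaining claims by routine diagram chasing.
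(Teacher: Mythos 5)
Your proof is correct and follows essentially the same approach as the paper's, which simply says to consider the counit of the adjoint pair $(P\otimes_{R_{P}}-,\mathrm{Hom}_{\mathrm{mod}(\mathcal{C})}(P,-))$; you have filled in the details — defining $K_{1}$, $K_{2}$ as kernel and cokernel of the counit, deducing (b) from the fully-faithfulness of $P\otimes_{R_{P}}-$ via the triangular identity, and using exactness of $\mathrm{Hom}_{\mathrm{mod}(\mathcal{C})}(P,-)$ to place $K_{1}$, $K_{2}$ in $\mathrm{Ker}(\mathrm{Hom}(P,-))$ — in exactly the way the paper intends.
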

\begin{proof}
This follows by considering, the counit of the following adjoint pair of functors $(P\otimes_{R_{P}}-,\mathrm{Hom}_{\mathrm{mod}(\mathcal{C})}(P,-))$.
\end{proof}
Now, we give other necessary and sufficient conditions for $\mathbb{I}_{1}$ to be
equal to $\mathbb{I}_{\infty}$.

\begin{proposition}\label{I1proje}
Let $\mathcal{C}$ be a dualizing $R$-variety and $P=\mathrm{Hom}_{\mathcal{C}}(C,-)\in \mathrm{mod}(\mathcal{C})$. The following are equivalent
\begin{enumerate}
\item [(a)] $\mathbb{I}_{1}=\mathbb{I}_{\infty}$

\item [(b)] $P \otimes_{R_{P}} \mathrm{Hom}_{\mathrm{Mod}(\mathcal{C})}(P,\mathrm{Hom}_{\mathcal{C}}(C',-))$ is a projective $\mathcal{C}$-module for all $C'\in \mathcal{C}$.
\end{enumerate}
\end{proposition}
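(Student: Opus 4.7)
The plan is to reduce the statement to Proposition \ref{I1=Iinfi}(a), which already characterizes $\mathbb{I}_{1}=\mathbb{I}_{\infty}$ by the projectivity of $P^{\ast}(C')=\mathrm{Hom}_{\mathrm{mod}(\mathcal{C})}(P,\mathrm{Hom}_{\mathcal{C}}(C',-))$ as an $R_{P}$-module for every $C'\in \mathcal{C}$. Setting $N:=P^{\ast}(C')\in \mathrm{mod}(R_{P})$, the task becomes: $N$ is projective in $\mathrm{mod}(R_{P})$ if and only if $P\otimes_{R_{P}}N$ is projective in $\mathrm{mod}(\mathcal{C})$. The bridge between these two statements is the equivalence $\mathrm{Hom}_{\mathrm{mod}(\mathcal{C})}(P,-)|_{\mathbb{P}_{1}}:\mathbb{P}_{1}\to \mathrm{mod}(R_{P})$ from Proposition \ref{lemainteres}(a), whose quasi-inverse is the restriction of $P\otimes_{R_{P}}-$ (as the unique left adjoint). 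In particular, for every $N\in \mathrm{mod}(R_{P})$ one has $P\otimes_{R_{P}}N\in \mathbb{P}_{1}$ and a natural isomorphism $N\simeq \mathrm{Hom}_{\mathrm{mod}(\mathcal{C})}(P,P\otimes_{R_{P}}N)$; moreover, by Proposition \ref{lemainteres}(c), this equivalence restricts to an equivalence $\mathrm{add}(P)\simeq \mathrm{add}(R_{P})=\mathrm{proj}(R_{P})$.

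For the forward implication, assume $N$ is a projective $R_{P}$-module. Then $N\in\mathrm{add}(R_{P})$, so applying the additive functor $P\otimes_{R_{P}}-$ together with the natural isomorphism $P\otimes_{R_{P}}R_{P}\simeq P$ yields $P\otimes_{R_{P}}N\in\mathrm{add}(P)$, and therefore it is projective in $\mathrm{mod}(\mathcal{C})$.

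For the converse, suppose $M:=P\otimes_{R_{P}}N$ is projective in $\mathrm{mod}(\mathcal{C})$. Since $M\in\mathbb{P}_{1}$, there exists a presentation $\xymatrix{P_{1}\ar[r] & P_{0}\ar[r] & M\ar[r] & 0}$ with $P_{0},P_{1}\in \mathrm{add}(P)$. Projectivity of $M$ forces the epimorphism $P_{0}\to M$ to split, whence $M\in \mathrm{add}(P)$. Applying $\mathrm{Hom}_{\mathrm{mod}(\mathcal{C})}(P,-)$ and using $N\simeq \mathrm{Hom}_{\mathrm{mod}(\mathcal{C})}(P,M)$ together with the equivalence $\mathrm{add}(P)\simeq \mathrm{proj}(R_{P})$, we conclude $N\in\mathrm{add}(R_{P})$; that is, $N$ is projective in $\mathrm{mod}(R_{P})$. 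Combining with Proposition \ref{I1=Iinfi}(a) gives the desired equivalence.

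The only potentially delicate points are ensuring that $P\otimes_{R_{P}}-$ indeed lands in $\mathbb{P}_{1}\subseteq \mathrm{mod}(\mathcal{C})$ and that the unit of the adjunction $(P\otimes_{R_{P}}-,\mathrm{Hom}_{\mathrm{mod}(\mathcal{C})}(P,-))$ is an isomorphism on all of $\mathrm{mod}(R_{P})$; but both facts are already built into the recollement of Proposition \ref{recolleproyefin} and the equivalence of Proposition \ref{lemainteres}(a), so no new computation is required. Thus the proof is essentially an application of those two structural results to the single module $N=P^{\ast}(C')$.
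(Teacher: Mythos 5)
Your proposal is correct and follows essentially the same route as the paper. The paper proves $(b)\Rightarrow(a)$ exactly as you argue the converse of your reduction (using \ref{especialsuc1}(b) in place of your invocation of the unit of the adjunction being an isomorphism, but these are the same fact packaged differently, both coming from the fully faithfulness of $P\otimes_{R_{P}}-$ in the recollement \ref{recolleproyefin}), and then says "the other implication is similar," which matches your forward direction via the equivalence $\mathrm{add}(P)\simeq\mathrm{proj}(R_{P})$ from \ref{lemainteres}(c). Both proofs reduce to \ref{I1=Iinfi}(a) and exploit the equivalence $\mathbb{P}_{1}\simeq\mathrm{mod}(R_{P})$.
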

\begin{proof}
$(b)\Rightarrow (a)$. Let $M=\mathrm{Hom}_{\mathcal{C}}(C',-)$ and we consider the module $M_{1}:=P \otimes_{R_{P}} \mathrm{Hom}_{\mathrm{Mod}(\mathcal{C})}(P,\mathrm{Hom}_{\mathcal{C}}(C',-))\in \mathrm{mod}(\mathcal{C})$. By \ref{especialsuc1},  there exists a morphism $\epsilon_{M}':M_{1}\longrightarrow M$ such that
$\mathrm{Hom}_{\mathrm{mod}(\mathcal{C})}(P,\epsilon_{M}'):\mathrm{Hom}_{\mathrm{mod}(\mathcal{C})}(P,M_{1})\longrightarrow \mathrm{Hom}_{\mathrm{mod}(\mathcal{C})}(P,M)$ is an isomorphism. Suppose that $M_{1}$ is a projective $\mathcal{C}$-module. Since $M_{1}\in \mathbb{P}_{1}$, we have that there exists an epimorphism $P^{n}\longrightarrow M_{1}$. 
Then we have that $M_{1}\in \mathrm{add}(P)$. By \ref{lemainteres}, we have that $\mathrm{Hom}_{\mathrm{mod}(\mathcal{C})}(P,M_{1})$ is a projective $R_{P}$-module. But $\mathrm{Hom}_{\mathrm{mod}(\mathcal{C})}(P,M)=P^{\ast}(C')$. Then we have that $P^{\ast}(C')$ is a  projective $R_{P}$-module for all $C'\in \mathcal{C}$. By \ref{I1=Iinfi}, we have that  $\mathbb{I}_{1}=\mathbb{I}_{\infty}$.  The other implication is similar.
\end{proof}

\begin{proposition}\label{idealproye}
Let $\mathcal{C}$ be a dualizing $R$-variety, $P=\mathrm{Hom}_{\mathcal{C}}(C,-)\in \mathrm{mod}(\mathcal{C})$ and  $\mathcal{I}=\mathrm{Tr}_{P}\mathcal{C}$.  The following are equivalent.
\begin{enumerate}
\item [(a)]  $\mathcal{I}$ is $2$-idempotent and $\mathbb{I}_{1}=\mathbb{I}_{\infty}$;

\item [(b)] $\mathcal{I}(C',-)$ is a projective $\mathcal{C}$-module for all $C'\in \mathcal{C}$.
\end{enumerate}
\end{proposition}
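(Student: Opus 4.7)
The plan is to pivot both conditions around the identification
$\mathrm{Hom}_{\mathrm{mod}(\mathcal{C})}(P,\mathcal{I}(C',-)) \simeq P^{\ast}(C')$, combining Proposition \ref{otracaidemo} (which rewrites $2$-idempotency of $\mathcal{I}$ as $\mathcal{I}(C',-)\in\mathbb{P}_{1}$ for every $C'$) with Propositions \ref{I1=Iinfi} and \ref{I1proje} (which encode $\mathbb{I}_{1}=\mathbb{I}_{\infty}$ as projectivity of $P^{\ast}(C')$ over $R_{P}$ and of $P\otimes_{R_{P}}P^{\ast}(C')$ over $\mathcal{C}$, respectively). To get the key identification, I would apply the exact functor $\mathrm{Hom}_{\mathrm{mod}(\mathcal{C})}(P,-)$ (exact because $P$ is projective) to the short exact sequence
$0\to \mathcal{I}(C',-)\to \mathrm{Hom}_{\mathcal{C}}(C',-)\to (\pi_{1})_{\ast}(\mathrm{Hom}_{\mathcal{C}/\mathcal{I}}(C',-))\to 0$ and invoke Lemma \ref{percero}, which kills $\mathrm{Hom}(P,(\pi_{1})_{\ast}(-))$; this yields $(P,\mathcal{I}(C',-))\simeq (P,(C',-))=P^{\ast}(C')$.

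For $(a)\Rightarrow(b)$, $2$-idempotency of $\mathcal{I}$ plus Proposition \ref{otracaidemo} give $\mathcal{I}(C',-)\in\mathbb{P}_{1}=\mathbb{F.P.P}(P)$, so by Proposition \ref{euivalenciaproj1}(a) the counit $P\otimes_{R_{P}}(P,\mathcal{I}(C',-))\to \mathcal{I}(C',-)$ is an isomorphism; the identification above rewrites this as $\mathcal{I}(C',-)\simeq P\otimes_{R_{P}}P^{\ast}(C')$. Hypothesis $\mathbb{I}_{1}=\mathbb{I}_{\infty}$ and Proposition \ref{I1proje} force the right-hand side to be projective, and we are done. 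For $(b)\Rightarrow(a)$, property $A$ (see \ref{trasatproA}) makes $\mathcal{I}(C',-)$ finitely generated, and being a trace ideal $\mathcal{I}(C',-)=\mathrm{Tr}_{P}((C',-))$ is a quotient of some $P^{n}$; projectivity then splits this epimorphism, so $\mathcal{I}(C',-)\in \mathrm{add}(P)\subseteq \mathbb{P}_{\infty}\subseteq \mathbb{P}_{1}$, and Proposition \ref{otracaidemo} yields $2$-idempotency. Furthermore, $\mathcal{I}(C',-)\in\mathrm{add}(P)$ combined with the equivalence in Proposition \ref{lemainteres}(c) shows that $(P,\mathcal{I}(C',-))$ is a projective $R_{P}$-module, and the identification above says this is $P^{\ast}(C')$; Proposition \ref{I1=Iinfi}(a) then gives $\mathbb{I}_{1}=\mathbb{I}_{\infty}$.

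The main obstacle—though ultimately a mild one—is precisely the isomorphism $(P,\mathcal{I}(C',-))\simeq P^{\ast}(C')$, since it is what transports the projectivity datum between the trace ideal $\mathcal{I}(C',-)$ and the representable $P^{\ast}(C')$ controlling $\mathbb{I}_{\infty}$. Once that bridge is in place, every remaining step is a direct appeal to the recollement machinery and the characterizations of $\mathbb{P}_{k}$ and $\mathbb{I}_{k}$ developed in Sections \ref{sec:6}--\ref{sec:7}.
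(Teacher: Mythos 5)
Your proposal is correct and follows essentially the same route as the paper: reduce $2$-idempotency to $\mathcal{I}(C',-)\in\mathbb{P}_{1}$ via Proposition \ref{otracaidemo}, bridge to $P^{\ast}(C')$ through the identification $(P,\mathcal{I}(C',-))\simeq(P,\mathrm{Hom}_{\mathcal{C}}(C',-))$ (the paper derives it directly from $\mathcal{I}(C',-)=\mathrm{Tr}_{P}((C',-))$ while you apply $\mathrm{Hom}(P,-)$ to the short exact sequence and invoke \ref{percero}, an equivalent observation), and finish with Propositions \ref{I1proje}/\ref{I1=Iinfi} and \ref{lemainteres}. The one small service you render beyond the paper is to write out the $(b)\Rightarrow(a)$ direction, which the paper dismisses as ``similar''; your argument there is sound.
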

\begin{proof}
$(a)\Rightarrow (b)$. Let $C'\in \mathcal{C}$, by \ref{otracaidemo}, we have that $\mathcal{I}(C',-)\in \mathbb{P}_{1}$ for all $C'\in \mathcal{C}$. Then, by \ref{lemainteres} we have that $\mathcal{I}(C',-)= P \otimes_{R_{P}} \mathrm{Hom}_{\mathrm{mod}(\mathcal{C})}(P,\mathcal{I}(C',-))$.\\ But, since $\mathcal{I}(C',-)=\mathrm{Tr}_{P}(\mathrm{Hom}_{\mathcal{C}}(C',-))$, we have the following isomorphism of $R_{P}$-modules $\mathrm{Hom}_{\mathrm{mod}(\mathcal{C})}(P,\mathcal{I}(C',-))=\mathrm{Hom}_{\mathrm{mod}(\mathcal{C})}(P,\mathrm{Hom}_{\mathcal{C}}(C',-))$.
Then $P \otimes_{R_{P}} \mathrm{Hom}_{\mathrm{mod}(\mathcal{C})}(P,\mathrm{Hom}_{\mathcal{C}}(C',-))=\mathcal{I}(C',-).$ Since $\mathbb{I}_{1}=\mathbb{I}_{\infty}$ by \ref{I1proje}, we conclude that $\mathcal{I}(C',-)$ is projective. The other implication is similar.
\end{proof}

\begin{proposition}\label{quasiher2}
Let $\mathcal{C}$ be a dualizing $R$-variety with cokernels and  consider $P=\mathrm{Hom}_{\mathcal{C}}(C,-)\in \mathrm{mod}(\mathcal{C})$. If  $\mathcal{I}(C',-)$ is projective for all $C'\in \mathcal{C}$. Then we have that $R_{P}$ is quasi-hereditary.
\end{proposition}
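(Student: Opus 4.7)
The plan is to reduce this statement directly to two previously established results: Proposition \ref{idealproye}, which converts projectivity of the ideal into a homological statement about $\mathbb{I}_\infty$, and Proposition \ref{quasiRp}, which bounds the global dimension of $R_P$.

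First I would invoke Proposition \ref{idealproye}: the hypothesis that $\mathcal{I}(C',-)$ is projective in $\mathrm{mod}(\mathcal{C})$ for all $C' \in \mathcal{C}$ is equivalent to the conjunction that $\mathcal{I}=\mathrm{Tr}_P\mathcal{C}$ is $2$-idempotent and $\mathbb{I}_{1}=\mathbb{I}_{\infty}$. In particular, we obtain the equality $\mathbb{I}_{1}=\mathbb{I}_{\infty}$ for free, which is exactly the hypothesis needed to apply the next step.

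Next, since $\mathcal{C}$ is assumed to be a dualizing $R$-variety with cokernels, Proposition \ref{quasiRp} applies: together with $\mathbb{I}_{1}=\mathbb{I}_{\infty}$, it yields the global dimension bound $\mathrm{gl.dim}(R_{P})\leq 2$. This is the crucial input because it connects the homological behaviour of $\mathrm{mod}(\mathcal{C})$ (where $\mathrm{gl.dim}\leq 2$ holds thanks to Auslander's coherence theorem, as already used in the proof of \ref{quasiRp}) with $\mathrm{mod}(R_{P})$, via the equivalences of Proposition \ref{lemainteres} and Corollary \ref{dinprojigualdimproj}.

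Finally, I would conclude by citing the theorem of Dlab--Ringel (\cite[Theorem 2]{DR2}) already quoted in the paper, which asserts that any artin algebra whose global dimension is at most $2$ admits a quasi-hereditary structure. Since $R_{P}=\mathrm{End}_{\mathrm{mod}(\mathcal{C})}(P)^{op}$ is an artin $R$-algebra (being the opposite of the endomorphism ring of a finitely generated module over a dualizing $R$-variety), the bound $\mathrm{gl.dim}(R_{P})\leq 2$ forces $R_{P}$ to be quasi-hereditary, completing the argument. The proof is essentially a one-line chaining of \ref{idealproye} and \ref{quasiRp}; the only potential obstacle is verifying that all standing hypotheses (dualizing, with cokernels, $P$ finitely generated projective of the form $\mathrm{Hom}_{\mathcal{C}}(C,-)$) are inherited by both invoked propositions, which they are by direct inspection.
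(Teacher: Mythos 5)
Your proposal matches the paper's proof exactly: it chains Proposition \ref{idealproye} (projectivity of $\mathcal{I}(C',-)$ gives $\mathbb{I}_{1}=\mathbb{I}_{\infty}$) with Proposition \ref{quasiRp} ($\mathbb{I}_{1}=\mathbb{I}_{\infty}$ plus cokernels gives $\mathrm{gl.dim}(R_P)\leq 2$, hence quasi-hereditary by Dlab--Ringel). The paper states this more tersely, but your unpacking of the two steps and the intermediate global-dimension bound is precisely what the paper's one-line citation encodes.
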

\begin{proof}
It follows from \ref{quasiRp} and \ref{idealproye}.
\end{proof}

\begin{proposition}\label{fullcatdev}
Let $\mathcal{C}$ be a dualizing $R$-variety,  $P=\mathrm{Hom}_{\mathcal{C}}(C,-)\in \mathrm{mod}(\mathcal{C})$ and $\mathcal{I}=\mathrm{Tr}_{P}\mathcal{C}$. Consider the functor $\pi_{\ast}:\mathrm{mod}(\mathcal{C}/\mathcal{I})\longrightarrow \mathrm{mod}(\mathcal{C}).$ If  $\mathcal{I}(C',-)$ is projective for all $C'\in \mathcal{C}$, we have a full  embedding
$$\mathrm{D}^{b}(\pi_{\ast}):\mathrm{D}^{b}(\mathrm{mod}(\mathcal{C}/\mathcal{I}))\longrightarrow \mathrm{D}^{b}(\mathrm{mod}(\mathcal{C}))$$
between its  bounded derived categories.
\end{proposition}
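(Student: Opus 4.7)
The plan is to invoke the following well-known criterion from the theory of derived categories: if $F:\mathcal{A}\to\mathcal{B}$ is an exact functor between abelian categories (with $\mathcal{A}$ having enough projectives or injectives) such that for every pair $X,Y\in\mathcal{A}$ and every $i\geq 0$ the canonical comparison
$$\mathrm{Ext}^{i}_{\mathcal{A}}(X,Y)\longrightarrow \mathrm{Ext}^{i}_{\mathcal{B}}(FX,FY)$$
is an isomorphism, then the induced triangulated functor $D^{b}(F):D^{b}(\mathcal{A})\to D^{b}(\mathcal{B})$ is a full embedding. With this in hand, it suffices to verify the hypotheses for $F=\pi_{\ast}$.

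First I would note that $\pi_{\ast}:\mathrm{mod}(\mathcal{C}/\mathcal{I})\to \mathrm{mod}(\mathcal{C})$ is exact, since composition with the projection $\pi$ preserves kernels and cokernels pointwise, and the restriction to finitely presented objects is already justified by \ref{trasatproA} and \ref{restresfuntores}. Next, I would observe that the hypothesis forces the ideal $\mathcal{I}$ to be strongly idempotent. Indeed, each $\mathcal{I}(C',-)$ being projective trivially lies in $\mathbb{P}_{\infty}$ for every $C'\in\mathcal{C}$, so Corollary \ref{caractstronidemp} yields the strong idempotency of $\mathcal{I}$.

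By the definition of strong idempotency (\ref{kidemcat}(b)) combined with Corollary \ref{caractidemfin2} applied in the finitely presented setting for every finite $k$, the comparison maps
$$\varphi^{i}_{F,\pi_{\ast}(F')}:\mathrm{Ext}^{i}_{\mathrm{mod}(\mathcal{C}/\mathcal{I})}(F,F')\longrightarrow \mathrm{Ext}^{i}_{\mathrm{mod}(\mathcal{C})}(\pi_{\ast}(F),\pi_{\ast}(F'))$$
are isomorphisms for all $F,F'\in\mathrm{mod}(\mathcal{C}/\mathcal{I})$ and all $i\geq 0$. Together with the exactness of $\pi_{\ast}$, this is exactly the input required by the criterion, producing the desired full embedding $D^{b}(\pi_{\ast}):D^{b}(\mathrm{mod}(\mathcal{C}/\mathcal{I}))\longrightarrow D^{b}(\mathrm{mod}(\mathcal{C}))$.

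The main obstacle is the dévissage passing from object-level Ext-isomorphisms to the Hom-isomorphisms $\mathrm{Hom}_{D^{b}(\mathcal{A})}(X,Y[i])\simeq \mathrm{Hom}_{D^{b}(\mathcal{B})}(FX,FY[i])$ for complexes with several nonzero cohomologies. I would handle this by a double induction on the cohomological amplitudes of the source and target, slicing via the truncation triangles $\tau^{\leq n}Z\to Z\to \tau^{>n}Z\to$ and applying the five lemma to the associated long exact $\mathrm{Hom}$-sequences; the base case reduces precisely to the Ext-isomorphism on single modules established in the previous paragraph. This is a standard argument in the literature on derived categories, and I would either reproduce the dévissage explicitly or appeal to it as a known fact.
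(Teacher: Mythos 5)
Your overall strategy coincides with the paper's: reduce to strong idempotency via Corollary \ref{caractstronidemp}, obtain the Ext-isomorphisms, and then invoke the general criterion (the paper cites \cite[Theorem 4.3]{GeigleLen} for this, where you instead sketch the d\'evissage by hand; both are fine).

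There is, however, a small but real gap in the middle step. You assert that ``each $\mathcal{I}(C',-)$ being projective trivially lies in $\mathbb{P}_{\infty}$''. This is not trivial, and in fact it is false for an arbitrary projective module: $\mathbb{P}_{\infty}$ requires a projective resolution with \emph{all} terms in $\mathrm{add}(P)$, so in particular the module itself must already lie in $\mathrm{add}(P)$, which an arbitrary projective $\mathcal{C}$-module need not. The missing ingredient is the hypothesis $\mathcal{I}=\mathrm{Tr}_{P}\mathcal{C}$: since $\mathcal{I}(C',-)=\mathrm{Tr}_P(\mathrm{Hom}_{\mathcal{C}}(C',-))$ is finitely generated and generated by $P$, there is an epimorphism $P^{n}\twoheadrightarrow\mathcal{I}(C',-)$ for some $n$; projectivity of $\mathcal{I}(C',-)$ then makes this split, so $\mathcal{I}(C',-)\in\mathrm{add}(P)\subseteq\mathbb{P}_{\infty}$. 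This is exactly the argument the paper gives, and once you insert it your proof goes through as intended.
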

\begin{proof}
Since  $\mathcal{I}=\mathrm{Tr}_{P}\mathcal{C}$,  we get an epimorphism  $P^{n}\longrightarrow \mathcal{I}(C',-)$ for each $C'\in \mathcal{C}$.
Since $\mathcal{I}(C',-)$ is projective for all $C'\in \mathcal{C}$, we have that $\mathcal{I}(C',-)\in \mathrm{add}(P)\subseteq \mathbb{P}_{\infty}$. Then, by \ref{caractstronidemp}, we have that $\mathcal{I}$ is strongly idempotent. That is, $$\varphi^{i}_{F,\pi_{\ast}(F')}:\mathrm{Ext}^{i}_{\mathrm{mod}(\mathcal{C}/I)}(F,F')\longrightarrow \mathrm{Ext}^{i}_{\mathrm{mod}(\mathcal{C})}(\pi_{\ast}(F),\pi_{\ast}(F'))$$ is an isomorphism for all $F,F'\in \mathrm{mod}(\mathcal{C}/\mathcal{I})$ and for all $0\leq i < \infty$
(see definition \ref{kidemcat}).  By \cite[Theorem 4.3]{GeigleLen}, we have the required full embedding.
\end{proof}

\section{Some examples}\label{sec:9}

Consider an algebraically closed field $K$  and the infinite quiver 
$$Q:\xymatrix{1\ar[r]^{\alpha_{1}}& 2\ar[r]^{\alpha_{2}}  &\cdots \ar[r]  &  k\ar[r]^{\alpha_{k}} & k+1\ar[r]  & \cdots\ar[r] & \cdots }$$
Consider $\mathcal{C}:=KQ/\langle \rho\rangle$, the path category associated to $Q$ where $\rho$ is given by the relations $\alpha_{i+1}\alpha_{i}=0$ for all $i\geq 1$. By construction, we have that $\mathcal{C}$ is a Hom-finite $K$-category (for more details see for example \cite[Proposition 6.6]{MOSS}).\\ 
It is well known that the category of representations $\mathrm{Rep}(Q,\rho)$ is equivalent to $\mathrm{Mod}(\mathcal{C})$. In this case, the projective and simple representations  associated to the vertex $k$ are of the form
$$P_{k}:\xymatrix{k\ar[d] \\
k+1,}\quad \quad \quad \quad \quad S_{k}:\, k$$
\begin{enumerate}
\item [(1a)] Consider $P=\oplus_{j=1}^{k}P_{j}$ and $\mathcal{I}=\mathrm{Tr}_{P}\mathcal{C}$. In this case, we have that $\frac{\mathrm{Hom}_{\mathcal{C}}(i,-)}{\mathcal{I}(i,-)}=0$  for $1\leq i\leq k$ and $\frac{\mathrm{Hom}_{\mathcal{C}}(i,-)}{\mathcal{I}(i,-)}\simeq \frac{P_{i}}{\mathrm{Tr}_{P}(P_{i})}=P_{i}$ for $i\geq k+1$. Then for all $j\geq 1$ we have that 
$\mathrm{Ext}^{j}_{\mathrm{Mod}(\mathcal{C})}(\frac{\mathrm{Hom}_{\mathcal{C}}(i,-)}{\mathcal{I}(i,-)},F'\circ \pi)=0$ for all $i\in \mathcal{C}=FQ/\langle \rho\rangle$ and for all $F'\in \mathrm{Mod}(\mathcal{C}/\mathcal{I})$. By \ref{caractidem}, we have that $\mathcal{I}$ is strongly idempotent.\\

\item [(1b)] Consider the projective $P:=\oplus_{j=2}^{k}P_{j}$ and let $\mathcal{I}:=\mathrm{Tr}_{P}\mathcal{C}$. We assert that $\mathrm{Tr}_{P}\mathcal{C}$ is $(k-1)$-idempotent.\\
Indeed, firstly we have that $\frac{\mathrm{Hom}_{\mathcal{C}}(1,-)}{\mathcal{I}(1,-)} \simeq \frac{P_{1}}{\mathrm{Tr}_{P}(P_{1})}\simeq S_{1}$,
where $S_{1}$ is the simple representation associated to the vertex $1$. Moreover, we have that
$\mathrm{Tr}_{P}(P_{i})=P_{i}$ for $2\leq i\leq k$. Then we have that
$\frac{\mathrm{Hom}_{\mathcal{C}}(i,-)}{\mathcal{I}(i,-)}\simeq \frac{P_{i}}{\mathrm{Tr}_{P}(P_{i})}
=0$ for $2\leq i\leq k$. We also have that $\mathrm{Tr}_{P}(P_{i})=0$ for all $i\geq k+1$; and hence $\frac{\mathrm{Hom}_{\mathcal{C}}(i,-)}{\mathcal{I}(i,-)}\simeq \frac{P_{i}}{\mathrm{Tr}_{P}(P_{i})}=P_{i}$ for $i\geq k+1$.\\
It is enough to show that
$\mathrm{Ext}^{j}_{\mathrm{Mod}(\mathcal{C})}(S_{1},F'\circ \pi)=0$ for all $0\leq j \leq k-1$ and $\forall F'\in \mathrm{Mod}(\mathcal{C}/\mathcal{I})$ (see \ref{caractidem}).
We have a projective resolution of $S_{1}$ 
$$\xymatrix{\cdots\ar[r] & P_{k+1}\ar[r] & P_{k}\ar[r] & \cdots\ar[r] & P_{1}\ar[r]  & S_{1}\ar[r]  & 0}$$
where each $P_{i}$ is the projective associated to the vertex $i$. Using this resolution we can see that $\mathrm{Ext}_{\mathrm{Mod}(\mathcal{C})}^{j}(S_{1},F'\circ \pi)=0$ for all $1\leq j\leq k-1$. Therefore, we have that $\mathcal{I}$ is $(k-1)$-idempotent.\\

\item [(1c)] The ideal given in item $(b)$ is not $k$-idempotent. Indeed, we have that $S_{k}=\Omega^{k-1}(S_{1})$ (the $k-1$ syzygy of $S_{1}$). By the shifting lemma, we have that $\mathrm{Ext}^{k}_{\mathrm{Mod}(\mathcal{C})}(S_{1},S_{k+1})\simeq \mathrm{Ext}^{1}_{\mathrm{Mod}(\mathcal{C})}(S_{k},S_{k+1})$. We have the exact sequence  $0\rightarrow S_{k+1}\rightarrow P_{k}\rightarrow S_{k}\rightarrow 0$  which does not split. We conclude that $\mathrm{Ext}^{k}_{\mathrm{Mod}(\mathcal{C})}(S_{1},S_{k+1})\neq 0$. Then, $\mathcal{I}$ is not $k$-idempotent.\\

\item [(2)] Let $\mathcal{I}$ be a heredity ideal in $\mathcal{C}$, according to definition 3.2 in \cite{Martin1}. Then we have that $\mathcal{I}(C,-)$ is a projective $\mathcal{C}$-module for all $C\in \mathcal{C}$ and $\mathcal{I}$ is idempotent. Then by \ref{caractidem}, we have that
$\mathrm{Ext}^{1}_{\mathrm{Mod}(\mathcal{C})}(\frac{\mathrm{Hom}_{\mathcal{C}}(C,-)}{\mathcal{I}(C,-)}, F'\circ \pi)=0$ for all $F'\in \mathrm{Mod}(\mathcal{C}/\mathcal{I})$ and for all $C\in \mathcal{C}$.\\
Now, since the projective dimension of each $\frac{\mathrm{Hom}_{\mathcal{C}}(C,-)}{\mathcal{I}(C,-)}$ is less or equal to $1$, we have that $\mathrm{Ext}^{j}_{\mathrm{Mod}(\mathcal{C})}(\frac{\mathrm{Hom}_{\mathcal{C}}(C,-)}{\mathcal{I}(C,-)}, F'\circ \pi)=0$ for all $F'\in \mathrm{Mod}(\mathcal{C}/\mathcal{I})$ and for all $C\in \mathcal{C}$. Then, by \ref{caractidem}, we have that $\mathcal{I}$ is strongly idempotent.
\end{enumerate}

\section*{Acknowledgements}
{\small  This work presents results obtained during the first author's doctoral studies, carried out with a CONACYT grant (see \cite{phdtesisLG}).
The authors are grateful to the project PAPIIT-Universidad Nacional Aut\'onoma de M\'exico IN100520. }

\bibliographystyle{plain}
\bibliography{HTDV}

\footnotesize

\vskip3mm \noindent  $^{1}$Luis Gabriel Rodr\'iguez Vald\'es:\\ Departamento de Matem\'aticas, Facultad de Ciencias, Universidad Nacional Aut\'onoma de M\'exico\\
Circuito Exterior, Ciudad Universitaria,
C.P. 04510, Ciudad de M\'exico, MEXICO.\\ {\tt luisgabriel@ciencias.unam.mx }

\vskip3mm \noindent $^{2}$Martha Lizbeth Shaid Sandoval Miranda:\\ Departamento de Matem\'aticas, Universidad Aut\'onoma Metropolitana Unidad Iztapalapa\\
Av. San Rafael Atlixco 186, Col. Vicentina Iztapalapa 09340, M\'exico, Ciudad de M\'exico.\\ {\tt marlisha@xanum.uam.mx, marlisha@ciencias.unam.mx}

\vskip3mm \noindent $^{3}$Valente Santiago Vargas:\\ Departamento de Matem\'aticas, Facultad de Ciencias, Universidad Nacional Aut\'onoma de M\'exico\\
Circuito Exterior, Ciudad Universitaria,
C.P. 04510, Ciudad de M\'exico, MEXICO.\\ {\tt valente.santiago.v@gmail.com}

\end{document}